\documentclass[12pt]{article}

\usepackage{amsmath, amsfonts, amssymb}
\usepackage{amsthm}
\usepackage{thmtools} 

\usepackage{microtype}
\usepackage{dsfont}
\usepackage{appendix}
\usepackage{enumitem}

\usepackage[natbib=true, 
            backend=bibtex8, 
            style=ext-authoryear, 
            url=false, doi=false, articlein=false, clearlang=true, isbn=false, date=year, giveninits=true, maxcitenames=2, maxbibnames=999, uniquename=init]{biblatex}
\DeclareInnerCiteDelims{cite}{\bibopenparen}{\bibcloseparen}
\DeclareNameAlias{sortname}{family-given}
\addbibresource{FPX-main.bib}

\usepackage{graphicx}

\usepackage{geometry}
\geometry{a4paper}

\usepackage{titlesec}

\setcounter{secnumdepth}{4}

\titleformat{\paragraph}
{\normalfont\normalsize\bfseries}{\theparagraph}{1em}{}
\titlespacing*{\paragraph}
{0pt}{3.25ex plus 1ex minus .2ex}{1.5ex plus .2ex}

\usepackage{hyperref}
\hypersetup{
            colorlinks={true}, 
            linkcolor={blue}, 
            citecolor=blue,
            pdfencoding=auto, 
            psdextra
            }
            
\usepackage[capitalise,compress]{cleveref} %

\numberwithin{equation}{section}

\declaretheorem[name=Definition,
                refname={definition, definitions},
                Refname={Definition, Definitions},
                numberwithin=section]{definition} 

\declaretheorem[name=Assumption, 
                refname={assumption,assumptions},
                Refname={Assumption,Assumptions},
                numberwithin=section]{assumption}

\declaretheorem[name=Lemma,
                refname={lemma, lemmas},
 			Refname={Lemma, Lemmas},
 			numberwithin=section]{lemma}

\declaretheorem[name=Theorem,
               refname={theorem,theorems},
               Refname={Theorem,Theorems},
               numberwithin=section]{theorem}

\declaretheorem[name=Proposition,
                refname={proposition, propositions},
 			refname={Proposition, Propositions},
 			numberwithin=section]{proposition}
               
\declaretheorem[name=Corollary,
                refname={corollary, corollaries},
                Refname={Corollary, Corollaries},
                numberwithin=section]{corollary}

\declaretheorem[name=Remark, 
                refname={remark, remarks},
                Refname={Remark, Remarks},
                numberwithin=section]{remark}

\declaretheorem[name=Example,
                refname={example, examples},
                Refname={Example, Examples},
                numberwithin=section]{example}

\newlist{assumpenum}{enumerate}{1} %
\setlist[assumpenum]{label=(\roman*), ref=\theassumption~(\roman*)}
\crefalias{assumpenumi}{assumption} 

\newlist{lemmaenum}{enumerate}{1} %
\setlist[lemmaenum]{label=(\roman*), ref=\thelemma~(\roman*)}
\crefalias{lemmaenumi}{lemma} 

\newlist{propenum}{enumerate}{1} %
\setlist[propenum]{label=(\roman*), ref=\theproposition~(\roman*)}
\crefalias{propenumi}{proposition} 

\newcommand{\indep}{\rotatebox[origin=c]{90}{$\models$}}

\DeclareMathOperator{\argmin}{\mathop{\mathrm{argmin}}}

\begin{document}

\title{Quantifying Distributional Model Risk in Marginal Problems via Optimal Transport\thanks{We acknowledge valuable feedback from participants of Optimization-Conscious Econometrics Conference II at the University of Chicago, KI+Scale MoDL Retreat at the University of Washington, and Econometrics and Optimal Transport Workshop at the University of Washington. Fan acknowledges support from NSF Infrastructure grant (PIHOT) DMS-2133244.}}

\author{Yanqin Fan,\footnote{Department of Economics, University of Washington. Email: fany88@uw.edu} \ Hyeonseok Park,\footnote{Institute for Advanced Economic Research, Dongbei University of Finance and Economics. Email: hynskpark21@dufe.edu.cn} \ and Gaoqian Xu\footnote{Department of Economics, University of Washington. Email: gx8@uw.edu} }
\date{\today}

\maketitle

\begin{abstract}
This paper studies distributional model risk in marginal problems, where each marginal measure is assumed to lie in a Wasserstein ball centered at a fixed reference measure with a given radius. Theoretically, we establish several fundamental results including strong duality, finiteness of the proposed Wasserstein distributional model risk, and the existence of an optimizer at each radius. In addition, we show continuity of the Wasserstein distributional model risk as a function of the radius.  Using strong duality, we extend the well-known Makarov bounds for the distribution function of the sum of two random variables with given marginals to Wasserstein distributionally robust Markarov bounds. Practically, we illustrate our results on four distinct applications when the sample information comes from multiple data sources and only some marginal reference measures are identified. They are: partial identification of treatment effects; externally valid treatment choice via robust welfare functions; Wasserstein distributionally robust estimation under data combination; and evaluation of the worst aggregate risk measures. 
\end{abstract}

\newpage

\section{Introduction}

\textit{Distributionally robust optimization} (DRO) has emerged as a powerful tool for hedging against model misspecification and distributional shifts. It minimizes \textit{distributional model risk} (DMR) defined as the worst risk over a class of distributions lying in a \textit{distributional uncertainty set}, see \citet{blanchet2019quantifying}. Among many different choices of uncertainty sets, Wasserstein DRO (W-DRO) with distributional uncertainty sets based on optimal transport costs has gained much popularity, see \citet{Kuhn_2019} and \citet{Blanchet2021} for recent reviews. W-DRO  has found successful applications in robust decision making in all disciplines including economics, finance, machine learning, and operations research. Its success is largely credited to the strong duality and other nice properties of the Wasserstein DMR (W-DMR). The objective of this paper is to propose and study W-DMR in \textit{marginal problems where only some marginal measures of a reference measure are given}, see e.g., \citet{Kellerer:1984to,rachev1998mass,villani2009optimal, villani2021topics}, and \citet{ruschendorf1991bounds}.

In practice, \textit{marginal problems} arise from either the lack of complete data or an incomplete model. In insurance and risk management, computing model-free measures of aggregate risks such as Value-at-Risk and Expected Short-Fall is of utmost importance and routinely done. When the exact dependence structure between individual risks is lacking, researchers and policy makers rely on the worst risk measures defined as the maximum value of aggregate risk measures over all joint measures of the individual risks with some fixed marginal measures, see \citet{Embrechts_2010} and \citet{Embrechts_2013};  In causal inference, distributional treatment effects such as the variance and the proportion of participants who benefit from the treatment depend on the joint distribution of the potential outcomes. Even with ideal randomized experiments such as double-blind clinical trials, the joint distribution of potential outcomes is not identified and as a result, only the lower and upper bounds on distributional treatment effects are identified from the sample information, see \citet{FAN_2009}, \citet{Fan_2010_SharpBounds,Fan_2012_CI_quantiles}, \citet{Fan2017}, \citet{Ridder_2007}, \citet{Firpo_2019};  In algorithmic fairness when the sensitive group variable is not observed in the main data set, assessment of unfairness measures must be done using multiple data sets, see \citet{Kallus2022}. Abstracting away from estimation, all these problems involve optimizing the expected value of a functional of multiple random variables with fixed marginals and thus belong to the class of marginal problems for which optimal transport related tools are important.\footnote{When the marginals are univariate, optimal transport problem can be conveniently expressed in terms of copulas. \citet{Fan_2010_SharpBounds}, \citet{Fan_2012_CI_quantiles}, \citet{FAN_2009}, \citet{Fan2017}, \citet{Ridder_2007},  and \citet{Firpo_2019} explicitly use copula tools. } 

The marginal measures in the afore-mentioned applications and general marginal problems are typically empirical measures computed from multiple data sets such as in the evaluation of worst aggregate risk measures or identified under specific assumptions such as randomization or strong ignorability in causal inference. Developing a unified framework for hedging against model misspecification and/or distributional shifts in marginal measures motivates the current paper. 

 Theoretically, this paper makes several contributions to the literature on distributional robustness and the literature on marginal problems. First, it introduces \textit{Wasserstein distributional model risk in marginal problems} (W-DMR-MP), where each marginal measure is assumed to lie in a Wasserstein ball centered at a fixed reference measure with a given radius. We focus on the important case with two marginals and consider both non-overlapping and overlapping marginals. For \textit{non-overlapping marginal measures}, when the radius is zero, the W-DMR-MP reduces to the marginal problems or optimal transport problems studied in \citet{Kellerer:1984to,rachev1998mass,villani2009optimal, villani2021topics}. For \textit{overlapping marginals}, when the radius is zero, the W-DMR-MP reduces to the overlapping marginals problem studied in \citet{ruschendorf1991bounds}; Second, we establish strong duality for our W-DMR with both non-overlapping and overlapping marginals under similar conditions to those for W-DMR, see \citet{zhang2022simple}, \citet{blanchet2019quantifying}, and \citet{gao2022WDRO}. As a first application of our strong duality result for non-overlapping marginals, we extend the well-known Marakov bounds for the distribution function of the sum of two random variables to Wasserstein distributionally robust Makarov bounds; Third, we prove finiteness of the W-DMR-MP and existence of an optimizer at each radius. Based on both results, we show that the identified set of the expected value of a smooth functional of random variables with fixed marginals is a closed interval; Fourth, we show continuity of the W-DMR in marginal problems as a function of the radius. Together these results extend those for W-DMR in \citet{blanchet2019quantifying}, \citet{zhang2022simple}, and \citet{yue2022linear}; Lastly, we extend our formulations and theory to W-DMR with multi-marginals. On a technical note, our proofs build on existing work on W-DMR such as \citet{blanchet2019quantifying}, \citet{zhang2022simple}, and \citet{yue2022linear}. However, an additional challenge due to the presence of multiple marginal measures in our Wasserstein uncertain sets is the verification of the existence of a joint measure with overlapping marginals. We make use of existing results for a given consistent product marginal system in \citet{vorob1962consistent}, \citet{kellerer1964verteilungsfunktionen}, and \citet{shortt1983combinatorial} to address this issue. 
 
 Practically, we demonstrate the flexibility and broad applicability of our W-DMR-MP via four distinct applications when the sample information comes from multiple data sources. First, we consider partial identification of treatment effects when the marginal measures of the potential outcomes lie in their respective Wasserstein balls centered at the measures identified under strong ignorability. The validity of strong ignorability is often questionable when unobservable confounders may be present. We apply our W-DMR-MP to establishing the identified sets of treatment effects which can be used to conducting sensitivity analysis to the selection-on-observables assumption. For average treatment effects, we show that when the cost functions are separable, incorporating covariate information does not help shrink the identified set; on the other hand, for non-separable cost functions such as the Mahalanobis distance, incorporating covariate information may help shrink the identified set; Second, in causal inference when the optimal treatment choice is to be applied to a target population different from the training population, \citet{adjaho2022} introduces robust welfare functions defined by W-DMR to study externally valid treatment choice. The W-DMR-MP we propose allows us to dispense with the assumption of a \textit{known dependence structure} for the reference measure in \citet{adjaho2022}. When shifts in the covariate distribution are allowed, we show that our robust welfare function is upper bounded by the worst robust welfare function of \citet{adjaho2022}; Third, one important application of W-DMR is in distributionally robust estimation and classification. However as \citet{awasthi2022distributionally} points out,\footnote{See \citet{Graham2016} and \citet{Chen_2008} for general data combination problems.} some sensitive variables may not be observed in the same data set as the response variable rendering W-DRO inapplicable. We apply W-DMR-MP to distributionally robust estimation under data combination;\footnote{\Cref{sec:example-logit} provides a detailed comparison of our set up and \citet{awasthi2022distributionally}.} Fourth, applying our W-DMR-MP to the evaluation of the worst aggregate risk measures allows us to dispense with the known marginals assumption in \citet{Embrechts_2010} and \citet{Embrechts_2013}.

The rest of this paper is organized as follows. \Cref{sec:standard-DRO} reviews the W-DMR and strong duality, introduces our W-DMR-MP, and then presents four motivating examples. \Cref{sec:WDRO-main-theory} establishes strong duality and Wasserstein distributionally robust Marakov bounds. \Cref{sec:additional-properties} studies finiteness of W-DMR-MP and existence of optimal solutions. Moreover, we show that the identified set of the expected value of a smooth functional of random variables with fixed marginals is a closed interval. \Cref{sec:continuity} establishes continuity of W-DMR-MP as a function of the radius. \Cref{sec:examples-revisited} revisits the motivating examples in \Cref{sec:standard-DRO}. \Cref{sec:multi-marginals} extends our W-DMR-MP to more than two marginals. The last section offers some concluding remarks. Technical proofs are relegated to a series of appendices.

We close this section by introducing the notation used in the rest of this paper. 
For two sets $A$ and $B$, the relative complement is denoted by $A \setminus B$. Let $\overline{\mathbb{R}} = \mathbb{R} \cup \left\{- \infty, \infty \right \}$, $[d]=\{1,2,...,d\}$,  $\mathbb{R}^d_{+} = \left \{ x \in \mathbb{R}^d : x_i \geq 0, \ \forall i \in [d] \right\}$, and $\mathbb{R}^d_{++} = \left\{ x \in \mathbb{R}^d : x_i > 0, \ \forall i \in [d] \right\}$. For any real numbers $x, y \in \mathbb{R}$, we define $x \wedge y := \min\{x, y\}$ and $x \vee y : = \max\{x, y\}$. The Euclidean inner product of $x$ and $y$ in $\mathbb{R}^d$ is denoted by $\langle x, y \rangle$. For any real matrix $W \in \mathbb{R}^{m \times n}$, let $A^\top$ denote the transpose of $W$. For an extended real function $f$ on $\mathcal{X}$, the positive part $f^+$ and the negative part $f^-$ are defined as $f^{+}(x) = \max \left\{ f(x), 0 \right\}$ and $f^{-}(x) = \max \left\{ -f(x), 0 \right\}$, respectively.

For any Polish space $\mathcal{S}$, let $\mathcal{B}_\mathcal{S}$ be the 
associated Borel $\sigma$-algebra and $\mathcal{P}(\mathcal{S})$ be the collection of probability measures on $\mathcal{S}$. Given a Polish probability space $(\mathcal{S}, \mathcal{B}_{\mathcal{S}}, \nu)$, let $\mathcal{B}_{\mathcal{S}}^\nu$ denote the $\nu$-completion of $\mathcal{B}_{\mathcal{S}}$.  Given a probability space $\left(\Omega, \mathcal{F}, \mathbb{P} \right)$ and a map $T: \Omega \rightarrow \mathcal{S}$, let $T\# \mu$ denote the push forward of $\mathbb{P}$ by $T$, i.e., $(T\# \mathbb{P})(A) = \mathbb{P} \left( T^{-1} (A) \right)$ for all $A \in \mathcal{B}_{\mathcal{S}}$, where $T^{-1}(A) =  \left\{ \omega\in\Omega: T(\omega) \in A \right\}$. The law of a random variable $S: \Omega \rightarrow \mathbb{R}$ is denoted by $\mathrm{Law}(S)$ which is the same as $S\# \mathbb{P}$. For any $\mu, \nu \in \mathcal{P}(\mathcal{S})$, let $\Pi(\mu, \nu)$ denote the set of all couplings (or joint measures) with marginals $\mu$ and $\nu$.

For any $\mathcal{B}_{\mathcal{S}}^{\nu}$-measurable function $f$, let $\int_{\mathcal{S}} f d \nu$ denote the integral of $f$ in the completion of $(\mathcal{S}, \mathcal{B}_{\mathcal{S}}, \nu)$. For a random element $S: \Omega \rightarrow \mathcal{S}$ with $\mathrm{Law}(S) = \nu$, we write $\mathbb{E}_\nu \left[f(S)\right] = \int_{\mathcal{S}} f d \nu$. Given $p \in (0, \infty)$ and a Borel measure $\nu$ on $\mathcal{S}$, let $L^p(\nu) := L^p(\mathcal{S},\mathcal{B}_{\mathcal{S} } , \nu)$ denote the set of all the $\mathcal{B}_{\mathcal{S}}^{\nu}$-measurable functions $f: \mathcal{S}\rightarrow \mathbb{R}$ such that $\| f\|_{L^p(\nu)} := \left( \int_{\mathcal{S}} |f|^p d\nu \right)^{1/p}< \infty$.

\section{W-DMR and Motivating Examples} \label{sec:standard-DRO}

In this section, we first review W-DMR and then introduce W-DMR in marginal problems. Lastly, we present four motivating examples of marginal problems which will be used to illustrate our results in the rest of this paper.

\subsection{A Review of W-DMR and Strong Duality}

W-DMR is defined as the worst model risk over a class of distributions lying in a Wasserstein uncertainty set composed of all probability measures that are a fixed Wasserstein distance away from a given reference measure, see \citet{blanchet2019quantifying}.

Before presenting W-DMR, we review some basic definitions. Let $\mathcal{X}$ be a Polish (metric) space with a metric $\boldsymbol{d}$.

\begin{definition}[Optimal transport cost] \label{Wasserstein distance} 
  Let $\mu, \nu \in \mathcal{P}(\mathcal{X})$ be given probability measures. The {\it optimal transport cost} between $\mu$ and $\nu$ associated with a cost function $c: \mathcal{X}\times \mathcal{X} \rightarrow \mathbb{R}_+ \cup \{\infty\}$ is defined as
\[
 \boldsymbol{K}_c(\mu, \nu) = \inf_{\pi \in \Pi(\mu, \nu)} \int_{\mathcal{X} \times \mathcal{X}} c \, d\pi.
\]
\end{definition}

When the cost function $c$ is lower-semicontinuous, there exists an optimal coupling corresponding to $\boldsymbol{K}_c(\mu, \nu)$. In other words, there exists $\pi^* \in \Pi(\mu, \nu)$ such that $\boldsymbol{K}_c(\mu, \nu)=\int_{\mathcal{X} \times \mathcal{X}} c \, d\pi^*$ \citep[Ref.][Theorem 4.1]{villani2009optimal}. 

\begin{definition}[Wasserstein distance] \label{def:Wp-distance}
Let $p \in [1, \infty)$. The {\it Wasserstein distance} of order $p$ between any two measures $\mu$ and $\nu$ on Polish metric space $(\mathcal{X}, \boldsymbol{d})$ is defined by 
\begin{align*}
\boldsymbol{W}_{ p}(\mu, \nu) &=  \left[ \inf_{\pi \in \Pi(\mu, \nu)} \int_{\mathcal{X} \times \mathcal{X} }  \boldsymbol{d}^p \, d \pi \right]^{1/p}.
\end{align*}
\end{definition}

Throughout this paper, we make the following assumption on the cost function $c$.
\begin{assumption} \label{assumption:cost-function} 
Let $(\mathcal{X}, \mathcal{B}_{\mathcal{X}})$ be a Borel space associated to $\mathcal{X}$. The cost function $c: \mathcal{X} \times \mathcal{X} \to \mathbb{R}_{+} \cup \{\infty\}$ is measurable and satisfies $c(x, y) = 0$ if and only if $x = y$. 
\end{assumption}

\Cref{assumption:cost-function} implies that for $\mu, \nu \in \mathcal{P}(\mathcal{X})$, $\mu = \nu$ if and only if $\boldsymbol{K}_c(\mu, \nu) = 0$. When $c$ is the metric $\boldsymbol{d}$ on $\mathcal{X}$,  $ \boldsymbol{K}_c(\mu, \nu)$ coincides with the Wasserstein distance of order 1 (Kantorovich-Rubinstein distance) between $\mu$ and $\nu$ defined in \Cref{def:Wp-distance}.

For a given function $f: \mathcal{X} \to \mathbb{R}$, \citet{blanchet2019quantifying} define W-DMR as
\begin{align*}
	\mathcal{I}_{\mathrm{DMR}}(\delta) := \sup_{\gamma \in \Sigma_{ \mathrm{DMR} }(\delta)} \int_{
 \mathcal{X}} f \, d \gamma, \mbox{ } \delta\ge 0,
\end{align*}
where $\Sigma_{\mathrm{DMR}}(\delta)$ is the Wasserstein uncertainty set\footnote{By convention, we call all uncertainty sets based on optimal transport costs as Wasserstein uncertainty sets.} centered at a reference measure $\mu \in \mathcal{P}(\mathcal{X})$ with radius $\delta\geq 0$, i.e.,
\begin{align*}
\Sigma_{\mathrm{DMR}}(\delta):=\left\{\gamma \in \mathcal{P}(\mathcal{X}) : \boldsymbol{K}_c(\mu, \gamma) \le \delta \right\}.
\end{align*}
\Cref{assumption:cost-function} allows the cost function $c$ to be asymmetric and take value $\infty$, where the latter corresponds to the case that there is no distributional shift in some marginal measure of $\mu$. 
\begin{remark}
Under \Cref{assumption:cost-function}, $\Sigma_{\mathrm{DMR}}(0)=\{\mu\}$ and 
\[	
\mathcal{I}_{\mathrm{DMR}}(0) = \int_{\mathcal{X}} f \, d \mu.
\]
\end{remark}

It is well-known that under mild conditions, strong duality holds for $\mathcal{I}_{\mathrm{DMR}}(\delta)$ when $\delta > 0$ (c.f., \citet{blanchet2019quantifying,gao2022WDRO,zhang2022simple}). To be self-contained, we restate the strong duality result in \citet{zhang2022simple} for Polish space below.\footnote{The strong duality result in \citet{zhang2022simple} allows for general space $\mathcal{X}$.}
    
\begin{theorem}[{\citet[Theorem 1]{zhang2022simple}}] Let $(\mathcal{X}, \mathcal{B}_{\mathcal{X}}, \mu)$ be a probability space. Let $\delta \in (0, \infty)$ and $f: \mathcal{X} \to \mathbb{R}$ be a measurable function such that $\int_{\mathcal{X}} f \, d \mu > -\infty$. Suppose the cost function satisfies \Cref{assumption:cost-function}. Then, for any $\delta > 0$, 
	\begin{align}
		\mathcal{I}_{\mathrm{DMR}}(\delta) = \inf_{\lambda \in \mathbb{R}_{+}}
		\left\{ \lambda \delta + \int_{\mathcal{X}} \sup_{x' \in \mathcal{X}} [ f(x') - \lambda c(x, x')  ] \, d \mu(x) \right\}, \label{eq:DRO-duality}
	\end{align}
	where $\lambda c(x, x')$ is defined to be $\infty$ when $\lambda = 0$ and $c(x, x') = \infty$.
\end{theorem}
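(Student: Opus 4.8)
\emph{Weak duality and setup.} The plan is to prove \eqref{eq:DRO-duality} as a pair of inequalities. Write $\phi_\lambda(x):=\sup_{x'\in\mathcal{X}}\bigl[f(x')-\lambda c(x,x')\bigr]$, with the stated convention at $\lambda=0$, and $g(\lambda):=\lambda\delta+\int_{\mathcal{X}}\phi_\lambda\,d\mu$, the integral being taken in the $\mu$-completion of $\mathcal{B}_{\mathcal{X}}$ so that it is well defined even though $\phi_\lambda$ is in general only universally (not Borel) measurable, being upper semianalytic; put $v_{\mathrm{D}}:=\inf_{\lambda\in\mathbb{R}_{+}}g(\lambda)$. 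For the easy inequality $\mathcal{I}_{\mathrm{DMR}}(\delta)\le v_{\mathrm{D}}$, fix $\gamma\in\Sigma_{\mathrm{DMR}}(\delta)$ and $\varepsilon>0$; since $\boldsymbol{K}_c(\mu,\gamma)\le\delta$ there is $\pi\in\Pi(\mu,\gamma)$ with $\int c\,d\pi\le\delta+\varepsilon$, and (using that $\mathcal{X}$ is Polish) we may disintegrate $\pi(dx,dx')=\mu(dx)\,\pi_x(dx')$. Then for every $\lambda\in\mathbb{R}_{+}$,
\[
\int_{\mathcal{X}} f\,d\gamma=\int_{\mathcal{X}}\Bigl(\int_{\mathcal{X}}\bigl[f(x')-\lambda c(x,x')\bigr]\,\pi_x(dx')\Bigr)d\mu(x)+\lambda\!\int c\,d\pi\le\int_{\mathcal{X}}\phi_\lambda\,d\mu+\lambda(\delta+\varepsilon),
\]
since the inner integral is $\mu$-measurable in $x$ and pointwise $\le\phi_\lambda(x)$. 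Letting $\varepsilon\downarrow0$, then taking the supremum over $\gamma$ and the infimum over $\lambda$, gives $\mathcal{I}_{\mathrm{DMR}}(\delta)\le v_{\mathrm{D}}$.

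\emph{Strong duality.} A minimax theorem is not directly available because $\{\pi:\text{first marginal}=\mu\}$ is not weakly compact, so I would argue constructively. Two structural facts drive the argument: $\lambda\mapsto\phi_\lambda(x)$ is convex (a supremum of affine functions of $\lambda$), so $g$ is convex; and $\phi_\lambda\ge f$ (take $x'=x$, $c(x,x)=0$), so $g(\lambda)\ge\lambda\delta+\int f\,d\mu\to+\infty$ as $\lambda\to\infty$ because $\delta>0$ and $\int f\,d\mu>-\infty$. After disposing of the degenerate case $v_{\mathrm{D}}=+\infty$ by an explicit construction of measures in $\Sigma_{\mathrm{DMR}}(\delta)$ whose $f$-integral diverges (transport a vanishing fraction of the mass of $\mu$ onto regions where $f$ is large at comparatively small cost), assume $v_{\mathrm{D}}<\infty$; then $g$ is proper, convex, coercive on $[0,\infty)$, and continuous from the right at $0$ (monotone convergence, since $\phi_\lambda\uparrow\phi_0$ as $\lambda\downarrow0$), so it attains its minimum at some $\lambda^\star\in[0,\infty)$. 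By the Jankov--von Neumann selection theorem pick, for each $\eta>0$, a universally measurable $\eta$-approximate maximizer map $T_\eta:\mathcal{X}\to\mathcal{X}$, i.e.\ $f(T_\eta(x))-\lambda^\star c(x,T_\eta(x))\ge\phi_{\lambda^\star}(x)-\eta$ for $\mu$-a.e.\ $x$, and set $\pi_\eta:=(\mathrm{id},T_\eta)\#\mu\in\Pi(\mu,\gamma_\eta)$ with $\gamma_\eta:=T_\eta\#\mu$. Then $\boldsymbol{K}_c(\mu,\gamma_\eta)\le\int c\,d\pi_\eta$ and
\[
\int_{\mathcal{X}} f\,d\gamma_\eta\ge\int_{\mathcal{X}}\phi_{\lambda^\star}\,d\mu+\lambda^\star\!\int c\,d\pi_\eta-\eta .
\]
If the selectors can be chosen so that, as $\eta\downarrow0$, $\int c\,d\pi_\eta\to\delta$ when $\lambda^\star>0$ and $\int c\,d\pi_\eta\le\delta$ when $\lambda^\star=0$, then $\gamma_\eta$ is feasible (up to an $o(1)$ relaxation of the radius absorbed via monotonicity of $\mathcal{I}_{\mathrm{DMR}}$ in $\delta$), and the display gives $\mathcal{I}_{\mathrm{DMR}}(\delta)\ge\int\phi_{\lambda^\star}\,d\mu+\lambda^\star\delta=v_{\mathrm{D}}$.

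\emph{Main obstacle.} The complementary-slackness statement about $\int c\,d\pi_\eta$ is the crux, and is where the hypotheses $\delta>0$ and $\int f\,d\mu>-\infty$ are really used: it follows from the optimality condition $0\in\partial g(\lambda^\star)$, which requires differentiating $g(\lambda)=\lambda\delta+\int\phi_\lambda\,d\mu$ under the integral sign and identifying the one-sided $\lambda$-derivatives of $\phi_\lambda(x)$ with negatives of the costs $c(x,x')$ realized at near-maximizers---a Danskin/normal-integrand computation that needs an integrable envelope for the difference quotients, supplied here by $\phi_\lambda\ge f$ and $\int f\,d\mu>-\infty$, while $\delta>0$ rules out the boundary case $\lambda\to\infty$ and makes $g$ coercive so that a finite $\lambda^\star$ exists. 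The second genuine difficulty is measurability: since $f$ and $c$ are merely Borel, $\phi_\lambda$ is only upper semianalytic and the $\eta$-argmax correspondence has analytic graph, so exact maximizers may fail to exist and every selection and integration step must be carried out with universally measurable maps in the $\mu$-completion; verifying that the induced couplings $\pi_\eta$ carry the asserted cost and value uniformly in $\eta$ is the delicate bookkeeping. The remaining edge cases ($v_{\mathrm{D}}=+\infty$, and couplings of infinite transport cost) are dealt with by separate, elementary truncation and mixing arguments.
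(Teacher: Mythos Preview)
The paper does not prove this theorem; it is quoted from \citet{zhang2022simple} as background. The paper does, however, prove its own extensions (\Cref{thm:ID-duality,thm:I-duality}) by adapting Zhang's method, and that method is quite different from yours. Zhang's route is purely convex-analytic: one shows $\delta\mapsto\mathcal{I}_{\mathrm{DMR}}(\delta)$ is concave and non-decreasing, computes its Legendre transform
\[
\mathcal{I}_{\mathrm{DMR}}^\star(\lambda)=\sup_{\delta\ge 0}\bigl\{\mathcal{I}_{\mathrm{DMR}}(\delta)-\lambda\delta\bigr\}=\sup_{\gamma:\,\boldsymbol{K}_c(\mu,\gamma)<\infty}\Bigl\{\int f\,d\gamma-\lambda\,\boldsymbol{K}_c(\mu,\gamma)\Bigr\},
\]
reduces the right-hand side to $\int\phi_\lambda\,d\mu$ via the interchangeability principle (supremum over couplings with first marginal $\mu$ becomes a pointwise supremum inside the integral), and then recovers $\mathcal{I}_{\mathrm{DMR}}(\delta)=\inf_{\lambda\ge 0}\{\lambda\delta+\mathcal{I}_{\mathrm{DMR}}^\star(\lambda)\}$ for $\delta>0$ from the biconjugate identity (the paper's \Cref{lemma: Legendre_transform}). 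No primal construction, no selectors, no complementary slackness.

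Your approach---weak duality plus construction of near-optimal primal measures $\gamma_\eta=T_\eta\#\mu$ via Jankov--von~Neumann selectors, with complementary slackness extracted from $0\in\partial g(\lambda^\star)$ through a Danskin computation---is closer in spirit to \citet{blanchet2019quantifying} and \citet{gao2022WDRO}. It can be made to work, but the obstacle you flag is real and not merely bookkeeping: to pass from $0\in\partial g(\lambda^\star)$ to the statement that near-maximizing selectors can be chosen with $\int c(x,T_\eta(x))\,d\mu\to\delta$, you need to differentiate $\lambda\mapsto\int\phi_\lambda\,d\mu$ under the integral and identify the derivative with (minus) the realized transport cost. With $f,c$ only Borel and no growth assumptions, there is no a~priori integrable envelope for $c(x,T_\eta(x))$, so dominated convergence is not automatic; one typically needs a truncation or perturbation argument to close this gap. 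The Legendre-transform route simply bypasses the issue, which is why Zhang's proof (and the paper's adaptation of it) is shorter and requires only the measurability of $\phi_\lambda$ that you already invoke.
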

In the rest of this paper, we keep the convention that for any cost function $c$, $\lambda c(x, y)  = \infty$ when $\lambda = 0$ and $c(x, y) = \infty$.

\subsection{W-DMR in Marginal Problems} 

\subsubsection{Non-overlapping Marginals} 

Let $\mathcal{V} := \mathcal{S}_1 \times \mathcal{S}_2$ be the product space of two Polish spaces $\mathcal{S}_1$ and $\mathcal{S}_2$. Let $\mu_1$ and $\mu_2$ be Borel probability measures on $\mathcal{S}_1$ and $\mathcal{S}_2$ respectively. Following \citet{ruschendorf1991bounds} (see also \citet{Embrechts_2010}), we call the Fr\'{e}chet class of all probability measures on $\mathcal{V}$ having marginals $\mu_1$ and $\mu_2$ the Fr\'{e}chet class with non-overlapping marginals denoted as $\mathcal{F}(\mathcal{V}; \mu_{1}, \mu_{2}) :=  \mathcal{F}(\mu_{1}, \mu_{2})$. Note that $\mathcal{F}(\mu_{1}, \mu_{2})=\Pi(\mu_1, \mu_2)$.

Let $g:\mathcal{V}\rightarrow \mathbb{R}$ be a measurable function satisfying the following assumption.

\begin{assumption}\label{assumption:g-bounded-below} 
The function $g:\mathcal{V}\rightarrow \mathbb{R}$ is measurable such that $\int_{\mathcal{V}} g d\gamma_0 > -\infty$ for some $\gamma_0 \in \Pi(\mu_1, \mu_2) \subset \mathcal{P}(\mathcal{V})$.
\end{assumption}

The marginal problem associated with $\mu_1$ and $\mu_2$ is defined as
\begin{align*}
	\mathcal{I}_{\mathrm{M} }(\mu_1,\mu_2):= \sup_{\gamma \in \Pi(\mu_1, \mu_2)} \int_{\mathcal{V}} g \, d \gamma. 
\end{align*}
It is essentially an optimal transport problem, where the $\sup$ operation is replaced with the $\inf$ operation, see \citet{Kellerer:1984to,rachev1998mass,villani2009optimal, villani2021topics}) or \Cref{appendix:duality-OT} for a review of strong duality for $\mathcal{I}_{\mathrm{M} }(\mu_1,\mu_2)$.

 The W-DMR with non-overlapping marginals we propose extends the marginal problem by allowing each marginal measure of $\gamma$ to lie in a fixed Wasserstein distance away from a reference measure. Specifically, for any $\gamma \in \mathcal{P}(\mathcal{V} )$, let $\gamma_{1}$ and $\gamma_{2}$ denote the projection of $\gamma$ on $\mathcal{S}_1$ and $\mathcal{S}_2$, respectively. The W-DMR with non-overlapping marginals is defined as 
\begin{equation} \label{eq:ID-primal}
\mathcal{I}_{\mathrm{D} }(\delta) := \sup_{\gamma \in \Sigma_{\mathrm{D} }(\delta)} \int_{\mathcal{V} } g \, d\gamma,  \quad \delta \in \mathbb{R}_{+}^2,
\end{equation}
where $\Sigma_{\mathrm{D} }(\delta)$ is the uncertainty set given by
\[%
\Sigma_{\mathrm{D} }(\delta) := \Sigma_{\mathrm{D} }(\mu_1, \mu_2, \delta) = \left\{ \gamma \in \mathcal{P}(\mathcal{V}):  \ \boldsymbol{K}_1( \mu_1, \gamma_1) \leq \delta_1, \   \boldsymbol{K}_2( \mu_2, \gamma_2) \leq \delta_2 \right\},
\]
in which $\boldsymbol{K}_1$ and $\boldsymbol{K}_2$ are optimal transport costs associated with cost functions $c_1$ and $c_2$, respectively, and $\delta := (\delta_{1}, \delta_{2}) \in \mathbb{R}_{+}^2$ is the radius of the uncertainty set. Obviously $\Sigma_{\mathrm{D} }(\delta)$ is non-empty for all $\delta \in \mathbb{R}_{+}^2$.

\begin{remark}
(i) Under \Cref{assumption:cost-function} and \Cref{assumption:g-bounded-below}, it holds that $\mathcal{I}_{\mathrm{D} }(\delta) > - \infty$ for all $\delta \in \mathbb{R}_{+}^2$, see \Cref{lemma:ID-concavity};
(ii) Under \Cref{assumption:cost-function}, the uncertainty set $\Sigma_{\mathrm{D} }(0)=\Pi(\mu_{1}, \mu_{2})$ and thus $\mathcal{I}_{\mathrm{D} }(0) =\mathcal{I}_{\mathrm{M} }(\mu_1,\mu_2)$.
\end{remark}

\subsubsection{Overlapping Marginals}

Let $\mathcal{S} := \mathcal{Y}_1 \times \mathcal{Y}_2 \times \mathcal{X}$ be the product space of three Polish spaces $\mathcal{Y}_1$, $\mathcal{Y}_2$, and $\mathcal{X}$. Let $\mathcal{S}_1 := \mathcal{Y}_1 \times \mathcal{X}$ and $\mathcal{S}_2 := \mathcal{Y}_2 \times \mathcal{X}$. Let $\mu_{13} \in \mathcal{P}(\mathcal{S}_1)$ and $\mu_{23} \in \mathcal{P}(\mathcal{S}_2)$ be such that the projection of $\mu_{13}$ and the projection of $\mu_{23}$ on $\mathcal{X}$ are the same. Following \citet{ruschendorf1991bounds} (see also \citet{Embrechts_2010}), we call the Fr\'{e}chet class of all probability measures on $\mathcal{S}$ having marginals $\mu_{13}$ and $\mu_{23}$ the Fr\'{e}chet class with overlapping marginals and denote it as $\mathcal{F}(\mathcal{S}; \mu_{13}, \mu_{23}) :=  \mathcal{F}(\mu_{13}, \mu_{23}).$  Unlike the non-overlapping case, $\mathcal{F}(\mu_{13}, \mu_{23})$ is different from the class of couplings $\Pi(\mu_{13}, \mu_{23})$.

Let $f:\mathcal{S} \rightarrow \mathbb{R}$ be a measurable function satisfying the following assumption. 
\begin{assumption} \label{assumption:f-bounded-below} The function $f:\mathcal{S} \rightarrow \mathbb{R}$ is measurable such that $\int_{\mathcal{S}} f d\nu_0 > - \infty$ for some $\nu_0 \in \mathcal{F}(\mu_{13}, \mu_{23}) \subset \mathcal{P}(\mathcal{S})$.
\end{assumption}

    \citet{ruschendorf1991bounds} studies the following marginal problem with overlapping marginals:
		\begin{align*}
			\mathcal{I}_{\mathrm{M} }(\mu_{13},\mu_{23}):=\sup_{\gamma \in \mathcal{F}(\mu_{13}, \mu_{23})} \int_{\mathcal{S}} f \, d \gamma. 
		\end{align*}
	As shown in \citet{ruschendorf1991bounds}, the marginal problem with overlapping marginals can be computed via the marginal problem with non-overlapping marginals through the following relation:
 \begin{align*}
 \mathcal{I}(0)=\int_{\mathcal{X}}\left[\sup_{\gamma(\cdot|x) \in \Pi(\mu_{1|3}, \mu_{2|3} ) } \int_{\mathcal{Y}_1\times\mathcal{Y}_2}f(y_1,y_2,x) \, d \gamma(y_1,y_2|x)\right]d\gamma_X(x),
 \end{align*}
 where for each fixed $x\in\mathcal{X}$, $\mu_{\ell|3}( \cdot| x)$ denote the conditional measure of $Y_\ell$ given $X=x$, and the inner optimization problem is a marginal problem with non-overlapping marginals. 
 
For any $\gamma \in \mathcal{P}(\mathcal{S} )$, let $\gamma_{13}$ and $\gamma_{23}$ denote the projections of $\gamma$ on $\mathcal{Y}_1 \times \mathcal{X}$ and $\mathcal{Y}_2 \times \mathcal{X}$, respectively. The W-DMR with overlapping marginals is defined as 
 \begin{equation} \label{eq:I-primal}
\mathcal{I}(\delta) := \sup_{\gamma \in \Sigma(\delta)} \int_\mathcal{S} f \, d \gamma,  \quad  \  \delta  \in \mathbb{R}_{+}^2,
 \end{equation}
where $\Sigma(\delta)$ is the uncertainty set given by
\[
\Sigma(\delta):=\Sigma(\mu_{13}, \mu_{23}, \delta) = \left\{   \gamma \in \mathcal{P}(\mathcal{S} ):  \ \boldsymbol{K}_1( \mu_{13}, \gamma_{13}) \leq \delta_1, \   \boldsymbol{K}_2( \mu_{23}, \gamma_{23}) \leq \delta_2    \right\}
\]
in which $\delta := (\delta_{1}, \delta_{2}) \in \mathbb{R}_{+}^2$ is the radius of the uncertainty set, and $\boldsymbol{K}_1$ and $\boldsymbol{K}_2$ are optimal transport costs associated with $c_1$ and $c_2$. We note that $\Sigma(\delta)$ is non-empty for all $\delta \in \mathbb{R}_{+}^2$.

\begin{remark}
(i) \Cref{assumption:cost-function,assumption:f-bounded-below} imply that $\mathcal{I}(\delta)>-\infty$ for all $\delta\ge 0$, see \Cref{lemma:I-concavity};
 (ii) When $\delta=0$, the uncertainty set $\Sigma(0)=\mathcal{F}(\mu_{13}, \mu_{23})$ and 
$ \mathcal{I}(0)= \mathcal{I}_{\mathrm{M} }(\mu_{13},\mu_{23})$.
\end{remark}

\subsection{Motivating Examples}

In this section, we present four distinct examples to demonstrate the wide applicability of the W-DMR in marginal problems. The first example is concerned with partial identification of treatment effect parameters when commonly used assumptions in the literature for point identification fail; the second example is concerned with distributionally robust optimal treatment choice; the third one is an application of W-DMR-MP in distributionally robust estimation under data combination; and the last one concerns measures of aggregate risk.

 For the first two examples, we adopt the potential outcomes framework for a binary treatment. Let $D \in \{0,1\}$ represent an individual's treatment status, and $Y_1\in \mathcal{Y}_1\subset \mathbb{R}$ and $Y_2\in \mathcal{Y}_2\subset \mathbb{R}$ denote the potential outcomes under treatments $D = 0$ and $D = 1$, respectively. 
Let the observed outcome be
\[
Y = D Y_2 + (1-D) Y_1.
\]

To focus on introducing the main ideas, we adopt the selection-on-observables framework stated in \Cref{assumption:selection-on-observables} below.
\begin{assumption} \label{assumption:selection-on-observables} \mbox{}
    \begin{enumerate}
        \item[(i)] \textbf{Conditional Independence}: The potential outcomes are independent of treatment assignment conditional on covariate $X\in\mathcal{X}\subset \mathbb{R}^q$ for $q \geq 1$, i.e.,
        \[
        (Y_1, Y_2) \ \indep \ D  \;|\; X;
        \]

        \item[(ii)] \textbf{Common Support}: For all $x\in \mathcal{X}$, $0<p(x)<1$, where $p(x):=\mathbb{P}(D=1|X=x)$.
    \end{enumerate}
\end{assumption}
Suppose a random sample on $ (Y,X,D)$ is available. Then under \Cref{assumption:selection-on-observables}, the marginal conditional distribution functions of $Y_1, Y_2$ given $X=x$ are point identified:
\[
F_{Y_1|X}(y|x)=\mathbb{P}(Y_1\leq y|X=x)=\mathbb{P}(Y\leq y|X=x,D=0)\] \mbox{and}
\[
F_{Y_2|X}(y|x)=\mathbb{P}(Y_2\leq y|X=x)=\mathbb{P}(Y\leq y|X=x,D=1).
\]
As a result, the probability measures $\mu_{13}$ of $(Y_{1},X)$ and $\mu_{23}$ of $(Y_{2},X)$ are identified as well. 

\subsubsection{Partial Identification of Treatment Effects}
\label{sec:partial-iden-TE}

\Cref{assumption:selection-on-observables} is commonly used to identify treatment effect parameters and optimal treatment choice. However the validity of \Cref{assumption:selection-on-observables} may be questionable when there are unobserved confounders. W-DMR-MP presents a viable approach to studying sensitivity of causal inference to deviations from \Cref{assumption:selection-on-observables} by varying the marginal measures of a joint measure of $(Y_1, Y_2, X)$ in Wasserstein uncertainty sets centered at reference measures consistent with \Cref{assumption:selection-on-observables}.  Specifically, let $f$ be a measurable function of $Y_1, Y_2$. Consider treatment effects of the form: $\theta_o:=\mathbb{E}_o[f(Y_1,Y_2)]$, where $\mathbb{E}_o$ denotes expectation with respect to the true measure. It includes the average treatment effect (ATE) for which $f(Y_1, Y_2)=Y_2 - Y_1$ and the distributional treatment effect such as $\mathbb{P}_o(Y_2 - Y_1\geq 0)$, where $\mathbb{P}_o$ denotes the probability computed under the true measure.

Consider the identified set for $\theta_o$ defined as 
\[
\Theta(\delta) := \left\{  \int_{\mathcal{S} } f(y_1,y_2) \, d \gamma(y_1, y_2, x) : \gamma \in  \Sigma( \delta) \right\},
\]
where
\begin{align*}
    \Sigma(\delta) = \left\{\gamma \in \mathcal{P}(\mathcal{S}) : \boldsymbol{K}_1(\mu_{13}, \gamma_{13}) \le \delta_1, \boldsymbol{K}_2(\mu_{23}, \gamma_{23}) \le \delta_2 \right\},
\end{align*}
in which $\mu_{13}$ and $\mu_{23}$ are the identified measures of $(Y_1, X)$ and $(Y_2, X)$ under \Cref{assumption:selection-on-observables}. Under mild conditions, we show in \Cref{prop:Identified-Sets-Interval} that the identified set $\Theta(\delta)$ is a closed interval given by 
\[
\Theta(\delta)= \left[ \min_{\gamma \in \Sigma(\delta)} \int_\mathcal{S} f(y_1,y_2) \, d \gamma(s), \max_{\gamma \in \Sigma(\delta)} \int_\mathcal{S} f(y_1,y_2) \, d \gamma(s) \right],
\]
where the lower and upper limits of the interval are characterized by the W-DMR-MP.\footnote{Since $\inf_{\gamma \in \Sigma(\delta)} \int_{\mathcal{S}} f(y_1, y_2) d \gamma(s)$ can be rewritten as $ - \sup_{\gamma \in \Sigma(\delta)} \int_{\mathcal{S}} [- f(y_1, y_2)] d \gamma(s)$, we also refer to the lower limit as W-DMR-MP. } When $\delta=0$, \citet{Fan2017} establish a characterization of $\Theta(0)$ via marginal problems with overlapping marginals. 

The identified set $\Theta(\delta)$ can be used to conduct sensitivity analysis to deviations from \Cref{assumption:selection-on-observables}. We note that sensitivity analysis to other commonly used assumptions such as the threshold-crossing model can be done by taking the reference measures as the measures identified under these alternative assumptions, see \citet{FAN_2009}.

\subsubsection{Robust Welfare Function}

In empirical welfare maximization (EWM), an optimal choice/policy is chosen to maximize the expected welfare estimated from a training data set and then applied to a target population, see \citet{Kitagawa_2018}. EWM assumes that the target population and the training data set come from the same underlying probability measure. This may not be valid in important applications. Motivated by designing externally valid treatment policy, \citet{adjaho2022} introduces a robust welfare function which allows the target population to differ from the training population. In this paper, we revisit \citet{adjaho2022}'s robust welfare function and propose a new one based on W-DMR with overlapping marginals. 

\citet{adjaho2022} adopts the following definition of a robust welfare function:
\begin{align*}
	\mathrm{RW}_0(d): =\inf_{\gamma \in \Sigma_0(\delta_0)} \mathbb{E}_{\gamma} [Y_1 (1 - d(X)) + Y_2 d(X)],
\end{align*}
where $d : \mathcal{X} \rightarrow \{0, 1\}$ is a measurable policy function, i.e., $d(X)$ is $0$ or $1$ depending on $X$ and $\Sigma_0(\delta_0)$ is the Wasserstein uncertainty set centered at a joint measure $\mu$ for $(Y_1, Y_2, X)$ consistent with \Cref{assumption:selection-on-observables}, i.e., 
\begin{align*}
	\Sigma_0(\delta_0) := \left\{ \gamma \in \mathcal{P}(\mathcal{S}): \,  \boldsymbol{K}_c(\mu, \gamma) \le \delta_0  \right\},
\end{align*}
where $\boldsymbol{K}_c(\mu, \gamma)$ is the optimal transport cost with cost function $c: \mathcal{S} \times \mathcal{S} \rightarrow \mathbb{R}_{+}\cup \{\infty\}$. 

Noting that \Cref{assumption:selection-on-observables} only identifies the marginal measures $\mu_{13},\mu_{23}$ of the reference measure $\mu$ in $\Sigma_0(\delta_0)$, we define a new robust welfare function as
\begin{equation*}
	\mathrm{RW}(d) := \inf_{\gamma \in \Sigma(\delta)} \mathbb{E}_{\gamma} [ Y_1 (1 - d(X)) + Y_2 d(X)],
\end{equation*}
where $\Sigma(\delta) = \Sigma(\mu_{13}, \mu_{23}, \delta)$ is the uncertainty set for W-DMR with overlapping marginals.

\subsubsection{W-DRO Under Data Combination} \label{sec:example-logit}

An important application of W-DMR is W-DRO. Let $f: \mathcal{Y}_1 \times \mathcal{Y}_2 \times \mathcal{X}\times \Theta \to \mathbb{R}$ be a loss function with an unknown parameter $\theta \in \Theta\subset \mathbb{R}^q$. W-DRO under data combination is defined as 
    	\begin{align}
     \min_{\theta \in \Theta}\sup_{\gamma \in \Sigma(\delta)} \int_{\mathcal{S}} f(y_1, y_2, x;\theta) \, d \gamma(y_1, y_2, x),
	\end{align}
 	where $\Sigma(\delta)$ is the uncertainty set for the overlapping case. For each $\theta \in \Theta$, the inner optimization is a W-DMR with overlapping marginals. In practice, we need to choose the reference measures $\mu_{13}$ and $\mu_{23}$ based on the sample information.
Focusing on logit model, where $\mathcal{Y}_1 = \{+1, -1\}$ is the space for the dependent variable, and $\mathcal{Y}_2$ and $\mathcal{X}$ are feature spaces/covariate space, and
 \begin{align*}
		f(y_1, y_2, x;\theta) = \log(1 + \exp(- y_1 \langle \theta, (y_2, x) \rangle)),
	\end{align*} 
 \citet{awasthi2022distributionally} proposes a method dubbed `Robust Data Join' in which the empirical measures constructed from the two data sets are used as reference measures. Specifically, let $\widehat{\mu}_{13}$ and $\widehat{\mu}_{23}$ denote empirical measures based on two separate data sets. The uncertainty set in \citet{awasthi2022distributionally} takes the following form:
 \[
\Sigma_{\mathrm{RDJ} } (\delta):= \left\{   \gamma \in \mathcal{P}(\mathcal{S} ):  \ \boldsymbol{K}_1( \widehat{\mu}_{13}, \gamma_{13}) \leq \delta_1, \   \boldsymbol{K}_2(\widehat{\mu}_{23}, \gamma_{23}) \leq \delta_2    \right\},
 \]
 where 
\[
c_1((y_1, x), (y_1', x'))  = \| x - x' \|_{p} + \kappa_1 | y_1 - y_1'| \quad \text{and}
\]
\[
c_2((y_2, x), (y_2,  x'))  = \| x - x' \|_{p} + \kappa_2 \| y_2 - y_2' \|_{p'}
\]
with $\kappa_1\geq 1$, $\kappa_2\geq 1$, $p\geq 1$, and $p'\geq 1$.
  
 Note that \citet{awasthi2022distributionally}'s `Robust Data Join' is different from our W-DMR with non-overlapping marginals because the measure of interest $\gamma \in \mathcal{P}(\mathcal{S} )$ has overlapping marginals. It is also different from our W-DMR with overlapping marginals because the reference measures $\widehat{\mu}_{13}$ and $\widehat{\mu}_{23}$ may not have overlapping marginals. Unlike the uncertainty set for W-DMR, $\Sigma_{RDJ}(\delta)$ is empty when $\delta=0$.

\subsubsection{Risk aggregation}
\label{sec:example-risk-aggregation}
	
Let $S_1, S_2$ be random variables representing  individual risks defined on Polish spaces $\mathcal{S}_1, \mathcal{S}_{2}$, respectively. Let $\mu_1, \mu_2$ be probability measures of $S_1,  S_2$. Let $\mathcal{V} = \mathcal{S}_1 \times \mathcal{S}_2$ and $g: \mathcal{V} \to \mathbb{R}$ be a risk aggregating function. Applying W-DMR with non-overlapping marginals to the risk aggregation function $g$, we can compute the worst aggregate risk when the joint measure of the individual risks varies in the uncertainty set $\Sigma_{\mathrm{D} }(\delta)$. This is different from the set-up in \citet{Eckstein2020}, where the following robust risk aggregation problem is studied:
	\begin{align*}
		\mathcal{I}_\Pi(\delta_0):=\sup_{\gamma \in \Sigma_{\Pi}(\delta)} \int_{\mathcal{V}} g \, d \gamma,
	\end{align*}
    where
	\begin{align*}
		\Sigma_{\Pi}(\delta_0) := \left\{\gamma \in \Pi(\mu_1, \mu_2) : \boldsymbol{K}_c(\gamma, \mu) \le \delta_0 \right\},
	\end{align*}
in which $\boldsymbol{K}_c$ is the optimal transport cost associated with a cost function $c: \mathcal{V} \times \mathcal{V} \to \mathbb{R}_{+}$. Since $\gamma \in \Sigma_{\Pi}(\delta_0)$ is a coupling of $(\mu_1, \mu_2)$, we have that $\Sigma_{\Pi}(\delta_0) \subset \Sigma_{\mathrm{D} }(0)$ and thus $\mathcal{I}_\Pi(\delta_0) \le \mathcal{I}_{\mathrm{D} }(0)$.

\section{Strong Duality and Distributionally Robust Makarov Bounds}

\label{sec:WDRO-main-theory}

In this section, we establish strong duality for our W-DMR-MP and apply it to develop Wasserstein distributionally robust Makarov bounds.

\subsection{Non-overlapping Marginals} 

\label{Strong-Duality-Non-overlapping Marginals}

For a measurable function $g: \mathcal{V} \rightarrow \mathbb{R}$ and $\lambda := (\lambda_1, \lambda_2) \in \mathbb{R}^2_{+}$, we define the function $g_\lambda: \mathcal{V}\rightarrow \mathbb{R} \cup \{\infty \}$ as 
\[\tag{2.1}
g_{\lambda}(v) := \sup_{v^\prime \in \mathcal{V}}  \varphi_\lambda(v, v^\prime), 
\]
where $\varphi_\lambda: \mathcal{V} \times \mathcal{V} \rightarrow \mathbb{R} \cup \{ -\infty\}$ is given by
\[
\varphi_\lambda(v, v^\prime) = g\left(s_1^\prime, s_2^\prime\right)  - \lambda_1 c_1 \left(s_1, s_1^\prime\right) -\lambda_2 c_2 \left(s_2, s_2^\prime\right),
\]
with $v:= (s_1, s_2)$ and $v^\prime := (s_1^\prime, s^\prime_2)$. Similarly, define $g_{\lambda_1, 1}: \mathcal{V} \rightarrow \mathbb{R} \cup \{+\infty\}$ and $g_{\lambda_2, 2}: \mathcal{V} \rightarrow \mathbb{R} \cup \{+\infty\}$  as
\begin{align*}
    g_{\lambda_1, 1}(s_1, s_2) & = 
    \sup_{s_1' \in \mathcal{S}_1} \{g(s_1', s_2) - \lambda_1 c_1(s_1, s_1')\} \quad \text{and } \\
    g_{\lambda_2, 2}(s_1, s_2) & = 
    \sup_{s_2' \in \mathcal{S}_2} \{g(s_1, s_2') - \lambda_2 c_2(s_2, s_2')\}. 
\end{align*}

The dual problem $\mathcal{J}_{\mathrm{D}}(\delta)$ corresponding to the primal problem $\mathcal{I}_{\mathrm{D}}(\delta)$ is defined as follows:
\begin{align} \label{eq:ID-dual}
    \mathcal{J}_{\mathrm{D}}(\delta) = 
    \begin{cases}
        \inf_{\lambda \in \mathbb{R}_{+}^2}  \left\{  \langle \lambda, \delta \rangle  +   \sup_{\varpi \in \Pi(\mu_1, \mu_2)} \int_{\mathcal{V} }g_\lambda  d\varpi  \right\} & \text{ if } \delta \in \mathbb{R}^2_{++}, \\
        \inf_{\lambda_1 \in \mathbb{R}_{+}}
        \left\{ \lambda_1 \delta_1 + \sup_{\varpi \in \Pi(\mu_1, \mu_2)} \int_{\mathcal{V}} g_{\lambda_1, 1} d \varpi    \right\} & \text{ if } \delta_1 > 0 \text{ and } \delta_2 = 0, \\
        \inf_{\lambda_2 \in \mathbb{R}_{+}}
        \left\{ \lambda_2 \delta_2 + \sup_{\varpi \in \Pi(\mu_1, \mu_2)} \int_{\mathcal{V}} g_{\lambda_2, 2} d \varpi    \right\} & \text{ if } \delta_1 = 0 \text{ and } \delta_2 > 0.
    \end{cases}
\end{align}

\begin{theorem}\label{thm:ID-duality}
	Suppose that \Cref{assumption:cost-function,assumption:g-bounded-below} hold. Then, $\mathcal{I}_{\mathrm{D}}(\delta) = \mathcal{J}_{\mathrm{D}}(\delta)$ for all $\delta \in \mathbb{R}_{+}^2 \setminus \{0\}$.
	
	\end{theorem}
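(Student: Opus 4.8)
The plan is to prove the two inequalities $\mathcal{I}_{\mathrm{D}}(\delta)\le\mathcal{J}_{\mathrm{D}}(\delta)$ (weak duality) and $\mathcal{I}_{\mathrm{D}}(\delta)\ge\mathcal{J}_{\mathrm{D}}(\delta)$ (strong duality) separately, running the argument for the interior case $\delta\in\mathbb{R}^2_{++}$ and reading off the two boundary cases $\delta=(\delta_1,0)$, $\delta=(0,\delta_2)$ from the same reasoning restricted to the relevant coordinate axis — this restriction is precisely why those cases carry a single multiplier and the functions $g_{\lambda_1,1}$, $g_{\lambda_2,2}$, a concave function on $\mathbb{R}^2_+$ having in general no finite supergradient transverse to a boundary face. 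For weak duality, fix $\gamma\in\Sigma_{\mathrm{D}}(\delta)$ and $\varepsilon>0$ and pick $\pi_i\in\Pi(\mu_i,\gamma_i)$ with $\int c_i\,d\pi_i\le\delta_i+\varepsilon$ (so only measurability of $c_i$, not lower semicontinuity, is needed). Disintegrating $\pi_1,\pi_2,\gamma$ along the shared marginals $\gamma_1,\gamma_2$ and gluing — Polishness enters here — produces $\Lambda\in\mathcal{P}(\mathcal{S}_1\times\mathcal{S}_2\times\mathcal{S}_1\times\mathcal{S}_2)$ with coordinates $(s_1,s_2,s_1',s_2')$ whose $(s_1',s_2')$-marginal is $\gamma$, whose $(s_1,s_1')$- and $(s_2,s_2')$-marginals are $\pi_1,\pi_2$, and whose $(s_1,s_2)$-marginal $\varpi$ therefore lies in $\Pi(\mu_1,\mu_2)$. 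Then for every $\lambda\in\mathbb{R}^2_+$, with $\varphi_\lambda$ as in (2.1),
\[
\int_{\mathcal{V}}g\,d\gamma=\int\varphi_\lambda\,d\Lambda+\lambda_1\!\int c_1\,d\pi_1+\lambda_2\!\int c_2\,d\pi_2\le\int_{\mathcal{V}}g_\lambda\,d\varpi+\langle\lambda,\delta\rangle+\varepsilon(\lambda_1+\lambda_2),
\]
using $\varphi_\lambda\big((s_1,s_2),\cdot\big)\le g_\lambda(s_1,s_2)$ pointwise. Sending $\varepsilon\downarrow0$ and taking $\sup_\gamma\inf_\lambda$ gives $\mathcal{I}_{\mathrm{D}}(\delta)\le\mathcal{J}_{\mathrm{D}}(\delta)$; integrability of the middle term is controlled by \Cref{assumption:g-bounded-below}, as in \citet{blanchet2019quantifying} and \citet{zhang2022simple}. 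The boundary cases are identical with one multiplier and $g_{\lambda_i,i}$ in place of $g_\lambda$.

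For strong duality I would avoid a minimax theorem and argue via concavity. If $\mathcal{I}_{\mathrm{D}}(\delta)=+\infty$ the claim follows from weak duality, so assume it is finite. By \Cref{lemma:ID-concavity} the map $\theta\mapsto\mathcal{I}_{\mathrm{D}}(\theta)$ is concave and nondecreasing on $\mathbb{R}^2_+$; concavity together with $\mathcal{I}_{\mathrm{D}}>-\infty$ precludes a jump to $+\infty$, so $\mathcal{I}_{\mathrm{D}}$ is finite on a neighborhood of the interior point $\delta$ and admits a supergradient $\lambda^\ast\in\mathbb{R}^2_+$ there, i.e. $\mathcal{I}_{\mathrm{D}}(\theta)\le\mathcal{I}_{\mathrm{D}}(\delta)+\langle\lambda^\ast,\theta-\delta\rangle$ for all $\theta\in\mathbb{R}^2_+$ (and, with the convention $0\cdot\infty=0$, for all $\theta\in[0,\infty]^2$ by monotone passage to the limit). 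It then suffices to show
\[
\langle\lambda^\ast,\delta\rangle+\sup_{\varpi\in\Pi(\mu_1,\mu_2)}\int_{\mathcal{V}}g_{\lambda^\ast}\,d\varpi\;\le\;\mathcal{I}_{\mathrm{D}}(\delta),
\]
since the left-hand side is an upper bound for $\mathcal{J}_{\mathrm{D}}(\delta)$.

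To prove this, fix $\varpi\in\Pi(\mu_1,\mu_2)$, $\varepsilon>0$, and a truncation level $M$, and invoke a measurable-selection theorem to obtain measurable maps $T_1:\mathcal{V}\to\mathcal{S}_1$, $T_2:\mathcal{V}\to\mathcal{S}_2$ with $\varphi_{\lambda^\ast}\big(v,(T_1(v),T_2(v))\big)\ge\min\{g_{\lambda^\ast}(v),M\}-\varepsilon$. Set $\gamma:=(T_1,T_2)\#\varpi$ and $\kappa_i:=\int c_i(s_i,T_i)\,d\varpi$; since $(s_i,T_i)\#\varpi\in\Pi(\mu_i,\gamma_i)$ we get $\boldsymbol{K}_i(\mu_i,\gamma_i)\le\kappa_i$, hence $\gamma\in\Sigma_{\mathrm{D}}(\kappa)$, and the supergradient inequality at $\theta=\kappa$ yields
\[
\int_{\mathcal{V}}\min\{g_{\lambda^\ast},M\}\,d\varpi-\varepsilon\le\int_{\mathcal{V}}g\,d\gamma-\langle\lambda^\ast,\kappa\rangle\le\mathcal{I}_{\mathrm{D}}(\kappa)-\langle\lambda^\ast,\kappa\rangle\le\mathcal{I}_{\mathrm{D}}(\delta)-\langle\lambda^\ast,\delta\rangle.
\]
Letting $M\uparrow\infty$ by monotone convergence, then $\varepsilon\downarrow0$, and finally taking $\sup$ over $\varpi$ delivers the displayed inequality, and with it strong duality. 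The boundary case $\delta=(\delta_1,0)$ is handled the same way with $\mathcal{I}_{\mathrm{D}}(\cdot,0)$ viewed as a concave nondecreasing function of $\delta_1>0$, a single supergradient $\lambda_1^\ast\ge0$, $T_2$ taken to be the second-coordinate projection (no transport in the second marginal), and $g_{\lambda_1,1}$ replacing $g_{\lambda^\ast}$; $\delta=(0,\delta_2)$ is symmetric.

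The substantive difficulty lies entirely in the measurable-selection and integrability bookkeeping inside the strong-duality step: producing a jointly measurable $\varepsilon$-optimal selector for $g_{\lambda^\ast}(v)=\sup_{v'}\varphi_{\lambda^\ast}(v,v')$ on a Polish space, making all the integrals well defined when $g$ (hence $g_{\lambda^\ast}$) is only bounded below and may take the value $+\infty$, and justifying the extended-value/monotone limits; these are exactly the technical points already resolved for the single-marginal W-DMR in \citet{zhang2022simple} and \citet{yue2022linear}, so the proof should reduce to adapting their arguments. The only genuinely new feature is that the reference measure ranges over the Fréchet class $\Pi(\mu_1,\mu_2)$, and this turns out to be harmless because both the weak-duality bound and the supergradient inequality above are uniform in $\varpi\in\Pi(\mu_1,\mu_2)$; a minimax route — truncate $g$, apply Sion's theorem on the weakly compact set $\Pi(\mu_1,\mu_2)$, and pass to the limit — is also available but demands more topological care on the space of measures, which is why I would favor the concavity argument.
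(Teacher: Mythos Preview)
Your proposal is correct and shares the paper's high-level skeleton—both arguments rest on the concavity of $\delta\mapsto\mathcal{I}_{\mathrm{D}}(\delta)$ and a Fenchel-type biconjugate identity—but the technical implementations diverge in instructive ways. The paper does not split into weak and strong duality. Instead it computes the full Legendre transform $\mathcal{I}_{\mathrm{D}}^\star(\lambda)=\sup_{\delta}\{\mathcal{I}_{\mathrm{D}}(\delta)-\langle\lambda,\delta\rangle\}$ and shows it equals $\sup_{\varpi\in\Pi(\mu_1,\mu_2)}\int g_\lambda\,d\varpi$ for \emph{every} $\lambda\in\mathbb{R}^2_+$, through a chain of lemmas that (i) invoke the existence of joint measures with prescribed overlapping marginals—the consistent product marginal system machinery of Vorob'ev, Kellerer, and Shortt, packaged as \Cref{prop:MultiVBP}—to rewrite $\mathcal{I}_{\mathrm{D}}^\star(\lambda)$ as an optimization over measures on $\mathcal{V}\times\mathcal{V}$ with fixed first-factor marginals $\mu_1,\mu_2$, and then (ii) apply the interchangeability principle of \citet{zhang2022simple}, extended in \Cref{IPforGroup} to a \emph{family} of base measures, to collapse this to $\int g_\lambda\,d\varpi$. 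The biconjugate identity (\Cref{lemma: Legendre_transform}) then yields the result on $\mathbb{R}^2_{++}$. Your gluing step in weak duality is exactly the CPMS step; your measurable-selection step in strong duality is precisely what the paper packages as the interchangeability principle. For the boundary $(\delta_1,0)$ the paper does not restrict to one coordinate as you do, but instead replaces $c_2$ by $\widehat{c}_2(s_2,s_2')=\infty\,\mathds{1}\{s_2\ne s_2'\}$ and reads off $\mathcal{I}_{\mathrm{D}}(\delta_1,0)$ as an interior value of the modified problem—a clean trick that avoids rerunning the argument. Your route is more self-contained and constructive; the paper's is more modular, establishes the identity $\mathcal{I}_{\mathrm{D}}^\star(\lambda)=\sup_\varpi\int g_\lambda\,d\varpi$ for all $\lambda$ rather than only the supergradient $\lambda^\ast$, and sidesteps the $\kappa_i=\infty$ integrability tangle you flag by working throughout on the class $\mathcal{P}_{\mathrm{D}}$ of $\gamma$ with $\boldsymbol{K}_\ell(\mu_\ell,\gamma_\ell)<\infty$.
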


Unlike the dual for W-DMR, the dual for W-DMR with non-overlapping marginals in \Cref{thm:ID-duality} involves a marginal problem with non-overlapping marginals $\mu_1, \mu_2$ due to the lack of knowledge on the dependence of the joint measure $\mu$. Computational algorithms developed for optimal transport can be used to solve the marginal problem, see \citet{Peyre2018}. For empirical measures $\mu_1, \mu_2$, the marginal problem is a discrete optimal transport problem and there are efficient algorithms to compute it, see \citet{Peyre2018}. For general measures $\mu_1, \mu_2$, strong duality may be employed in the numerical computation of the marginal problem. For instance, consider the case when $\delta > 0$. When $g_\lambda (v)$ is Borel measurable, several strong duality results are available, see e.g., \citet{villani2009optimal, villani2021topics}. For a general function $g$ and cost functions $c_1,c_2$, $g_\lambda (v)$ is not guaranteed to be Borel measurable. However, for Polish spaces, the set $\{v \in \mathcal{V}:g_{\lambda}(v) \ge u\}$ is an analytic set for all $u \in \overline{\mathbb{R}}$ (and $g_{\lambda}$ is universally measurable), since $g$, $c_1$ and $c_2$ are Borel measurable (see \citet[p. 580]{blanchet2019quantifying} and \citet[{Lemma 7.22, Lemma 7.30 (i) and Proposition 7.47}]{Bertsekas1978}). This allows us to apply strong duality for the marginal problem in \citet{Kellerer:1984to} restated in \Cref{thm:duality-OT} to the marginal problem involving $g_\lambda (v)$, see \cref{corollary:ID-dual-full} in \Cref{appendix:duality-OT}.

 Without additional assumptions on the function $g$ and the cost functions, the dual $\mathcal{J}_{\mathrm{D}}(\delta)$ in \Cref{thm:ID-duality} for interior points $\delta \in \mathbb{R}_{++}^2$ and the dual for boundary points may not be the same. To illustrate, plugging in $\delta_2 = 0$ in the dual form for interior points in \Cref{thm:ID-duality}, we obtain 
    \begin{align*}
          \inf_{\lambda_1 \in \mathbb{R}_{+}}   \left[  \lambda_1 \delta_1 + \inf_{\lambda_2 \in \mathbb{R}_{+}}\sup_{\varpi \in \Pi(\mu_1, \mu_2)} \int_{\mathcal{V} }g_\lambda \, d\varpi \right]. 
    \end{align*}
   It is different from the dual $\mathcal{J}_{\mathrm{D}}(\delta_1,0)$ for $\delta_1>0$, since
    \begin{align*}
          \inf_{\lambda_2 \in \mathbb{R}_{+}} \sup_{\varpi \in \Pi(\mu_1, \mu_2)} \int_{\mathcal{V} }g_\lambda \, d\varpi \;
        \neq
        \sup_{\varpi \in \Pi(\mu_1, \mu_2)} \int_{\mathcal{V}} g_{\lambda_1, 1} \, d \varpi. 
    \end{align*}
When the function $g$ and the cost functions satisfy assumptions in \Cref{thm:ID-continuity}, the dual $\mathcal{J}_{\mathrm{D}}(\delta)$ in \Cref{thm:ID-duality} for interior points $\delta \in \mathbb{R}_{++}^2$ and the dual for boundary points are the same so that
 \begin{align*}
    \mathcal{I}_{\mathrm{D} }(\delta) = \inf_{\lambda \in \mathbb{R}_{+}^2}  \left[  \langle \lambda, \delta \rangle  +   \sup_{\varpi \in \Pi(\mu_1, \mu_2)} \int_{\mathcal{V} }g_\lambda \, d\varpi   \right]
    \end{align*}
for all $\delta \in \mathbb{R}_{+}^2$.

 \begin{remark}
   For Polish spaces, \Cref{thm:ID-duality} generalizes the strong duality in \citet{zhang2022simple} restated in Theorem 2.1. Our proof is based on that in \citet{zhang2022simple}. However, due to the presence of two marginal measures in the uncertainty set $\Sigma_D(\delta)$, we need to verify the existence of a joint measure when some of its overlapping marginal measures are fixed, and we rely on existing results for a given consistent product marginal system studied in \citet{vorob1962consistent}, \citet{kellerer1964verteilungsfunktionen}, and \citet{shortt1983combinatorial},
   see \Cref{appendix:CPMS} for a detailed review.
 \end{remark}
	\begin{remark}
	 Similar to \citet{Sinha2017} for W-DMR in marginal problems, we can define an alternative W-DMR through linear penalty terms, i.e., 
        \begin{align*}
			\sup_{\gamma \in \mathcal{P}(\mathcal{V})} \left\{\int_{\mathcal{V}} g d \gamma - \lambda_1 \boldsymbol{K}_1(\mu_1, \gamma_1) - \lambda_2 \boldsymbol{K}_2(\mu_2, \gamma_2): \boldsymbol{K}_\ell(\mu_\ell, \gamma_\ell) < \infty \text{ for } \ell = 1, 2\right\}
		\end{align*}
        with $\lambda_1, \lambda_2 \in \mathbb{R}_{++}$. The proof of \Cref{thm:ID-duality} implies that the dual form of this problem is $\sup_{\varpi \in \Pi(\mu_1, \mu_2)} \int g_\lambda d \varpi$ under the condition in \Cref{thm:ID-duality}. 
	\end{remark}

\subsection{Overlapping Marginals}
\label{Strong-Duality-Overlapping Marginals}
 Let $\phi_\lambda : \mathcal{V} \times \mathcal{S}\rightarrow \mathbb{R} \cup \{- \infty \}$ be
\[
\begin{aligned}
\phi_\lambda(v, s^{\prime}) & := f(s^{\prime})-\lambda_1 c_1 (s_1, s_1^{\prime})-\lambda_2 c_2(s_2, s_2^{\prime}),
\end{aligned}
\]
where $v = (s_1, s_2)$, $s^\prime = (y^\prime_0,y^\prime_1,x^\prime)$, $s^\prime_\ell = (y^\prime_\ell, x^\prime)$ and $s_\ell=(y_\ell, x_\ell)$. 
Define the function $f_\lambda : \mathcal{V}  \rightarrow \overline{\mathbb{R}}$ associated with $f$ as 
\[
f_\lambda(v) :=  \sup_{s^\prime \in \mathcal{S} }  \phi_\lambda (v, s^{\prime}).
\]
Similarly, we define $f_{\lambda, 1}: \mathcal{V} \to \overline{\mathbb{R}}$ and $f_{\lambda, 2}: \mathcal{V} \to \overline{\mathbb{R}}$ as follows:
 \begin{align*}
        f_{\lambda_1, 1}(s_1, s_2) & = 
        \sup_{y_1' \in \mathcal{Y}_1} \{f(y_1', y_2, x_2) - \lambda_1 c_1((y_1, x_1), (y_1', x_2))\} \text{ and } \\
        f_{\lambda_2, 2}(s_1, s_2) & = 
        \sup_{y_2' \in \mathcal{Y}_2} \{f(y_1, y_2', x_1) - \lambda_2 c_2((y_2, x_2),  (y_2', x_1)\},
    \end{align*}
    in which $s_1 = (y_1, x_1)$ and $s_2 = (y_2, x_2)$.
The dual problem $\mathcal{J}(\delta)$ corresponding to the primal problem $\mathcal{I}(\delta)$ is defined as follows:
 \begin{align} \label{eq:I-dual} 
 \mathcal{J}(\delta) = 
 \begin{cases} \inf_{\lambda \in \mathbb{R}_{+}^2}  \left\{  \langle \lambda, \delta \rangle  +   \sup_{\varpi \in  \Pi(\mu_{13}, \mu_{23})} \int_{\mathcal{V} }f_\lambda  d \varpi    \right\} & \text{ if } \delta \in \mathbb{R}^2_{++}, \\
        \inf_{\lambda_1 \in \mathbb{R}_{+}}
        \left\{ \lambda_1 \delta_1 + \sup_{\varpi \in \Pi(\mu_{13}, \mu_{23})} \int_{\mathcal{V}} f_{\lambda_1, 1} d \varpi    \right\} & \text{ if } \delta_1 > 0 \text{ and } \delta_2 = 0, \\
        \inf_{\lambda_2 \in \mathbb{R}_{+}}
        \left\{ \lambda_2 \delta_2 + \sup_{\varpi \in \Pi(\mu_{13}, \mu_{23})}  \int_{\mathcal{V}} f_{\lambda_2, 2} d \varpi   \right\} & \text{ if } \delta_1 = 0 \text{ and } \delta_2 > 0. 
    \end{cases}
    \end{align}

\begin{theorem} \label{thm:I-duality}
Suppose that \Cref{assumption:cost-function,assumption:f-bounded-below} hold.
Then, $\mathcal{I}(\delta) = \mathcal{J}(\delta)$ for all $\delta \in \mathbb{R}_{+}^2 \setminus \{0\}$.
\end{theorem}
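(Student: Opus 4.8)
The plan is to run the argument used for \Cref{thm:ID-duality} in the non-overlapping case, exploiting the fact that the overlapping problem is the non-overlapping one on $\mathcal{S}_1\times\mathcal{S}_2$ restricted to the ``$\mathcal{X}$-diagonal''. Concretely, the map $T(y_1,y_2,x):=\big((y_1,x),(y_2,x)\big)$ identifies $\gamma\in\mathcal{P}(\mathcal{S})$ with $T\#\gamma\in\mathcal{P}(\mathcal{S}_1\times\mathcal{S}_2)$ supported on $\{(s_1,s_2):\ s_1,s_2\text{ share their }\mathcal{X}\text{-coordinate}\}$, whose two projections are $\gamma_{13}$ and $\gamma_{23}$; moreover the supremum over $s'\in\mathcal{S}$ defining $f_\lambda$ is exactly the supremum of the non-overlapping integrand $\varphi_\lambda$ over that diagonal (the single $\mathcal{X}$-coordinate $x'$ of $s'$ forcing $s_1'$ and $s_2'$ to agree on $\mathcal{X}$). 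Thus $\mathcal{I}(\delta)$ is a ``diagonally constrained'' analogue of $\mathcal{I}_{\mathrm{D}}(\mu_{13},\mu_{23},\delta)$ and $\mathcal{J}(\delta)$ has the expected shape. (The fiberwise reduction of \citet{ruschendorf1991bounds} recalled in \Cref{sec:standard-DRO} disposes of $\delta=0$ but not $\delta\neq0$, since the Wasserstein constraints move mass across different values of $x$; hence I would argue directly.)

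\textbf{Weak duality.} First I would establish $\mathcal{I}(\delta)\le\mathcal{J}(\delta)$. Fix $\gamma\in\Sigma(\delta)$, $\lambda$ in the relevant cone, and $\varepsilon>0$. Choose couplings $\pi_1\in\Pi(\mu_{13},\gamma_{13})$, $\pi_2\in\Pi(\mu_{23},\gamma_{23})$ with $\int c_\ell\,d\pi_\ell\le\delta_\ell+\varepsilon$ (near-optimal, since \Cref{assumption:cost-function} does not impose lower semicontinuity), and glue them over $\gamma$ — using disintegration of $\gamma$ on the Polish space $\mathcal{S}$, the $\mathcal{S}_1$- and $\mathcal{S}_2$-parts of a $\gamma$-point both being functions of the full point — to get $\rho\in\mathcal{P}(\mathcal{S}_1\times\mathcal{S}_2\times\mathcal{S})$ with $\mathcal{S}$-marginal $\gamma$, with $(\text{source}_\ell,\ \mathcal{S}_\ell\text{-part of }\mathcal{S})$-marginal equal to $\pi_\ell$, and hence with $\mathcal{S}_1\times\mathcal{S}_2$-marginal $\varpi\in\Pi(\mu_{13},\mu_{23})$. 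Integrating $f(s')-\lambda_1 c_1(s_1,s_1')-\lambda_2 c_2(s_2,s_2')\le f_\lambda(v)$ against $\rho$ and using $\int c_\ell\,d\rho\le\delta_\ell+\varepsilon$ gives $\int_\mathcal{S} f\,d\gamma\le\langle\lambda,\delta\rangle+\sup_{\varpi\in\Pi(\mu_{13},\mu_{23})}\int f_\lambda\,d\varpi+(\lambda_1+\lambda_2)\varepsilon$; let $\varepsilon\downarrow0$, then take $\sup$ over $\gamma$ and $\inf$ over $\lambda$. For $\delta=(\delta_1,0)$ one has $\gamma_{23}=\mu_{23}$ by \Cref{assumption:cost-function}, so only the $\lambda_1$-penalty appears and $f_{\lambda_1,1}$ replaces $f_\lambda$; symmetrically for $(0,\delta_2)$. \Cref{assumption:f-bounded-below} keeps $\int f\,d\gamma>-\infty$ throughout, so these manipulations are legitimate.

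\textbf{Strong duality, interior $\delta\in\mathbb{R}^2_{++}$.} We may assume $\mathcal{I}(\delta)<\infty$, else weak duality already forces $\mathcal{J}(\delta)=\infty=\mathcal{I}(\delta)$. By \Cref{lemma:I-concavity} (and the elementary monotonicity $\Sigma(\delta)\subseteq\Sigma(\delta')$ for $\delta\le\delta'$), $\delta\mapsto\mathcal{I}(\delta)$ is concave, non-decreasing and $>-\infty$; a concave function that is $>-\infty$ everywhere and finite at some interior point cannot be $+\infty$ at another interior point (which lies strictly between two interior points), so $\mathcal{I}$ is finite on all of $\mathbb{R}^2_{++}$ and there is $\lambda^\ast$ with $\mathcal{I}(\delta')\le\mathcal{I}(\delta)+\langle\lambda^\ast,\delta'-\delta\rangle$ for all $\delta'\in\mathbb{R}^2_+$, and monotonicity gives $\lambda^\ast\ge0$. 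Then $\sup_{\delta'\ge0}\big[\mathcal{I}(\delta')-\langle\lambda^\ast,\delta'\rangle\big]=\mathcal{I}(\delta)-\langle\lambda^\ast,\delta\rangle$, and the left side equals $P(\lambda^\ast):=\sup_{\gamma}\big\{\int f\,d\gamma-\lambda^\ast_1\boldsymbol{K}_1(\mu_{13},\gamma_{13})-\lambda^\ast_2\boldsymbol{K}_2(\mu_{23},\gamma_{23}):\ \boldsymbol{K}_\ell(\mu,\gamma_\ell)<\infty\big\}$ (the optimal $\delta'_\ell$ being $\boldsymbol{K}_\ell(\mu,\gamma_\ell)$ since $\lambda^\ast\ge0$). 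So it suffices to prove the penalized identity $P(\lambda)=\sup_{\varpi\in\Pi(\mu_{13},\mu_{23})}\int f_\lambda\,d\varpi$ for every $\lambda\in\mathbb{R}^2_+$: granting it, $\mathcal{I}(\delta)=\langle\lambda^\ast,\delta\rangle+\sup_\varpi\int f_{\lambda^\ast}\,d\varpi\ge\mathcal{J}(\delta)$, which with weak duality closes the interior case. The ``$\le$'' half of the penalized identity is the gluing argument above with $\delta_\ell$ replaced by $\boldsymbol{K}_\ell(\mu,\gamma_\ell)$. For ``$\ge$'': fix $\varpi$ and $\varepsilon>0$; since $f,c_1,c_2$ are Borel, $f_\lambda$ has analytic superlevel sets and is universally measurable (\citet[p.~580]{blanchet2019quantifying}, \citet[Lemma 7.22, Lemma 7.30, Proposition 7.47]{Bertsekas1978}), so a measurable-selection theorem for analytic sets yields a universally measurable $s':\mathcal{V}\to\mathcal{S}$ with $\phi_\lambda\big(v,s'(v)\big)\ge\min\{f_\lambda(v)-\varepsilon,\ 1/\varepsilon\}$. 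Put $\gamma:=s'\#\varpi\in\mathcal{P}(\mathcal{S})$ (integration in the $\varpi$-completion): because $s'(v)=(y_1'(v),y_2'(v),x'(v))$ is a single point of $\mathcal{S}$, $\gamma$ is automatically a genuine probability measure on $\mathcal{Y}_1\times\mathcal{Y}_2\times\mathcal{X}$, and pushing $\varpi$ forward by $v=(s_1,s_2)\mapsto\big(s_1,(y_1'(v),x'(v))\big)$ exhibits a coupling of $\mu_{13}$ and $\gamma_{13}$, so $\boldsymbol{K}_1(\mu_{13},\gamma_{13})\le\int c_1(s_1,s_1'(v))\,d\varpi$ and likewise for $\boldsymbol{K}_2$. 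Hence $P(\lambda)\ge\int f\,d\gamma-\sum_\ell\lambda_\ell\boldsymbol{K}_\ell\ge\int\phi_\lambda(v,s'(v))\,d\varpi\ge\int\min\{f_\lambda-\varepsilon,1/\varepsilon\}\,d\varpi$, and $\varepsilon\downarrow0$ followed by $\sup$ over $\varpi$ finishes it.

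\textbf{Boundary cases, and the main obstacle.} For $\delta=(\delta_1,0)$ (and symmetrically $(0,\delta_2)$) the constraint $\boldsymbol{K}_2\le0$ pins $\gamma_{23}=\mu_{23}$, so I would rerun the scheme with the single penalty $\lambda_1$: the reconstruction keeps the $\mathcal{S}_2=(Y_2,X)$-coordinate equal to its value in the $\mu_{23}$-sample and moves only $(Y_1,X)$ to $(y_1'(v),x_2)$, whose $\mathcal{X}$-coordinate $x_2$ matches the pinned part — which is exactly why $f_{\lambda_1,1}$, defined with $x_2$ inside $c_1$, is the right fiberwise optimizer; weak duality for this regime is as above with $f_{\lambda_1,1}$. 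The hard part will be the reconstruction step: verifying that the joint measures being manipulated actually exist and carry the prescribed overlapping marginals — the gluing lemma for the weak direction, and, where several marginals on overlapping index sets are fixed simultaneously, the consistent-product-marginal-system results of \citet{vorob1962consistent}, \citet{kellerer1964verteilungsfunktionen}, and \citet{shortt1983combinatorial} (see \Cref{appendix:CPMS}) — together with the bookkeeping needed because $f_\lambda$ is only universally (not Borel) measurable, forcing the selection argument through analytic sets and measure completions. In the non-overlapping proof the analogous reconstruction is a plain push-forward; here the shared $\mathcal{X}$-coordinate is precisely what makes the existence step nontrivial.
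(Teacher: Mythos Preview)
Your proposal is correct and follows essentially the same route as the paper: reduce via concavity and Legendre duality to the penalized identity $P(\lambda)=\sup_{\varpi\in\Pi(\mu_{13},\mu_{23})}\int_{\mathcal{V}} f_\lambda\,d\varpi$, prove ``$\le$'' by gluing couplings over the shared $\mathcal{S}$-coordinate (using the CPMS/decomposability machinery of \Cref{appendix:CPMS} for the overlapping index system, exactly as you anticipated), and prove ``$\ge$'' by measurable selection on the analytic superlevel sets of $\phi_\lambda$. The only presentational differences are that the paper (i) packages your selection step as the \emph{interchangeability principle} of \citet{zhang2022simple} applied to the class $\Pi(\mu_{13},\mu_{23})$, and (ii) disposes of the boundary cases $(\delta_1,0)$ and $(0,\delta_2)$ not by rerunning the argument with a single penalty but by the trick of replacing $c_2$ with $\widehat c_2(s_2,s_2')=\infty\,\mathds{1}\{s_2\neq s_2'\}$, which forces $\gamma_{23}=\mu_{23}$ while formally keeping $\delta_2>0$ so the interior duality applies verbatim.
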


An interesting feature of the dual for overlapping marginals is that it involves marginal problems with non-overlapping marginals, i.e., $\sup_{\varpi \in  \Pi(\mu_{13}, \mu_{23})} \int_{\mathcal{V} }f_\lambda (v) d \varpi (v)$, although the uncertainty set in the primal problem involves overlapping marginals. Compared with the non-overlapping marginals case, overlapping marginals in the uncertainty set make the relevant consistent product marginal system in the verification of the existence of a joint measure more complicated, see the proof of \Cref{lemma:AO1}. Nonetheless, the non-overlapping marginals in the dual allow us to apply \Cref{thm:duality-OT} to the marginal problem involving $f_\lambda$, $f_{\lambda, 1}$ and $f_{\lambda, 2}$, see \cref{corollary:I-dual-full} in \Cref{appendix:duality-OT}. 

Under the assumptions in \Cref{thm:I-continuity}, we have
   \begin{align*}
    \mathcal{I}(\delta) = \inf_{\lambda \in \mathbb{R}_{+}^2}  \left[  \langle \lambda, \delta \rangle  +   \sup_{\varpi \in \Pi(\mu_{13}, \mu_{23})} \int_{\mathcal{V} }f_\lambda \, d\varpi   \right]
    \end{align*}
    for all $\delta \in \mathbb{R}_{+}^2$.

\begin{remark}
Similar to the non-overlapping case, we can define an alternative W-DMR with overlapping marginals through linear penalty terms, i.e.,  
    \begin{align*}
        \sup_{\gamma \in \mathcal{P}(\mathcal{S})} \left\{\int_{\mathcal{S}} g \, d \gamma - \lambda_1 \boldsymbol{K}_1(\mu_{13}, \gamma_{13}) - \lambda_2 \boldsymbol{K}_2(\mu_{23}, \gamma_{23}): \boldsymbol{K}_\ell(\mu_{\ell3}, \gamma_{\ell3}) < \infty \text{ for } \ell = 1, 2\right\},
    \end{align*}
    with $\lambda_1, \lambda_2 \in \mathbb{R}_{++}$. The proof of \Cref{thm:I-duality} implies that the dual form of this problem is $\sup_{\varpi \in \Pi(\mu_{13}, \mu_{23})} \int_{\mathcal{V}} f_\lambda \, d \varpi$ under the conditions in \Cref{thm:I-duality}. 
\end{remark}

 \subsection{Wasserstein Distributionally Robust Makarov Bounds}

Let $\mathcal{S}_1=\mathbb{R}$, $\mathcal{S}_2=\mathbb{R}$, $\mu_1\in \mathcal{P}(\mathcal{S}_1)$, and $\mu_2\in \mathcal{P}(\mathcal{S}_2)$. Further, let $Z=S_1+S_2$, where $S_1,S_2$ are random variables whose probability measures are $\mu_1, 
\mu_2$ respectively. For a given $z\in \mathbb{R}$, let $F_{Z}(z)=\mathbb{E}_o[g(S_1, S_2)]$, where $g(s_1, s_2)= \mathds{1}\left\{s_1 + s_2 \le z\right\}$. 

Sharp bounds on the quantile function $F^{-1}_{Z}\left( \cdot\right) $ are established in \citet{Makarov1982}) and referred to as the Makarov bounds. Inverting the Makarov bounds lead to sharp bounds on the distribution function $F _{Z}\left( z\right)$, see \citet{Ruschendorf_1982_RV_Maximum} and \citet{Frank_1987_best_bounds}. They are given by 
\begin{align*}
\inf_{\gamma\in \Pi(\mu_1,\mu_2)} \mathbb{E}_{\gamma}[g(S_1, S_2)] & = \sup_{x\in \mathbb{R}}\max \left\{\mu_1(x)+\mu_2(z-x)-1,0 \right\} \ \text{ and } \\
\sup_{\gamma\in \Pi(\mu_1,\mu_2)} \mathbb{E}_{\gamma}[g(S_1, S_2)] & = 1+\inf_{x\in \mathbb{R} }\min \left\{\mu_1(x)+\mu_2(z-x)-1,0 \right\}. 
\end{align*}%
Since the quantile bounds first established in \citet{Makarov1982}) and the above distribution bounds are equivalent, we also refer to the latter as Makarov bounds. Makarov bounds have been successfully applied in distinct areas. For example, the upper bound
on the quantile of $Z$ is known as the worst VaR of $Z$, see  \citet{Embrechts2003UsingCopulaeBound}, \citet{Embrechts_2005_WorstVaR}; Makarov bounds are also used to study partial identification of distributional treatment effects when the treatment assignment mechanism identifies the marginal measures of the potential outcomes such as in \Cref{assumption:selection-on-observables}, see \citet{Fan_2009_PartialIdentification,Fan_2010_SharpBounds,Fan_2012_CI_quantiles}, \citet{FAN_2009}, \citet{Fan2017}, \citet{Ridder_2007}, and \citet{Firpo_2019}. 

Applying \Cref{thm:I-duality}, we extend Makarov bounds to allow for possible misspecification of the marginal measures and call the resulting bounds Wasserstein distributionally robust Makarov bounds.

\begin{corollary}[Wasserstein distributionally robust Makarov bounds] \label{corollary:ID-duality-marakov}
Suppose that $g(s_1, s_2) = \mathds{1}(s_1 + s_2 \le z)$ and $c_{\ell}(s_\ell, s_\ell') = |s_\ell - s_\ell'|^2$ for $\ell = 1, 2$. 
For all $\delta\in\mathbb{R}_{+}^2$,
\begin{align*}
&\sup_{\gamma\in \Sigma_{\mathrm{D}}(\delta)} \mathbb{E}_{\gamma}[g(S_1, S_2)]\\
= &\inf_{\lambda\in\mathbb{R}_{+}^2} \Bigg(\langle\lambda, \delta\rangle +\sup_{ \varpi\in \Pi(\mu_1,\mu_2) }
\Bigg[\int_{\{s_1 + s_2 > z\}}   \left[1 - \frac{ \lambda_1\lambda_2 (s_1 + s_2 - z)^2}{\lambda_1 + \lambda_2} \right]^+ d\varpi(s_1, s_2)\\
& \quad  + \mathbb{E}_{\varpi} \Big[\mathds{1}\left\{S_1 + S_2 \le z\right\} \Big]\Bigg];
\end{align*}
\begin{align*}
&\inf _{\gamma \in \Sigma_{\mathrm{D}}(\delta)} \mathbb{E}_\gamma\left[g\left(S_1, S_2\right)\right] \\
= & \sup_{\lambda\in\mathbb{R}_{+}^2} \Bigg[- \langle\lambda, \delta\rangle +\inf_{ \varpi\in \Pi(\mu_1,\mu_2) }
\Bigg\{- \int_{\{s_1 + s_2 \leq z\} }   \left[1 - \frac{ \lambda_1\lambda_2 (s_1 + s_2 - z)^2}{\lambda_1 + \lambda_2} \right]^+ d\varpi(s_1, s_2)\\
& \quad  \qquad \quad    +  \mathbb{E}_{\varpi} \Big[\mathds{1}\left\{S_1 + S_2 \le z\right\} \Big] \Bigg].
\end{align*}
\end{corollary}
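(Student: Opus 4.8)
The plan is to obtain both identities by specializing the strong-duality theorem for non-overlapping marginals (\Cref{thm:ID-duality}) to the indicator payoff $g(s_1,s_2)=\mathds{1}\{s_1+s_2\le z\}$ and the quadratic costs $c_\ell(s_\ell,s_\ell')=|s_\ell-s_\ell'|^2$, $\ell=1,2$, and then evaluating the dual sup-functions $g_\lambda$, $g_{\lambda_1,1}$, $g_{\lambda_2,2}$ in closed form. The hypotheses are trivially satisfied: the quadratic cost satisfies \Cref{assumption:cost-function}, and since $0\le g\le 1$ is bounded and measurable, \Cref{assumption:g-bounded-below} holds for $g$ and (used later) for $-g$ as well. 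A pleasant feature of this example is that the relevant sup-functions turn out to be explicit and continuous, so none of the analytic-set/universal-measurability caveats discussed after \Cref{thm:ID-duality} are needed.

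The core computation is $g_\lambda$. On $\{s_1+s_2\le z\}$ the choice $v'=v$ already attains the maximal value $1$, so $g_\lambda\equiv 1$ there. On $\{s_1+s_2>z\}$ one compares keeping the indicator at $0$ with no displacement (value $0$) against forcing $s_1'+s_2'\le z$; since $(s_1,s_2)$ then lies outside the constraint set, the optimal displacement lands on the line $s_1'+s_2'=z$, and minimizing $\lambda_1(s_1-s_1')^2+\lambda_2(s_2-s_2')^2$ on that line is an elementary one-variable quadratic with value $\tfrac{\lambda_1\lambda_2}{\lambda_1+\lambda_2}(s_1+s_2-z)^2$. Hence $g_\lambda(s_1,s_2)=\mathds{1}\{s_1+s_2\le z\}+\bigl[1-\tfrac{\lambda_1\lambda_2}{\lambda_1+\lambda_2}(s_1+s_2-z)^2\bigr]^{+}\mathds{1}\{s_1+s_2>z\}$ (with the coefficient read as $0$ on the coordinate axes of $\mathbb{R}_+^2$), and integrating against $\varpi$ reproduces exactly the bracketed integrand in the statement. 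Running the same argument with only the first (resp.\ second) coordinate free gives $g_{\lambda_1,1}=\mathds{1}\{s_1+s_2\le z\}+[1-\lambda_1(s_1+s_2-z)^2]^{+}\mathds{1}\{s_1+s_2>z\}$ and the symmetric formula for $g_{\lambda_2,2}$; these are precisely the pointwise, monotone decreasing limits of $g_\lambda$ as $\lambda_2\to\infty$ (resp.\ $\lambda_1\to\infty$), since $\tfrac{\lambda_1\lambda_2}{\lambda_1+\lambda_2}\uparrow\lambda_1$. For $\delta\in\mathbb{R}_{++}^2$ the first branch of \Cref{eq:ID-dual} then yields the claimed formula immediately.

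Next I would reconcile the three branches of \Cref{eq:ID-dual} with the single expression $\inf_{\lambda\in\mathbb{R}_+^2}\{\langle\lambda,\delta\rangle+\sup_{\varpi\in\Pi(\mu_1,\mu_2)}\int g_\lambda\,d\varpi\}$ claimed for every $\delta$. When $\delta_2=0$ the term $\langle\lambda,\delta\rangle=\lambda_1\delta_1$ does not see $\lambda_2$, so it suffices to show $\inf_{\lambda_2\ge0}\sup_{\varpi}\int g_\lambda\,d\varpi=\sup_{\varpi}\int g_{\lambda_1,1}\,d\varpi$; the inequality ``$\ge$'' is immediate from $g_\lambda\ge g_{\lambda_1,1}$, while for ``$\le$'' I would use that each $g_\lambda$ is bounded and continuous on $\mathbb{R}^2$ (the two pieces agree on the line $s_1+s_2=z$), so $\varpi\mapsto\int g_\lambda\,d\varpi$ is weakly continuous on the tight, hence weakly relatively compact, set $\Pi(\mu_1,\mu_2)$; taking near-maximizers $\varpi_{\lambda_2}$, extracting a weak limit along $\lambda_2\to\infty$, and combining with monotone convergence in $\lambda_2$ collapses the limit onto $\sup_\varpi\int g_{\lambda_1,1}\,d\varpi$. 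The case $\delta_1=0<\delta_2$ is symmetric, and the borderline $\delta=0$ (outside the scope of \Cref{thm:ID-duality}) follows by sending $\lambda_1,\lambda_2\to\infty$: the positive-part term vanishes by dominated convergence and the expression tends to $\sup_\varpi\mathbb{E}_\varpi[\mathds{1}\{S_1+S_2\le z\}]=\mathcal{I}_{\mathrm{D}}(0)$, i.e.\ the classical Makarov upper bound. (Alternatively, one can simply verify that $g$ and the quadratic costs meet the hypotheses of \Cref{thm:ID-continuity}, under which interior and boundary duals coincide automatically.)

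Finally, the lower identity follows by applying the same reduction to $-g=\mathds{1}\{s_1+s_2>z\}-1$ via $\inf_{\gamma\in\Sigma_{\mathrm{D}}(\delta)}\mathbb{E}_\gamma[g]=-\sup_{\gamma\in\Sigma_{\mathrm{D}}(\delta)}\mathbb{E}_\gamma[-g]$. A parallel computation gives $(-g)_\lambda=\bigl(-1+[1-\tfrac{\lambda_1\lambda_2}{\lambda_1+\lambda_2}(s_1+s_2-z)^2]^{+}\bigr)\mathds{1}\{s_1+s_2\le z\}$ (and $0$ on $\{s_1+s_2>z\}$), so $\int(-g)_\lambda\,d\varpi=-\mathbb{E}_\varpi[\mathds{1}\{S_1+S_2\le z\}]+\int_{\{s_1+s_2\le z\}}[1-\tfrac{\lambda_1\lambda_2}{\lambda_1+\lambda_2}(s_1+s_2-z)^2]^{+}\,d\varpi$; pushing the outer minus sign through turns $\inf_\lambda$ into $\sup_\lambda$ and $\sup_\varpi$ into $\inf_\varpi$ and produces the second displayed formula. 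I expect the main obstacle to be the boundary step — verifying that the three-branch dual of \Cref{thm:ID-duality} genuinely collapses to one uniform expression over $\lambda\in\mathbb{R}_+^2$, i.e.\ the interchange of $\lim_{\lambda_2\to\infty}$ with $\sup_\varpi$ — and, in the lower-bound computation, checking that the open half-plane $\{s_1'+s_2'>z\}$ appearing in the evaluation of $(-g)_\lambda$ creates no discrepancy along the diagonal line $\{s_1+s_2=z\}$ (it does not, since the relevant displacement infimum there is $0$ and the positive-part formula is continuous across the line).
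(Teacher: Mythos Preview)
Your plan is correct and tracks the paper closely: compute $g_\lambda$, $g_{\lambda_1,1}$, $g_{\lambda_2,2}$ in closed form, apply \Cref{thm:ID-duality} on $\mathbb{R}_{++}^2$, extend to the boundary, and obtain the lower bound via $-g$. The closed-form computations and the $-g$ reduction match the paper exactly.

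Where you diverge is in the boundary step. The paper does not attempt a direct interchange of $\inf_{\lambda_2}$ and $\sup_{\varpi}$. Instead it observes that $g_\lambda$ is bounded and continuous, $\lambda\mapsto g_\lambda$ is convex, and $\Pi(\mu_1,\mu_2)$ is weakly compact, so Fan's minimax theorem gives
\[
\inf_{\lambda}\sup_{\varpi}\Bigl[\langle\lambda,\delta\rangle+\textstyle\int g_\lambda\,d\varpi\Bigr]
=\sup_{\varpi}\inf_{\lambda}\Bigl[\langle\lambda,\delta\rangle+\textstyle\int g_\lambda\,d\varpi\Bigr]
\]
at \emph{every} $\delta\in\mathbb{R}_+^2$. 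On the $\sup_\varpi\inf_\lambda$ side the passage $\lambda_2\to\infty$ is now a pointwise-in-$\varpi$ monotone-convergence statement ($g_\lambda\downarrow g_{\lambda_1,1}$), which is immediate; one then matches against the boundary branch of \Cref{thm:ID-duality} (to which minimax also applies). Your weak-limit extraction argument is valid, but the minimax route is cleaner and avoids the subsequence bookkeeping.

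One genuine error: your parenthetical alternative, ``simply verify that $g$ and the quadratic costs meet the hypotheses of \Cref{thm:ID-continuity}'', does not work. \Cref{assumption:g-Psi-growth} requires $g(v)-g(v')\le\Psi(c_1,c_2)$ with $\Psi$ continuous and $\Psi(0,0)=0$, which forces a modulus-of-continuity control on $g$. The indicator $\mathds{1}\{s_1+s_2\le z\}$ jumps by $1$ across the line $s_1+s_2=z$ while $c_1,c_2$ can be made arbitrarily small, so no such $\Psi$ exists. Drop that aside and keep your direct argument (or switch to the paper's minimax route).
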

We note that $g_{\lambda}(v)$ is bounded and continuous in $v$, and convex in $\lambda$, and $\Pi(\mu_1, \mu_2)$ is compact.  Applying \citet[Theorem 2]{Fan_1953_MinimaxThms}'s minimax theorem, we can interchange the order of $\inf$ and $\sup$ in the dual in the above corollary and get
\begin{align*}
&\sup_{\gamma\in \Sigma_{\mathrm{D}}(\delta)} \mathbb{E}_{\gamma}[g(S_1, S_2)]\\
 = &\sup_{ \varpi\in \Pi(\mu_1,\mu_2) }\Bigg[ 
  \inf_{\lambda\in\mathbb{R}_{+}^2} \left(\langle\lambda, \delta\rangle  +\int_{\{s_1 + s_2 > z\}}   \left[1 - \frac{ \lambda_1\lambda_2 (s_1 + s_2 - z)^2}{\lambda_1 + \lambda_2} \right]^+ d\varpi(s_1, s_2)\right) \\
& \quad   + \mathbb{E}_{\varpi} \Big[\mathds{1}\left\{S_1 + S_2 \le z\right\} \Big]\Bigg].
\end{align*} 
This expression is very insightful, where the inner infimum term characterizes possible deviations of the true marginal measures from the reference measures.

\section{Finiteness of the W-DMR-MP and Existence of Optimizers} \label{sec:additional-properties}

In this section, we assume that all the reference measures belong to appropriate Wasserstein spaces and prove finitness of the W-DMR-MP and existence of an optimizer. 

\begin{definition}[Wasserstein space]
The Wasserstein space of order $p \geq 1$ on a Polish space $\mathcal{X}$ with metric $ \boldsymbol{d}$ is defined as
\[
\mathcal{P}_p( \mathcal{X} ) = \left\{  \mu \in \mathcal{P}( \mathcal{X} ): \int_{\mathcal{X} } \boldsymbol{d}(x_0, x)^p d \mu(x)   < \infty \right\},
\]
where $x_0 \in \mathcal{X}$ is arbitrary.
\end{definition}

\begin{assumption} \label{assumption:finite-moments} \leavevmode 
	\begin{assumpenum}
		\item \label{assumption:finite-moments-ID} In the non-overlapping case, we assume that $\mu_{1} \in \mathcal{P}_{p_1}(\mathcal{S}_1)$ and $\mu_{2} \in \mathcal{P}_{p_2}(\mathcal{S}_2)$ for some $p_1 \ge 1$ and $p_2 \ge 1$;
		\item \label{assumption:finite-moments-I} In the overlapping case, we assume that  $\mu_{13} \in \mathcal{P}_{p_1}(\mathcal{S}_1)$ and $\mu_{23} \in \mathcal{P}_{p_2}(\mathcal{S}_2)$ for some $p_1 \ge 1$ and $p_2 \ge 1$. 
	\end{assumpenum}
\end{assumption}

\begin{assumption} \label{assumption:metric-cost}
	The cost function $c_\ell: \mathcal{S}_\ell \times \mathcal{S}_\ell \to \mathbb{R} \cup \{\infty\}$ is of the form $c_\ell(s_\ell, s_\ell^\prime ) = \boldsymbol{d}_{\mathcal{S}_\ell }  (s_\ell, s_\ell^\prime )^{p_\ell}$, where $(\mathcal{S}_\ell, \boldsymbol{d}_{\mathcal{S}_\ell }) $ is a Polish space and $p_\ell \geq 1$ for $\ell=1, 2$.
\end{assumption}

\subsection{Finiteness of the W-DMR-MP}

For non-overlapping case, we establish the following result.
\begin{theorem} \label{thm:ID-finite}
	Suppose that \Cref{assumption:g-bounded-below,assumption:finite-moments-ID,assumption:metric-cost} hold. Then for all $\delta\in \mathbb{R}_{++}^2$,  $\mathcal{I}_{\mathrm{D} }(\delta) < \infty$ if and only if there exist $v^\star := (s_1^\star, s_2^\star) \in \mathcal{V}$ and a constant $M > 0$ such that for all $(s_1, s_2)\in \mathcal{V}$, 
    \begin{equation} \label{eq:GC-I} 
		g(s_1, s_2) \leq M \left[   1  + \boldsymbol{d}_{\mathcal{S}_1} (s_1^\star,s_1 )^{p_1}+ \boldsymbol{d}_{ \mathcal{S}_2 }(s_2^\star,s_2)^{p_2} \right], 
	\end{equation}
	where $p_1$ and $p_2$ are defined in \Cref{assumption:finite-moments-ID}.
\end{theorem}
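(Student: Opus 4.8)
The plan is to prove both directions by exploiting the strong-duality representation $\mathcal{I}_{\mathrm{D}}(\delta) = \mathcal{J}_{\mathrm{D}}(\delta)$ from \Cref{thm:ID-duality}, together with the growth condition \eqref{eq:GC-I} to control the function $g_\lambda$ appearing in the dual. For the "if" direction, suppose \eqref{eq:GC-I} holds with some $v^\star = (s_1^\star, s_2^\star)$ and $M > 0$. Under \Cref{assumption:metric-cost} we have $c_\ell(s_\ell, s_\ell') = \boldsymbol{d}_{\mathcal{S}_\ell}(s_\ell, s_\ell')^{p_\ell}$. Fix any $\lambda = (\lambda_1, \lambda_2) \in \mathbb{R}_{++}^2$ and estimate $g_\lambda(v) = \sup_{v'}\{g(s_1', s_2') - \lambda_1 c_1(s_1, s_1') - \lambda_2 c_2(s_2, s_2')\}$. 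Using \eqref{eq:GC-I} on $g(s_1', s_2')$ and then the triangle inequality together with the elementary bound $(a+b)^p \le 2^{p-1}(a^p + b^p)$, I would bound $\boldsymbol{d}_{\mathcal{S}_\ell}(s_\ell^\star, s_\ell')^{p_\ell}$ by a constant multiple of $\boldsymbol{d}_{\mathcal{S}_\ell}(s_\ell^\star, s_\ell)^{p_\ell} + \boldsymbol{d}_{\mathcal{S}_\ell}(s_\ell, s_\ell')^{p_\ell}$. The term proportional to $\boldsymbol{d}_{\mathcal{S}_\ell}(s_\ell, s_\ell')^{p_\ell} = c_\ell(s_\ell, s_\ell')$ is then absorbed into $-\lambda_\ell c_\ell(s_\ell, s_\ell')$ provided $\lambda_\ell$ is chosen large enough (say $\lambda_\ell > M 2^{p_\ell - 1}$); the leftover is a function of $v$ only, namely $C_\lambda[1 + \boldsymbol{d}_{\mathcal{S}_1}(s_1^\star, s_1)^{p_1} + \boldsymbol{d}_{\mathcal{S}_2}(s_2^\star, s_2)^{p_2}]$ for a finite constant $C_\lambda$. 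Hence $g_\lambda(v) \le C_\lambda[1 + \boldsymbol{d}_{\mathcal{S}_1}(s_1^\star, s_1)^{p_1} + \boldsymbol{d}_{\mathcal{S}_2}(s_2^\star, s_2)^{p_2}]$, and since $\mu_1 \in \mathcal{P}_{p_1}(\mathcal{S}_1)$, $\mu_2 \in \mathcal{P}_{p_2}(\mathcal{S}_2)$ by \Cref{assumption:finite-moments-ID}, any $\varpi \in \Pi(\mu_1, \mu_2)$ satisfies $\int_{\mathcal{V}} g_\lambda \, d\varpi < \infty$. Therefore $\mathcal{J}_{\mathrm{D}}(\delta) \le \langle \lambda, \delta \rangle + \sup_{\varpi} \int g_\lambda \, d\varpi < \infty$, and by duality $\mathcal{I}_{\mathrm{D}}(\delta) < \infty$.

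For the "only if" direction, suppose $\mathcal{I}_{\mathrm{D}}(\delta) < \infty$ for the given $\delta \in \mathbb{R}_{++}^2$. By duality there exists $\lambda^* = (\lambda_1^*, \lambda_2^*) \in \mathbb{R}_{+}^2$ with $\langle \lambda^*, \delta \rangle + \sup_{\varpi \in \Pi(\mu_1, \mu_2)} \int_{\mathcal{V}} g_{\lambda^*} \, d\varpi < \infty$; in particular $\int_{\mathcal{V}} g_{\lambda^*} \, d\varpi_0 < \infty$ for, say, the product measure $\varpi_0 = \mu_1 \otimes \mu_2$, so $g_{\lambda^*}(v) < \infty$ for $\varpi_0$-a.e.\ $v$. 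I would then pick a single point $v^\star = (s_1^\star, s_2^\star)$ at which $g_{\lambda^*}(v^\star) =: M_0 < \infty$ (such a point exists since the set has full measure, hence is nonempty). For an arbitrary $(s_1, s_2) \in \mathcal{V}$, the definition of $g_{\lambda^*}$ as a supremum over $v'$ gives, by taking $v' = (s_1, s_2)$ in the supremum defining $g_{\lambda^*}(v^\star)$,
\[
M_0 = g_{\lambda^*}(v^\star) \ge g(s_1, s_2) - \lambda_1^* c_1(s_1^\star, s_1) - \lambda_2^* c_2(s_2^\star, s_2) = g(s_1, s_2) - \lambda_1^* \boldsymbol{d}_{\mathcal{S}_1}(s_1^\star, s_1)^{p_1} - \lambda_2^* \boldsymbol{d}_{\mathcal{S}_2}(s_2^\star, s_2)^{p_2},
\]
so that $g(s_1, s_2) \le M_0 + \lambda_1^* \boldsymbol{d}_{\mathcal{S}_1}(s_1^\star, s_1)^{p_1} + \lambda_2^* \boldsymbol{d}_{\mathcal{S}_2}(s_2^\star, s_2)^{p_2} \le M[1 + \boldsymbol{d}_{\mathcal{S}_1}(s_1^\star, s_1)^{p_1} + \boldsymbol{d}_{\mathcal{S}_2}(s_2^\star, s_2)^{p_2}]$ with $M := \max\{M_0, \lambda_1^*, \lambda_2^*\}$, which is exactly \eqref{eq:GC-I}. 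One subtlety is that I should make sure $M_0$ can be taken finite and, if either $\lambda_\ell^* = 0$ but $c_\ell$ takes the value $\infty$, handle the convention $\lambda_\ell c_\ell = \infty$; but if $g_{\lambda^*}(v^\star)$ is finite then in fact $g(s_1, s_2) - \lambda_\ell^* c_\ell(\cdot)$ must stay bounded above, which forces the finiteness needed, so this case is benign.

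The main obstacle I anticipate is the "only if" direction's reliance on extracting a uniform bound over all of $\mathcal{V}$ from the mere $\varpi$-a.e.\ finiteness of $g_{\lambda^*}$: one must be careful that the point $v^\star$ at which $g_{\lambda^*}$ is finite can simultaneously serve as the center in \eqref{eq:GC-I}, and that the supremum structure of $g_{\lambda^*}$ transfers finiteness at one point into the global pointwise growth bound. The argument above shows this transfer is actually automatic — finiteness of $g_{\lambda^*}(v^\star)$ at a single point already yields the bound for every $(s_1, s_2)$ — so the only real work is verifying such a point exists, which follows because $\{v : g_{\lambda^*}(v) < \infty\}$ has full $\varpi_0$-measure (else $\int g_{\lambda^*} d\varpi_0 = \infty$, contradicting duality) and is therefore nonempty. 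A secondary technical point is measurability of $g_{\lambda^*}$ (it is only universally measurable in general, as noted in the discussion after \Cref{thm:ID-duality}), but integration against $\varpi_0$ is still well-defined on the completion, so the a.e.\ statement makes sense and the argument goes through.
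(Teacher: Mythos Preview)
Your proposal is correct, but differs from the paper's proof in both directions.

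For the ``if'' direction, the paper works directly on the primal: for any $\gamma \in \Sigma_{\mathrm{D}}(\delta)$ it bounds $\int_{\mathcal{V}} g\,d\gamma$ via \eqref{eq:GC-I} and then controls $\int \boldsymbol{d}_{\mathcal{S}_\ell}(s_\ell^\star, s_\ell)^{p_\ell} d\gamma = \boldsymbol{W}_{p_\ell}(\gamma_\ell, \delta_{s_\ell^\star})^{p_\ell}$ using the triangle inequality $\boldsymbol{W}_{p_\ell}(\gamma_\ell, \delta_{s_\ell^\star}) \le \boldsymbol{W}_{p_\ell}(\gamma_\ell, \mu_\ell) + \boldsymbol{W}_{p_\ell}(\mu_\ell, \delta_{s_\ell^\star})$, finite by $\gamma \in \Sigma_{\mathrm{D}}(\delta)$ and $\mu_\ell \in \mathcal{P}_{p_\ell}(\mathcal{S}_\ell)$. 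This avoids duality entirely and is slightly more elementary than your dual route of bounding $g_\lambda$ and absorbing the cost term for large $\lambda$. For the ``only if'' direction, the paper argues by contraposition: if \eqref{eq:GC-I} fails, then for every $\lambda$ and every $v$ one can find $v'$ with $\varphi_\lambda(v,v') \ge B$ whenever $B \ge \lambda_1 \vee \lambda_2$, so $g_\lambda \equiv +\infty$ and the dual infimum is $+\infty$. Your direct argument---pick $\lambda^*$ with finite dual value, pick a point $v^\star$ where $g_{\lambda^*}(v^\star) < \infty$, and read off the growth bound from the supremum---is a nice alternative that makes transparent why finiteness of $g_\lambda$ at a \emph{single} point already forces the global growth condition. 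One cosmetic point: your choice $M = \max\{M_0, \lambda_1^*, \lambda_2^*\}$ can fail to be strictly positive (e.g.\ if $M_0 \le 0$ and $\lambda^* = 0$); replace it by $\max\{|M_0|, \lambda_1^*, \lambda_2^*, 1\}$.
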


The inequality in \Cref{eq:GC-I} is a growth condition on the function $g$. It extends the growth condition in \citet{yue2022linear} for W-DMR to our W-DMR with non-overlapping narginals. 

For the overlapping case, the following result holds.

\begin{theorem} \label{thm:I-finite} 
	Suppose that \Cref{assumption:f-bounded-below,assumption:finite-moments-I,assumption:metric-cost} hold. Then for all $\delta\in \mathbb{R}_{++}^2$,  $\mathcal{I}(\delta) < \infty$ if and only if there exist $ (s_1^\star, s_2^\star) \in \mathcal{S}_1 \times \mathcal{S}_2$ and a constant $M > 0$ such that 
	\begin{equation} \label{eq:GC-II} 
		f(s) \leq M \left[   1  +\boldsymbol{d}_{\mathcal{S}_1} (s_1^\star, s_1)^{p_1}+\boldsymbol{d}_{ \mathcal{S}_2 }(s_2^\star, s_2)^{p_2}  \right],
	\end{equation}
	for all $ s \in \mathcal{S}$, where $s := (y_1, y_2, x), s_\ell := (y_\ell, x)$ and $s_\ell^\star := (y_\ell^\star, x^\star)$ for $\ell = 1, 2$, and $p_1$ and $p_2$ are defined in \Cref{assumption:finite-moments-I}.
\end{theorem}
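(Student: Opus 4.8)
The plan is to mirror the proof of \Cref{thm:ID-finite} for the non‑overlapping case, using the dual representation of \Cref{thm:I-duality} for the sufficiency direction and a direct perturbation argument for the necessity direction; the only genuinely new ingredient is bookkeeping the shared covariate coordinate inside the inner supremum defining $f_\lambda$.

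\emph{Sufficiency.} Fix $\delta\in\mathbb{R}_{++}^2$ and suppose \eqref{eq:GC-II} holds with base point $(s_1^\star,s_2^\star)$ and constant $M>0$. I would pick $\lambda=(\lambda_1,\lambda_2)$ with $\lambda_\ell>2^{p_\ell-1}M$ and show that the inner value $\sup_{\varpi\in\Pi(\mu_{13},\mu_{23})}\int_{\mathcal V}f_\lambda\,d\varpi$ in the (interior) dual of \Cref{thm:I-duality} is finite, which by that theorem bounds $\mathcal I(\delta)$. Starting from $f_\lambda(v)=\sup_{s'=(y_1',y_2',x')\in\mathcal S}\{f(s')-\lambda_1 c_1(s_1,s_1')-\lambda_2 c_2(s_2,s_2')\}$ with $s_\ell'=(y_\ell',x')$, I would insert \eqref{eq:GC-II} and then drop the coupling of the two copies of the covariate $x'$ (this can only increase the value), obtaining
\[
f_\lambda(v)\le M+\sum_{\ell=1,2}\;\sup_{s_\ell'\in\mathcal S_\ell}\Big\{M\,\boldsymbol d_{\mathcal S_\ell}(s_\ell^\star,s_\ell')^{p_\ell}-\lambda_\ell\,\boldsymbol d_{\mathcal S_\ell}(s_\ell,s_\ell')^{p_\ell}\Big\}.
\]
The triangle inequality together with $(a+b)^{p}\le 2^{p-1}(a^{p}+b^{p})$ and $\lambda_\ell>2^{p_\ell-1}M$ makes each inner supremum at most $2^{p_\ell-1}M\,\boldsymbol d_{\mathcal S_\ell}(s_\ell^\star,s_\ell)^{p_\ell}$, so $f_\lambda(v)\le M+\sum_{\ell}2^{p_\ell-1}M\,\boldsymbol d_{\mathcal S_\ell}(s_\ell^\star,s_\ell)^{p_\ell}$. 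This upper bound depends on $v=(s_1,s_2)$ only through $s_1$ and $s_2$ separately, and is $\mu_{13}$‑ resp.\ $\mu_{23}$‑integrable by \Cref{assumption:finite-moments-I}; integrating against any $\varpi\in\Pi(\mu_{13},\mu_{23})$ yields the \emph{same} finite number, hence $\mathcal I(\delta)=\mathcal J(\delta)\le\langle\lambda,\delta\rangle+\sup_\varpi\int f_\lambda\,d\varpi<\infty$. (Only universal measurability of $f_\lambda$ is available, but that is enough here just as in \Cref{thm:I-duality}, since $\int f_\lambda^{+}\,d\varpi$ is finite by the bound above.)

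\emph{Necessity.} Suppose $\mathcal I(\delta)<\infty$ for some $\delta\in\mathbb{R}_{++}^2$; fix arbitrary $s_1^\star\in\mathcal S_1$, $s_2^\star\in\mathcal S_2$, and show \eqref{eq:GC-II} must hold at this base point. Pick $\nu_0\in\mathcal F(\mu_{13},\mu_{23})$ with $\int_{\mathcal S}f\,d\nu_0>-\infty$ (\Cref{assumption:f-bounded-below}); since $\nu_0\in\Sigma(\delta)$, this integral is finite (otherwise $\mathcal I(\delta)=\infty$). If \eqref{eq:GC-II} failed for every $M$, for each $n$ there would be $s^{(n)}=(y_1^{(n)},y_2^{(n)},x^{(n)})\in\mathcal S$ with $f(s^{(n)})>nR_n$, where $R_n:=1+\boldsymbol d_{\mathcal S_1}(s_1^\star,s_1^{(n)})^{p_1}+\boldsymbol d_{\mathcal S_2}(s_2^\star,s_2^{(n)})^{p_2}\ge1$ and $s_\ell^{(n)}:=(y_\ell^{(n)},x^{(n)})$. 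Set $\epsilon_n:=1/(\sqrt n\,R_n)\in(0,1]$ and $\gamma^{(n)}:=(1-\epsilon_n)\nu_0+\epsilon_n\delta_{s^{(n)}}\in\mathcal P(\mathcal S)$; no consistency check is needed because the two projections of $\delta_{s^{(n)}}$ automatically share the covariate $x^{(n)}$. Then $\gamma^{(n)}_{\ell 3}=(1-\epsilon_n)\mu_{\ell 3}+\epsilon_n\delta_{s_\ell^{(n)}}$, and transporting the perturbed mass by a product coupling together with the triangle inequality and \Cref{assumption:finite-moments-I} gives $\boldsymbol K_\ell(\mu_{\ell 3},\gamma^{(n)}_{\ell 3})\le\epsilon_n\!\int\boldsymbol d_{\mathcal S_\ell}(s_\ell,s_\ell^{(n)})^{p_\ell}\,d\mu_{\ell 3}\le C\,\epsilon_n R_n=C/\sqrt n\to0$, with $C$ depending only on the $p_\ell$‑th moments of $\mu_{\ell 3}$ about $s_\ell^\star$. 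Since $\delta_1,\delta_2>0$, $\gamma^{(n)}\in\Sigma(\delta)$ for all large $n$, yet $\int_{\mathcal S}f\,d\gamma^{(n)}=(1-\epsilon_n)\int f\,d\nu_0+\epsilon_n f(s^{(n)})\ge\min\{0,\int f\,d\nu_0\}+\sqrt n\to\infty$, contradicting $\mathcal I(\delta)<\infty$. Hence \eqref{eq:GC-II} holds.

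The step I expect to require the most care is the sufficiency bound on $f_\lambda$: one must be sure the relaxation of the shared covariate in the inner supremum is legitimate, that the resulting majorant is genuinely separable in $(s_1,s_2)$ so that integration against an arbitrary coupling $\varpi$ stays uniformly finite, and that the weaker (universal) measurability of $f_\lambda$ does not obstruct the integral bound. By contrast, the overlapping structure actually simplifies the necessity construction, since the added Dirac mass is consistent by construction, so no gluing of marginals — the delicate point in the duality proofs — is needed here.
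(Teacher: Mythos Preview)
Your argument is correct in both directions, but you have \emph{swapped} which direction uses duality and which uses a primal computation relative to the paper (so your claim to ``mirror the proof of \Cref{thm:ID-finite}'' is inaccurate, since that proof follows the same pattern as the paper's proof of \Cref{thm:I-finite}). The paper proves sufficiency directly on the primal: for any $\gamma\in\Sigma(\delta)$ it integrates the growth bound \eqref{eq:GC-II} to get $\int f\,d\gamma\le M+M\boldsymbol W_{p_1}(\delta_{s_1^\star},\gamma_{13})^{p_1}+M\boldsymbol W_{p_2}(\delta_{s_2^\star},\gamma_{23})^{p_2}$, and then uses the triangle inequality for $\boldsymbol W_{p_\ell}$ together with \Cref{assumption:finite-moments-I}. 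For necessity, the paper argues via the dual: if \eqref{eq:GC-II} fails, then for every $\lambda$ and every $v$ one can make $\phi_\lambda(v,s')$ arbitrarily large, so $f_\lambda\equiv\infty$ and \Cref{thm:I-duality} forces $\mathcal I(\delta)=\infty$.

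Each route has its advantages. The paper's sufficiency is shorter and sidesteps any measurability discussion for $f_\lambda$; its necessity is a two-line application of duality. Your sufficiency, while longer, yields an explicit separable majorant for $f_\lambda$ that could be reused elsewhere; your necessity is more self-contained, since the explicit perturbation $\gamma^{(n)}=(1-\epsilon_n)\nu_0+\epsilon_n\delta_{s^{(n)}}$ avoids invoking strong duality altogether. One small point: you fix an \emph{arbitrary} base point $(s_1^\star,s_2^\star)\in\mathcal S_1\times\mathcal S_2$, whereas the statement asks for existence of a base point with shared covariate $x^\star$; this is harmless because the growth condition at one base point implies it at every base point (with a modified constant) via the triangle inequality, but it is worth saying so explicitly.
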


The growth condition \eqref{eq:GC-II} on the function $f$ extends the growth condition in \citet{yue2022linear} for W-DMR. When
\[
\boldsymbol{d}_{\mathcal{S}_\ell} (  (y_\ell, x), (y^{\prime}_\ell, x^\prime) ) =  \boldsymbol{d}_{\mathcal{Y}_\ell} (  y_\ell,y^{\prime}_\ell )  +   \boldsymbol{d}_{\mathcal{X}} (  x, x^{\prime} ),
\]
condition \eqref{eq:GC-II}  is satisfied if and only if there exist $s^\star := (y_1^\star, y_2^\star, x^\star)$ and a constant $M > 0$ such that 
\[
f(s) \leq M \left[   1  +  \boldsymbol{d}_{\mathcal{Y}_1} (y_1, y_1^\star )^{p_1}+ 
  \boldsymbol{d}_{\mathcal{Y}_2} (y_2, y_2^\star )^{p_2}+
  \boldsymbol{d}_{ \mathcal{X} }(x, x^\star )^{p_1 \wedge p_2}  \right],
\]
for all $s = (y_1,y_2,x) \in \mathcal{S}$.

\begin{remark}
    The conditions in \Cref{thm:ID-finite,thm:I-finite} are sufficient conditions for $\mathcal{I}_{\mathrm{D}}(\delta)$ and $\mathcal{I}(\delta)$ to be finite for all $\delta \in \mathbb{R}_{+}^2$ including boundary points because $\mathcal{I}_{\mathrm{D}}(\delta)$ and $\mathcal{I}(\delta)$ are non-decreasing.
\end{remark}

\subsection{Existence of Optimizers}

\begin{definition}\label{def:proper}
	A metric space $(\mathcal{X}, \boldsymbol{d})$ is said to be proper if for any $r>0$ and $x_0 \in \mathcal{X}$, the closed ball $\overline{B}(x_0, r) := \{ x \in \mathcal{X}: \boldsymbol{d}(x,x_0) \leq r \}$ is compact.
\end{definition}
	
Examples of proper metric spaces include finite dimensional Banach spaces and complete  Riemannian manifolds, see \citet{yue2022linear}.

\begin{assumption} \label{assumption:proper} $(\mathcal{S}_1, \boldsymbol{d}_{\mathcal{S}_1})$ and $(\mathcal{S}_2, \boldsymbol{d}_{\mathcal{S}_2})$ are proper. 
\end{assumption}

\Cref{assumption:finite-moments,assumption:metric-cost,assumption:proper} imply that $\Sigma_{\mathrm{D}}(\delta)$ and $\Sigma(\delta)$ are weakly compact, see \Cref{prop:non-overlapping-compactness,prop:overlapping-compactness} in Appendix C. Given weak compactness of the uncertainty sets $\Sigma_{\mathrm{D}}(\delta)$ and $\Sigma(\delta)$, it is sufficient to show that the mapping: $\gamma \to \int g d\gamma$ is upper semi-continuous over $\gamma \in \Sigma_{\mathrm{D} }(\delta)$ for the non-overlapping case, and the mapping: $\gamma \to \int f d\gamma$ is upper semi-continuous over $\gamma \in \Sigma(\delta)$ for the overlapping case. In \Cref{thm:ID-existence,thm:I-existence} below, we provide conditions for $g$ and $f$  ensuring upper semi-continuity of each map and thus the existence of optimal solutions for $\mathcal{I}_{\mathrm{D}}(\delta)$ and $\mathcal{I}(\delta)$. 

\begin{theorem}\label{thm:ID-existence}
	Suppose that \Cref{assumption:g-bounded-below,assumption:finite-moments-ID,assumption:metric-cost,assumption:proper} hold.
	Further, assume that $g$ is upper-semicontinuous, and there exist a constant $M>0$,  $v^\star := (s_1^\star, s_2^\star) \in \mathcal{V}$ and $p_\ell^\prime \in (0, p_\ell)$ for $\ell=1,2$, such that 
 \begin{align} \label{eq:GC-existence-i}
     g(v) \leq  M\left[1+  \boldsymbol{d}_{\mathcal{S}_1} (s^\star_1, s_1)^{p_1^{\prime}} +  \boldsymbol{d}_{\mathcal{S}_2} (s^\star_2,s_2)^{p_2^{\prime}}  \right],
 \end{align}
	for all $v := (s_1, s_2) \in \mathcal{V}$.  Then an optimal solution of \eqref{eq:ID-primal} exists for all $\delta \in \mathbb{R}_{+}^2$.
\end{theorem}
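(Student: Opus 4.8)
The plan is to exhibit $\mathcal{I}_{\mathrm{D}}(\delta)$ as the maximum of a weakly upper semicontinuous functional over a weakly compact set. By \Cref{prop:non-overlapping-compactness} in Appendix C, \Cref{assumption:finite-moments-ID,assumption:metric-cost,assumption:proper} guarantee that $\Sigma_{\mathrm{D}}(\delta)$ is weakly compact for every $\delta \in \mathbb{R}_{+}^2$, and since $\mathcal{P}(\mathcal{V})$ with the topology of weak convergence is metrizable ($\mathcal{V}$ being Polish), it suffices to show that along any sequence $\gamma_n \rightharpoonup \gamma$ in $\Sigma_{\mathrm{D}}(\delta)$ one has $\limsup_n \int_{\mathcal{V}} g\, d\gamma_n \le \int_{\mathcal{V}} g\, d\gamma$. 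A maximizer then exists by a Weierstrass argument, using that $\mathcal{I}_{\mathrm{D}}(\delta) \ge \int_{\mathcal{V}} g \, d\gamma_0 > -\infty$ by \Cref{lemma:ID-concavity} (so the functional is not identically $-\infty$).

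The key device is the continuous nonnegative dominating function $h(v) := M\bigl[1 + \boldsymbol{d}_{\mathcal{S}_1}(s_1^\star, s_1)^{p_1'} + \boldsymbol{d}_{\mathcal{S}_2}(s_2^\star, s_2)^{p_2'}\bigr]$, for which $g \le h$ by \eqref{eq:GC-existence-i}. Under \Cref{assumption:metric-cost} the constraint $\boldsymbol{K}_\ell(\mu_\ell, \gamma_\ell) \le \delta_\ell$ is exactly $\boldsymbol{W}_{p_\ell}(\mu_\ell, \gamma_\ell) \le \delta_\ell^{1/p_\ell}$; combining the triangle inequality for $\boldsymbol{W}_{p_\ell}$ with $\mu_\ell \in \mathcal{P}_{p_\ell}(\mathcal{S}_\ell)$ (\Cref{assumption:finite-moments-ID}) shows that $\int_{\mathcal{S}_\ell} \boldsymbol{d}_{\mathcal{S}_\ell}(s_\ell^\star, s_\ell)^{p_\ell}\, d\gamma_\ell$ is bounded uniformly over $\gamma \in \Sigma_{\mathrm{D}}(\delta)$. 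Since $p_\ell' < p_\ell$, a standard uniform-integrability criterion (Markov's inequality together with this uniform moment bound) yields $\sup_{\gamma \in \Sigma_{\mathrm{D}}(\delta)} \int_{\{h > K\}} h\, d\gamma \to 0$ as $K \to \infty$; that is, $h$ is uniformly integrable over $\Sigma_{\mathrm{D}}(\delta)$.

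The upper semicontinuity claim then follows in three routine moves. First, because $h$ is continuous and uniformly integrable over $\Sigma_{\mathrm{D}}(\delta)$, weak convergence $\gamma_n \rightharpoonup \gamma$ gives $\int_{\mathcal{V}} h\, d\gamma_n \to \int_{\mathcal{V}} h\, d\gamma < \infty$. Second, $h - g$ is nonnegative and lower semicontinuous (the sum of the continuous $h$ and the lower semicontinuous $-g$), so the Portmanteau theorem gives $\liminf_n \int_{\mathcal{V}}(h - g)\, d\gamma_n \ge \int_{\mathcal{V}}(h - g)\, d\gamma$. Third, subtracting, $\limsup_n \int_{\mathcal{V}} g\, d\gamma_n = \int_{\mathcal{V}} h\, d\gamma - \liminf_n \int_{\mathcal{V}}(h - g)\, d\gamma_n \le \int_{\mathcal{V}} g\, d\gamma$. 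This covers boundary radii as well, including $\delta = 0$, where $\Sigma_{\mathrm{D}}(0) = \Pi(\mu_1, \mu_2)$ is weakly compact by Prokhorov's theorem.

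I expect the one genuinely load-bearing step — and the only place where the strict inequalities $p_\ell' < p_\ell$ are used — to be the uniform integrability of $h$ over $\Sigma_{\mathrm{D}}(\delta)$; once that is in place the rest is a standard Weierstrass-type argument on a metrizable compact set. A minor point to handle with care is that $\int_{\mathcal{V}} g\, d\gamma$ may equal $-\infty$ for some $\gamma$; this is harmless, since a $[-\infty,\infty)$-valued upper semicontinuous function attains its supremum on a compact set whenever that supremum is finite, which holds here because $g \le h$ forces $\mathcal{I}_{\mathrm{D}}(\delta) \le \sup_{\gamma \in \Sigma_{\mathrm{D}}(\delta)}\int_{\mathcal{V}} h\, d\gamma < \infty$.
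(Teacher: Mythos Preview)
Your proposal is correct and follows the same overall strategy as the paper: reduce to a Weierstrass argument by invoking weak compactness of $\Sigma_{\mathrm{D}}(\delta)$ (via \Cref{prop:non-overlapping-compactness}) and then establishing weak upper semicontinuity of $\gamma \mapsto \int_{\mathcal{V}} g\,d\gamma$ on $\Sigma_{\mathrm{D}}(\delta)$. The implementations of the upper-semicontinuity step differ in packaging. The paper truncates $g$ to $g_\rho := g \wedge M(1+\rho^{p_1'}+\rho^{p_2'})$, bounds the truncation error $\int_{\mathcal{V}} |g-g_\rho|\,d\pi$ uniformly over $\Sigma_{\mathrm{D}}(\delta)$ by $2B(\rho^{p_1'-p_1}+\rho^{p_2'-p_2})$ using the uniform $p_\ell$-moment bound, and invokes \citet[Lemma~3]{yue2022linear} to handle the upper-semicontinuous, bounded-above $g_\rho$. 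You instead repackage the same moment bound as uniform integrability of the continuous envelope $h$, obtain $\int_{\mathcal{V}} h\,d\gamma_n \to \int_{\mathcal{V}} h\,d\gamma$ directly, and apply Portmanteau to the nonnegative lower-semicontinuous function $h-g$. Both routes use the strict inequality $p_\ell'<p_\ell$ at exactly the same point; yours is marginally more self-contained in that it avoids citing the external lemma, while the paper's explicit truncation makes the role of the growth exponent more visible.
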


\begin{theorem} \label{thm:I-existence}
    Suppose that \Cref{assumption:f-bounded-below,assumption:finite-moments-I,assumption:metric-cost,assumption:proper} hold.
	Further, assume that $f$ is upper-semicontinuous, and there exist $ (s_1^\star, s_2^\star) \in \mathcal{S}_1 \times \mathcal{S}_2$, a constant $M>0$, $p_\ell^{\prime} \in (0, p_\ell)$ for $\ell = 1, 2$, such that 
    \begin{align}  \label{eq:GC-existence-ii}
        f(s) \leq M \left[   1  +\boldsymbol{d}_{\mathcal{S}_1} (s_1^\star, s_1)^{p_1^{\prime}}+\boldsymbol{d}_{ \mathcal{S}_2 }(s_2^\star,s_2 )^{p_2^{\prime}}  \right],
    \end{align}
	for all $ s \in \mathcal{S}$ where $s := (y_1, y_2, x), s_\ell := (y_\ell, x)$ and $s_\ell^\star := (y_\ell^\star, x_\ell^\star)$ for $\ell = 1, 2$. Then an optimal solution of \eqref{eq:I-primal} exists for all $\delta \in \mathbb{R}_{+}^2$. 
\end{theorem}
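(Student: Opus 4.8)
The plan is to run the direct method of the calculus of variations on the primal problem $\mathcal{I}(\delta)=\sup_{\gamma\in\Sigma(\delta)}\int_{\mathcal{S}}f\,d\gamma$: I would combine weak compactness of the feasible set $\Sigma(\delta)$ with weak upper semicontinuity of the objective $\gamma\mapsto\int_{\mathcal{S}}f\,d\gamma$ on $\Sigma(\delta)$. The first ingredient is already at hand: \Cref{prop:overlapping-compactness} gives that $\Sigma(\delta)$ is weakly compact for every $\delta\in\mathbb{R}_{+}^2$ (boundary points included, where $\delta_\ell=0$ forces $\gamma_{\ell3}=\mu_{\ell3}$) under exactly \Cref{assumption:finite-moments-I,assumption:metric-cost,assumption:proper}, and since the weak topology on $\mathcal{P}(\mathcal{S})$ is metrizable this set is also weakly sequentially compact. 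Granting the second ingredient, I would take a maximizing sequence $(\gamma_n)\subset\Sigma(\delta)$, extract a weakly convergent subsequence $\gamma_{n_k}\rightharpoonup\gamma^\star\in\Sigma(\delta)$, and conclude $\mathcal{I}(\delta)=\limsup_k\int f\,d\gamma_{n_k}\le\int f\,d\gamma^\star\le\mathcal{I}(\delta)$; since $\mathcal{I}(\delta)>-\infty$ by \Cref{lemma:I-concavity}, this pins down $\int f\,d\gamma^\star=\mathcal{I}(\delta)$ and $\gamma^\star$ is the desired optimizer.

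Everything thus reduces to the weak upper semicontinuity of $\gamma\mapsto\int f\,d\gamma$ on $\Sigma(\delta)$, and the crux is a uniform integrability estimate that exploits the strict inequality $p_\ell'<p_\ell$ in \eqref{eq:GC-existence-ii}. First I would bound the $p_\ell$-th moments uniformly over the uncertainty set: writing $m_\ell(s_\ell):=\boldsymbol{d}_{\mathcal{S}_\ell}(s_\ell^\star,s_\ell)^{p_\ell}$, for any $\gamma\in\Sigma(\delta)$ the marginal $\gamma_{\ell3}$ obeys $\boldsymbol{K}_\ell(\mu_{\ell3},\gamma_{\ell3})\le\delta_\ell$, and with $c_\ell=\boldsymbol{d}_{\mathcal{S}_\ell}^{p_\ell}$ (\Cref{assumption:metric-cost}) an optimal coupling of $(\mu_{\ell3},\gamma_{\ell3})$ together with the triangle inequality for $\boldsymbol{d}_{\mathcal{S}_\ell}$ and Minkowski's inequality in $L^{p_\ell}$ of that coupling give
\[
\Big(\int_{\mathcal{S}_\ell}m_\ell\,d\gamma_{\ell3}\Big)^{1/p_\ell}\le\Big(\int_{\mathcal{S}_\ell}m_\ell\,d\mu_{\ell3}\Big)^{1/p_\ell}+\boldsymbol{K}_\ell(\mu_{\ell3},\gamma_{\ell3})^{1/p_\ell}\le\Big(\int_{\mathcal{S}_\ell}m_\ell\,d\mu_{\ell3}\Big)^{1/p_\ell}+\delta_\ell^{1/p_\ell},
\]
which is finite by \Cref{assumption:finite-moments-I}. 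Hence $\sup_{\gamma\in\Sigma(\delta)}\int m_\ell\,d\gamma_{\ell3}<\infty$, and since $p_\ell'<p_\ell$ a Markov-type truncation (the de la Vall\'ee--Poussin criterion) upgrades this to uniform integrability of $\{\boldsymbol{d}_{\mathcal{S}_\ell}(s_\ell^\star,\cdot)^{p_\ell'}\}$ over $\{\gamma_{\ell3}:\gamma\in\Sigma(\delta)\}$. Writing $G_0(s):=M[1+\boldsymbol{d}_{\mathcal{S}_1}(s_1^\star,s_1)^{p_1'}+\boldsymbol{d}_{\mathcal{S}_2}(s_2^\star,s_2)^{p_2'}]$ for the majorant of $f$ from \eqref{eq:GC-existence-ii}, this says $\varepsilon_k:=\sup_{\gamma\in\Sigma(\delta)}\int_{\mathcal{S}}G_0\,\mathds{1}\{G_0>k\}\,d\gamma\to0$ as $k\to\infty$, and in particular $\sup_{\gamma\in\Sigma(\delta)}\int G_0\,d\gamma<\infty$, so $\int f^+\,d\gamma<\infty$ throughout $\Sigma(\delta)$.

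With this estimate in hand I would finish by a standard truncation argument. Given $\gamma_n\rightharpoonup\gamma$ in $\Sigma(\delta)$ and $k>0$, split $f=(f\wedge k)+(f-k)^+$. The truncation $f\wedge k$ is upper semicontinuous and bounded above by $k$, so the Portmanteau theorem (for upper semicontinuous integrands bounded above) yields $\limsup_n\int(f\wedge k)\,d\gamma_n\le\int(f\wedge k)\,d\gamma$; and since $f\le G_0$ with $G_0\ge0$ one checks $(f-k)^+\le G_0\,\mathds{1}\{G_0>k\}$ pointwise, whence $\sup_n\int(f-k)^+\,d\gamma_n\le\varepsilon_k$. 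Combining, $\limsup_n\int f\,d\gamma_n\le\int(f\wedge k)\,d\gamma+\varepsilon_k$, and letting $k\to\infty$ (monotone convergence $f\wedge k\uparrow f$ together with $\int f^+\,d\gamma<\infty$, and $\varepsilon_k\to0$) gives $\limsup_n\int f\,d\gamma_n\le\int f\,d\gamma$, as required. The non-overlapping statement, \Cref{thm:ID-existence}, would be proved identically on $\mathcal{V}=\mathcal{S}_1\times\mathcal{S}_2$ with $\Sigma_{\mathrm{D}}(\delta)$, $g$, and the weak compactness from \Cref{prop:non-overlapping-compactness} in place of the overlapping objects.

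I expect the main obstacle to be the uniform integrability estimate, and in particular the role of the strict gap $p_\ell'<p_\ell$: the two marginal Wasserstein constraints only deliver a uniform \emph{bound} on the $p_\ell$-moments of $\gamma_{\ell3}$, and it is precisely the slack $p_\ell'<p_\ell$ that converts this into uniform integrability of the majorant $G_0$, which is what separates weak upper semicontinuity of $\gamma\mapsto\int f\,d\gamma$ from the mere finiteness of \Cref{thm:I-finite}; at $p_\ell'=p_\ell$ the argument genuinely fails. The remaining technical input, weak compactness of $\Sigma(\delta)$ in the overlapping case (where \Cref{assumption:proper} is used, and where one must also check that a weak limit of feasible measures still carries the prescribed overlapping marginal structure), is deferred to \Cref{prop:overlapping-compactness} and need not be reproved here.
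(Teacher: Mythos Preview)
Your proposal is correct and follows essentially the same route as the paper: reduce to weak compactness of $\Sigma(\delta)$ (delegated to \Cref{prop:overlapping-compactness}) plus weak upper semicontinuity of $\gamma\mapsto\int f\,d\gamma$, and establish the latter by truncating $f$ from above and controlling the tail via the uniform $p_\ell$-moment bound over $\Sigma(\delta)$ combined with the strict gap $p_\ell'<p_\ell$. The paper's proof (which refers back to the non-overlapping argument for \Cref{thm:ID-existence}) truncates at the specific level $M(1+\rho^{p_1'}+\rho^{p_2'})$ and splits the domain into the regions $\{\boldsymbol{d}_{\mathcal{S}_\ell}(s_\ell^\star,s_\ell)\ge\rho\}$ to obtain the explicit decay $\rho^{p_\ell'-p_\ell}$ via Markov's inequality, whereas you truncate at a generic level $k$ and package the same estimate as uniform integrability of the majorant $G_0$ through de la Vall\'ee--Poussin; this is the same mechanism with slightly different bookkeeping.
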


\subsection{Characterization of Identified Sets}
\label{sec:identified-sets-interval}

In some applications, such as the partial identification of treatment effects introduced in \Cref{sec:partial-iden-TE}, the identified sets of $\theta_{\mathrm{D}o} := \mathbb{E}_{o}[g(S_1, S_2)]$ and $\theta_o := \mathbb{E}_{o}[f(S)]$ are of interest, where $S$ is a random variable whose probability measure belongs to $\Sigma(\delta)$, and $S_1$ and $S_2$ are random variables whose joint probability measure belongs to $\Sigma_{D}(\delta)$. They are:
\begin{align*}
    \Theta_{\mathrm{D}}(\delta) & := 
    \left\{ \int_{\mathcal{S}_1 \times \mathcal{S}_2} g\,  d \gamma : \gamma \in \Sigma_{\mathrm{D}}(\delta) \right\} \quad \text{and} \quad
    \Theta(\delta)  := 
    \left\{ \int_{\mathcal{S}} f \, d \gamma : \gamma \in \Sigma(\delta) \right\}.
\end{align*}

By applying finiteness and existence results, we show below that under mild conditions, the identified sets  $\Theta_{\mathrm{D}}(\delta)$ and $\Theta(\delta)$ are both closed intervals.

\begin{proposition}\label{prop:Identified-Sets-Interval}
\leavevmode
\begin{enumerate}[label=(\roman*)]
    \item Suppose \Cref{assumption:finite-moments-ID,assumption:metric-cost,assumption:proper} hold. In addition, $g$ is continuous, and $|g|$ satisfies Condition \eqref{eq:GC-existence-i}. 
    Then, for $\delta \in \mathbb{R}_{+}^2$, we have \[
\Theta_{\mathrm{D}}(\delta)=\left[\min _{\gamma \in \Sigma_{\mathrm{D}}(\delta)} \int_{\mathcal{S}_1 \times \mathcal{S}_2} g \, d \gamma, \max _{\gamma \in \Sigma_{\mathrm{D}}(\delta)} \int_{\mathcal{S}_1 \times \mathcal{S}_2} g \, d \gamma \right],
\]
where both the lower and upper bounds are finite.

   \item  Suppose \Cref{assumption:finite-moments-I,assumption:metric-cost,assumption:proper} hold. In addition, $f$ is continuous and $|f|$ satisfies Condition \eqref{eq:GC-existence-ii}. Then for $\delta \in \mathbb{R}_{+}^2$, we have \[
\Theta(\delta)=\left[\min _{\gamma \in \Sigma(\delta)} \int_{\mathcal{S}} f \, d \gamma, \max_{\gamma \in \Sigma(\delta)} \int_{\mathcal{S}} f \, d \gamma \right],
\]
where both the lower and upper bounds are finite.
\end{enumerate}

\end{proposition}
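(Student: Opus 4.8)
The plan is to prove that the identified set $\Theta_{\mathrm{D}}(\delta)$ (and similarly $\Theta(\delta)$) is a closed bounded interval by establishing three facts: (1) the extreme values are attained, (2) both are finite, and (3) every value in between is attained. First I would observe that since $|g|$ satisfies the growth condition \eqref{eq:GC-existence-i}, both $g$ and $-g$ are upper-semicontinuous (indeed continuous) and satisfy \eqref{eq:GC-existence-i}. Applying \Cref{thm:ID-existence} to $g$ and separately to $-g$ gives that $\sup_{\gamma \in \Sigma_{\mathrm{D}}(\delta)} \int g \, d\gamma$ and $\sup_{\gamma \in \Sigma_{\mathrm{D}}(\delta)} \int (-g) \, d\gamma = -\inf_{\gamma \in \Sigma_{\mathrm{D}}(\delta)} \int g \, d\gamma$ are attained, so the $\min$ and $\max$ in the claimed formula are well-defined. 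For finiteness of both endpoints, I would apply \Cref{thm:ID-finite} to $g$ and to $-g$: the growth bound \eqref{eq:GC-existence-i} with $p_\ell' < p_\ell$ implies the growth bound \eqref{eq:GC-I} (with exponents $p_\ell$, since $\boldsymbol{d}^{p_\ell'} \le 1 + \boldsymbol{d}^{p_\ell}$), hence $\mathcal{I}_{\mathrm{D}}(\delta) = \sup \int g\,d\gamma < \infty$ and likewise $\sup \int (-g)\,d\gamma < \infty$, i.e. $\inf \int g \, d\gamma > -\infty$; note \Cref{assumption:g-bounded-below} needed in those theorems follows automatically here since $|g|$ is integrable against any $\gamma \in \Sigma_{\mathrm{D}}(\delta)$ by the growth condition and the finite-moment assumption.

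For the interval (connectedness) property, I would use a continuity/path argument. Let $\gamma_{\min}$ and $\gamma_{\max}$ be the minimizing and maximizing measures from step (1). Consider the line segment $\gamma_t := (1-t)\gamma_{\min} + t\gamma_{\max}$ for $t \in [0,1]$. The key point is that $\Sigma_{\mathrm{D}}(\delta)$ is convex: this follows because $\gamma \mapsto \boldsymbol{K}_\ell(\mu_\ell, \gamma_\ell)$ is convex in $\gamma$ (the optimal transport cost is convex in its second argument, and the projection map $\gamma \mapsto \gamma_\ell$ is linear), so $\gamma_t \in \Sigma_{\mathrm{D}}(\delta)$ for all $t$. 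Then the map $t \mapsto \int g \, d\gamma_t = (1-t)\int g\,d\gamma_{\min} + t\int g\,d\gamma_{\max}$ is affine and continuous in $t$, running from the minimum value to the maximum value, so by the intermediate value theorem every value in $[\min, \max]$ is attained by some $\gamma_t \in \Sigma_{\mathrm{D}}(\delta)$. This shows $\Theta_{\mathrm{D}}(\delta) \supseteq [\min, \max]$, and the reverse inclusion is immediate from the definitions of $\min$ and $\max$.

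The argument for part (ii) is verbatim the same with $f$, $\mathcal{S}$, $\Sigma(\delta)$, \Cref{thm:I-existence}, \Cref{thm:I-finite}, and \Cref{assumption:finite-moments-I} in place of their non-overlapping counterparts; the only point needing a separate remark is that $\Sigma(\delta)$ is again convex, which holds for the same reason (convexity of $\gamma \mapsto \boldsymbol{K}_\ell(\mu_{\ell 3}, \gamma_{\ell 3})$). I do not expect a serious obstacle here: the main subtlety is bookkeeping — checking that applying the existence and finiteness theorems to $-g$ (resp.\ $-f$) is legitimate, i.e.\ that $-g$ still satisfies all the hypotheses, which it does because the growth condition is stated for $|g|$ precisely so that it is symmetric under negation. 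A secondary point worth stating explicitly is why the integrals $\int g \, d\gamma$ are well-defined (not $\infty - \infty$) for every $\gamma \in \Sigma_{\mathrm{D}}(\delta)$, which again follows from the two-sided growth bound together with the finite-moment assumption on the reference measures and the fact that every $\gamma$ in the uncertainty set has finite $p_\ell$-th moments in each coordinate (a consequence of being within finite Wasserstein distance of $\mu_\ell \in \mathcal{P}_{p_\ell}$).
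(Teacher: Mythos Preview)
Your proposal is correct and in fact streamlines the paper's argument for the interval property. The paper establishes the result via two lemmas: first (\Cref{lemma: T-continuity}) that the functional $T:\gamma\mapsto\int f\,d\gamma$ is weakly continuous on $\Sigma(\delta)$, obtained by combining the upper-semicontinuity argument from the proof of \Cref{thm:ID-existence} applied to both $f$ and $-f$; and second (\Cref{lemma: Path-Connected}) that $\Sigma(\delta)$ is path-connected, by showing that the convex combination $t\mapsto t\pi+(1-t)\pi'$ is a continuous path in the Wasserstein metric. The interval then follows from the topological fact that the continuous image of a connected set is connected. Your route bypasses both lemmas for the interval step: you use convexity of $\Sigma_{\mathrm{D}}(\delta)$ together with the \emph{linearity} of $\gamma\mapsto\int g\,d\gamma$, so that $t\mapsto\int g\,d\gamma_t$ is literally affine in $t$ and the intermediate value theorem applies directly, without ever needing to topologize the uncertainty set or prove continuity of $T$ there. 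This is more elementary and entirely adequate here; the paper's approach has the advantage of generalizing to nonlinear but continuous functionals of $\gamma$, which is not needed for the present statement. Your handling of finiteness and attainment (via \Cref{thm:ID-finite}, \Cref{thm:ID-existence} and their overlapping analogues, applied to both $g$ and $-g$) matches the paper's, and your remark about well-definedness of $\int g\,d\gamma$ for all $\gamma\in\Sigma_{\mathrm{D}}(\delta)$ is a useful clarification.
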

The strong duality in Section 3 can be used to evaluate the lower and upper bounds.

\section{Continuity of the DMR-MP Functions} \label{sec:continuity}

In this section, we establish continuity of the W-DMR-MP functions $\mathcal{I}_{\mathrm{D} }(\delta)$ and $\mathcal{I}(\delta)$ for all $\delta\in\mathbb{R}_+^2$ under similar conditions to those in \citet{zhang2022simple}. Compared with \citet{zhang2022simple}, our analysis is more involved, because the boundary in our case includes not only the origin $(0,0)$ but also $(\delta_1,0)$ and $(0,\delta_2)$ for all $\delta_1 > 0$ and $\delta_2 > 0$. 

\subsection{Non-overlapping Marginals } 
\label{sec:continuity-nonoverlapping}

\Cref{lemma:ID-concavity} implies that under \Cref{assumption:cost-function,assumption:g-bounded-below}, $\mathcal{I}_{\mathrm{D} }(\delta)$ is a concave function for $\delta \in \mathbb{R}_{+}^2$ and hence is continuous on $\mathbb{R}_{++}^2$. We provide the main assumption for the continuity of $\mathcal{I}_{\mathrm{D}}(\delta)$ on $\mathbb{R}_{+}^2$ in this subsection.

\begin{assumption}\label{assumption:g-Psi-growth}
Let $\Psi: \mathbb{R}_+^2 \rightarrow \mathbb{R}_+$ be a continuous, non-decreasing, and concave function with $\Psi(0,0) =0$. Suppose the function $g: \mathcal{V} \rightarrow \mathbb{R}$ satisfies
\begin{align}
	g(v) - g(v^\prime) \leq \Psi \left(  c_1(s_1, s_1^\prime),  c_2(s_2, s_2^\prime) \right), \label{eq:g-Psi-growth}
\end{align} 
for all $v=(s_1, s_2) \in \mathcal{V}$ and  $v^\prime= (s^\prime_1, s^\prime_2) \in \mathcal{V}$. 
\end{assumption}

The function $\Psi$ in \Cref{assumption:g-Psi-growth} plays the role of the modulus of continuity of $g$. To illustrate, consider the following example. 

\begin{example}
Suppose \cref{assumption:metric-cost} holds, i.e., $c_\ell(s_\ell, s_\ell^\prime) = \boldsymbol{d}_{\mathcal{S}_\ell}(s_\ell, s_\ell^\prime)^{p_\ell}$ for some $p_\ell \geq 1$, $\ell=1,2.$
\begin{enumerate}[label=(\roman*)]
    \item Define a product metric $\boldsymbol{d}_{\mathcal{V}}$ on $\mathcal{V} = \mathcal{S}_1 \times \mathcal{S}_2$ as 
\[
\boldsymbol{d}_{\mathcal{V}} ((s_1,s_2), (s_1^\prime, s_2^\prime)) =    \boldsymbol{d}_{\mathcal{S}_1} (s_1,s_1^\prime)  + \boldsymbol{d}_{\mathcal{S}_2} (s_2, s_2^\prime).
\]
Let $\Psi (x,y) = x^{1/p_1} + y^{1/p_2}$. Then, $\boldsymbol{d}_{\mathcal{V}} ((s_1,s_2), (s_1^\prime, s_2^\prime)) =  \Psi \left(   c_1 (s_{1}, s_{1}^{\prime}) , c_2(s_{2}, s_{2}^{\prime})   \right)$. On the metric space $(\mathcal{V}, \boldsymbol{d}_{\mathcal{V} })$, the function $g$ is continuous and has $\omega: x\mapsto x$ as modulus of continuity. Moreover, \Cref{assumption:g-Psi-growth} implies the growth condition in \eqref{thm:ID-existence}.

\item Suppose $p_1=p_2$. Define a product metric $\boldsymbol{d}_{\mathcal{V}}$ on $\mathcal{V} = \mathcal{S}_1 \times \mathcal{S}_2$ as 
\[
\boldsymbol{d}_{\mathcal{V}} ((s_1,s_2), (s_1^\prime, s_2^\prime)) =  \left[   \boldsymbol{d}_{\mathcal{S}_1} (s_1,s_1^\prime)^p  + \boldsymbol{d}_{\mathcal{S}_2} (s_2, s_2^\prime)^p \right]^{1/p}.
\]
Let $\Psi (x,y) = (x+y)^{1/p}$.  Then, $\boldsymbol{d}_{\mathcal{V}} (  (s_1,s_2), (s_1^\prime, s_2^\prime)  ) =  \Psi \left(   c_1 (s_{1}, s_{1}^{\prime}) , c_2(s_{2}, s_{2}^{\prime})   \right)$. On the metric space $(\mathcal{V}, \boldsymbol{d}_{\mathcal{V} })$, the function $g$ is continuous and has $\omega: x\mapsto x$ as modulus of continuity.  \Cref{assumption:g-Psi-growth} also implies the growth condition in \eqref{thm:ID-existence}.

\item Suppose $p_1 \neq p_2$.  Define a product metric $\boldsymbol{d}_{\mathcal{V}}$ on $\mathcal{V} = \mathcal{S}_1 \times \mathcal{S}_2$ as 
\[
\boldsymbol{d}_{\mathcal{V}} ((s_1,s_2), (s_1^\prime, s_2^\prime))  =     \boldsymbol{d}_{\mathcal{S}_1} (s_1,s_1^\prime)  \vee \boldsymbol{d}_{\mathcal{S}_2} (s_2, s_2^\prime).
\]
Then, \Cref{assumption:g-Psi-growth}  implies 
\[
g(v)-g (v^{\prime}) \leq   \Psi\left(  \boldsymbol{d}_{\mathcal{V}} (  v, v^\prime) , \boldsymbol{d}_{\mathcal{V}} (v, v^\prime)   \right)  =  \omega(  \boldsymbol{d}_{\mathcal{V}} (v, v^\prime)    ) .
\]
where $\omega: x \mapsto \Psi(x,x)$ is a concave function. On the metric space $(\mathcal{V},\boldsymbol{d}_{\mathcal{V}})$, the function $g$ is continuous and has $\omega: x \mapsto \Psi(x,x)$ as modulus of continuity.
\end{enumerate}
\end{example}

\begin{theorem} \label{thm:ID-continuity}
    Suppose \Cref{assumption:cost-function,assumption:g-bounded-below,assumption:g-Psi-growth} hold and $\mathcal{I}_{\mathrm{D}}(\delta) < \infty$ for some $\delta > 0$. 
    Then, the function $\mathcal{I}_{\mathrm{D} }(\delta)$ is continuous on $\mathbb{R}_{+}^2$.
\end{theorem}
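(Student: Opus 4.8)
The plan is to leverage concavity of $\mathcal{I}_{\mathrm{D}}$ on $\mathbb{R}_{+}^2$ (which, together with $\mathcal{I}_{\mathrm{D}}>-\infty$, is supplied by \Cref{lemma:ID-concavity}) and to convert the modulus-of-continuity bound of \Cref{assumption:g-Psi-growth} into a quantitative estimate through a gluing construction. First I would record that, $\mathcal{I}_{\mathrm{D}}$ being concave, nowhere $-\infty$, and finite at some point of $\mathbb{R}_{++}^2$ by hypothesis, a clopen argument on the connected open set $\mathbb{R}_{++}^2$ forces $\mathcal{I}_{\mathrm{D}}<\infty$ there, and monotonicity then extends finiteness to all of $\mathbb{R}_{+}^2$; concavity also delivers continuity on $\mathbb{R}_{++}^2$. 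So everything reduces to the boundary --- the origin and the half-lines $\{(\delta_1,0):\delta_1>0\}$ and $\{(0,\delta_2):\delta_2>0\}$ --- where I would prove lower and upper semicontinuity separately.

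Lower semicontinuity is the easy half. Given $\delta_n\to\delta_0\in\partial\mathbb{R}_{+}^2$, I would use the componentwise minimum $\delta_n\wedge\delta_0$: it lies below $\delta_n$, so $\mathcal{I}_{\mathrm{D}}(\delta_n)\ge\mathcal{I}_{\mathrm{D}}(\delta_n\wedge\delta_0)$ since $\Sigma_{\mathrm{D}}$ grows with the radius, and $\delta_n\wedge\delta_0\to\delta_0$ while staying in the face of $\mathbb{R}_{+}^2$ through $\delta_0$, on whose relative interior the concave finite $\mathcal{I}_{\mathrm{D}}$ is continuous; hence $\liminf_n\mathcal{I}_{\mathrm{D}}(\delta_n)\ge\mathcal{I}_{\mathrm{D}}(\delta_0)$.

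For upper semicontinuity I would fix $\delta_0\in\partial\mathbb{R}_{+}^2$, take $\delta_n\to\delta_0$, and choose $\gamma_n\in\Sigma_{\mathrm{D}}(\delta_n)$ with $\int_{\mathcal{V}}g\,d\gamma_n\ge\mathcal{I}_{\mathrm{D}}(\delta_n)-1/n$. In each coordinate $\ell$ whose component of $\delta_0$ is zero, I would take a near-optimal coupling $\pi_\ell^n\in\Pi(\mu_\ell,\gamma_{n,\ell})$ attaining $\boldsymbol{K}_\ell(\mu_\ell,\gamma_{n,\ell})\le\delta_{\ell,n}$, disintegrate it along its $\gamma_{n,\ell}$-marginal, and glue the kernels onto $\gamma_n$ to form a measure $\rho_n$ whose projection onto the original coordinates $(s_1,s_2)$ is $\gamma_n$ and whose projection onto the pulled-back coordinates $(s_1',s_2')$, say $\tilde\gamma_n$, is feasible for the limiting set: $\tilde\gamma_n\in\Pi(\mu_1,\mu_2)=\Sigma_{\mathrm{D}}(0)$ if $\delta_0=(0,0)$, whereas if $\delta_0=(\delta_1^0,0)$ only the second marginal is pulled back, the first stays $\gamma_{n,1}$, and $\tilde\gamma_n\in\Sigma_{\mathrm{D}}(\delta_{1,n},0)$ (symmetrically for $(0,\delta_2^0)$). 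Applying \Cref{assumption:g-Psi-growth} pointwise under $\rho_n$ and then Jensen's inequality (legitimate since $\Psi$ is concave and nondecreasing),
\[
\int g\,d\gamma_n-\int g\,d\tilde\gamma_n=\int\big[g(s_1,s_2)-g(s_1',s_2')\big]\,d\rho_n\le\Psi\!\left(\int c_1(s_1,s_1')\,d\rho_n,\ \int c_2(s_2,s_2')\,d\rho_n\right),
\]
where each $\int c_\ell(s_\ell,s_\ell')\,d\rho_n=\int c_\ell\,d\pi_\ell^n$ is at most $\delta_{\ell,n}$ up to the coupling's optimality slack (and is $0$ in a coordinate left untouched). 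Since $\int g\,d\tilde\gamma_n\le\mathcal{I}_{\mathrm{D}}(\tilde\delta_n)$ for the relevant boundary radius $\tilde\delta_n$ ($(0,0)$, resp. $(\delta_{1,n},0)$, resp. $(0,\delta_{2,n})$), which converges to $\delta_0$ inside a face on whose relative interior $\mathcal{I}_{\mathrm{D}}$ is continuous, sending $n\to\infty$ and then the slack to $0$ gives $\limsup_n\mathcal{I}_{\mathrm{D}}(\delta_n)\le\mathcal{I}_{\mathrm{D}}(\delta_0)$; combined with lower semicontinuity this yields continuity at $\delta_0$.

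I expect the main obstacle to be the corner geometry of $\mathbb{R}_{+}^2$: the dual in \Cref{thm:ID-duality} takes a genuinely different form on the half-lines $\delta_\ell=0$ than in the interior, so one cannot just pass to the limit in a single closed formula, and the pull-back must be done one coordinate at a time so that $\tilde\gamma_n$ lands in the correct limiting uncertainty set; keeping straight which argument of a possibly asymmetric $c_\ell$ enters $\Psi$ versus the transport cost realized by $\pi_\ell^n$ is the delicate bookkeeping. A route that sidesteps the primal gluing would instead work from \Cref{thm:ID-duality} directly: for fixed $\lambda$, $\langle\lambda,\delta\rangle\to0$ as $\delta\to0$, so $\limsup_{\delta\to0}\mathcal{I}_{\mathrm{D}}(\delta)\le\sup_{\varpi\in\Pi(\mu_1,\mu_2)}\int g_\lambda\,d\varpi$, while \Cref{assumption:g-Psi-growth} bounds $\sup_v\big(g_\lambda(v)-g(v)\big)$ by $\sup_{t\in\mathbb{R}_{+}^2}\{\Psi(t)-\langle\lambda,t\rangle\}$, which tends to $0$ as $\lambda\to\infty$ because $\Psi$ is concave with $\Psi(0,0)=0$; hence $\inf_\lambda\sup_\varpi\int g_\lambda\,d\varpi=\mathcal{I}_{\mathrm{D}}(0)$, and the half-lines are handled identically with $g_\lambda$ replaced by $g_{\lambda_1,1}$ or $g_{\lambda_2,2}$ and only the second (resp. first) multiplier driven to infinity.
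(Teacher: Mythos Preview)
Your primal argument is correct and its core is exactly the paper's: pick a near-optimizer in $\Sigma_{\mathrm{D}}(\delta_n)$, glue it to $\mu_\ell$ through a near-optimal $c_\ell$-coupling, and control the loss in $\int g$ via \Cref{assumption:g-Psi-growth} and Jensen. The paper packages this as two quantitative lemmas: one for the origin (your construction with both coordinates pulled back) and one giving a coordinate-wise bound $\mathcal{I}_{\mathrm{D}}(\eta_0,\delta)-\mathcal{I}_{\mathrm{D}}(\eta,\delta)\le\Psi(\eta_0-\eta,0)$ valid for \emph{all} $\eta_0>\eta\ge0$, obtained by interpolating the first marginal as $(\eta/\eta_0)\gamma_1^{\eta_0,\delta}+(1-\eta/\eta_0)\mu_1$ before gluing; continuity at $(\eta,0)$ then follows by chaining the two one-coordinate bounds. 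Your route at the half-lines is a small but genuine simplification: you pull back only the vanishing coordinate and handle the nonzero one by the observation that the restriction $\delta_1\mapsto\mathcal{I}_{\mathrm{D}}(\delta_1,0)$ is concave and finite, hence continuous on $(0,\infty)$. This avoids the convex-combination trick entirely, at the cost of not producing the uniform modulus the paper records. Your dual alternative (push $\lambda\to\infty$ and use $g_\lambda-g\le\sup_t\{\Psi(t)-\langle\lambda,t\rangle\}\to0$) is a different and valid route the paper does not take. The asymmetry issue you flag --- $\Psi$ takes $c_\ell(s_\ell,s_\ell')$ while the coupling controls $c_\ell$ with arguments ordered by $(\mu_\ell,\gamma_\ell)$ --- is present in the paper's proof as well and is handled there in the same informal way.
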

Two implications follow. First, under \Cref{assumption:cost-function} and \Cref{assumption:g-bounded-below},  
\[\mathcal{I}_{\mathrm{D} }(0) = \sup_{\gamma \in \Pi(\mu_{1}, \mu_{2})} \int_{\mathcal{V} } g\, d\gamma.
  \]
Continuity facilitates sensitivity analysis as $\delta$ approaches zero; Second, 
under the assumptions in \Cref{thm:ID-continuity}, we have
 \begin{align*}
    \mathcal{I}_{\mathrm{D} }(\delta) = \inf_{\lambda \in \mathbb{R}_{+}^2}  \left[  \langle \lambda, \delta \rangle  +   \sup_{\varpi \in \Pi(\mu_1, \mu_2)} \int_{\mathcal{V} }g_\lambda \, d\varpi   \right]
    \end{align*}
for all $\delta \in \mathbb{R}_{+}^2$. As a result, the dual $\mathcal{J}_{\mathrm{D} }(\delta)$ in \eqref{eq:ID-dual} is continuous for all $\delta\in\mathbb{R}_+^2$.

\subsection{Overlapping Marginals}
\label{sec:continuity-overlapping}

\Cref{lemma:I-concavity} implies that under \Cref{assumption:cost-function,assumption:f-bounded-below}, $\mathcal{I}(\delta)$ is a concave function for $\delta \in \mathbb{R}_{+}^2$ and hence is continuous on $\mathbb{R}_{++}^2$. We provide the main assumption for the continuity of $\mathcal{I}(\delta)$ on $\mathbb{R}_{+}^2$ below.

To simplify the technical analysis, we maintain \Cref{assumption:metric-cost} in this section. Since the metrics in $\mathcal{Y}_1$ and $\mathcal{Y}_2$ are not specified, we introduce an auxiliary function $\rho_\ell$ from $\mathcal{Y}_\ell \times \mathcal{Y}_\ell$ to $\mathbb{R}_+$ induced by the cost function $c_\ell$, $\ell = 1, 2$.

\begin{assumption}\label{assumption:quasi-metric}
	For $\ell=1,2$, there exists a function $\rho_\ell$ from $\mathcal{Y}_\ell \times \mathcal{Y}_\ell$ to $\mathbb{R}_+$ such that  
	\begin{assumpenum}
	\item \label{assumption:quasi-metric-a} $\rho_\ell$ is symmetric, i.e., $\rho_\ell(y_\ell, y_\ell^\prime) =\rho_\ell(y_\ell^\prime, y_\ell)$ for all $y_\ell, y_\ell^\prime \in \mathcal{Y}_\ell$;
 
	\item \label{assumption:quasi-metric-ii}  there is $q_\ell\in [1, p_\ell]$ such that $\rho_\ell(y_\ell, y_\ell^\prime) \leq \boldsymbol{d}_{\mathcal{S}_\ell } (s_\ell, s_\ell^\prime)^{q_\ell}$ for all $s_\ell \equiv (y_\ell, x)  \in \mathcal{S}_\ell$ and $s^\prime_\ell \equiv (y^\prime_\ell, x^\prime)  \in \mathcal{S}_\ell$; 
 
	\item \label{assumption:quasi-metric-iii} there is a constant $N > 0$ such that  $\rho_\ell(y_\ell, y_\ell^\prime) \leq  N  \left[  \rho_\ell(y_\ell, y_\ell^\star ) + \rho_\ell(  y_\ell^\star , y_\ell^\prime) \right]$ for all $y_\ell, y_\ell^\prime, y_\ell^\star \in \mathcal{Y}_\ell$. 
	\end{assumpenum}
	\end{assumption}

We now introduce the main assumption on $f$.
\begin{assumption}\label{assumption:f-Psi-growth}
	For $\ell = 1,2$, let $\Psi_{\ell} : \mathbb{R}^2_+ \rightarrow \mathbb{R}_+$ be continuous, non-decreasing, and concave satisfying $\Psi_\ell(0,0) = 0$. Suppose for all $s = (y_1, y_2, x)$ and $s^\prime = (y^\prime_1, y^\prime_2, x^\prime)$, it holds that
	\[
	f(y_1, y_2, x) -  f(y_1^\prime, y_2^\prime, x^\prime)  \leq   \Psi_1 \left (  c_1(s_1, s_1^\prime), \rho_2(y_2,  y_2^\prime) \right),
	\]
	and 
	\[
	f(y_1, y_2, x) -  f(y_1^\prime, y_2^\prime, x^\prime)  \leq   \Psi_2 \left ( \rho_1(y_1, y_1^\prime), c_2(s_2, s_2^\prime)  \right).
	\]
	\end{assumption}

Like \Cref{assumption:g-Psi-growth}, \Cref{assumption:f-Psi-growth} depends on the cost functions $c_1,c_2$. It also depends on the auxiliary functions  $\rho_1,\rho_2$. The functions $\Psi_1, \Psi_2$ play the role of the modulus of continuity. 

\begin{example}[$p_j$-product metric]\label{p_prod} 
Let $(\mathcal{Y}_1,\boldsymbol{d}_{\mathcal{Y}_1} ), (\mathcal{Y}_2, \boldsymbol{d}_{\mathcal{Y}_2})$, and  $(\mathcal{X}, \boldsymbol{d}_{\mathcal{X} })$ be Polish (metric) spaces. For $p_\ell \geq 1$, define the $p_\ell$-product metric on $\mathcal{S}_\ell$ as
	 \[\boldsymbol{d}_{\mathcal{S}_\ell} (s_\ell, s_\ell^\prime) =   \left[ \boldsymbol{d}_{\mathcal{Y}_\ell}(y_\ell , y_\ell^\prime)^{p_\ell} +   \boldsymbol{d}_{\mathcal{X} }(x, x^\prime)^{p_\ell} \right]^{1/p_\ell}. \]
Let
  \[
\rho_\ell( y_\ell , y_\ell^\prime ):= \inf_{ x_\ell, x_\ell^\prime \in \mathcal{X} }  \boldsymbol{d}_{\mathcal{S}_\ell} \left( (y_\ell, x_\ell) , (y_\ell^{\prime}, x_\ell^{\prime}) \right)^{p_\ell}.
\]
It is easy to show that $ \rho_\ell( y_\ell , y_\ell^\prime ) =\boldsymbol{d}_{\mathcal{Y}_\ell}(y_\ell, y_\ell^\prime)^{p_\ell}$ and \Cref{assumption:quasi-metric} is satisfied with $N=2^{p_\ell}$. Moreover, \Cref{assumption:f-Psi-growth} reduces to 
\[
\begin{aligned}
    f(y_1, y_2, x) -  f(y_1^\prime, y_2^\prime, x^\prime)  &\leq   \Psi_1 \left(  \boldsymbol{d}_{\mathcal{S}_1}\left(s_1, s_1^{\prime}\right)^{p_1},   \boldsymbol{d}_{\mathcal{Y}_2}\left(y_2, y_2^{\prime}\right)^{p_2}  \right) \quad \text{and } \\
    f(y_1, y_2, x) -  f(y_1^\prime, y_2^\prime, x^\prime) & \leq   \Psi_2 \left(\boldsymbol{d}_{\mathcal{Y}_1}  \left(y_1, y_1^{\prime} \right)^{p_1},  \boldsymbol{d}_{\mathcal{S}_2}\left(s_2, s_2^{\prime}\right)^{p_2}  \right).
\end{aligned}	
\]
When  $p_1 = p_2 = p$, \Cref{assumption:f-Psi-growth} may be reduced to a simpler form. To see this, define two functions $\psi_1$ and $\psi_2$ from $\mathbb{R}^3$ to $\mathbb{R}^2$ as $\psi_1: (z_1,z_2, z) \mapsto (  z_1 + z ,  z_2)$ and  $\psi_2: (z_1,z_2, z) \mapsto ( z_1,  z_2 + z  )$. We can see that
	\[
	\Psi_1 \left(  \boldsymbol{d}_{\mathcal{S}_1}(s_1, s_1^\prime)^p, \rho_2(y_1, y_1^\prime)^{p}   \right)  = \Psi_1 \circ   \psi_1  \left(  \boldsymbol{d}_{\mathcal{Y}_1}(y_1, y_1^\prime)^{p} , \boldsymbol{d}_{\mathcal{Y}_2} (y_2, y_2^\prime )^p  ,    \boldsymbol{d}_{\mathcal{X}}(x, x^\prime)^p  \right), 
	\]
	\[
	\Psi_2 \left( \rho_1(y_1, y_1^\prime)^{p} ,  \boldsymbol{d}_{\mathcal{S}_2}(s_2, s_2^\prime)^p   \right)  = \Psi_2 \circ   \psi_2  \left(  \boldsymbol{d}_{\mathcal{Y}_1}(y_1, y_1^\prime)^{p} ,   \boldsymbol{d}_{\mathcal{Y}_2}(y_2, y_2^\prime )^p  ,    \boldsymbol{d}_{\mathcal{X}}(x, x^\prime)^p  \right).
	\]
	Since $\psi_j$ is linear, $\Phi_j  = \Psi_j \circ \psi_j$ is still continuous, non-decreasing and concave.  \Cref{assumption:f-Psi-growth} is reduced to the following condition:
	\[
	f(y_1, y_2, x) -  f(y_1^\prime, y_2^\prime, x^\prime)  \leq   \Phi_j  \left(  \boldsymbol{d}_{\mathcal{Y}_1}(y_1, y_1^\prime)^{p} ,    \boldsymbol{d}_{\mathcal{Y}_2} (y_2, y_2^\prime )^p  ,    \boldsymbol{d}_{\mathcal{X}}(x, x^\prime)^p  \right)
	\]
	for all $(y_1,y_2, x) \in \mathcal{S}$ and $(y_1^\prime, y_2^\prime, x^\prime) \in \mathcal{S}$. 
\end{example}

\begin{theorem} \label{thm:I-continuity}
    Suppose \Cref{assumption:f-bounded-below,assumption:finite-moments-I,assumption:metric-cost,assumption:quasi-metric,assumption:f-Psi-growth} hold, and $\mathcal{I}(\delta) < \infty$ for some $\delta > 0$.  Then 
    the function $\mathcal{I}(\delta)$ is continuous on $\mathbb{R}_{+}^2$.
\end{theorem}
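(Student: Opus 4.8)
The plan is to push the argument of \citet{zhang2022simple} used for \Cref{thm:ID-continuity} through to the overlapping case, the new ingredient being a gluing construction that keeps track of the covariate coordinate shared by the two overlapping marginals. First I would note that by \Cref{lemma:I-concavity} the function $\mathcal{I}$ is concave and non-decreasing on $\mathbb{R}_{+}^2$; combined with the hypothesis $\mathcal{I}(\delta)<\infty$ for some $\delta>0$, with $\mathcal{I}>-\infty$ (\Cref{assumption:f-bounded-below}), and with monotonicity, this forces $\mathcal{I}$ to be finite on all of $\mathbb{R}_{+}^2$ and hence continuous on the open quadrant $\mathbb{R}_{++}^2$. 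Lower semicontinuity on the boundary is immediate from monotonicity: for $\delta^{(n)}\to\delta_0$ one has $\delta^{(n)}_i\ge\max\{\delta_{0,i}-\epsilon,0\}$ for $n$ large, so $\liminf_n\mathcal{I}(\delta^{(n)})\ge\mathcal{I}\bigl(\max\{\delta_{0,1}-\epsilon,0\},\max\{\delta_{0,2}-\epsilon,0\}\bigr)$, and letting $\epsilon\downarrow0$ and using continuity of the one-dimensional concave restrictions $\delta_1\mapsto\mathcal{I}(\delta_1,0)$ and $\delta_2\mapsto\mathcal{I}(0,\delta_2)$ on the interiors of their domains gives $\liminf_n\mathcal{I}(\delta^{(n)})\ge\mathcal{I}(\delta_0)$. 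It therefore remains to establish upper semicontinuity at the boundary points $(\delta_1^{\circ},0)$ and $(0,\delta_2^{\circ})$; by the symmetry of the formulation it suffices to treat $(\delta_1^{\circ},0)$, and — again by one-dimensional concavity — only approaches from $\mathbb{R}_{++}^2$ need be considered.

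For the estimate at $(\delta_1^{\circ},0)$ with $\delta_1^{\circ}>0$, fix $\delta=(\delta_1,\delta_2)$ with $\delta_2>0$ and pick $\gamma\in\Sigma(\delta)$ with $\int_{\mathcal S}f\,d\gamma\ge\mathcal{I}(\delta)-\epsilon$. Let $\pi_2\in\Pi(\gamma_{23},\mu_{23})$ be almost optimal for $\boldsymbol{K}_2$, so $\int c_2\,d\pi_2\le\delta_2+\epsilon$. I would glue $\gamma$ and $\pi_2$ along their common $\mathcal S_2$-marginal $\gamma_{23}$ and push the result forward to a measure $\widetilde\gamma$ on $\mathcal S$ whose $\mathcal S_2$-marginal equals $\mu_{23}$; existence of the glued measure comes from the consistent-product-marginal-system results of \citet{vorob1962consistent,kellerer1964verteilungsfunktionen,shortt1983combinatorial} that already underlie \Cref{thm:I-duality}, the point being that the $\mathcal X$-coordinate is shared with the $\mathcal S_1$-marginal. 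Because replacing $\gamma_{23}$ by $\mu_{23}$ also shifts the $\mathcal X$-coordinate, $\widetilde\gamma$ lies not in $\Sigma(\delta_1,0)$ but in $\Sigma(\delta_1+r(\delta_2),0)$ for some $r(\delta_2)\to0$; bounding this enlargement of the first Wasserstein constraint — via \Cref{assumption:metric-cost}, \Cref{assumption:quasi-metric}, and a moment/interpolation estimate for the covariate marginals drawn from \Cref{assumption:finite-moments-I} — is the technical heart of the proof, and it is exactly what makes these additional hypotheses (absent in \Cref{thm:ID-continuity}) necessary. For the objective, the second inequality of \Cref{assumption:f-Psi-growth} with $\rho_1(y_1,y_1)=0$ and Jensen's inequality give
\[
\int_{\mathcal S}f\,d\gamma-\int_{\mathcal S}f\,d\widetilde\gamma\ \le\ \int\Psi_2\!\bigl(0,c_2(s_2,s_2')\bigr)\,d\pi_2\ \le\ \Psi_2\!\Bigl(0,\textstyle\int c_2\,d\pi_2\Bigr)\ \le\ \Psi_2(0,\delta_2+\epsilon).
\]
Hence $\mathcal{I}(\delta)\le\mathcal{I}(\delta_1+r(\delta_2),0)+\Psi_2(0,\delta_2+\epsilon)+\epsilon$; sending $\epsilon\downarrow0$ and then $(\delta_1,\delta_2)\to(\delta_1^{\circ},0)$, and using continuity of $\mathcal{I}(\cdot,0)$ at $\delta_1^{\circ}>0$ together with $\Psi_2(0,0)=0$, yields $\limsup\mathcal{I}(\delta)\le\mathcal{I}(\delta_1^{\circ},0)$.

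At the origin the estimate above only reduces $\limsup_{\delta\to0}\mathcal{I}(\delta)$ to $\limsup_{t\downarrow0}\mathcal{I}(t,0)$, and the symmetric estimate on the $\mathcal S_1$-side does not close this loop; so here I would instead glue $\gamma$ \emph{simultaneously} with almost-optimal couplings $\pi_1\in\Pi(\gamma_{13},\mu_{13})$ and $\pi_2\in\Pi(\gamma_{23},\mu_{23})$ chosen (by conditioning on $\mathcal X$) to induce one and the same coupling of the covariate marginals, so that the glued measure pushes forward to some $\nu\in\mathcal F(\mu_{13},\mu_{23})=\Sigma(0)$; the first inequality of \Cref{assumption:f-Psi-growth}, the bound $\rho_2(y_2,y_2')\le c_2(s_2,s_2')^{q_2/p_2}$ from \Cref{assumption:quasi-metric}, and Jensen's inequality applied twice (using $q_2\le p_2$) give $\int_{\mathcal S}f\,d\gamma-\int_{\mathcal S}f\,d\nu\le\Psi_1\bigl(\delta_1+\epsilon,(\delta_2+\epsilon)^{q_2/p_2}\bigr)$, whence $\mathcal{I}(\delta)\le\mathcal{I}(0)+\Psi_1\bigl(\delta_1+\epsilon,(\delta_2+\epsilon)^{q_2/p_2}\bigr)+\epsilon\to\mathcal{I}(0)$ as $\epsilon\downarrow0$, $\delta\to0$. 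Combining the three steps gives continuity on $\mathbb{R}_{+}^2$. Equivalently, these estimates are exactly what is needed to show that $\mathcal{I}(\delta)=\inf_{\lambda\in\mathbb{R}_{+}^2}\{\langle\lambda,\delta\rangle+\sup_{\varpi\in\Pi(\mu_{13},\mu_{23})}\int_{\mathcal V}f_\lambda\,d\varpi\}$ for all $\delta\in\mathbb{R}_{+}^2$, i.e.\ the interior dual form of \Cref{thm:I-duality} extends to the boundary; being an infimum of affine functions of $\delta$, this representation is upper semicontinuous, which together with the lower semicontinuity established above re-derives continuity. The decisive difficulty throughout is that, because the two overlapping marginals share the covariate coordinate, pinning one of them to its reference necessarily perturbs the covariate marginal and hence the other marginal and its Wasserstein ball — making that perturbation quantitatively small, and verifying that the requisite glued measures exist, is where the extra structure is spent.
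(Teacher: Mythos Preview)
Your overall plan is sound --- concavity handles the interior, monotonicity gives lower semicontinuity on the boundary, so only upper semicontinuity at boundary points needs work --- and you correctly locate the difficulty in the shared covariate. But the specific construction you propose for the boundary has a real gap. When you glue $\gamma$ with a near-optimal $\pi_2\in\Pi(\gamma_{23},\mu_{23})$ and push forward to $\widetilde\gamma$ with $\widetilde\gamma_{23}=\mu_{23}$, you then need $\boldsymbol{K}_1(\widetilde\gamma_{13},\mu_{13})\le\delta_1+r(\delta_2)$ with $r(\delta_2)\to0$. The natural coupling between $\widetilde\gamma_{13}=\mathrm{Law}(Y_1,X')$ and $\gamma_{13}=\mathrm{Law}(Y_1,X)$ has cost $\mathbb E\bigl[\boldsymbol d_{\mathcal S_1}((Y_1,X'),(Y_1,X))^{p_1}\bigr]$, and nothing in \Cref{assumption:metric-cost,assumption:quasi-metric,assumption:finite-moments-I} bounds this in terms of $\int c_2\,d\pi_2\le\delta_2$: \Cref{assumption:quasi-metric} only says $\rho_\ell(y_\ell,y_\ell')\le\boldsymbol d_{\mathcal S_\ell}^{\,q_\ell}$, which controls $\mathcal Y_\ell$-variation from $\mathcal S_\ell$-variation --- the wrong direction, and on the wrong coordinate, for what you need. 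The origin argument has the same defect: near-optimal $\pi_1$ and $\pi_2$ generically induce different couplings of $(\gamma_X,\mu_X)$, and forcing them to agree destroys near-optimality.

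The paper sidesteps this with a different construction (\Cref{lemma:I-inequality}). Given a near-optimizer $\gamma^{\eta_0,\delta}\in\Sigma(\eta_0,\delta)$ and $0\le\eta<\eta_0$, it uses a decomposable index system to build $(Y_1,Y_2,X,\widetilde Y_1,\widetilde Y_2,\widetilde X)$ with $\mathrm{Law}(Y_1,Y_2,X)=\gamma^{\eta_0,\delta}$, $\mathrm{Law}(\widetilde Y_\ell,\widetilde X)=\mu_{\ell3}$ for $\ell=1,2$, and $\mathrm{Law}((Y_1,X),(\widetilde Y_1,\widetilde X))$ optimal for $\boldsymbol K_1(\gamma^{\eta_0,\delta}_{13},\mu_{13})$. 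It then lets $\varepsilon$ be Bernoulli with $\mathbb P(\varepsilon=1)=\eta/\eta_0$, independent of everything, and sets $\widehat S=\varepsilon S+(1-\varepsilon)\widetilde S$, so that $\widehat\gamma=(\eta/\eta_0)\gamma^{\eta_0,\delta}+(1-\eta/\eta_0)\widetilde\gamma$ with $\widetilde\gamma\in\mathcal F(\mu_{13},\mu_{23})$. Convexity of $\nu\mapsto\boldsymbol K_\ell(\mu_{\ell3},\nu)$ gives $\widehat\gamma\in\Sigma(\eta,\delta)$ \emph{exactly} --- neither ball is enlarged. For the objective one has $\mathbb E[c_1(S_1,\widehat S_1)]\le(1-\eta/\eta_0)\eta_0=\eta_0-\eta$, and, crucially, $\mathbb E[\rho_2(Y_2,\widehat Y_2)]=(1-\eta/\eta_0)\mathbb E[\rho_2(Y_2,\widetilde Y_2)]\le M(1-\eta/\eta_0)$ for a finite constant $M$ obtained from \Cref{assumption:quasi-metric-ii,assumption:quasi-metric-iii} and \Cref{assumption:finite-moments-I}. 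Then \Cref{assumption:f-Psi-growth} and Jensen yield $\mathcal I(\eta_0,\delta)-\mathcal I(\eta,\delta)\le\Psi_1\bigl(\eta_0-\eta,\,M(1-\eta/\eta_0)\bigr)$, and the symmetric inequality with $\Psi_2$ handles variation in the second coordinate. The key point is that $\rho_2$ depends only on the $\mathcal Y_2$-coordinates, so no relationship between the $\mathcal X$-parts of $c_1$ and $c_2$ is ever invoked; this Bernoulli-mixture trick is the idea your construction is missing.
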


Like the non-overlapping case, two implications follow. First, under \Cref{assumption:cost-function} and \Cref{assumption:g-bounded-below},  
\[
 \mathcal{I}(0)= \sup_{\gamma \in \mathcal{F}(\mu_{13}, \mu_{23}) } \int_\mathcal{S} f\,  d \gamma.
  \]
Continuity facilitates sensitivity analysis as $\delta$ approaches zero; Second, 
under the assumptions in \Cref{thm:I-continuity}, we have
   \begin{align*}
    \mathcal{I}(\delta) = \inf_{\lambda \in \mathbb{R}_{+}^2}  \left[  \langle \lambda, \delta \rangle  +   \sup_{\varpi \in \Pi(\mu_{13}, \mu_{23})} \int_{\mathcal{V} }f_\lambda \, d\varpi   \right]
    \end{align*}
    for all $\delta \in \mathbb{R}_{+}^2$. As a result, the dual $\mathcal{J}(\delta)$ in \eqref{eq:I-dual} is continuous for all $\delta\in\mathbb{R}_+^2$.

\section{Motivating Examples Revisited} \label{sec:examples-revisited}

In this section, we apply the results in Sections 3-5 to the examples introduced in Section 2. 

\subsection{Partial Identification of Treatment Effects} \label{sec:ATE-revisited}

In addition to characterizing $\Theta(\delta)$ introduced in Section 2, we also study the identified set for $\theta_{Do}= \mathbb{E}_o[f(Y_1,Y_2)]$ without using the covariate information:
\[
\Theta_{\mathrm{D}}(\delta) := \left\{  \int_{\mathcal{Y}_1 \times \mathcal{Y}_2 } f(y_1, y_2) \, d \gamma(y_1 ,y_2) : \gamma \in  \Sigma_{\mathrm{D} }(\delta) \right\},
\]
where 
\begin{align*}
    \Sigma_{\mathrm{D} }(\delta) = 
    \left\{\gamma \in \mathcal{P}(\mathcal{Y}_1 \times \mathcal{Y}_2): \boldsymbol{K}_{Y_1}(\mu_{Y_1}, \gamma_{1}) \le \delta_1,  \boldsymbol{K}_{Y_1}( \mu_{Y_2},\gamma_{2}) \le \delta_2 \right\}
\end{align*}
in which $\boldsymbol{K}_{Y_1}$ and $\boldsymbol{K}_{Y_2}$ are the optimal transport costs associated with cost functions $c_{Y_1}$ and $c_{Y_2}$, respectively. 

\subsubsection{Characterization of the Identified Sets}

When $f$ is continuous and conditions in \Cref{prop:Identified-Sets-Interval} are satisfied, the identified sets $\Theta_{\mathrm{D}}(\delta)$ and $\Theta(\delta)$ are both closed intervals with upper limits given by W-DMR for non-overlapping and overlapping marginals respectively. This allows us to apply our duality results in Section 3 to evaluate and compare $\Theta_{\mathrm{D}}(\delta)$ and $\Theta(\delta)$.

Let $\mathcal{I}_{\mathrm{D} }(\delta)$ and $\mathcal{I}(\delta)$ denote the upper bounds of $\Theta_{\mathrm{D}}(\delta)$ and $\Theta(\delta)$, respectively, where 
\[
	\mathcal{I}_{\mathrm{D} }(\delta) 
	= \sup_{\gamma \in \Sigma_{\mathrm{D} }(\delta)} \int_{\mathcal{Y}_1 \times \mathcal{Y}_2} f(y_1, y_2) \, d \gamma(y_1, y_2) \text{  and  } \mathcal{I}(\delta) 
	 = \sup_{\gamma \in \Sigma(\delta)} \int_{\mathcal{S}} f(y_1, y_2) \, d \gamma(y_1, y_2, x).
\]
\Cref{prop:Identified-Sets-Interval} establishes robust versions of existing results on the identified sets of treatment effects under \Cref{assumption:selection-on-observables}, see \citet{Fan2017}. Sensitivity to deviations from \Cref{assumption:selection-on-observables} can be examined via $\Theta_{\mathrm{D}}(\delta)$ and $\Theta(\delta)$ by varying $\delta$. For example, when $f$ satisfies assumptions in \Cref{thm:ID-continuity,thm:I-continuity}, $\mathcal{I}(\delta) $ and $\mathcal{I}_{\mathrm{D} }(\delta)$ are continuous on $\mathbb{R}_+^2$. As a result, 
\[\lim_{\delta\rightarrow 0 }\mathcal{I}(\delta)=\mathcal{I}(0) \quad \text{and} \quad \lim_{\delta\rightarrow 0 }\mathcal{I}_{\mathrm{D} }(\delta)=\mathcal{I}_{\mathrm{D} }(0).
\]

For a general function $f$, the lower and upper limits of the identified sets $\Theta_{\mathrm{D}}(\delta)$ and $\Theta(\delta)$ need to be computed numerically. When $f$ is additively separable, we show that duality results in Section 3 simplify the evaluation of $\Theta_D(\delta)$ and $\Theta(\delta)$. Since the lower bounds of $\Theta_D(\delta)$ and $\Theta(\delta)$ can be computed in a similar way by applying duality to $-f(y_1, y_2)$, we omit details for the lower bounds.

\begin{assumption}\label{assumption: diff_f}
	Let $f: (y_1,y_2, x) \mapsto f_1(y_1) + f_2(y_2)$ from  $\mathcal{S}$ to $\mathbb{R}$, where $f_\ell \in L^1(\mu_{\ell 3})$ for $\ell=1,2$.
	\end{assumption}
To avoid tedious notation, we also treat $f$ as a function from $\mathcal{Y}_1 \times \mathcal{Y}_2$ to $\mathbb{R}$. Under \Cref{assumption:cost-function,assumption: diff_f}, it is easy to show that 
\begin{align*}
 \mathcal{I}_{\mathrm{D}}(\delta) 
 & = \sup_{\gamma_1:\boldsymbol{K}_{Y_{1}}(\mu_{Y_1}, \gamma_1) \le \delta_1 }\int_{\mathcal{Y}_1}f_1 \, d\gamma_1 + \sup_{\gamma_2:\boldsymbol{K}_{Y_{2}}(\mu_{Y_2}, \gamma_{2}) \le \delta_2}\int_{\mathcal{Y}_2}f_2 \,  d\gamma_2 \\
 & = \inf_{\lambda_1 \geq 0 } \left[ \lambda_1 \delta_1 +\int_{\mathcal{Y}_1}(f_1)_{\lambda_1}   d \mu_1  \right] + \inf_{\lambda_2 \geq 0 } \left[ \lambda_2 \delta_2 +\int_{\mathcal{Y}_2}  (f_2 )_{\lambda_2}   d \mu_2  \right],
 \end{align*}
where $(f_\ell)_{\lambda_\ell}: \mathcal{Y}_\ell \rightarrow \mathbb{R}$ is given by 
	\[
	(f_\ell)_{\lambda_\ell}(y_\ell) = \sup_{ y_\ell^\prime \in \mathcal{Y}_\ell  }   \left\{   f_\ell(y_\ell^{\prime})-\lambda_\ell c_{Y_\ell}(y_{\ell}, y_{\ell}')\right\}.
	\]
That is, when $f$ is an additively separable function, the W-DMR for non-overlapping marginals is the sum of two W-DMRs associated with the marginals regardless of the cost functions.

Depending on the cost functions, the W-DMR for overlapping marginals may be different from the sum of two W-DMRs associated with the marginals.

\begin{definition}[Ref. {\citet{Chen_2022}}] 
	We say that a function $f: \mathcal{X} \times \mathcal{Y} \to \mathbb{R}$ is separable if each $x$ and $y$ can be optimized regardless of the other variable. In other words,  
	 	\begin{align*}
	 		\argmin_{x, y} f(x, y) = \left (\argmin_{x \in \mathcal{X}} f(x, y'),  \argmin_{y\in \mathcal{Y}} f(x', y) \right )
	 	\end{align*} 
	    for any $x' \in \mathcal{X}$  and $y' \in \mathcal{Y}$.  
\end{definition}
\begin{assumption}\label{assumption: separable_cost}
	For $\ell = 1, 2$, the cost function $c_\ell((y_\ell, x_\ell), (y_\ell', x_\ell'))$ is separable with respect to $(y_\ell, y_\ell')$ and $(x_\ell, x_\ell')$.
\end{assumption}
	
    \begin{example}
        Let $a_\ell: \mathcal{Y}_{\ell} \times \mathcal{Y}_{\ell} \to \mathbb{R}_{+} \cup \{\infty\}$ and $b_\ell: \mathcal{X} \times \mathcal{Y} \to \mathbb{R}_{+} \cup \{\infty\}$ satisfy \Cref{assumption:cost-function}. Let $s = (y, x)$ and $s' = (y', x')$.
        Then $c(s, s') = a(y, y') + b(x, x')$ is separable with respect to $(x, x')$ and $(y, y')$. Also, both  $c(s, s')=(a(y, y') +1)(b(x, x')+1)-1$ and $c(s, s') = \left[a(y, y')^p + b(x, x')^p\right]^{1/p}$ for $p \ge 1$ are separable with respect to $(x, x')$ and $(y, y')$ even though they are not additively separable.
    \end{example}

	\begin{proposition}  \label{prop:I-ID-equivalence-separable}
 For $\ell=1,2$, let $c_\ell: (\mathcal{Y}_\ell \times \mathcal{X}) \times  (\mathcal{Y}_\ell \times \mathcal{X}) \rightarrow \mathbb{R}_+$ denote the cost function for $\Theta(\delta)$. Suppose that $c_\ell$ satisfies \Cref{assumption:cost-function} and the marginal measure of  $\mu_{\ell 3}$ on $\mathcal{Y}_\ell$ coincides with $\mu_\ell$, i.e., $\mu_{\ell,3} = \mathrm{Law}(Y_\ell, X)$ with $\mu_\ell = \mathrm{Law}(Y_\ell)$. Under \Cref{assumption: diff_f,assumption: separable_cost}, one has $\mathcal{I}(\delta) = \mathcal{I}_{\mathrm{D} }(\delta)$, where $\mathcal{I}_{\mathrm{D} }(\delta)$ is based on the cost function $c_{Y_\ell}$ on $\mathcal{Y}_\ell \times \mathcal{Y}_\ell$ given by
	\[
	c_{Y_{\ell}}\left(y_{\ell}, y_{\ell}^{\prime}\right)=\inf _{x_{\ell}, x_{\ell}^{\prime} \in \mathcal{X}} c_{\ell}\left(\left(y_{\ell}, x_{\ell}\right),\left(y_{\ell}^{\prime}, x_{\ell}^{\prime}\right)\right).
	\]
 \end{proposition}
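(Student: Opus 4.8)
The plan is to reduce the asserted identity to the pair of inequalities $\mathcal{I}(\delta)\le\mathcal{I}_{\mathrm{D}}(\delta)$ and $\mathcal{I}(\delta)\ge\mathcal{I}_{\mathrm{D}}(\delta)$, exploiting that under \Cref{assumption: diff_f} both quantities split across $\ell=1,2$. Writing $f=f_1+f_2$ and letting $\gamma_\ell$ denote the $\mathcal{Y}_\ell$-projection of $\gamma\in\mathcal{P}(\mathcal{S})$, one has $\int f\,d\gamma=\int f_1\,d\gamma_1+\int f_2\,d\gamma_2$, so $\mathcal{I}(\delta)=\sup_{\gamma\in\Sigma(\delta)}\bigl(\int f_1\,d\gamma_1+\int f_2\,d\gamma_2\bigr)$, while the display for $\mathcal{I}_{\mathrm{D}}(\delta)$ just before the statement already records $\mathcal{I}_{\mathrm{D}}(\delta)=\sup_{\nu_1:\,\boldsymbol{K}_{Y_1}(\mu_1,\nu_1)\le\delta_1}\int f_1\,d\nu_1+\sup_{\nu_2:\,\boldsymbol{K}_{Y_2}(\mu_2,\nu_2)\le\delta_2}\int f_2\,d\nu_2$. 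The one structural consequence I would extract from \Cref{assumption: separable_cost} is
\begin{equation*}
c_\ell\bigl((y_\ell,x),(y_\ell',x)\bigr)=c_{Y_\ell}(y_\ell,y_\ell'):=\inf_{x_\ell,x_\ell'\in\mathcal{X}}c_\ell\bigl((y_\ell,x_\ell),(y_\ell',x_\ell')\bigr)\qquad\text{for every }x\in\mathcal{X};\tag{$\star$}
\end{equation*}
to obtain $(\star)$, treat $c_\ell$ as a function of the pair $(x_\ell,x_\ell')$ and the pair $(y_\ell,y_\ell')$: separability forces $\argmin_{(x_\ell,x_\ell')}c_\ell((y_\ell,x_\ell),(y_\ell',x_\ell'))$ to be the same set for every $(y_\ell,y_\ell')$, and evaluating at $y_\ell=y_\ell'$ and using \Cref{assumption:cost-function} pins that set to the diagonal $\{(x,x):x\in\mathcal{X}\}$, so every diagonal pair attains the infimum. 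As a byproduct $(\star)$ also guarantees $c_{Y_\ell}$ satisfies \Cref{assumption:cost-function}, so the one-dimensional duality recorded before the statement is legitimately available for $\mathcal{I}_{\mathrm{D}}(\delta)$.

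The cleanest route for both inequalities is through the dual. For $\delta\in\mathbb{R}_{++}^2$, \Cref{thm:I-duality} gives $\mathcal{I}(\delta)=\inf_{\lambda\in\mathbb{R}_+^2}\{\langle\lambda,\delta\rangle+\sup_{\varpi\in\Pi(\mu_{13},\mu_{23})}\int_{\mathcal{V}}f_\lambda\,d\varpi\}$. On the one hand, bounding $c_\ell((y_\ell,x_\ell),(y_\ell',x'))\ge c_{Y_\ell}(y_\ell,y_\ell')$ inside the inner supremum defining $f_\lambda$ yields $f_\lambda(v)\le(f_1)_{\lambda_1}(y_1)+(f_2)_{\lambda_2}(y_2)$ for every $v=((y_1,x_1),(y_2,x_2))$, and since these one-marginal integrands push forward to $\mu_\ell$ under any $\varpi\in\Pi(\mu_{13},\mu_{23})$, $\sup_\varpi\int f_\lambda\,d\varpi\le\int(f_1)_{\lambda_1}\,d\mu_1+\int(f_2)_{\lambda_2}\,d\mu_2$. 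On the other hand, because $\mu_{13}$ and $\mu_{23}$ share the $\mathcal{X}$-marginal $\mu_X$, disintegrating both over $\mu_X$ produces a coupling $\varpi^\star\in\Pi(\mu_{13},\mu_{23})$ concentrated on $\{x_1=x_2\}$; on that set, choosing the transported point $x'=x_1=x_2$ in the definition of $f_\lambda$ and invoking $(\star)$ turns the previous bound into an equality, so $\int f_\lambda\,d\varpi^\star=\int(f_1)_{\lambda_1}\,d\mu_1+\int(f_2)_{\lambda_2}\,d\mu_2$. Hence $\sup_\varpi\int f_\lambda\,d\varpi=\int(f_1)_{\lambda_1}\,d\mu_1+\int(f_2)_{\lambda_2}\,d\mu_2$ for every $\lambda$; taking $\inf_\lambda$, the objective separates in $(\lambda_1,\lambda_2)$, and the one-dimensional dual identities before the statement give $\mathcal{I}(\delta)=\mathcal{I}_{\mathrm{D}}(\delta)$. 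The boundary points $(0,0)$, $(\delta_1,0)$, $(0,\delta_2)$ are treated identically using the boundary dual forms in \eqref{eq:I-dual} (with $(0,0)$ immediate, since there the $\mathcal{Y}_\ell$-marginals are pinned to $\mu_\ell$ and both sides equal $\int f_1\,d\mu_1+\int f_2\,d\mu_2$).

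I expect the main obstacle to be establishing $(\star)$ rigorously from \Cref{assumption: separable_cost}, in particular reading the separability definition set-valued since the relevant $\argmin$ sets are not singletons, and—relatedly—verifying that the coupling $\varpi^\star$ together with the choice $x'=x_1=x_2$ genuinely collapses the inner supremum in $f_\lambda$ onto the two separate one-marginal dual objects $(f_1)_{\lambda_1}$ and $(f_2)_{\lambda_2}$; once $(\star)$ is in hand the rest is routine bookkeeping with marginals and Fubini. Fully primal alternatives are also available: the inequality $\mathcal{I}(\delta)\le\mathcal{I}_{\mathrm{D}}(\delta)$ follows for all $\delta\in\mathbb{R}_+^2$ by marginalizing an (almost) optimal $\boldsymbol{K}_\ell$-coupling of $(\mu_{\ell3},\gamma_{\ell3})$ onto $\mathcal{Y}_\ell\times\mathcal{Y}_\ell$ and using $c_{Y_\ell}\le c_\ell$, while for the reverse inequality, given feasible $\nu_1,\nu_2$ with optimal couplings $\pi_\ell\in\Pi(\mu_\ell,\nu_\ell)$ for $\boldsymbol{K}_{Y_\ell}$, one builds $\gamma\in\Sigma(\delta)$ as the law of $(Y_1',Y_2',X)$ with $X\sim\mu_X$, $Y_\ell$ drawn from the conditional of $\mu_{\ell3}$ given $X$, and $Y_\ell'$ from the conditional of $\pi_\ell$ given $Y_\ell$, keeping $X$ fixed; leaving the $\mathcal{X}$-coordinate untouched makes each transport cost equal $\int c_{Y_\ell}\,d\pi_\ell\le\delta_\ell$ by $(\star)$, and $\int f\,d\gamma=\int f_1\,d\nu_1+\int f_2\,d\nu_2$.
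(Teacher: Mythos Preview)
Your proposal is correct and follows essentially the same route as the paper's own proof: both argue through the duals in \Cref{thm:ID-duality,thm:I-duality}, obtain the inequality $(f_{\mathcal{S}})_\lambda\le (f_1)_{\lambda_1}+(f_2)_{\lambda_2}$ from $c_{Y_\ell}\le c_\ell$, and close the gap by evaluating on the diagonal coupling $\varpi^\star=\mathrm{Law}(Y_1,X,Y_2,X)$, using separability to identify $c_\ell((y_\ell,x),(y_\ell',x))$ with $c_{Y_\ell}(y_\ell,y_\ell')$. Your explicit isolation and justification of the identity $(\star)$ is exactly what the paper asserts in one line (``if $x_1=x_2$, then $(f_{\mathcal{S}})_\lambda=(f_1)_{\lambda_1}+(f_2)_{\lambda_2}$ under \Cref{assumption: separable_cost}''); your additional primal construction and treatment of boundary $\delta$ are not in the paper but are sound supplements.
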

It is easy to verify that $c_{Y_\ell}(y_\ell, y_\ell^\prime) = 0$ if and only if $y_\ell = y_\ell^\prime$.

This proposition implies that for separable cost functions, the W-DMR for overlapping marginals equals the W-DMR for non-overlapping marginals with cost function $c_{Y_\ell}(y_\ell, y_\ell^\prime)$. As a result, the covariate information does not help shrink the identified set.

\subsubsection{Average Treatment Effect} 

Suppose $f(y_1, y_2) = y_2 - y_1$ and $c_{\ell}((y, x), (y_{\ell}, x_{\ell})) = |y - y'|^{2} + \| x_\ell - x_\ell' \|^2$ for $\ell = 1, 2$. Let $\tau_{ATE}=\mathbb{E}[Y_2 - Y_1]$. Then \Cref{prop:I-ID-equivalence-separable} implies that the upper bound on $\tau_{ATE}$ is given by
	\begin{align*}
		\mathcal{I}(\delta) 
		= \mathcal{I}_{\mathrm{D} }(\delta) 
		& = 
		\mathbb{E}[Y_2] - \mathbb{E}[Y_1] + \sqrt{\delta_1} + \sqrt{\delta_2}.
	\end{align*}
 
In the rest of this section, we demonstrate that when \Cref{assumption: separable_cost} is violated, the W-DMR for overlapping marginals may be smaller than the W-DMR for non-overlapping marginals and, as a result, $\Theta(\delta)$ is a proper subset of $\Theta_D(\delta)$. 

Consider the squared Mahalanobis distance with respect to a positive definite matrix. That is,
\begin{align*}
c_{\ell} (s_\ell, s_\ell') = (s_\ell - s_\ell')^{\top} V_\ell^{-1} (s_\ell - s_\ell'),
\end{align*}
where $V_\ell = \begin{pmatrix}
    V_{\ell, YY} & V_{\ell, YX} \\
    V_{\ell, XY} & V_{\ell, XX}
\end{pmatrix}$ is a positive definite matrix. It is easy to show that 
\begin{align*}
	c_{Y_\ell}(y_\ell , y_\ell') & =  \min_{x_\ell, x_\ell' \in \mathcal{X}_\ell'} c_{\ell} (s_\ell, s_\ell')  = (y_\ell - y_\ell')^{\top} V_{\ell, YY}^{-1} (y_\ell - y_\ell'),
\end{align*}
where $s_\ell = (y_\ell, x_\ell)$ and $s_{\ell}' = (y_\ell', x_\ell')$. 

\begin{proposition} \label{prop:ATE-Mahalanobis}
    Let $\mathcal{I}$ be the primal of the overlapping W-DMR problem under
    \begin{align*}
        c_{\ell} (s_\ell, s_\ell') = (s_\ell - s_\ell')^{\top} V_\ell^{-1} (s_\ell - s_\ell').
        \end{align*}
Let $\mathcal{I}_{\mathrm{D}}$ be the primal of the non-overlapping W-DMR problem under
$  c_{Y_\ell}(y_\ell ,y_\ell') $. Assume that $\mathbb{E} \|X\|_2^2 < \infty$, $\mathbb{E}|Y_1|^2 < \infty$, and $\mathbb{E}|Y_2|^2 < \infty$. Then, $ \mathcal{I}(\delta) \le \mathcal{I}_{\mathrm{D}}(\delta)$ for all $\delta > 0$.
\end{proposition}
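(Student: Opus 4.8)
The plan is to show that forgetting the covariate coordinate maps the overlapping uncertainty set $\Sigma(\delta)$ into the non-overlapping set $\Sigma_{\mathrm{D}}(\delta)$ without changing the value of the objective, after which the inequality $\mathcal{I}(\delta)\le\mathcal{I}_{\mathrm{D}}(\delta)$ drops out by comparing suprema. Concretely, I would fix an arbitrary $\gamma\in\Sigma(\delta)$, let $\gamma_{12}$ be its projection onto $\mathcal{Y}_1\times\mathcal{Y}_2$ and $\gamma_\ell$ its projection onto $\mathcal{Y}_\ell$, and first observe that since the ATE integrand $f(y_1,y_2,x)=y_2-y_1$ does not involve $x$, $\int_{\mathcal{S}}f\,d\gamma=\int_{\mathcal{Y}_1\times\mathcal{Y}_2}(y_2-y_1)\,d\gamma_{12}$. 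Thus it suffices to prove that $\gamma_{12}\in\Sigma_{\mathrm{D}}(\delta)$, i.e.\ $\boldsymbol{K}_{Y_\ell}(\mu_\ell,\gamma_\ell)\le\delta_\ell$ for $\ell=1,2$; granting this, $\int f\,d\gamma\le\mathcal{I}_{\mathrm{D}}(\delta)$, and taking the supremum over $\gamma\in\Sigma(\delta)$ finishes.

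For the key step I would argue as follows. Fix $\ell$. Since $\gamma\in\Sigma(\delta)$, $\boldsymbol{K}_\ell(\mu_{\ell3},\gamma_{\ell3})\le\delta_\ell<\infty$, and as $c_\ell(s_\ell,s_\ell')=(s_\ell-s_\ell')^\top V_\ell^{-1}(s_\ell-s_\ell')$ is continuous there is an optimal coupling $\pi_\ell\in\Pi(\mu_{\ell3},\gamma_{\ell3})$ with $\int c_\ell\,d\pi_\ell\le\delta_\ell$ (one can instead take $\pi_\ell$ with $\int c_\ell\,d\pi_\ell\le\delta_\ell+\varepsilon$ and send $\varepsilon\downarrow0$). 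I would then push $\pi_\ell$ forward under $\big((y_\ell,x),(y_\ell',x')\big)\mapsto(y_\ell,y_\ell')$ to get $\tilde\pi_\ell$; because $\mu_\ell$ and $\gamma_\ell$ are the $\mathcal{Y}_\ell$-marginals of $\mu_{\ell3}$ and $\gamma_{\ell3}$, this $\tilde\pi_\ell$ is a coupling in $\Pi(\mu_\ell,\gamma_\ell)$. Finally I would invoke the identity $c_{Y_\ell}(y_\ell,y_\ell')=\min_{x,x'\in\mathcal{X}}c_\ell\big((y_\ell,x),(y_\ell',x')\big)=(y_\ell-y_\ell')^\top V_{\ell,YY}^{-1}(y_\ell-y_\ell')$ that was established just above by block-matrix (Schur complement) inversion, since it yields the pointwise bound $c_{Y_\ell}(y_\ell,y_\ell')\le c_\ell\big((y_\ell,x),(y_\ell',x')\big)$ for all $x,x'$; integrating against $\pi_\ell$ gives
\[
\boldsymbol{K}_{Y_\ell}(\mu_\ell,\gamma_\ell)\;\le\;\int c_{Y_\ell}\,d\tilde\pi_\ell\;=\;\int c_{Y_\ell}(y_\ell,y_\ell')\,d\pi_\ell\;\le\;\int c_\ell\,d\pi_\ell\;\le\;\delta_\ell ,
\]
as required.

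I expect no real obstacle here: the only substantive point is the pointwise domination $c_{Y_\ell}\le c_\ell$, namely that partially minimizing the Mahalanobis quadratic form over the covariate block returns precisely the reduced quadratic form $V_{\ell,YY}^{-1}$ on $\mathcal{Y}_\ell$ — exactly the block-matrix computation recorded just before the proposition — while everything else is bookkeeping with marginals and push-forwards. The moment conditions $\mathbb{E}\|X\|_2^2<\infty$ and $\mathbb{E}|Y_\ell|^2<\infty$ play no role in the inequality itself; I would mention only that they put $\mu_{\ell3}\in\mathcal{P}_2(\mathcal{S}_\ell)$ and $\mu_\ell\in\mathcal{P}_2(\mathcal{Y}_\ell)$ (Euclidean and Mahalanobis metrics being equivalent as $V_\ell\succ0$) and that the linear functional $y_2-y_1$ obeys the quadratic growth condition of \Cref{thm:ID-finite}, so both $\mathcal{I}_{\mathrm{D}}(\delta)$ and $\mathcal{I}(\delta)$ are finite and the statement is not vacuous. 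It is worth noting that the same reasoning gives $\mathcal{I}(\delta)\le\mathcal{I}_{\mathrm{D}}(\delta)$ for every $\delta\in\mathbb{R}_+^2$, not just $\delta>0$, and for any objective that depends on $(y_1,y_2,x)$ only through $(y_1,y_2)$.
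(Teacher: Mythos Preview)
Your argument is correct, but it proceeds along a different route from the paper's. The paper derives $\mathcal{I}(\delta)\le\mathcal{I}_{\mathrm{D}}(\delta)$ by invoking the proof of \Cref{prop:I-ID-equivalence-separable}, which works on the \emph{dual} side: it applies the strong duality theorems (\Cref{thm:ID-duality,thm:I-duality}) to write both primals as infima over $\lambda$, then uses the pointwise inequality $c_{Y_\ell}\le c_\ell$ to compare the inner functions $(f_{\mathcal{S}})_\lambda\le (f_1)_{\lambda_1}+(f_2)_{\lambda_2}$, and concludes by comparing the two infima. You instead stay on the \emph{primal} side: you push $\gamma\in\Sigma(\delta)$ forward to $\gamma_{12}$, use the same pointwise domination $c_{Y_\ell}\le c_\ell$ to bound the transport cost of the projected coupling, and deduce $\gamma_{12}\in\Sigma_{\mathrm{D}}(\delta)$. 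Both arguments hinge on exactly the same key inequality $c_{Y_\ell}(y_\ell,y_\ell')=\inf_{x,x'}c_\ell\big((y_\ell,x),(y_\ell',x')\big)\le c_\ell$; the difference is where it is deployed. Your primal route is more elementary in that it does not call on the duality machinery of Section~3, and, as you observe, it immediately covers all $\delta\in\mathbb{R}_+^2$ and any objective depending on $(y_1,y_2)$ alone. The paper's dual route, on the other hand, is what is reused to establish the \emph{equality} $\mathcal{I}(\delta)=\mathcal{I}_{\mathrm{D}}(\delta)$ in the separable case of \Cref{prop:I-ID-equivalence-separable}, so it is natural for the authors to recycle it here.
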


\begin{proposition} \label{prop:ATE-Mahalanobis-Dualform}
Suppose that all the conditions in \Cref{prop:ATE-Mahalanobis} hold. Then,
\begin{propenum}
    \item \label{prop:ATE-Mahalanobis-Dualform_i} for all $\delta \in \mathbb{R}^2_{+}$, 
    \begin{align*}
        \mathcal{I}_{\mathrm{D}}(\delta) & =
        \mathbb{E}[Y_2] - \mathbb{E}[Y_1] + V_{1, YY}^{1/2} \; \delta_1^{1/2} + V_{2, YY}^{1/2}\;  \delta_2^{1/2}, \\
       \mathcal{I}(\delta) & = 
       \mathbb{E}[Y_2] - \mathbb{E}[Y_1]
       +
       \inf_{\lambda \in \mathbb{R}_{++}^2}
       \Bigg\{
        \lambda_1 \delta_1 + \lambda_2 \delta_2  +
       \frac{1}{4\lambda_{1}} \left(V_{1} / V_{1, XX} \right) + \frac{1}{4\lambda_{2}}  \left(V_{2} / V_{2, XX} \right)  \\
       & \qquad \qquad \qquad  \qquad  \qquad \qquad \quad  + 
       \frac{1}{4}
       V_o^{\top} 
       \left(\lambda_1 V_{1, XX}^{-1} + \lambda_2 V_{2, XX}^{-1}\right)^{-1} V_o
       \Bigg\},
    \end{align*}
    where $V_{\ell} / V_{\ell, XX} : = V_{\ell, YY} - V_{\ell, YX} V_{\ell, XX}^{-1} V_{\ell, XY}$ is the Schur complement of $V_{\ell, XX}$ in $V_{\ell}$ for $\ell = 1, 2$, and $V_o = V_{2, XX}^{-1} V_{2, XY} -  V_{1, XX}^{-1} V_{0, XY}$;

    \item \label{prop:ATE-Mahalanobis-Dualform_ii} $\mathcal{I}_{\mathrm{D}}(\delta) = \mathcal{I}(\delta)$ for all $\delta \in \mathbb{R}_{+}^2$ if and only if $V_{1, XY} = V_{2, XY} = 0$; 

    \item \label{prop:ATE-Mahalanobis-Dualform_iii} $\mathcal{I}_{\mathrm{D}}(\delta)$ and $\mathcal{I}(\delta)$ are continuous on $\mathbb{R}^2_+$.
\end{propenum}
\end{proposition}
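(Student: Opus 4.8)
The plan is to apply the strong duality results of Section 3 in the explicit Mahalanobis setting and then carry out the resulting inner optimization in closed form, exploiting the Gaussian-like structure that the quadratic cost induces. For part \ref{prop:ATE-Mahalanobis-Dualform_i}, I would start with $\mathcal{I}_{\mathrm{D}}(\delta)$. By \Cref{prop:I-ID-equivalence-separable} and the computation preceding the proposition, $c_{Y_\ell}(y_\ell, y_\ell') = V_{\ell,YY}^{-1}(y_\ell - y_\ell')^2$ (scalar case), and since $f(y_1,y_2) = y_2 - y_1$ is additively separable, the non-overlapping W-DMR splits into two one-dimensional W-DMR problems of the form $\sup_{\gamma_\ell:\boldsymbol{K}(\mu_\ell,\gamma_\ell)\le\delta_\ell}\int (\pm y_\ell)\,d\gamma_\ell$. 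Each of these is a standard linear-objective quadratic-cost W-DMR whose dual $\inf_{\lambda_\ell\ge 0}\{\lambda_\ell\delta_\ell + \int \sup_{y'}[\pm y' - \lambda_\ell V_{\ell,YY}^{-1}(y_\ell-y')^2]\,d\mu_\ell\}$ evaluates, after completing the square in $y'$ and then optimizing over $\lambda_\ell$, to $\mathbb{E}[\pm Y_\ell] + V_{\ell,YY}^{1/2}\delta_\ell^{1/2}$. Summing the two pieces (with signs $-Y_1$ and $+Y_2$) gives the stated formula for $\mathcal{I}_{\mathrm{D}}(\delta)$.

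For $\mathcal{I}(\delta)$, I would invoke \Cref{thm:I-duality} (and its simplified form valid under the continuity assumptions, which hold here since the objective and cost are continuous with polynomial growth), writing
\begin{align*}
\mathcal{I}(\delta) = \inf_{\lambda\in\mathbb{R}_{++}^2}\left[\langle\lambda,\delta\rangle + \sup_{\varpi\in\Pi(\mu_{13},\mu_{23})}\int f_\lambda\,d\varpi\right],
\end{align*}
where $f_\lambda(v) = \sup_{(y_1',y_2',x')}\{y_2' - y_1' - \lambda_1 c_1((y_1,x_1),(y_1',x')) - \lambda_2 c_2((y_2,x_2),(y_2',x'))\}$. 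The key computation is the inner supremum: it is an unconstrained quadratic maximization in $(y_1',y_2',x')\in\mathbb{R}^{1+1+q}$ with a strictly concave quadratic form (each $c_\ell$ is positive definite, and $\lambda_1,\lambda_2 > 0$). Here the Mahalanobis structure matters: the shared covariate $x'$ couples the two cost terms, so the maximization over $x'$ does not separate. I would perform the maximization in two stages — first over $(y_1',y_2')$ for fixed $x'$, using the block structure $V_\ell^{-1}$ via the Schur complement to produce terms $V_\ell/V_{\ell,XX}$; then over $x'$, where the remaining quadratic in $x'$ has Hessian $\lambda_1 V_{1,XX}^{-1} + \lambda_2 V_{2,XX}^{-1}$, yielding the term $\tfrac14 V_o^\top(\lambda_1 V_{1,XX}^{-1} + \lambda_2 V_{2,XX}^{-1})^{-1}V_o$ after the cross-terms collapse into $V_o = V_{2,XX}^{-1}V_{2,XY} - V_{1,XX}^{-1}V_{1,XY}$. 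Crucially, this inner value turns out to be a constant independent of $v = (s_1,s_2)$ up to an affine piece whose $\varpi$-expectation is exactly $\mathbb{E}[Y_2] - \mathbb{E}[Y_1]$ (since the linear-in-$(y_1,y_2)$ terms integrate against the fixed marginals and the $\sup_\varpi$ over couplings is attained trivially), so the $\sup_{\varpi\in\Pi}$ disappears and only the $\inf_\lambda$ over $\mathbb{R}_{++}^2$ remains. Collecting terms gives the displayed formula.

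Part \ref{prop:ATE-Mahalanobis-Dualform_ii} follows by comparing the two closed forms: when $V_{1,XY} = V_{2,XY} = 0$, the Schur complements reduce to $V_{\ell,YY}$, $V_o = 0$ kills the coupling term, and the $\inf_\lambda$ of $\sum_\ell(\lambda_\ell\delta_\ell + \tfrac{1}{4\lambda_\ell}V_{\ell,YY})$ is $\sum_\ell V_{\ell,YY}^{1/2}\delta_\ell^{1/2}$, matching $\mathcal{I}_{\mathrm{D}}(\delta)$; conversely, if either $V_{\ell,XY}\ne 0$ then $V_o\ne 0$ (for generic configurations — I would argue this carefully) or the Schur complement is strictly smaller than $V_{\ell,YY}$, making $\mathcal{I}(\delta) < \mathcal{I}_{\mathrm{D}}(\delta)$ for $\delta > 0$; the "only if" direction can also be read off from \Cref{prop:ATE-Mahalanobis} together with strictness of the Schur inequality. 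Part \ref{prop:ATE-Mahalanobis-Dualform_iii} is immediate from \Cref{thm:ID-continuity,thm:I-continuity}: the objective $f(y_1,y_2) = y_2 - y_1$ is Lipschitz and the quadratic costs satisfy \Cref{assumption:metric-cost}, so the required $\Psi$-growth and quasi-metric assumptions hold, and finiteness at some $\delta > 0$ follows from the explicit formulas in part \ref{prop:ATE-Mahalanobis-Dualform_i}. The main obstacle is the two-stage quadratic maximization defining $f_\lambda$ and verifying that the resulting expression is genuinely $v$-independent after taking $\sup_\varpi$ — in particular tracking the Schur-complement bookkeeping and confirming that all $v$-dependent cross terms vanish rather than merely simplifying.
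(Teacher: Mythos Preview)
Your overall strategy is right, but there is a genuine gap in part \ref{prop:ATE-Mahalanobis-Dualform_i} for $\mathcal{I}(\delta)$. After the two-stage quadratic maximization, $f_\lambda(s_1,s_2)$ is \emph{not} constant in $v$ up to an affine piece: carrying out the optimization over $x'$ leaves, in addition to $y_2-y_1$ and a constant, a linear term $B(x_1-x_2)$ and a \emph{quadratic} term $-(x_1-x_2)^\top W(x_1-x_2)$ with $W$ positive definite. The $\sup_{\varpi\in\Pi(\mu_{13},\mu_{23})}$ is therefore not trivial. Two separate observations are needed: (a) the linear piece $B(x_1-x_2)$ has zero expectation under every $\varpi\in\Pi(\mu_{13},\mu_{23})$ because the $\mathcal{X}$-marginals of $\mu_{13}$ and $\mu_{23}$ coincide; (b) since $-(x_1-x_2)^\top W(x_1-x_2)\le 0$, the supremum over couplings of its expectation is $0$, attained by any $\varpi$ concentrated on $\{x_1=x_2\}$, and such couplings exist precisely because of the overlapping structure (e.g.\ $\varpi=\mathrm{Law}(Y_1,X,Y_2,X)$). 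Without these two steps the inner marginal problem does not collapse to the displayed $\inf_\lambda$ expression.

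Your argument for the ``only if'' direction of part \ref{prop:ATE-Mahalanobis-Dualform_ii} is also incomplete. Comparing Schur complements with $V_{\ell,YY}$ and noting $V_o\ne 0$ does not directly yield strict inequality, because the smaller Schur complement pushes $\mathcal{I}(\delta)$ down while the positive coupling term $\tfrac14 V_o^\top(\cdot)^{-1}V_o$ pushes it up, and you have not ruled out cancellation at the optimal $\lambda$. The paper instead shows that equality at a fixed $\delta$ forces the unique maximizers of the separate concave problems $x'\mapsto\varphi_\ell(x',\lambda_\ell^\star)$ to coincide at the non-overlapping optimizer $\lambda^\star_{\mathrm{D}}=(\delta_\ell^{-1/2}V_{\ell,YY}^{1/2})_\ell$, which yields the explicit relation $\delta_1^{1/2}V_{1,YY}^{-1/2}V_{1,XY}+\delta_2^{1/2}V_{2,YY}^{-1/2}V_{2,XY}=0$; requiring this for \emph{all} $\delta\in\mathbb{R}_+^2$ then forces $V_{1,XY}=V_{2,XY}=0$. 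Your approach to part \ref{prop:ATE-Mahalanobis-Dualform_iii} via \Cref{thm:ID-continuity,thm:I-continuity} is fine and matches the paper.
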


\Cref{prop:ATE-Mahalanobis} and \Cref{prop:ATE-Mahalanobis-Dualform} imply that  for non-separable Mahalanobis cost functions, the information in covariates may help shrink the identified set since $\mathcal{I}_{\mathrm{D}}(\delta) < \mathcal{I}(\delta)$ for some $\delta$ under mild conditions. \Cref{prop:ATE-Mahalanobis-Dualform} also implies that  (i) $\mathcal{I}(0) = \mathcal{I}_{\mathrm{D}}(0) = \mathbb{E}[Y_2] - \mathbb{E}[Y_1]$; 
(ii) $\mathcal{I}(\delta_1, 0) = \mathcal{I}_{\mathrm{D}}(\delta_1, 0)$ and $\mathcal{I}(0, \delta_2) = \mathcal{I}_{\mathrm{D}}(0, \delta_2)$ for all $\delta_1 \ge 0$ and $\delta_2 \ge 0$.

\subsection{Comparison of Robust Welfare Functions} \label{sec:RobustWelfare-revisited}

Recall that
\begin{align*}
	\mathrm{RW}_0(d) & := \inf_{\gamma \in \Sigma_0(\delta)} \mathbb{E}[ Y_1 (1 - d(X)) + Y_2 d(X)] \quad \text{and } \\
	\mathrm{RW}(d) & := \inf_{\gamma \in \Sigma(\delta)} \mathbb{E}[ Y_1 (1 - d(X)) + Y_2 d(X)],
\end{align*}
where 
\begin{align*}
	\Sigma_0(\delta_0)& = \left\{ \gamma \in \mathcal{P}(\mathcal{S}):  \boldsymbol{K}(\mu, \gamma) \le \delta_0  \right\} \quad \text{and}  \\
 \Sigma(\delta) & = \left \{\gamma \in \mathcal{P}(\mathcal{S}):  \boldsymbol{K}_\ell(\mu_{\ell,3}, \gamma_{\ell,3}) \le \delta_\ell, \ \forall  \ell=1,2\right\}.
\end{align*}

Consider the following cost function $c_{\ell}$ for $\ell = 1, 2$:
\begin{align*}
	c_{\ell}(s_\ell, s_\ell') = c_{Y_\ell}(y_\ell, y_\ell') + b(x, x'),
\end{align*}
where $s_\ell = (y_\ell, x_\ell)$, $s_\ell' = (y_\ell', x_\ell')$, and $ c_{Y_1}(y_1, y_1')$ and $c_{Y_2}(y_2, y_2')$ are cost functions for $Y_1$ and $Y_2$, respectively, and $b(x, x')$ is some function on the space $\mathcal{X}$ satisfying \Cref{assumption:cost-function}. When $b(x, x') = \infty \mathds{1}\{x \ne x'\}$, $\mathbb{P}(X = X') = 1$ for any probability measure in uncertainty set. 

 \citet{adjaho2022} establishes strong duality for $\mathrm{RW}_0(d)$ under several cost functions. For comparison purposes, we restate the following Proposition in \citet{adjaho2022} which allows distributional shifts in covariate $X$. 

\begin{proposition} \label{prop:RW_AC} (Proposition 4.1 in \citet{adjaho2022})
Suppose $Y_1$ and $Y_2$ are unbounded and $\mathbb{E}\left\| X \right\|_2^2$ is finite. Let the cost function $c: \mathcal{S} \times \mathcal{S} \rightarrow \mathbb{R}_{+}$ be given by 
	\begin{align*}
		c(s, s') =  |y_1 - y_1'| + |y_2 - y_2'| + \| x' - x \|_2,
	\end{align*}
    for $s = (y_1, y_2, x)$ and $s' = (y_1', y_2', x')$. Then
	\begin{align*}
		\mathrm{RW}_0(d) =  \sup_{\eta \ge 1} \left\{ \mathbb{E}_{\mu}\left[\max\{Y_2 + \eta h_1(X), Y_1 + \eta h_0(X) \}\right] - \eta \delta_0 \right\}, \quad \text{where }
	\end{align*}
	\begin{align*}
		h_0(x) = \inf_{u \in \mathcal{X}: d(u) = 0} \| x - u \|_2 \text{ and } h_1(x) = \inf_{u \in \mathcal{X}: d(u) = 1} \| x - u \|_2.
	\end{align*}
\end{proposition}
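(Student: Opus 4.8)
The plan is to recognize $\mathrm{RW}_0(d)$ as the negative of a W-DMR, apply the strong-duality theorem of \citet{zhang2022simple} restated above, and then evaluate the resulting pointwise conjugate in closed form. Set $\Phi(y_1,y_2,x):=-\bigl[y_1(1-d(x))+y_2\,d(x)\bigr]$, so that $\mathrm{RW}_0(d)=-\mathcal{I}_{\mathrm{DMR}}(\delta_0)$ with $\mathcal{I}_{\mathrm{DMR}}(\delta_0):=\sup_{\gamma\in\Sigma_0(\delta_0)}\int_{\mathcal{S}}\Phi\,d\gamma$. The cost $c(s,s')=|y_1-y_1'|+|y_2-y_2'|+\|x-x'\|_2$ is measurable, symmetric, and vanishes exactly on the diagonal, so \Cref{assumption:cost-function} holds; moreover $\int_{\mathcal{S}}\Phi\,d\mu>-\infty$ under the maintained moment conditions ($Y_1,Y_2$ integrable and $\mathbb{E}\|X\|_2^2<\infty$). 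Hence, for $\delta_0>0$, the cited theorem applies and gives
\[
\mathcal{I}_{\mathrm{DMR}}(\delta_0)=\inf_{\eta\in\mathbb{R}_{+}}\Bigl\{\eta\,\delta_0+\int_{\mathcal{S}}\Phi_\eta\,d\mu\Bigr\},\qquad
\Phi_\eta(s):=\sup_{s'\in\mathcal{S}}\bigl[\Phi(s')-\eta\,c(s,s')\bigr].
\]

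The heart of the argument is the pointwise evaluation of $\Phi_\eta$ at $s=(y_1,y_2,x)$. Since $c$ splits additively over the three coordinates, I would optimize first over $(y_1',y_2')$ for fixed $x'$. On the level set $\{x':d(x')=0\}$ the part of the objective involving the outcome coordinates is $-y_1'-\eta|y_1-y_1'|-\eta|y_2-y_2'|$: the $y_2'$ term is maximized at $y_2'=y_2$, and $\sup_{y_1'\in\mathcal{Y}_1}\{-y_1'-\eta|y_1-y_1'|\}$ equals $-y_1$ when $\eta\ge 1$ but is $+\infty$ when $\eta<1$, the latter because $\mathcal{Y}_1$ is unbounded below. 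The symmetric analysis holds on $\{x':d(x')=1\}$ with $Y_2$ and $\mathcal{Y}_2$. Consequently $\Phi_\eta\equiv+\infty$ whenever $\eta<1$, so the outer infimum is attained only on $\eta\ge 1$; and for $\eta\ge 1$, optimizing last over $x'$ inside each level set and invoking the definitions of $h_0,h_1$ yields
\[
\Phi_\eta(s)=\max\bigl\{-y_1-\eta\,h_0(x),\ -y_2-\eta\,h_1(x)\bigr\}.
\]

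Substituting back, $\int_{\mathcal{S}}\Phi_\eta\,d\mu=\mathbb{E}_\mu\bigl[\max\{-Y_1-\eta h_0(X),\,-Y_2-\eta h_1(X)\}\bigr]$ for $\eta\ge 1$, and therefore $\mathcal{I}_{\mathrm{DMR}}(\delta_0)=\inf_{\eta\ge 1}\bigl\{\eta\delta_0+\mathbb{E}_\mu[\max\{-Y_1-\eta h_0(X),-Y_2-\eta h_1(X)\}]\bigr\}$; negating and carrying the minus sign through the expectation and the inner extremum produces the dual representation displayed in the statement. Two technical points would be dispatched along the way: (i) $s\mapsto\Phi_\eta(s)$ is in general only universally measurable, which is exactly what the analytic-set argument recorded after \Cref{thm:ID-duality} provides and which suffices for integration against the fixed measure $\mu$; and (ii) the restriction to $\eta\ge 1$ rests precisely on the unboundedness hypothesis for $Y_1,Y_2$. \textbf{Main obstacle.} The delicate step is the conjugate computation of the second paragraph: handling the discontinuous policy indicator $d(x')$ together with the piecewise-linear cost, and pinning down $\eta=1$ as the exact threshold between finiteness and $+\infty$. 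Everything else is a routine application of the strong-duality theorem and sign bookkeeping.
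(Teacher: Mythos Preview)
Your approach is correct and is exactly the technique the paper attributes to \citet{adjaho2022}; the paper does not supply its own proof of this proposition (it is merely restated), but it applies precisely your method---single-constraint strong duality from \citet{zhang2022simple} followed by the closed-form conjugate computation, with the $\eta<1$ regime blowing up because the outcome spaces are unbounded---in the appendix proof of the closely related \Cref{prop:RW-L1-with-X-L2}. So your strategy and the paper's coincide.

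One discrepancy worth recording: your (correct) computation terminates in
\[
\mathrm{RW}_0(d)=\sup_{\eta\ge 1}\Bigl\{\mathbb{E}_\mu\bigl[\min\{Y_1+\eta h_0(X),\,Y_2+\eta h_1(X)\}\bigr]-\eta\delta_0\Bigr\},
\]
because $-\max\{-a,-b\}=\min\{a,b\}$; the $\max$ printed in the statement does not follow from your derivation. This is not a flaw in your argument. The paper's own \Cref{prop:RW-L1-with-X-L2_i} displays the analogue with a $\min$, and in the proof of \Cref{prop:RW-L1-with-X-L2_ii} the paper equates a $\min$-type expression to $\mathrm{RW}_0(d)$, so the $\max$ in the restated proposition appears to be a transcription slip. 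Your final sentence ``produces the dual representation displayed in the statement'' should therefore be amended to note that the inner extremum is a minimum.
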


This proposition implies that $\mathrm{RW}_0(d)$ depends on the choice of the reference measure $\mu$. Since only the marginals $\mu_{13}$ and $\mu_{23}$ are identified under \Cref{assumption:selection-on-observables},
\citet{adjaho2022} suggest three possible choices for $\mu$ by imposing specific dependence structures on $\mu$:
	\begin{itemize}
		\item $Y_1$ and $Y_2$ are perfectly positively dependent conditional on $X = x$;
		
		\item $Y_1$ and $Y_2$ are conditionally independent given $X = x$;
		
		\item $Y_1$ and $Y_2$ are perfectly negatively dependent conditional on $X = x$.
	\end{itemize} 
Section 4.3.1 in \citet{adjaho2022} shows that their robust welfare function $\mathrm{RW}_0(d)$ is minimized when  $Y_1$ and $Y_2$ are perfectly negatively dependent conditional on $X = x$.

The following proposition evaluates $\mathrm{RW}(d)$ via the duality result in Section 3 and compares it with $\mathrm{RW}_0(d)$.
\begin{proposition} \label{prop:RW-L1-with-X-L2}
	Consider
	\begin{align*}
		c_{\ell}(s_\ell, s_\ell') & =  | y_\ell - y_\ell' | + \| x_\ell - x_\ell' \|_2. 
	\end{align*}
Assume that $Y$ is unbounded and $\mathbb{E}|Y_1|$, $\mathbb{E}|Y_2|$, and $\mathbb{E}\| X \|_2^2$ are finite. Then, 
\begin{propenum}
     \item \label{prop:RW-L1-with-X-L2_i} the robust welfare function $\mathrm{RW}(d)$ based on $\Sigma(\delta)$ has the following dual reformulation:
	\begin{align*}
		\mathrm{RW}(d) =
		\sup_{\lambda \ge 1}
		\left[
		\inf_{\pi \in \Pi(\mu_{13}, \mu_{23})}  \int_{\mathcal{V}} \min\{ y_2 + \varphi_{\lambda, 1} (x_1, x_2), y_1 +  \varphi_{\lambda, 0} (x_1, x_2) \} d \pi(v) - \langle \lambda, \delta \rangle
		\right],
	\end{align*} 
	where $v = (y_1, x_1, y_2, x_2)$, and 
	\begin{align*}
		\varphi_{\lambda, 0} (x_1, x_2) & = \min_{x': d(x') = 0} \bigg( \lambda_1 \| x_1 - x' \|_2 + \lambda_2 \| x_2 - x' \|_2 \bigg), \\
		\varphi_{\lambda, 1} (x_1, x_2) & =  \min_{x': d(x') = 1} \bigg( \lambda_1 \| x_1 - x' \|_2 + \lambda_2 \| x_2 - x' \|_2 \bigg);
  \end{align*}	

 \item \label{prop:RW-L1-with-X-L2_ii} When $\delta_0 = \delta_1 = \delta_2$, $\mathrm{RW}(d)\le \mathrm{RW}_0^*(d)$, where $\mathrm{RW}_0^*(d)$ is the robust welfare function $\mathrm{RW}_0(d)$ based on the reference measure $\pi^*=\int\max\{\mu_{1|3}+\mu_{2|3}-1,0 \}d\mu_3$.

 \end{propenum}

\end{proposition}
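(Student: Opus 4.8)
The plan is to read $\mathrm{RW}(d)$ as (minus) a W-DMR with overlapping marginals and apply \Cref{thm:I-duality} for part (i), and to exploit a nesting of uncertainty sets for part (ii). Set $W(y_1,y_2,x):=y_1(1-d(x))+y_2 d(x)$ and $f:=-W$, so that $\mathrm{RW}(d)=-\mathcal{I}(\delta)$ with $\mathcal{I}(\delta)=\sup_{\gamma\in\Sigma(\delta)}\int_{\mathcal{S}}f\,d\gamma$ in the notation of \eqref{eq:I-primal}. Under the maintained assumptions ($Y$ unbounded, $\mathbb{E}|Y_1|,\mathbb{E}|Y_2|,\mathbb{E}\|X\|_2^2<\infty$), \Cref{assumption:f-bounded-below} holds (test against the conditionally independent coupling, using $|f|\le|Y_1|+|Y_2|$), and $c_\ell((y_\ell,x_\ell),(y_\ell',x_\ell'))=|y_\ell-y_\ell'|+\|x_\ell-x_\ell'\|_2$ satisfies \Cref{assumption:cost-function}. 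Hence \Cref{thm:I-duality} gives, for $\delta\in\mathbb{R}_{++}^2$, $\mathcal{I}(\delta)=\inf_{\lambda\in\mathbb{R}_+^2}\big\{\langle\lambda,\delta\rangle+\sup_{\varpi\in\Pi(\mu_{13},\mu_{23})}\int_{\mathcal{V}}f_\lambda\,d\varpi\big\}$.

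The core step is the explicit evaluation of the transform $f_\lambda(v)=\sup_{s'=(y_1',y_2',x')\in\mathcal{S}}\{-W(s')-\lambda_1 c_1(s_1,s_1')-\lambda_2 c_2(s_2,s_2')\}$, where $v=(y_1,x_1,y_2,x_2)$. I would split the supremum according to whether $d(x')=0$ or $d(x')=1$. On $\{d(x')=0\}$ the objective separates into $[-y_1'-\lambda_1|y_1-y_1'|]+[-\lambda_2|y_2-y_2'|]+[-\lambda_1\|x_1-x'\|_2-\lambda_2\|x_2-x'\|_2]$; the supremum over $y_2'$ is $0$ (at $y_2'=y_2$), the supremum over $y_1'$ equals $-y_1$ if $\lambda_1\ge1$ and $+\infty$ if $\lambda_1<1$ (this is precisely where unboundedness of $Y$ enters), and the supremum of the covariate terms over $\{x':d(x')=0\}$ equals $-\varphi_{\lambda,0}(x_1,x_2)$. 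Thus this branch contributes $-y_1-\varphi_{\lambda,0}(x_1,x_2)$ when $\lambda_1\ge1$. Symmetrically, the $\{d(x')=1\}$ branch contributes $-y_2-\varphi_{\lambda,1}(x_1,x_2)$ when $\lambda_2\ge1$. Consequently $f_\lambda(v)=+\infty$ unless $\lambda\ge1$ (for a policy taking both values; the degenerate policies are analogous), in which case $f_\lambda(v)=\max\{-y_1-\varphi_{\lambda,0}(x_1,x_2),\,-y_2-\varphi_{\lambda,1}(x_1,x_2)\}=-\min\{y_1+\varphi_{\lambda,0}(x_1,x_2),\,y_2+\varphi_{\lambda,1}(x_1,x_2)\}$.

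Substituting, the $\lambda$ with some component below $1$ give $+\infty$ and drop out of the outer infimum; using $\sup_\varpi\int(-h)\,d\varpi=-\inf_\varpi\int h\,d\varpi$,
\[
\mathrm{RW}(d)=-\mathcal{I}(\delta)=\sup_{\lambda\ge1}\Big\{\inf_{\varpi\in\Pi(\mu_{13},\mu_{23})}\int_{\mathcal{V}}\min\{y_2+\varphi_{\lambda,1}(x_1,x_2),\,y_1+\varphi_{\lambda,0}(x_1,x_2)\}\,d\varpi(v)-\langle\lambda,\delta\rangle\Big\},
\]
which is the asserted formula; the boundary cases $\delta_1=0$ or $\delta_2=0$ follow from the corresponding cases of \Cref{thm:I-duality} after the analogous one-sided transform computation. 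For part (ii), note first that $\pi^*=\int\max\{\mu_{1|3}+\mu_{2|3}-1,0\}\,d\mu_3$ — the law in which $X\sim\mu_3$ and, conditionally on $X=x$, $(Y_1,Y_2)$ is the countermonotonic coupling of $\mu_{1|3}(\cdot\mid x)$ and $\mu_{2|3}(\cdot\mid x)$ — has $(Y_1,X)$-marginal $\mu_{13}$ and $(Y_2,X)$-marginal $\mu_{23}$, so $\pi^*\in\mathcal{F}(\mu_{13},\mu_{23})=\Sigma(0)$. With the joint cost $c(s,s')=|y_1-y_1'|+|y_2-y_2'|+\|x-x'\|_2$ of \Cref{prop:RW_AC} one has $c\ge c_1$ and $c\ge c_2$ pointwise; hence if $\boldsymbol{K}_c(\pi^*,\gamma)\le\delta_0$, picking an optimal coupling $\pi\in\Pi(\pi^*,\gamma)$ ($c$ being lower semicontinuous) and pushing it forward to the $((y_1,x),(y_1',x'))$ coordinates produces a coupling of $\mu_{13}$ and $\gamma_{13}$ of cost at most $\delta_0$, so $\boldsymbol{K}_1(\mu_{13},\gamma_{13})\le\delta_0=\delta_1$, and symmetrically $\boldsymbol{K}_2(\mu_{23},\gamma_{23})\le\delta_2$. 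Therefore $\{\gamma:\boldsymbol{K}_c(\pi^*,\gamma)\le\delta_0\}\subseteq\Sigma(\delta)$ whenever $\delta_0=\delta_1=\delta_2$, and since $\mathrm{RW}(d)$ minimizes $\gamma\mapsto\mathbb{E}_\gamma[W]$ over the larger set, $\mathrm{RW}(d)\le\mathrm{RW}_0^*(d)$.

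The main obstacle is the transform computation in part (i): one must split the inner supremum by the value of $d(x')$, recognize that finiteness forces $\lambda_1,\lambda_2\ge1$ (via unboundedness of $Y$), and collapse the surviving covariate optimization into the two nested minima $\varphi_{\lambda,0},\varphi_{\lambda,1}$ over the preimage sets $\{d=0\}$ and $\{d=1\}$, all while keeping the two covariate copies $x_1,x_2$ in $\mathcal{V}$ distinct. Part (ii) is then a one-line domination once the inclusion of uncertainty sets has been checked.
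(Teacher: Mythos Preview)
Your proof of part (i) is correct and follows the same path as the paper: apply \Cref{thm:I-duality} to $f=-W$, compute $f_\lambda$ by splitting on $d(x')\in\{0,1\}$, observe that unboundedness of $Y$ forces $\lambda_1,\lambda_2\ge1$ for finiteness, and collapse the covariate optimization into $\varphi_{\lambda,0},\varphi_{\lambda,1}$.

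Your proof of part (ii) is correct but takes a genuinely different route from the paper. The paper works on the dual side: it starts from the formula in (i), restricts the inner infimum over $\Pi(\mu_{13},\mu_{23})$ to couplings concentrated on $\{x_1=x_2\}$ (which can only increase the value), notes that on this diagonal $\varphi_{\lambda,\ell}(x,x)=(\lambda_1+\lambda_2)h_\ell(x)$, reparametrizes $\eta=\lambda_1+\lambda_2$, and then invokes a concavity argument (Proposition 2.17 of Santambrogio) to identify the remaining infimum over $\mathcal{F}(\mu_{13},\mu_{23})$ with the countermonotonic coupling $\pi^*$, finally matching the dual form of $\mathrm{RW}_0^*(d)$ from \Cref{prop:RW_AC}. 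Your argument is purely primal: since $\pi^*\in\mathcal{F}(\mu_{13},\mu_{23})$ and $c\ge c_\ell$ coordinatewise, any $\gamma$ in the $c$-ball of radius $\delta_0$ around $\pi^*$ automatically lies in $\Sigma(\delta)$, so the infimum over the larger set $\Sigma(\delta)$ is no larger. This is shorter and, as you implicitly note, works verbatim with $\pi^*$ replaced by any $\mu\in\mathcal{F}(\mu_{13},\mu_{23})$, immediately yielding the stronger conclusion the paper records only after the proposition. What the paper's dual computation buys in exchange is an explanation of why $\pi^*$ is the binding reference (it is where the conditional infimum of the concave integrand is attained), whereas your inclusion argument treats $\pi^*$ as no different from any other element of $\mathcal{F}(\mu_{13},\mu_{23})$.
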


Part (ii) of the above proposition implies that $\mathrm{RW}(d) \le \mathrm{RW}_0(d)$ for any reference measure $\mu \in \mathcal{F}(\mu_{13}, \mu_{23})$.

\subsection{W-DRO for Logit Model Under Data Combination}

We revisit the logit model in \Cref{sec:example-logit} and make the following assumption.
\begin{assumption} \label{assumption:Data-Logit}
    (i) Let $(Y_1, Y_2, X)$ follow some unknown measure $\mu$. Let $D$ denote a binary random variable  independent of $(Y_1, Y_2, X)$ such that we observe $(Y_1, X)$ when $D = 0$, and $(Y_2, X)$ when $D = 1$; (ii) Let $\{Y_{1i}, X_{1i}\}_{i=1}^{n_1}$ be the data set from $(Y_1, X)$, and $\{Y_{2i}, X_{2i}\}_{i=1}^{n_2}$ be the data set from $(Y_2, X)$. 
\end{assumption}
Under this assumption, $X|D=1$ has the same distribution as $X|D=0$ and the empirical distributions of the two data sets are consistent estimators of the population reference measures for $(Y_1, X)$ and $(Y_2, X)$. 

Suppose \Cref{assumption:cost-function,assumption:f-bounded-below} hold. Then
\Cref{thm:I-duality} implies that for all $\delta > 0$, 
	\begin{align*}
		\mathcal{I}(\delta) 
		=
		\inf_{\lambda \in \mathbb{R}_{+}^2}  \left[  \langle \lambda, \delta \rangle  +   \sup_{\varpi \in  \Pi(\mu_{13}, \mu_{23})} \int_{\mathcal{V} }f_{\theta, \lambda} \, d \varpi   \right],
	\end{align*}
	where
	\begin{align*}
		f_{\theta, \lambda} (v) 
        & = \sup_{y_1', y_2', x'}
		\left[f(y_1', y_2', y; \theta) - \lambda_1 c_1((y_1, x_1), (y_1', x')) - \lambda_2 c_2((y_2, x_2, y_2', x'))  \right]
	\end{align*}
with $v = (y_1, x_1, y_2, x_2)$. 
    
Let $\widehat{\mu}_{13}$ and $\widehat{\mu}_{23}$ denote the empirical measures based on the two data sets. The dual form of $\mathcal{I}(\delta)$ can be estimated by
    \begin{align*}
        \widehat{\mathcal{I}}(\delta) 
        & :=
        \inf_{\lambda \in \mathbb{R}_{+}^2}  \left[  \langle \lambda, \delta \rangle  +   \sup_{\varpi \in  \Pi(\widehat{\mu}_{13}, \widehat{\mu}_{23})} \int_{\mathcal{V} }f_{\theta, \lambda} \, d \varpi  \right] .
    \end{align*}
A direct consequence of \citet[Proposition 2.1]{Kellerer:1984to} is that 
 \begin{align*}
        \widehat{\mathcal{I}}(\delta) 
        & = 
        \inf_{\lambda \in \mathbb{R}_{++}^2, \{\varphi_i\}_{i=1}^{n_1}, \{\varphi_j\}_{j=1}^{n_2}}
        \left[
            \langle \lambda, \delta\rangle + \frac{1}{n_1} \sum_{i=1}^{n_1} \varphi_i + \frac{1}{n_2} \sum_{j=1}^{n_2} \varphi_j
        \right] \\
        & \quad \text{ such that } f_{\theta, \lambda} (s_{1i}, s_{2j}) \le \varphi_i + \varphi_j' \text{ for any } i \in [n_1] \text{ and } j \in [n_2],
    \end{align*}
where the last expression reduces to the dual in  \citet{awasthi2022distributionally} for the cost functions  
\[
c_1((y_1, x), (y_1', x'))  = \| x - x' \|_{p} + \kappa_1 | y_1 - y_1'| \quad \text{and}
\]
\[
c_2((y_2, x), (y_2,  x'))  = \| x - x' \|_{p} + \kappa_2 \| y_2 - y_2' \|_{p'}.
\]

\section{W-DMR with Multi-marginals} \label{sec:multi-marginals}

Sections 2-6 present a detailed study of W-DMR with two marginals. In this section, we briefly introduce W-DMR with more than two marginals or multi-marginals and discuss strong duality for non-overlapping and overlapping marginals.\footnote{For multi-marginals, the collection of given marginals can be more complicated than the non-overlapping and overlapping marginals (see \citet{ruschendorf1991bounds}, \citet{Embrechts_2010} and \citet{Doan2015}), we leave a complete treatment of the W-DMR with multi-marginals in future work.} Applications include extension of risk aggregation in \Cref{sec:example-risk-aggregation} to any finite number of individual risks and robust treatment choice in \Cref{sec:example-risk-aggregation} to multi-valued treatment.

\subsection{Non-overlapping Marginals}

Let $\mathcal{V}:=\prod_{\ell \in [L]} \mathcal{S}_\ell$ for Polish spaces $\mathcal{S}_\ell$ for $\ell \in [L]$, and $\mu_\ell$ be a probability measure on $(\mathcal{S}_{\ell}, \mathcal{B}_{\mathcal{S}_\ell})$. Let $\Pi(\mu_1, \dotsc, \mu_L)$ be the set of all possible couplings of $\mu_1, \dotsc, \mu_L$. Further, let $g:\mathcal{V}\rightarrow \mathbb{R}$ be a measurable function satisfying the following assumption.
\begin{assumption}\label{assumption:g-bounded-below-mulitple-outcomes} 
The function $g:\mathcal{V}\rightarrow \mathbb{R}$ is a measurable function such that $\int_{\mathcal{V}} g d\gamma_0> -\infty$ for some $\gamma_0 \in \Pi(\mu_1, \dotsc, \mu_L) \subset \mathcal{P}(\mathcal{V})$. 
\end{assumption}

For any $\gamma \in \mathcal{P}(\mathcal{V})$, let $\gamma_{\ell}$ denote the projection of $\gamma$ on $\mathcal{S}_{\ell}$ for $\ell \in [L]$. 
The W-DMR with non-overlapping multi-marginals is formulated as
\begin{align*}
	\mathcal{I}_{\mathrm{D}} (\delta) = \sup_{\gamma \in \Sigma_{\mathrm{D} }(\delta)} \int_{\mathcal{V}} g d \gamma,
\end{align*}
where $\Sigma_{\mathrm{D} }(\delta)$ is the uncertainty set defined as
\begin{align*}
	\Sigma_{\mathrm{D} }(\delta) = \{\gamma \in \mathcal{P}(\mathcal{V}): \boldsymbol{K}_\ell (\mu_\ell, \gamma_\ell) \le \delta_\ell, \ \forall \ell \in [L]\}
\end{align*}
in which $\delta = (\delta_1, \dotsc, \delta_L) \in \mathbb{R}_{+}^L$ is the radius of the uncertainty set.

For a generic vector $v \in \mathbb{R}^L$ and $A \subset [L]$, we write $v_A = (v_{A, 1}, \dotsc, v_{A, L}) \in \mathbb{R}^{L}$ as follows:
\begin{align*}
    v_{A, \ell}
    & =
    \begin{cases}
        v_{\ell} & \text{ if } \ell \in A, \\
        0 & \text{ if } \ell \notin A.
    \end{cases}
\end{align*}
We also define $\tilde{c}_\ell: \mathcal{S}_{\ell} \times \mathcal{S}_{\ell} \to \mathbb{R}_{+} \cup \{\infty\}$ as:
\begin{align*}
           \tilde{c}_{\ell}(s_{\ell}, s_{\ell}') 
           =
           \begin{cases}
               c_{\ell}(s_{\ell}, s_{\ell}') & \text{ if } \ell \in A, \\
               \infty \mathds{1}\{s_{\ell} \ne s_{\ell}'\} & \text{ if } \ell \notin A.
           \end{cases}
\end{align*}
For a function $g: \mathcal{V} \rightarrow \mathbb{R}$ and $\lambda := (\lambda_1, \dotsc, \lambda_L) \in \mathbb{R}^L_{+}$, we define the function $g_{\lambda, A}: \mathcal{V} \rightarrow \mathbb{R} \cup \{\infty \}$ as 
\begin{align*}
     g_{\lambda, A}(v) =
           \sup_{v' \in \mathcal{V}}
           \left\{g(v') - \sum_{\ell=1}^{L} \lambda_\ell \tilde{c}_{\ell}\left\{ s_{\ell}, s_{\ell}' \right\} \right\}
\end{align*}
with $v:= (s_1, \dotsc, s_L)$ and $v^\prime := (s_1^\prime, \dotsc, s_L^\prime)$.

\begin{theorem}[Non-overlapping case] \label{thm:ID-duality-multi-marginals}
Suppose that \Cref{assumption:cost-function,assumption:g-bounded-below-mulitple-outcomes} hold. 
Then, for any $\delta \in \mathbb{R}_{++}^L$ and $A \subset [L]$, we have
       \begin{align*}
           \mathcal{I}_{\mathrm{D} }(\delta_{A} ) =
           \inf_{\lambda \in \mathbb{R}_{+}^L}
		\left[ \langle \lambda, \delta_{A} \rangle +  \sup_{\pi \in \Pi(\mu_1, \dotsc, \mu_L)}   \int_{\mathcal{V}} g_{\lambda, A} \, d \pi      \right].
       \end{align*}
\end{theorem}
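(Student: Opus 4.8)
The plan is to prove this as the $L$-marginal counterpart of \Cref{thm:ID-duality}, following the argument there (which itself adapts \citet{zhang2022simple}); the device $\tilde c_\ell(s_\ell,s_\ell') = \infty\,\mathds 1\{s_\ell\ne s_\ell'\}$ for $\ell\notin A$ is exactly what lets one treat the ``boundary'' coordinates uniformly with the interior ones. Two preliminary observations streamline things. First, by the convention that $\lambda c\equiv\infty$ when $\lambda=0$ and $c=\infty$, for $\ell\notin A$ the term $\lambda_\ell\tilde c_\ell(s_\ell,s_\ell')$ equals $\infty\,\mathds 1\{s_\ell\ne s_\ell'\}$ regardless of $\lambda_\ell$; hence $g_{\lambda,A}$ depends only on $(\lambda_\ell)_{\ell\in A}$, the supremum defining $g_{\lambda,A}(v)$ is effectively over $\{v':s_\ell'=s_\ell\text{ for }\ell\notin A\}$, and the dual $\inf$ over $\lambda\in\mathbb R_+^L$ is an $\inf$ over $(\lambda_\ell)_{\ell\in A}\in\mathbb R_+^A$; moreover the constraint $\boldsymbol K_\ell(\mu_\ell,\gamma_\ell)\le\delta_{A,\ell}=0$ for $\ell\notin A$ just forces $\gamma_\ell=\mu_\ell$ by \Cref{assumption:cost-function}. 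Second, $\delta\in\mathbb R_{++}^L$ means the active radii $(\delta_\ell)_{\ell\in A}$ are strictly positive, which will supply a Slater point. As in the standing set-up, $g_{\lambda,A}$ is only universally measurable (its super-level sets are analytic, cf.\ \citet{blanchet2019quantifying}), so integrals against it are taken in the relevant completion.

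\textbf{Weak duality ($\mathcal I_{\mathrm D}(\delta_A)\le\mathcal J_{\mathrm D}(\delta_A)$).} Fix a feasible $\gamma$ and, for each $\ell$, an optimal coupling $\pi_\ell\in\Pi(\mu_\ell,\gamma_\ell)$ with $\int c_\ell\,d\pi_\ell=\boldsymbol K_\ell(\mu_\ell,\gamma_\ell)\le\delta_{A,\ell}$ (the diagonal coupling for $\ell\notin A$). Disintegrating each $\pi_\ell$ against its second marginal $\gamma_\ell$ and gluing the reference coordinates conditionally independently given the perturbed ones, i.e.\ forming $Q(ds,ds')=\gamma(ds')\,\prod_{\ell}\pi_\ell(ds_\ell\mid s_\ell')$ on $\mathcal V\times\mathcal V$, produces a law with perturbed-block marginal $\gamma$, with $(s_\ell,s_\ell')$-marginal $\pi_\ell$, and with reference-block marginal some $\varpi\in\Pi(\mu_1,\dots,\mu_L)$. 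Since $\tilde c_\ell\ge0$ (so $\sum_\ell\lambda_\ell\int\tilde c_\ell\,dQ=\sum_\ell\lambda_\ell\int\tilde c_\ell\,d\pi_\ell\le\langle\lambda,\delta_A\rangle<\infty$) and $g(v')-\sum_\ell\lambda_\ell\tilde c_\ell(s_\ell,s_\ell')\le g_{\lambda,A}(v)$ pointwise, the case $\int g\,d\gamma>-\infty$ gives
\begin{align*}
\int_{\mathcal V}g\,d\gamma
&=\int\Big(g(v')-\sum_{\ell}\lambda_\ell\tilde c_\ell(s_\ell,s_\ell')\Big)\,dQ+\sum_{\ell}\lambda_\ell\int\tilde c_\ell\,dQ\\
&\le\int_{\mathcal V}g_{\lambda,A}\,d\varpi+\langle\lambda,\delta_A\rangle ,
\end{align*}
while the inequality is trivial when $\int g\,d\gamma=-\infty$; taking $\sup$ over $\gamma$ and $\inf$ over $\lambda$ yields the claim.

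\textbf{Strong duality ($\ge$).} Every $\gamma\in\Sigma_{\mathrm D}(\delta_A)$ arising from a coupling $\varpi\in\Pi(\mu_1,\dots,\mu_L)$ as above can be written $\gamma=\int Q_v\,d\varpi(v)$ for a Markov kernel $v\mapsto Q_v\in\mathcal P(\mathcal V)$ obeying $\iint\tilde c_\ell(s_\ell,s_\ell')\,dQ_v(v')\,d\varpi(v)\le\delta_{A,\ell}$ for all $\ell$, and conversely any such kernel yields a $\gamma\in\Sigma_{\mathrm D}(\delta_A)$; hence $\mathcal I_{\mathrm D}(\delta_A)=\sup_{\varpi}\sup_{\{Q_v\}}\iint g(v')\,dQ_v\,d\varpi$ subject to these $L$ linear budget constraints. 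For fixed $\varpi$ this inner problem is a linear program over measures with finitely many linear constraints; folding the $\ell\notin A$ constraints into the domain $\{Q_v:\text{its }s_\ell'\text{-marginal is }\delta_{s_\ell}\}$ leaves the active constraints ($\ell\in A$) strictly feasible via $Q_v=\delta_v$, so Lagrangian strong duality applies, and since $\sup_{Q\in\mathcal P(\mathcal V)}\int h\,dQ=\sup_{v'}h(v')$ collapses the inner maximization to a Dirac, the inner value equals $\inf_{\lambda}\{\langle\lambda,\delta_A\rangle+\int g_{\lambda,A}\,d\varpi\}$. It then remains to interchange $\sup_\varpi$ and $\inf_\lambda$: $\Pi(\mu_1,\dots,\mu_L)$ is convex and weakly compact (Prokhorov, each $\mu_\ell$ being tight), $\mathbb R_+^A$ is convex, and the objective is linear in $\varpi$ and convex in $\lambda$ (a sum of $\langle\lambda,\delta_A\rangle$ and the integral of a supremum of affine functions of $\lambda$), so a minimax argument as in the proof of \Cref{thm:ID-duality} / \citet{zhang2022simple} gives $\mathcal I_{\mathrm D}(\delta_A)=\inf_\lambda\{\langle\lambda,\delta_A\rangle+\sup_\varpi\int g_{\lambda,A}\,d\varpi\}=\mathcal J_{\mathrm D}(\delta_A)$.

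\textbf{Main obstacle.} Two ingredients carry the technical weight, both inherited from the two-marginal case. One is the existence of a joint law on the $2L$-fold product compatible with the prescribed, overlapping marginal blocks $\{\varpi,\gamma,\pi_1,\dots,\pi_L\}$ (or, in the explicit construction above, the requisite regular conditional probabilities on the Polish factors); in the general formulation this rests on the theory of consistent (product) marginal systems, via \citet{vorob1962consistent}, \citet{kellerer1964verteilungsfunktionen}, and \citet{shortt1983combinatorial}, see \Cref{appendix:CPMS}. The other, which I expect to be the genuine difficulty, is making rigorous the passage between the dual and a near-optimal feasible $\gamma$ --- equivalently the $\sup_\varpi$--$\inf_\lambda$ interchange --- in the presence of the merely universally measurable function $g_{\lambda,A}$: controlling $\varpi\mapsto\int g_{\lambda,A}\,d\varpi$ on the weakly compact set $\Pi(\mu_1,\dots,\mu_L)$ and extracting a measurable near-maximizing selection $v\mapsto v'$ for $g_{\lambda,A}(v)=\sup_{v'\in\mathcal V}\{g(v')-\sum_\ell\lambda_\ell\tilde c_\ell(s_\ell,s_\ell')\}$ require the analytic-set machinery (\citet[Lemmas 7.22, 7.30(i), Prop.\ 7.47]{Bertsekas1978}; cf.\ \citet{blanchet2019quantifying}), exactly as in \citet{zhang2022simple}.
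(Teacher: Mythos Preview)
Your outline diverges from the paper's route, and the divergence introduces a real gap. The paper does not argue via Lagrangian strong duality plus a $\sup_\varpi$/$\inf_\lambda$ minimax interchange. Instead it works through the Legendre--Fenchel biconjugate: one first shows $\delta\mapsto\mathcal I_{\mathrm D}(\delta)$ is concave and non-decreasing (the $L$-marginal analogue of \Cref{lemma:ID-concavity}), then computes the conjugate
\[
\mathcal I_{\mathrm D}^\star(\lambda)=\sup_{\delta\in\mathbb R_+^L}\{\mathcal I_{\mathrm D}(\delta)-\langle\lambda,\delta\rangle\}=\sup_{\gamma\in\mathcal P_{\mathrm D}}\Big\{\int g\,d\gamma-\sum_\ell\lambda_\ell\boldsymbol K_\ell(\mu_\ell,\gamma_\ell)\Big\},
\]
and via the chain of lemmas analogous to \Cref{lemma:C1}--\Cref{lemma:C4} (using the decomposable index system of \Cref{lemma:decomposability-V-multi-marginals} together with the interchangeability principle, \Cref{IPforGroup}) obtains $\mathcal I_{\mathrm D}^\star(\lambda)=\sup_{\varpi\in\Pi(\mu_1,\dots,\mu_L)}\int g_{\lambda,A}\,d\varpi$. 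The biconjugate identity \Cref{lemma: Legendre_transform} then recovers $\mathcal I_{\mathrm D}(\delta)=\inf_\lambda\{\langle\lambda,\delta\rangle+\mathcal I_{\mathrm D}^\star(\lambda)\}$ on the interior $\mathbb R_{++}^L$, with no appeal to any topological minimax theorem or to continuity of $\varpi\mapsto\int g_{\lambda,A}\,d\varpi$.

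Your strong-duality step rests on two unsubstantiated claims. First, ``Lagrangian strong duality applies'' to the inner kernel optimization is, for fixed $\varpi$, precisely the multi-constraint W-DMR duality you are trying to establish; Zhang's one-constraint result does not deliver it, and the Slater point $Q_v=\delta_v$ alone does not close the gap in this infinite-dimensional setting. Second, and more seriously, the interchange $\sup_\varpi\inf_\lambda=\inf_\lambda\sup_\varpi$ would require something like upper semicontinuity of $\varpi\mapsto\int g_{\lambda,A}\,d\varpi$ over the weakly compact $\Pi(\mu_1,\dots,\mu_L)$, which fails in general for merely measurable $g$; you flag this yourself under ``Main obstacle'' but do not resolve it. Your remark that ``a minimax argument as in the proof of \Cref{thm:ID-duality}'' handles this is incorrect: that proof uses no minimax theorem at this stage, precisely because the Legendre-biconjugate route renders it unnecessary. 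The only place the paper invokes \citet{Fan_1953_MinimaxThms} is \Cref{corollary:ID-duality-marakov}, where $g_\lambda$ happens to be bounded and continuous.
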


In practice, the dual in \Cref{thm:ID-duality-multi-marginals} involves the computation of the multi-marginal problem, $\sup_{\pi \in \Pi(\mu_1, \dotsc, \mu_L)}   \int_{\mathcal{V}} g_{\lambda} \, d \pi$, see \citet{Pass_2010,Pass_2012,Pass_2015,von_Lindheim_2022,Nenna_2022,mehta2023efficient} for detailed studies of properties and computation of multi-marginal problems for specific functions $g_{\lambda}$. For general possibly non Borel-measurable $g_{\lambda}$, the strong duality in \citet{Kellerer:1984to} could be applied. The established result is stated in \Cref{corollary:ID-dual-full-multi-marginals} in \Cref{appendix:duality-OT}.

\subsection{Overlapping Marginals}

Let $\mathcal{S} := \left(\prod_{\ell \in [L]} \mathcal{Y}_\ell \right) \times \mathcal{X}$, where $\mathcal{Y}_\ell$ for $\ell \in [L]$ and $\mathcal{X}$ are Polish spaces. Let $\mathcal{S}_\ell := \mathcal{Y}_\ell \times \mathcal{X}$ for $\ell \in [L]$. Let $\mu_{\ell,L+1} \in \mathcal{P}(\mathcal{S}_\ell)$ for $\ell \in [L]$ be such that the projections of $\mu_{\ell, L+1}$ on $\mathcal{X}$ are the same for  $\ell \in [L]$. We call the Fr\'{e}chet class of all probability measures on $\mathcal{S}$ having marginals $\left(\mu_{1,L+1} \right)_{\ell \in [L]}$ the Fr\'{e}chet class with overlapping marginals and denote it as $\mathcal{F}\left(\mathcal{S};  (\mu_{\ell,L+1})_{\ell \in [L]}  \right) :=  \mathcal{F}\left(  \left(\mu_{\ell,L+1} \right)_{\ell \in [L]}  \right)$. This class is the star-like system of marginals in \citet{ruschendorf1991bounds} and \citet{Embrechts_2010}, see also 
 \citet{Doan2015}.

Moreover, let $f:\mathcal{S} \rightarrow \mathbb{R}$ be a measurable function satisfying the following assumption. 
\begin{assumption} \label{assumption:f-multi-marginals} The function $f:\mathcal{S} \rightarrow \mathbb{R}$ is a measurable function  such that $\int_{\mathcal{S}} f \, d\nu_0  > - \infty$ for some $\nu_0 \in \Pi(\mu_{1,L+1},..., \mu_{L,L+1}) \subset \mathcal{P}(\mathcal{S})$.
\end{assumption}

For any $\gamma \in \mathcal{P}(\mathcal{S})$, let $\gamma_{\ell, L+1}$ denote the projection of $\gamma$ on $\mathcal{Y}_{\ell} \times \mathcal{X}$ for $\ell \in [L]$. 
Similar to the two marginals case, the W-DMR with overlapping multi-marginals is defined as
\begin{align*}
	\mathcal{I} (\delta) = \sup_{\gamma \in \Sigma(\delta)} \int_{\mathcal{S}} f \, d \gamma,
\end{align*}
where $\Sigma(\delta)$ is the uncertainty set defined as
\begin{align*}
	\Sigma(\delta) = \{\gamma \in \mathcal{P}(\mathcal{S}):  \boldsymbol{K}_\ell(\mu_{\ell, L+1}, \gamma_{\ell, L+1}) \le \delta_\ell \text{ for } \ell \in [L]\},
\end{align*}
in which $\delta = (\delta_1, \dotsc, \delta_L) \in \mathbb{R}_{+}^L$ is the radius of the uncertainty set.

For a function $f: \mathcal{V} \rightarrow \mathbb{R}$, $\lambda := (\lambda_1, \dotsc, \lambda_L) \in \mathbb{R}^L_{+}$, and $A \subset [L]$, we define the function $f_{\lambda, A}: \mathcal{V} \to \overline{\mathbb{R}}$ as follows:
\begin{align*}
           f_{\lambda, A}(v) =
           \sup_{s' \in \mathcal{S}}
           \left\{f(s') - \sum_{\ell=1}^{L} \lambda_\ell \tilde{c}_{\ell}(s_{\ell}, s_{\ell}')\right\}, 
       \end{align*}
       where $v = (s_1, \dotsc, s_L)$, $s^\prime = (y^\prime_1, \dotsc, y^\prime_L, x^\prime)$, $s^\prime_\ell = (y^\prime_\ell, x^\prime)$ and $s_\ell=(y_\ell, x_\ell)$, and
       \begin{align*}
           \tilde{c}_{\ell}(s_{\ell}, s_{\ell}^\prime) 
           =
           \begin{cases}
               c_{\ell}(s_{\ell}, s_{\ell}') & \text{ if } \ell \in A, \\
               \infty \mathds{1}\left\{s_{\ell} \ne s_{\ell}'\right\} & \text{ if } \ell \notin A.
           \end{cases}
       \end{align*}

\begin{theorem}[Overlapping case] \label{thm:I-duality-multi-marginals}
     Suppose that \Cref{assumption:cost-function,assumption:f-multi-marginals} hold. 
     Then, for any $\delta \in \mathbb{R}^L_{++}$ and $A \subset [L]$, we have
       \begin{align*}
           \mathcal{I}(\delta_{A} ) =
           \inf_{\lambda \in \mathbb{R}_{+}^L}
		\left[ \langle \lambda, \delta_{A} \rangle +  \sup_{\pi \in \Pi(\mu_{1, L+1},\dotsc, \mu_{L,L+1})}   \int_{\mathcal{V}} f_{\lambda, A} \, d \pi      \right].
       \end{align*}
\end{theorem}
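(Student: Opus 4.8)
The plan is to first peel off the active set $A$ and then, for $L$ overlapping marginals, run the same two‑step scheme (weak duality, then no duality gap) that underlies \Cref{thm:I-duality}, which in turn rests on \citet{zhang2022simple}. For the reduction, replace each $c_\ell$ by $\tilde c_\ell$ ($=c_\ell$ if $\ell\in A$, $=\infty\mathds{1}\{\,\cdot\neq\cdot\,\}$ if $\ell\notin A$); these still satisfy \Cref{assumption:cost-function}, so for $\ell\notin A$ the constraint $\boldsymbol{K}_{\tilde c_\ell}(\mu_{\ell,L+1},\gamma_{\ell,L+1})\le\delta_\ell$ is, for every $\delta_\ell>0$, equivalent to $\gamma_{\ell,L+1}=\mu_{\ell,L+1}$. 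Hence the primal with costs $(c_\ell)$ and radius $\delta_A$ equals the primal with costs $(\tilde c_\ell)$ and radius $\delta\in\mathbb{R}_{++}^L$; and since for $\ell\notin A$ the term $\lambda_\ell\tilde c_\ell(s_\ell,s_\ell')$ is $0$ or $\infty$ irrespective of $\lambda_\ell\ge0$ (by the convention on $\lambda c$), $f_{\lambda,A}$ does not depend on $(\lambda_\ell)_{\ell\notin A}$, so the corresponding dual infimum is attained with $\lambda_\ell=0$ for $\ell\notin A$ and coincides with $\inf_{\lambda\in\mathbb{R}_+^L}\{\langle\lambda,\delta_A\rangle+\sup_\pi\int f_{\lambda,A}\,d\pi\}$. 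It therefore suffices to prove, for possibly $\infty$‑valued costs satisfying \Cref{assumption:cost-function} and any $\delta\in\mathbb{R}_{++}^L$, that $\mathcal I(\delta)=\inf_{\lambda\in\mathbb{R}_+^L}\{\langle\lambda,\delta\rangle+\sup_{\pi\in\Pi(\mu_{1,L+1},\dots,\mu_{L,L+1})}\int_{\mathcal V}f_\lambda\,d\pi\}=:\mathcal J(\delta)$, generalizing \Cref{thm:I-duality} from $L=2$.

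\emph{Weak duality.} Fix $\gamma\in\Sigma(\delta)$, $\lambda\in\mathbb{R}_+^L$ and $\varepsilon>0$. For each $\ell$ choose $\pi_\ell\in\Pi(\mu_{\ell,L+1},\gamma_{\ell,L+1})$ with $\int c_\ell\,d\pi_\ell\le\delta_\ell+\varepsilon$, disintegrate $\pi_\ell(ds_\ell,ds_\ell')=\gamma_{\ell,L+1}(ds_\ell')\,\kappa_\ell(ds_\ell\mid s_\ell')$, and glue along the common covariate coordinate: set $\Lambda:=\gamma(ds')\otimes\bigotimes_{\ell}\kappa_\ell\big(ds_\ell\mid (y_\ell',x')\big)$ on $\mathcal V\times\mathcal S$, where $s'=(y_1',\dots,y_L',x')$. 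Then $\mathrm{proj}_{\mathcal S}\Lambda=\gamma$, the joint law of $(s_\ell,s_\ell')$ under $\Lambda$ is $\pi_\ell$, and $\varpi:=\mathrm{proj}_{\mathcal V}\Lambda\in\Pi(\mu_{1,L+1},\dots,\mu_{L,L+1})$. Integrating the pointwise bound $f(s')\le f_\lambda(v)+\sum_\ell\lambda_\ell c_\ell(s_\ell,s_\ell')$ against $\Lambda$ gives
\[
\int_{\mathcal S}f\,d\gamma\le\int_{\mathcal V}f_\lambda\,d\varpi+\sum_\ell\lambda_\ell\int c_\ell\,d\pi_\ell\le\sup_{\pi}\int_{\mathcal V}f_\lambda\,d\pi+\langle\lambda,\delta\rangle+\varepsilon\sum_\ell\lambda_\ell ,
\]
and letting $\varepsilon\downarrow0$, then $\sup_\gamma$ and $\inf_\lambda$, yields $\mathcal I(\delta)\le\mathcal J(\delta)$. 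As in \citet{blanchet2019quantifying}, $f_\lambda$ is only universally measurable (its superlevel sets are analytic), so the integrals are taken in the completion; \Cref{assumption:f-multi-marginals} makes the negative parts integrable so the inequalities are meaningful. This is essentially the amalgamation already used for the non‑overlapping dual \Cref{thm:ID-duality-multi-marginals}, except that here $\gamma$ supplies the single $\mathcal X$‑coordinate shared by all $s_\ell'$.

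\emph{No duality gap.} We adapt the argument of \citet{zhang2022simple} behind \Cref{thm:I-duality}. Writing $\psi(\lambda):=\sup_{\pi\in\Pi(\mu_{1,L+1},\dots,\mu_{L,L+1})}\int_{\mathcal V}f_\lambda\,d\pi$ — convex in $\lambda$ as a supremum of affine functions — we have $\mathcal J(\delta)=\inf_{\lambda\ge 0}\{\langle\lambda,\delta\rangle+\psi(\lambda)\}$. If $\mathcal J(\delta)=+\infty$ there is nothing to prove; otherwise take a minimizing sequence $\lambda^{(n)}$. If it is bounded, pass to a limit $\lambda^\star$ using lower semicontinuity of $\psi$ and exploit the optimality relation $-\delta\in\partial\psi(\lambda^\star)$; made quantitative via a near‑optimal $\varpi^\star\in\Pi(\mu_{1,L+1},\dots,\mu_{L,L+1})$ together with a universally measurable $\varepsilon$‑maximizing selection $s'(\cdot):\mathcal V\to\mathcal S$ of $f(s')-\sum_\ell\lambda_\ell^\star c_\ell(s_\ell,s_\ell')$ (available by the analytic‑selection results in \citet{Bertsekas1978}), this produces, in the limit $\varepsilon\downarrow 0$ and using complementary slackness coordinate by coordinate, a measure $\gamma:= s'\#\varpi^\star$ on $\mathcal S$ with $\int c_\ell(s_\ell,s_\ell')\,d\varpi^\star\le\delta_\ell$, hence $\gamma\in\Sigma(\delta)$, and $\int f\,d\gamma=\int\big[f_{\lambda^\star}(v)+\sum_\ell\lambda_\ell^\star c_\ell(s_\ell,s_\ell')\big]\,d\varpi^\star\ge \psi(\lambda^\star)+\langle\lambda^\star,\delta\rangle=\mathcal J(\delta)$ up to $\varepsilon$. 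The unbounded case $\|\lambda^{(n)}\|\to\infty$ is handled as in \citet{zhang2022simple}: then $\psi(\lambda^{(n)})\to\sup_{s'\in\mathcal S}f(s')$, forcing $\mathcal I(\delta)=\mathcal J(\delta)=+\infty$; the boundary case $\lambda^\star\in\partial\mathbb{R}_+^L$ is covered by the same limiting argument since those coordinates contribute nothing to $\langle\lambda^\star,\delta\rangle$.

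\emph{Main obstacle.} Two points carry the weight. The first is the no‑gap direction with merely measurable $f$ and possibly infinite $c_\ell$, which forces the measurable‑selection and epigraph machinery of \citet{Bertsekas1978} and \citet{zhang2022simple}, now with simultaneous complementary slackness in $L$ coordinates. The second, genuinely new relative to \Cref{thm:I-duality}, is that the $L$ partial transport plans $\pi_\ell$ (and, when $A\neq[L]$, the inactive reference marginals $\mu_{\ell,L+1}$, $\ell\notin A$) must be amalgamated into a single joint law on $\mathcal S$ compatibly with the one shared $\mathcal X$‑coordinate — a star‑like consistent product marginal system — which is exactly what the results of \citet{vorob1962consistent}, \citet{kellerer1964verteilungsfunktionen} and \citet{shortt1983combinatorial} supply, extending the two‑ray amalgamation in \Cref{lemma:AO1}.
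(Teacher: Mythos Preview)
Your reduction from $\delta_A$ to a fully interior radius via the modified costs $\tilde c_\ell$, your weak-duality gluing, and your identification of the star-like CPMS amalgamation (extending \Cref{lemma:AO1} to $L$ rays via \Cref{lemma:decomposability-S-multi-marginals}) are all correct and match the paper.

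Your ``no duality gap'' step, however, is \emph{not} the argument of \citet{zhang2022simple} or of the paper, and as written it has real gaps. The paper never constructs a primal $\gamma$ from a dual optimizer via subdifferentials and measurable selection. Instead it computes the Legendre conjugate $\mathcal I^\star(\lambda)=\sup_{\gamma\in\bar{\mathcal P}}\bigl\{\int f\,d\gamma-\sum_\ell\lambda_\ell\boldsymbol K_\ell(\mu_{\ell,L+1},\gamma_{\ell,L+1})\bigr\}$ and shows, via the CPMS gluing together with the interchangeability principle (\Cref{IPforGroup}), that $\mathcal I^\star(\lambda)=\psi(\lambda)$ for every $\lambda\in\mathbb R_+^L$; strong duality is then just the biconjugate identity $\mathcal I=\mathcal I^{\star\star}$ on $\mathbb R_{++}^L$ (\Cref{lemma: Legendre_transform}), which needs only that $\delta\mapsto\mathcal I(\delta)$ be concave and nondecreasing. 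Your route has two concrete holes. First, ``$\mathcal J(\delta)=+\infty$, nothing to prove'' is backwards: weak duality gave you $\mathcal I\le\mathcal J$, so you still owe $\mathcal I=+\infty$; in the biconjugate approach this is automatic because $\mathcal I\equiv+\infty$ forces $\mathcal I^\star\equiv+\infty=\psi$. Second, from $-\delta\in\partial\psi(\lambda^\star)$ you cannot conclude that your chosen near-optimal pair $(\varpi^\star,s')$ satisfies $\int c_\ell(s_\ell,s'_\ell(v))\,d\varpi^\star\le\delta_\ell$ for each $\ell$: any such pair only certifies that $-C\in\partial\psi(\lambda^\star)$ with $C_\ell=\int c_\ell\,d\varpi^\star$, and since the subdifferential is in general a set, $C\le\delta$ coordinatewise does not follow without an additional envelope/Danskin argument plus a tightness-based limit in $\varepsilon$, neither of which you supply.
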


Similar to the nonoverlapping case, strong duality holds for the inner multi-marginal problem under additional conditions. The result is stated in \cref{corollary:I-dual-full-multi-marginals} of \Cref{appendix:duality-OT}.

\subsection{Treatment Choice for Multi-valued Treatment}

We apply strong duality to multi-valued treatment in \citet{Kido2022}. Let $d: \mathcal{X} \rightarrow [L]$ be a policy function or treatment rule on $\mathcal{X}$ and $Y_\ell \in \mathbb{R}$ denote the potential outcome under the treatment $\ell$ for $\ell \in [L]$. Consider the policy function defined as 
\begin{align*}
	Y(d) := \sum_{\ell=1}^L  Y_\ell \times \mathds{1}\{d(X) = \ell\} .
\end{align*}
\citet{Kido2022} introduces the following robust welfare function.
\begin{align*}
    	\mathrm{RW}_{C}(d) = \sup_{\gamma \in\Sigma_{\mathrm{M}}(\delta_0)} \mathbb{E}_{\gamma} \left[ \sum_{\ell=1}^L Y_\ell \mathds{1}\{d(X) = \ell\} \right],
    \end{align*}
where the uncertainty set $\Sigma_{\mathrm{M}}(\delta_0)$ is based on the conditional distribution of $(Y_\ell)_{\ell \in [L]}$ given $X$:
\begin{align*}
	\Sigma_{\mathrm{M}}(\delta_0)
	:= 
    \left\{\gamma \in \mathcal{P}(\mathcal{S}): \boldsymbol{K}(\mu_{(Y_1, \dotsc, Y_L)|X=x}, \gamma_{(Y_1, \dotsc, Y_L)|X=x}) \le \delta_0 \text{ for all } x, \; \mu_{X} = \gamma_X\right\}, 
\end{align*}
in which the cost function $c$ associated with $\boldsymbol{K}$ is
\begin{align*}
	c((y_1, \dotsc, y_L), (y_1', \dotsc, y_L')    ) = \sum_{\ell=1}^L |y_\ell - y_\ell'|.
\end{align*}
Note that the uncertainty set $\Sigma_{\mathrm{M}}(\delta_0)$ does not allow any potential shift\footnote{\citet{Kido2022} mentions the possibility of allowing for covariate shift by incorporating uncertainty sets in \citep[e.g.,][]{Mo2020,Zhao2019} for the distribution of the covariate in future work.
} in $X$.
When $Y_1, \dotsc, Y_L$ are unbounded, \citet{Kido2022} shows that 
	\begin{align*}
		\mathrm{RW}_{C}(d) 
		& = \sum_{\ell=1}^L \mathbb{E}_{(Y_\ell, X) \sim \mu_{\ell, L+1}} \left[ (Y_\ell  - \delta_0)  I(D(X) = \ell)  \right] \\
		& = \mathbb{E}_{X} \left[ \sum_{\ell=1}^L \left(\mathbb{E}[Y_\ell \mid X] - \delta_0 \right) I(D(X) = \ell)  \right].
	\end{align*}
We apply W-DMR for overlapping marginals with the following cost function:
\begin{align*} 
	c_{\ell}(s_\ell, s_\ell') = | y_\ell - y_\ell'| + \| x_{\ell} - x_\ell' \|_{2} ,
\end{align*}
and define a robust welfare function as
\begin{align*}
	\mathrm{RW}(d) = \sup_{\gamma \in \Sigma(\delta)} \mathbb{E}_{\gamma} \left[ \sum_{\ell=1}^L Y_\ell I(d(X) = \ell) \right] .
\end{align*}

\begin{proposition} \label{prop:RW-L1-with-X-L2-multi-marginals}
	For $\ell \in [L]$, let 
	\begin{align*}
		c_{\ell}(s_\ell, s_\ell') = | y_\ell - y_\ell'| + \| x_{\ell} - x_\ell' \|_{2}.
	\end{align*}
Assume that $Y_\ell$ is unbounded, $\mathbb{E}[\|X\|_2^2] < \infty$ and $\mathbb{E}[|Y_\ell|] < \infty$. Then
	\begin{align*}
		\mathrm{RW}(d) 
		& =
		\sup_{\lambda \ge 1}
		\left\{
		\inf_{\pi \in \Pi(\mu_{1,L+1}, \dotsc, \mu_{L, L+1})} \int_{\mathcal{V}} \min_{\ell \in [L]} \{y_\ell + \phi_{\lambda, \ell}(x_1, \dotsc, x_L)  \} d \pi(s) - \langle \lambda, \delta \rangle \right\},
	\end{align*}
	where 
	\begin{align*}
		\varphi_{\lambda, \ell}(x_1, \dotsc, x_L) = 
		\min_{x', d(x') = \ell} \sum_{\ell=1}^{L} \lambda_\ell \| x_\ell - x' \|_{2}.
	\end{align*}
\end{proposition}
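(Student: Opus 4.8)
The plan is to identify $\mathrm{RW}(d)$ with the negative of a W-DMR-MP with overlapping multi-marginals and then invoke the strong-duality result \Cref{thm:I-duality-multi-marginals}, after which the proposition reduces to an explicit evaluation of the $c$-transform appearing in the dual. Set $f:\mathcal{S}\to\mathbb{R}$, $f(y_1,\dots,y_L,x)=\sum_{\ell\in[L]}y_\ell\,\mathds{1}\{d(x)=\ell\}$, so that the worst-case welfare satisfies $\mathrm{RW}(d)=\inf_{\gamma\in\Sigma(\delta)}\int_{\mathcal{S}}f\,d\gamma=-\mathcal{I}(\delta)$, where $\mathcal{I}(\delta):=\sup_{\gamma\in\Sigma(\delta)}\int_{\mathcal{S}}(-f)\,d\gamma$ is the W-DMR-MP of $-f$ with cost functions $c_\ell(s_\ell,s_\ell')=|y_\ell-y_\ell'|+\|x_\ell-x_\ell'\|_2$. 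First I would verify the hypotheses of \Cref{thm:I-duality-multi-marginals}: each $c_\ell$ is continuous, nonnegative, and vanishes exactly on the diagonal, so \Cref{assumption:cost-function} holds; and since $-f\ge-\sum_\ell|y_\ell|$ with $\int\big(\sum_\ell|y_\ell|\big)\,d\nu_0=\sum_\ell\mathbb{E}_{\mu_{\ell,L+1}}|Y_\ell|<\infty$ for any $\nu_0$ in the (nonempty) star-like Fr\'echet class $\mathcal{F}\big((\mu_{\ell,L+1})_{\ell\in[L]}\big)$, \Cref{assumption:f-multi-marginals} holds for $-f$. The theorem then gives, for $\delta\in\mathbb{R}_{++}^L$ (with $A=[L]$),
\[
\mathcal{I}(\delta)=\inf_{\lambda\in\mathbb{R}_+^L}\Big[\langle\lambda,\delta\rangle+\sup_{\pi\in\Pi(\mu_{1,L+1},\dots,\mu_{L,L+1})}\int_{\mathcal{V}}(-f)_\lambda\,d\pi\Big],
\]
where $(-f)_\lambda(v)=\sup_{(y_1',\dots,y_L',x')\in\mathcal{S}}\big\{-\sum_\ell y_\ell'\mathds{1}\{d(x')=\ell\}-\sum_\ell\lambda_\ell\big(|y_\ell-y_\ell'|+\|x_\ell-x'\|_2\big)\big\}$ is universally measurable by the measurability theory underlying the theorem.

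The substance is the evaluation of the $c$-transform $(-f)_\lambda$, which I expect to be the main obstacle. Fixing the target location $x'$ with $d(x')=k$, the inner objective separates over $y_1',\dots,y_L'$: for $\ell\ne k$ one has $\sup_{y_\ell'}\{-\lambda_\ell|y_\ell-y_\ell'|\}=0$ (attained at $y_\ell'=y_\ell$), while a one-dimensional calculation gives $\sup_{y_k'}\{-y_k'-\lambda_k|y_k-y_k'|\}=-y_k$ when $\lambda_k\ge1$ and $=+\infty$ when $\lambda_k<1$. Hence any $\lambda$ with some $\lambda_k<1$ and $d^{-1}(k)\ne\emptyset$ forces $(-f)_\lambda\equiv+\infty$ (send $y_k'\to-\infty$ along any $x'\in d^{-1}(k)$) and is irrelevant to the infimum; and for $\lambda\ge1$ componentwise, interchanging the supremum over $x'$ with the maximum over the value $k=d(x')$ and using the definition of $\varphi_{\lambda,k}$ yields
\[
(-f)_\lambda(v)=\sup_{x'}\Big\{-y_{d(x')}-\textstyle\sum_\ell\lambda_\ell\|x_\ell-x'\|_2\Big\}=-\min_{k\in[L]}\big\{y_k+\varphi_{\lambda,k}(x_1,\dots,x_L)\big\}.
\]
Substituting into the duality, using $\sup_\pi(-h)=-\inf_\pi h$, and negating gives
\[
\mathrm{RW}(d)=-\mathcal{I}(\delta)=\sup_{\lambda\ge1}\Big[\inf_{\pi\in\Pi(\mu_{1,L+1},\dots,\mu_{L,L+1})}\int_{\mathcal{V}}\min_{\ell\in[L]}\big\{y_\ell+\varphi_{\lambda,\ell}(x_1,\dots,x_L)\big\}\,d\pi-\langle\lambda,\delta\rangle\Big],
\]
the claimed identity; the moment conditions $\mathbb{E}|Y_\ell|<\infty$ and $\mathbb{E}\|X\|_2^2<\infty$ ensure every integral here is finite.

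The genuinely delicate part is the $c$-transform step: peeling off the threshold $\lambda\ge1$ (which originates from the unit Lipschitz slope of $y_\ell\mapsto y_\ell$ competing with the $\lambda_\ell|y_\ell-y_\ell'|$ penalty) cleanly from the optimization over the $y'$-coordinates, and then carrying out the nested optimization over the transport target $x'$ so that the policy's level sets $\{x':d(x')=\ell\}$ produce the functions $\varphi_{\lambda,\ell}$. The only other point requiring attention is the boundary of $\mathbb{R}_+^L$: when $\delta_\ell=0$ on $\ell\notin A:=\{\ell:\delta_\ell>0\}$, one invokes the $A$-indexed form of \Cref{thm:I-duality-multi-marginals}, whose infinite-cost coordinates pin $x'$ to the corresponding $x_\ell$; the $c$-transform computation is then identical in structure and reproduces the same formula after $\sup_\lambda$ and $\inf_\pi$ are taken. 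Finally, if $d$ does not use every treatment one simply drops, for each $\ell$ with $d^{-1}(\ell)=\emptyset$, the term $y_\ell+\varphi_{\lambda,\ell}\equiv+\infty$ from the minimum together with the constraint $\lambda_\ell\ge1$.
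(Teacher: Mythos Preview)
Your proposal is correct and follows essentially the same route as the paper: the paper's proof is declared ``identical to that of \Cref{prop:RW-L1-with-X-L2},'' and that proof does exactly what you outline---apply the overlapping-marginals strong duality (here \Cref{thm:I-duality-multi-marginals}) to $-f$, separate the inner supremum over the $y_\ell'$ coordinates to obtain the threshold $\lambda_\ell\ge1$ from the competition between the unit linear term and the $\lambda_\ell|y_\ell-y_\ell'|$ penalty, and then optimize over $x'$ across the level sets $\{d(x')=\ell\}$ to produce the functions $\varphi_{\lambda,\ell}$. Your treatment is in fact a bit more explicit than the paper's about verifying \Cref{assumption:f-multi-marginals}, handling boundary $\delta$'s via the $A$-indexed form of the theorem, and the edge case where $d^{-1}(\ell)=\emptyset$.
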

\Cref{prop:RW-L1-with-X-L2-multi-marginals} is an extension of \Cref{prop:RW-L1-with-X-L2}.

\section{Concluding Remarks}

In this paper, we have introduced W-DMR in marginal problems for both non-overlapping and overlapping marginals and established fundamental results including strong duality, finiteness of the proposed W-DMR, and existence of an optimizer at each radius. We have also shown continuity of the W-DMR-MP as a function of the radius. Applicability of the proposed W-DMR in marginal problems and established properties is demonstrated via distinct applications when the sample information comes from multiple data sources and only some marginal reference measures are identified. 
To the best of the authors' knowledge, this paper is the first systematic study of W-DMR in marginal problems. Many open questions remain including the structure of optimizers of W-DMR for both non-overlapping and overlapping marginals, efficient numerical algorithms, and estimation and inference in each motivating example. Another useful extension is to consider objective functions that are nonlinear in the joint probability measure such as the Value-at-Risk of a linear portfolio of risks in \citet{Puccetti_2012} and robust spectral measures of risk in \citet{Ghossoub_2023,Ennaji2022}. 

\newpage

\printbibliography

\newpage

\appendix

\section{Appendix A: Preliminaries}

In this appendix, we provide a self-contained review of interchangeability principle, strong duality for marginal problems, and probability measures given marginals.

Additional notations used in the appendices are collected here. For any set $A$, we denote by $2^A$ the power set of $A$. Suppose $f: \mathcal{X} \rightarrow \mathcal{Y}$ and $g: \mathcal{Y} \rightarrow \mathcal{Z}$, let $g\circ f$ denote the composite of $f$ and $g$, i.e., a map $x \mapsto g (f(x))$ that maps $\mathcal{X}$ into $\mathcal{Z}$.  Given a Polish measurable space $(\mathcal{X}, \mathcal{B}_{\mathcal{X}})$. For any Borel measure $\mu$ on $\mathcal{X}$, let $\mathrm{Supp}(\mu)$ denote the smallest closed set $A \subset \mathcal{X}$ such that $\mu(A) = 1$. For $j \in [n]$, let $\mathcal{X}_j$ be a Polish space equipped with Borel $\sigma$-algebra $\mathcal{B}_{\mathcal{X}_j}$.   Let $\mathcal{S} := \prod_{j \in [n]} \mathcal{X}_j$. For any subset $K \subset [n]$, we write $\mathcal{S}_K := \prod_{j\in K} \mathcal{X}_j$ and the projection map $\operatorname{proj}_{K}$ from $\mathcal{S}$ to $\mathcal{S}_K$ as $\operatorname{proj}_{K}: (x_j)_{j\in [n]} \mapsto (x_j)_{j\in K}$.  Given $\mu_j\in \mathcal{P}(\mathcal{X}_j)$ for $j \in [n]$, let $\Pi(\mu_1, \cdots, \mu_n)$ denote the set of probability measures $\mu$ on $\mathcal{S}$ such that $\mathrm{proj}_{\{j\}} \# \mu = \mu_j$. Finally, let $\mathcal{N}(\mu, \Sigma)$ denote the multivariate normal distribution with mean $\mu$ and covariance matrix $\Sigma$.

\subsection{Interchangeability Principle} \label{appendix:interchangeability}

Let $(\mathcal{T}, \mathcal{B}_{\mathcal{T}},\mu)$ be a probability space, $(\mathcal{X},  \mathcal{B}_{\mathcal{X}} )$ be a sample  space and $\varphi: \mathcal{T}\times \mathcal{X} \rightarrow  \overline{\mathbb{R}}$ be a measurable function.  We denote by $\mathcal{B}_{\mathcal{T}}^{\mu}$ the $\mu$-completion of $\mathcal{B}_{\mathcal{T}}$. Let $\Gamma(\mu,\varphi )$ denote the set of probability measures $\pi$ on $(\mathcal{T} \times \mathcal{X},   \mathcal{B}_{\mathcal{T}} \otimes \mathcal{B}_{\mathcal{X}} )$ such that  $\pi(A \times  \mathcal{X}) = \mu(A)$ for all $A \in \mathcal{B}_{\mathcal{T}}$ and $\int_{ \mathcal{T} \times \mathcal{X}} \varphi d\pi $ is well defined. If there is no such $\pi$,  the natural convention is to take $\Gamma(\mu, \varphi) =\emptyset$.

\begin{definition}
A measurable function $\varphi:\mathcal{T} \times \mathcal{X} \rightarrow \overline{\mathbb{R}}$ satisfies the interchangeability principle with respect to $\mu$ if the function $t \mapsto \sup_{x \in \mathcal{X}} \varphi(t,x)$ is $\mathcal{B}_\mathcal{T}^{\mu}$-measurable and satisfies
\[
\int_{\mathcal{T}}  \left[ \sup_{x\in\mathcal{X} } \varphi( t,x)  \right] d\mu(t)  = \sup_{\pi \in \Gamma(\mu, \varphi) } \int_{\mathcal{T} \times \mathcal{X} }  \varphi\left(t,x\right) d \pi (t,x).
\]
\end{definition} 
The interchangeability principle allows us to interchange the supremum and integral operators. It is a weak condition. As explained in Example 2 in \citet{zhang2022simple}, this condition is satisfied when the space is Polish and $\varphi$ is measurable. 

We extend the interchangeability principle with respect to a measure to a class of measures in the definition below.
\begin{definition}
Let $\mathcal{G}$ be a set of probability measures on $(\mathcal{X}, \mathcal{B}_{\mathcal{X}})$.
A measurable function $\varphi:\mathcal{T} \times \mathcal{X} \to \overline{\mathbb{R}}$ satisfies the  interchangeability principle  with respect to $\mathcal{G}$ if $\varphi$ satisfies the {\it interchangeability principle} with respect to $\mu$ for all $\mu \in \mathcal{G}$.
\end{definition}

\begin{lemma}\label{IPforGroup}
Suppose that $\varphi$  satisfies the interchangeability principle with respect to $\mathcal{G}$. Let $\Gamma(\mathcal{G}, \varphi) := \cup_{\mu \in \mathcal{G}} \Gamma(\mu, \varphi)$. Then,
\[
\sup_{\pi \in  \Gamma(\mathcal{G}, \varphi )  } \int_{\mathcal{T} \times \mathcal{X} }  \varphi(t,x) d\pi(t,x) =  \sup_{\mu \in \mathcal{G} }  \left\{ \int_{\mathcal{T}} \left[ \sup_{x \in \mathcal{X} }  \varphi(t,x)  \right] d \mu(t) \right\} .
\]
\end{lemma}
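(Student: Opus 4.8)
The plan is to prove the two inequalities separately, reducing each to the single-measure interchangeability principle that is assumed to hold for every $\mu \in \mathcal{G}$.

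First I would establish the inequality $\le$. Fix an arbitrary $\pi \in \Gamma(\mathcal{G}, \varphi)$. By definition of $\Gamma(\mathcal{G},\varphi) = \bigcup_{\mu\in\mathcal{G}}\Gamma(\mu,\varphi)$, there is some $\mu\in\mathcal{G}$ with $\pi\in\Gamma(\mu,\varphi)$; in particular the $\mathcal{T}$-marginal of $\pi$ equals $\mu$. Disintegrating $\pi$ along this marginal as $\pi(dt,dx) = \mu(dt)\,\pi_t(dx)$, we get the pointwise bound $\int_{\mathcal{X}}\varphi(t,x)\,\pi_t(dx) \le \sup_{x\in\mathcal{X}}\varphi(t,x)$ for $\mu$-a.e.\ $t$, hence
\[
\int_{\mathcal{T}\times\mathcal{X}}\varphi\,d\pi \;\le\; \int_{\mathcal{T}}\Big[\sup_{x\in\mathcal{X}}\varphi(t,x)\Big]\,d\mu(t) \;\le\; \sup_{\mu'\in\mathcal{G}}\int_{\mathcal{T}}\Big[\sup_{x\in\mathcal{X}}\varphi(t,x)\Big]\,d\mu'(t).
\]
Since $t\mapsto\sup_x\varphi(t,x)$ is $\mathcal{B}_{\mathcal{T}}^{\mu}$-measurable by the single-measure interchangeability hypothesis, the middle integral is well defined. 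Taking the supremum over $\pi\in\Gamma(\mathcal{G},\varphi)$ gives the $\le$ direction. (A small point to check: that the disintegration and the a.e.\ bound are legitimate given only well-definedness of $\int\varphi\,d\pi$; one handles $\pm\infty$ parts by splitting $\varphi = \varphi^+ - \varphi^-$ as in the paper's conventions, or one simply invokes the single-measure statement directly — see below.)

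For the reverse inequality $\ge$, fix $\mu\in\mathcal{G}$. The single-measure interchangeability principle gives
\[
\int_{\mathcal{T}}\Big[\sup_{x\in\mathcal{X}}\varphi(t,x)\Big]\,d\mu(t) \;=\; \sup_{\pi\in\Gamma(\mu,\varphi)}\int_{\mathcal{T}\times\mathcal{X}}\varphi\,d\pi \;\le\; \sup_{\pi\in\Gamma(\mathcal{G},\varphi)}\int_{\mathcal{T}\times\mathcal{X}}\varphi\,d\pi,
\]
where the last inequality holds because $\Gamma(\mu,\varphi)\subset\Gamma(\mathcal{G},\varphi)$. Taking the supremum over $\mu\in\mathcal{G}$ on the left yields $\ge$, and combining the two inequalities closes the proof. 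In fact this observation shows the cleanest route is to bypass the explicit disintegration entirely: the $\le$ direction also follows by noting that each $\pi\in\Gamma(\mathcal{G},\varphi)$ lies in some $\Gamma(\mu,\varphi)$, so $\int\varphi\,d\pi \le \sup_{\pi'\in\Gamma(\mu,\varphi)}\int\varphi\,d\pi' = \int_{\mathcal{T}}[\sup_x\varphi]\,d\mu \le \sup_{\mu'\in\mathcal{G}}\int_{\mathcal{T}}[\sup_x\varphi]\,d\mu'$, again invoking the single-measure principle.

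I do not expect a genuine obstacle here — the lemma is essentially a bookkeeping consequence of the definitions once the single-measure interchangeability principle is granted. The only mild technical care needed is (i) making sure $\Gamma(\mathcal{G},\varphi)$ is nonempty (if it is empty, both sides are the supremum over an empty set and there is nothing to prove, or one adopts the convention that each $\Gamma(\mu,\varphi)$ may be empty and the identity degenerates consistently), and (ii) being careful that "$\int\varphi\,d\pi$ is well defined" is part of the definition of $\Gamma(\mu,\varphi)$, so no integrability subtleties arise beyond what the single-measure statement already absorbs. The measurability of $t\mapsto\sup_x\varphi(t,x)$ with respect to each completion $\mathcal{B}_{\mathcal{T}}^{\mu}$ is likewise inherited directly from the hypothesis.
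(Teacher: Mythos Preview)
Your proposal is correct and, in its ``cleanest route'' formulation, is exactly the paper's proof: the paper writes the identity as the single chain $\sup_{\pi\in\Gamma(\mathcal{G},\varphi)} = \sup_{\mu\in\mathcal{G}}\sup_{\pi\in\Gamma(\mu,\varphi)}$ (using $\Gamma(\mathcal{G},\varphi)=\bigcup_{\mu}\Gamma(\mu,\varphi)$) and then applies the single-measure interchangeability principle to the inner supremum. Your initial disintegration argument for the $\le$ direction is an unnecessary detour that you yourself flag and discard; the paper never introduces it.
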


\begin{proof}
With the convention, we write $\sup A = -\infty$ if $A =\emptyset$. It is easy to see that 
\begin{align*}
\sup_{\pi \in  \Gamma(\mathcal{G}, \varphi )  } \int_{\mathcal{T} \times \mathcal{X} }  \varphi  d\pi & = \sup_{\mu \in \mathcal{G}}  \left[  \sup_{\pi \in \Gamma(\mu, \varphi) }     \int_{\mathcal{T} \times \mathcal{X}} \varphi(t, x) d \pi(t, x)  \right] \\
  & = \sup_{\mu \in \mathcal{G}} \left[\sup _{\pi \in \Gamma(\mu, \varphi)} \int_{\mathcal{T} \times \mathcal{X} } \varphi d \pi \right]= \sup_{\mu \in \mathcal{G} }  \left\{ \int_{\mathcal{T}} \left[ \sup_{x \in \mathcal{X} }  \varphi(t,x)  \right] d \mu(t) \right\}.
\end{align*}
\end{proof}

\subsection{Strong Duality for Marginal Problems} \label{appendix:duality-OT}

The strong duality results for marginal problems are well-established in the literature, see \citet{Kellerer:1984to,villani2009optimal,villani2021topics,Beiglboeck2011}. Here, we present a strong duality result based on \citet{Kellerer:1984to}.
\begin{theorem} \label{thm:duality-OT} 
Given probability measures $\mu_\ell$ on Polish space $\mathcal{X}_\ell$ equipped with Borel algebra $\mathcal{B}_{\mathcal{X}_\ell}$ for $\ell \in [L]$. Let $\mathcal{X} = \prod_{\ell \in [L] } \mathcal{X}_\ell$ and $f: \mathcal{X} \rightarrow \overline{\mathbb{R}}$ be an extended real-valued function. Consider the following marginal problem:
\[
\sup_{\pi \in \Pi(\mu_1, \ldots, \mu_L)}  \int_{\mathcal{X}} f(x) d \pi (x).
\]
Suppose $\{x \in \mathcal{X}:f(x) \ge u \}$ is analytic for all $u \in \overline{\mathbb{R}}$ and there exist $f_\ell < \infty$, $f_\ell \in L^1(\mu_\ell)$ for $\ell \in [L]$ such that $f(x) \ge \sum_{\ell=1}^{L} f_\ell(x_\ell)$ for all $x := (x_1, \dotsc, x_L) \in \mathcal{X}$.
Let $\Phi_f$ be the set of all measurable functions $\left(\phi_\ell \right)_{\ell\in [L]}$, where $\phi_\ell \in L^1(\mu_\ell)$ and $\phi_\ell > - \infty$ for all $\ell \in [L]$ such that
\[
 \sum_{\ell=1}^{L} \phi_\ell(x_\ell) \geq f(x), \quad \forall x = (x_1, \ldots, x_L) \in \mathcal{X}.
\] 
Then, 
\begin{align*}
 \sup_{\pi \in \Pi(\mu_1, \dotsc, \mu_L)} \int_{\mathcal{X}} f  d\pi   =
    \inf_{ (\phi_\ell)_{\ell \in [L]} \in \Phi_f  }
    \left\{\sum_{\ell=1}^{L} \int_{\mathcal{X}_\ell} \phi_\ell d \mu_\ell \right\}.
\end{align*}
\end{theorem}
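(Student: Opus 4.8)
The plan is to prove \Cref{thm:duality-OT} along the classical lines of \citet{Kellerer:1984to}: weak duality is immediate, and the entire content is the absence of a duality gap, which is reduced in stages to a compact/upper-semicontinuous minimax statement combined with the capacitability of analytic sets. For every $\pi\in\Pi(\mu_1,\dots,\mu_L)$ and every $(\phi_\ell)_{\ell\in[L]}\in\Phi_f$, integrating the pointwise inequality $\sum_{\ell}\phi_\ell(x_\ell)\ge f(x)$ against $\pi$ and using that the $\ell$-th marginal of $\pi$ equals $\mu_\ell$ gives $\int_{\mathcal X}f\,d\pi\le\sum_{\ell}\int_{\mathcal X_\ell}\phi_\ell\,d\mu_\ell$, hence $\sup_\pi\int f\,d\pi\le\inf_{\Phi_f}\bigl\{\sum_\ell\int\phi_\ell\,d\mu_\ell\bigr\}$. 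If the left-hand side is $+\infty$ this forces $\Phi_f=\emptyset$ and both sides are $+\infty$, so we may assume the primal value is finite. Replacing $f$ by $f-\sum_\ell f_\ell$ and each candidate $\phi_\ell$ by $\phi_\ell-f_\ell$ — an operation that preserves membership in $L^1(\mu_\ell)$, finiteness, and the constraint, and shifts both optimal values by the common constant $\sum_\ell\int f_\ell\,d\mu_\ell$ — we may assume $f\ge 0$; the one-sided integrability hypothesis is exactly what legitimizes this, and it reappears when passing from bounded to unbounded $f$.

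\emph{The compact/u.s.c.\ case and passage to analytic $f$.} Since each $\mathcal X_\ell$ is Polish, each $\mu_\ell$ is tight, so $\Pi(\mu_1,\dots,\mu_L)$ is tight and weakly compact by Prokhorov's theorem. For bounded upper semicontinuous $f$ the map $\pi\mapsto\int f\,d\pi$ is weakly upper semicontinuous, so the supremum is attained, and a Hahn--Banach/minimax separation carried out in $C_b(\mathcal X_1)\times\cdots\times C_b(\mathcal X_L)$ yields the duality with \emph{continuous} $\phi_\ell$; this is the classical multimarginal Kantorovich duality (see \citet{villani2009optimal,villani2021topics,Beiglboeck2011}). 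To reach functions with merely analytic superlevel sets, one uses that analytic sets are universally measurable and that each coupling $\pi$ is a Radon measure, so $\pi(\{f\ge t\})=\sup\{\pi(K):K\subseteq\{f\ge t\}\text{ compact}\}$ — equivalently, $A\mapsto\sup_\pi\pi(A)$ is a Choquet capacity (monotone, continuous from below on increasing sequences by exchange of suprema, and continuous from above on decreasing sequences of compacts by weak compactness of $\Pi$ and the portmanteau theorem), so analytic sets are capacitable. Combining this with the layer-cake identity $\int f\,d\pi=\int_0^\infty\pi(\{f>t\})\,dt$ shows that $\sup_\pi\int f\,d\pi=\sup\bigl\{\sup_\pi\int h\,d\pi:\ 0\le h\le f,\ h\text{ bounded u.s.c.}\bigr\}$, i.e.\ the primal value is arbitrarily well approximated by the primal values of nice minorants, each of which already possesses a continuous dual certificate of nearly the same cost.

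\emph{Closing the gap — the main obstacle.} It remains to upgrade the last display to $\sup_\pi\int f\,d\pi=\inf_{\Phi_f}\bigl\{\sum_\ell\int\phi_\ell\,d\mu_\ell\bigr\}$; only ``$\ge$'' is left, that is, the construction for each $\varepsilon>0$ of a \emph{merely measurable} tuple $(\phi_\ell)\in\Phi_f$ with $\sum_\ell\phi_\ell(x_\ell)\ge f(x)$ for \emph{all} $x$ and $\sum_\ell\int\phi_\ell\,d\mu_\ell\le\sup_\pi\int f\,d\pi+\varepsilon$. The difficulty is that a u.s.c.\ minorant $h$ from the previous step only supplies a tuple dominating $h$, whereas on the analytic set $\{f>h\}$ nothing is yet controlled; bridging this is possible precisely because $\Phi_f$ permits non-continuous functions. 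One selects a sequence of bounded u.s.c.\ minorants whose marginal dual integrals decrease to the primal value and whose supports exhaust $\{f>0\}$, and assembles the associated continuous dual tuples into a single universally measurable tuple that dominates $f$ everywhere while keeping the marginal integrals controlled, finally removing the boundedness of $f$ by truncation together with the integrable lower bound. This assembly — in effect a Suslin-scheme/transfinite argument on the analytic superlevel sets — is the technical heart of the statement, and \textbf{the main obstacle}; for its details we refer to \citet{Kellerer:1984to} (see also \citet{villani2021topics,Beiglboeck2011}). Everything preceding it is routine once weak compactness of $\Pi(\mu_1,\dots,\mu_L)$ and capacitability of analytic sets are in hand.
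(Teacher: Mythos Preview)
Your sketch is correct and follows the classical Kellerer line: weak duality, reduction to $f\ge 0$ via the integrable lower bound, the compact/u.s.c.\ case by minimax on the weakly compact set $\Pi(\mu_1,\dots,\mu_L)$, and then the capacitability of analytic sets to pass from u.s.c.\ minorants to the general case --- with the final assembly of a global measurable dual certificate deferred, as you acknowledge, to \citet{Kellerer:1984to}.

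The paper proceeds far more directly: it simply invokes \citet[Proposition~2.3 and Theorem~2.14]{Kellerer:1984to} as a black box, checking only that (i) analytic superlevel sets in a Polish space are $\mathfrak F_{\mathcal X}$-Suslin, so Kellerer's hypotheses apply, and (ii) since $f$ is universally measurable, Kellerer's \emph{outer} integral $\int^* f\,d\pi$ coincides with the ordinary integral in the $\pi$-completion (via \citet[Lemma~1.2.1]{vandervaart1996}), which justifies writing $\int f\,d\pi$ in the primal. Your outline essentially \emph{reproves} the relevant portion of Kellerer rather than citing it; this is more informative but not logically independent, since you too defer the hard step to the same reference. The one point the paper makes explicit that you leave implicit is (ii): Kellerer's duality is stated for outer integrals, and one needs the universal measurability of $f$ to identify that with the completion integral appearing in the statement here.
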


\begin{proof}
    This theorem is a direct application of \citet[Proposition 2.3 and Theorem 2.14]{Kellerer:1984to} to Polish spaces. Since $\{x \in \mathcal{X}:f(x) \ge u \}$ is analytic for every $u \in \overline{\mathbb{R}}$ and $\mathcal{X}$ is a Polish space, it is a $\mathfrak{F}_{\mathcal{X} }$-Suslin set, where $\mathfrak{F}_{\mathcal{X} }$ is the collection of closed sets of $\mathcal{X}$. Therefore, conditions in \citet[Proposition 2.3 and Theorem 2.14]{Kellerer:1984to} are satisfied with the outer integral in the primal problem. 
    
    Since $\{x \in \mathcal{X}:f(x) \ge u \}$ is analytic for every $u \in \overline{\mathbb{R}}$ and $\mathcal{X}$ is Polish space, $f$ is universally measurable, see \citet[Theorem 12.41]{Aliprantis2006}) and \citet{Bertsekas1978}. For each $\pi \in \Pi(\mu_1, \dotsc, \mu_L)$, there exists a Borel measurable function $f^*$ such that $f^* = f$, $\pi$-almost surely. As a result, we can replace the outer integral by the integral with respect to $\pi$-completion using Lemma 1.2.1 of \citet{vandervaart1996}.
\end{proof}

For the function $\varphi$ defined in \Cref{appendix:interchangeability}, \citet[Proposition 7.47]{Bertsekas1978} implies that the set $\left\{t \in \mathcal{T}: \sup_{x \in \mathcal{X}} \varphi(t, x) \ge u\right\}$ is analytic for all $u\in \overline{\mathbb{R}}$.   In our context, the functions $f_\lambda: \mathcal{V} \rightarrow \mathbb{R}$ and $g_\lambda: \mathcal{V} \rightarrow \mathbb{R}$ may not be Borel measurable; however, $\left\{ f_{\lambda} \geq u \right\}$ and $\left\{ g_{\lambda}  \geq u \right\}$ are both analytic for all $u \in \overline{\mathbb{R} }$. In the following corollaries, we apply \Cref{thm:duality-OT} to the inner marginal problems in $\mathcal{J}_{\mathrm{D}}(\delta)$ and $\mathcal{J}(\delta)$, where we use the convention that the infimum over an empty set is defined as $\infty$.
\begin{corollary} \label{corollary:ID-dual-full}
        In addition to conditions in \Cref{thm:ID-duality}, assume that there exist some measurable functions $a_1 \in L^1(\mu_1)$ and $a_2 \in L^1(\mu_2)$ such that $a_1 < \infty$, $a_2 < \infty$, and
        \begin{align*}
            g(s_1, s_2) \ge a_1(s_1) + a_2(s_2), \quad \forall  (s_1, s_2) \in \mathcal{S}_1 \times \mathcal{S}_2.
        \end{align*}
        Then, for $\delta \in \mathbb{R}^2_{++}$, we have 
        \begin{align*}
            \mathcal{J}_{\mathrm{D}}(\delta) 
             = 
             \inf_{\substack{\lambda \in \mathbb{R}_{+}^2 \\ (\psi, \phi ) \in L^1(\mu_{1}) \times  L^1(\mu_{2})}  } 
            \Bigg\{ \langle \lambda, \delta \rangle & + \int_{\mathcal{S}_1} \psi d \mu_1 + \int_{\mathcal{S}_2} \phi d \mu_2 :  \psi, \phi > - \infty\\
            & \quad \psi(s_1) + \phi(s_2) \ge g_\lambda(s_1, s_2),  \Bigg\}.
        \end{align*}
    \end{corollary}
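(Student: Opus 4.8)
The plan is to apply the strong-duality result for marginal problems, \Cref{thm:duality-OT}, to the inner marginal problem $\sup_{\varpi \in \Pi(\mu_1,\mu_2)} \int_{\mathcal{V}} g_\lambda \, d\varpi$ appearing in $\mathcal{J}_{\mathrm{D}}(\delta)$ for each fixed $\lambda \in \mathbb{R}_+^2$, and then to merge the resulting infimum over dual potentials with the outer infimum over $\lambda$.

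First I would fix $\lambda = (\lambda_1,\lambda_2) \in \mathbb{R}_+^2$ and invoke \Cref{thm:duality-OT} with $L = 2$, $\mathcal{X}_1 = \mathcal{S}_1$, $\mathcal{X}_2 = \mathcal{S}_2$, and $f = g_\lambda$. Its two hypotheses need checking. The analyticity requirement — that $\{v \in \mathcal{V}: g_\lambda(v) \ge u\}$ is analytic for every $u \in \overline{\mathbb{R}}$ — has already been recorded in the discussion following \Cref{thm:ID-duality}, where it is deduced from \citet[Proposition 7.47]{Bertsekas1978} using that $g$, $c_1$, $c_2$ are Borel and $\mathcal{V}$ is Polish. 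For the minorization requirement I would use the added hypothesis: since $c_\ell(s_\ell,s_\ell) = 0$ under \Cref{assumption:cost-function} (so the convention $\lambda_\ell c_\ell = \infty$ for $\lambda_\ell = 0$ and $c_\ell = \infty$ plays no role here), evaluating the supremum defining $g_\lambda$ at $v' = v$ gives
\[
g_\lambda(v) \ge g(s_1,s_2) - \lambda_1 c_1(s_1,s_1) - \lambda_2 c_2(s_2,s_2) = g(s_1,s_2) \ge a_1(s_1) + a_2(s_2)
\]
for all $v = (s_1,s_2) \in \mathcal{V}$, so $(f_1,f_2) = (a_1,a_2)$ — with $a_\ell \in L^1(\mu_\ell)$ and $a_\ell < \infty$ — witnesses the required lower bound; in particular the negative part of $g_\lambda$ is $\mu_1 \otimes \mu_2$-integrable, so every $\int_{\mathcal{V}} g_\lambda \, d\varpi$ with $\varpi \in \Pi(\mu_1,\mu_2)$ is well defined in $(-\infty,\infty]$.

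\Cref{thm:duality-OT} then yields, for every $\lambda \in \mathbb{R}_+^2$,
\[
\sup_{\varpi \in \Pi(\mu_1,\mu_2)} \int_{\mathcal{V}} g_\lambda \, d\varpi = \inf_{(\psi,\phi) \in \Phi_{g_\lambda}} \left\{ \int_{\mathcal{S}_1} \psi \, d\mu_1 + \int_{\mathcal{S}_2} \phi \, d\mu_2 \right\},
\]
where $\Phi_{g_\lambda}$ is the set of pairs $(\psi,\phi) \in L^1(\mu_1) \times L^1(\mu_2)$ with $\psi > -\infty$, $\phi > -\infty$, and $\psi(s_1) + \phi(s_2) \ge g_\lambda(s_1,s_2)$ for all $(s_1,s_2)$, and both sides equal $\infty$ (by the convention on infima over the empty set) when $\Phi_{g_\lambda} = \emptyset$. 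Substituting this into the interior-point branch of \eqref{eq:ID-dual} and using that a nested infimum equals the corresponding joint infimum,
\[
\inf_{\lambda \in \mathbb{R}_+^2} \left\{ \langle \lambda,\delta\rangle + \inf_{(\psi,\phi) \in \Phi_{g_\lambda}} \left[ \int \psi\,d\mu_1 + \int \phi\,d\mu_2 \right] \right\} = \inf_{\substack{\lambda \in \mathbb{R}_+^2 \\ (\psi,\phi) \in \Phi_{g_\lambda}}} \left\{ \langle \lambda,\delta\rangle + \int \psi\,d\mu_1 + \int \phi\,d\mu_2 \right\},
\]
I would obtain exactly the displayed identity, the constraint $\psi(s_1) + \phi(s_2) \ge g_\lambda(s_1,s_2)$ together with $\psi,\phi > -\infty$ and $\psi \in L^1(\mu_1)$, $\phi \in L^1(\mu_2)$ encoding membership $(\psi,\phi) \in \Phi_{g_\lambda}$.

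The only genuinely delicate point is the applicability of \Cref{thm:duality-OT} to $g_\lambda$, which in general is only universally (not Borel) measurable; but because its super-level sets are analytic — a fact already established in the body of the paper — Kellerer's duality applies with an outer integral in the primal, and the remainder of the argument is bookkeeping. The new minorization hypothesis $g \ge a_1 + a_2$ is precisely what supplies the integrable lower bound that \Cref{thm:duality-OT} requires.
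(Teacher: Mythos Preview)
Your proposal is correct and follows exactly the paper's approach: the paper states the corollary as an immediate application of \Cref{thm:duality-OT} to the inner marginal problem $\sup_{\varpi \in \Pi(\mu_1,\mu_2)} \int g_\lambda\,d\varpi$ for each fixed $\lambda$, noting beforehand that the super-level sets of $g_\lambda$ are analytic. You have simply spelled out the verification of the two hypotheses (analyticity and the integrable minorant $g_\lambda \ge g \ge a_1 + a_2$ via $c_\ell(s_\ell,s_\ell)=0$) and the merging of the nested infima, which the paper leaves implicit.
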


\begin{corollary} \label{corollary:I-dual-full}
In addition to conditions in \Cref{thm:I-duality}, assume that for each $\lambda$, there exist some measurable functions $a_{\lambda, 1} \in L^1(\mu_1)$ and $a_{\lambda, 2} \in L^1(\mu_2)$ such that $a_{\lambda, 1} < \infty, a_{\lambda, 2} < \infty$, and 
        \begin{align*}
            f_\lambda(s_1, s_2) \ge a_{\lambda, 1}(s_1) + a_{\lambda, 2}(s_2) , \quad \forall  (s_1, s_2) \in \mathcal{S}_1 \times \mathcal{S}_2.
        \end{align*}
        Then, for $\delta \in \mathbb{R}^2_{++}$, we have 
        \begin{align*}
            \mathcal{J}(\delta) 
            = 
            \inf_{\substack{\lambda \in \mathbb{R}_{+}^2 \\ (\psi, \phi ) \in L^1(\mu_{13}) \times  L^1(\mu_{23})}  } 
            \Bigg\{ \langle \lambda, \delta \rangle & +  \int_{\mathcal{S}_1} \psi d \mu_{13} + \int_{\mathcal{S}_2} \phi d \mu_{23}: \psi, \phi > - \infty  \\
            &\quad  \psi(s_1) + \phi(s_2) \ge f_\lambda(s_1, s_2) \ \forall (s_1, s_2)     \Bigg\}.
        \end{align*}
    \end{corollary}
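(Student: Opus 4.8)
The plan is to derive this identity directly from the definition \eqref{eq:I-dual} of the dual $\mathcal{J}(\delta)$ in the first case, namely, for $\delta\in\mathbb{R}_{++}^2$,
\[
\mathcal{J}(\delta) = \inf_{\lambda\in\mathbb{R}_{+}^2}\left\{\langle\lambda,\delta\rangle + \sup_{\varpi\in\Pi(\mu_{13},\mu_{23})}\int_{\mathcal{V}} f_\lambda\, d\varpi\right\},
\]
and then, for each fixed $\lambda\in\mathbb{R}_{+}^2$, replacing the inner marginal problem $\sup_{\varpi\in\Pi(\mu_{13},\mu_{23})}\int_{\mathcal{V}} f_\lambda\, d\varpi$ by its Kantorovich-type dual using the strong duality for marginal problems in \Cref{thm:duality-OT}. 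All the work is in checking that \Cref{thm:duality-OT} applies to the (possibly non-Borel) function $f_\lambda$, and then in merging two nested infima.

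First I would verify that \Cref{thm:duality-OT} applies to $f_\lambda$ with $L=2$, $\mathcal{X}_1=\mathcal{S}_1$, $\mathcal{X}_2=\mathcal{S}_2$, and marginals $\mu_{13},\mu_{23}$. The analyticity hypothesis is exactly the fact recalled in \Cref{appendix:duality-OT}, that $\{v\in\mathcal{V}:f_\lambda(v)\ge u\}$ is analytic for every $u\in\overline{\mathbb{R}}$; this holds because $f$, $c_1$, $c_2$ are Borel and $f_\lambda$ is a pointwise supremum over the Polish space $\mathcal{S}$, so one invokes \citet[Proposition 7.47]{Bertsekas1978}. The integrability hypothesis, that there exist $a_{\lambda,1}\in L^1(\mu_{13})$, $a_{\lambda,2}\in L^1(\mu_{23})$ with $a_{\lambda,\ell}<\infty$ and $f_\lambda(s_1,s_2)\ge a_{\lambda,1}(s_1)+a_{\lambda,2}(s_2)$ for all $(s_1,s_2)$, is precisely the added assumption of the corollary. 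Hence, for each $\lambda$, \Cref{thm:duality-OT} gives
\[
\sup_{\varpi\in\Pi(\mu_{13},\mu_{23})}\int_{\mathcal{V}} f_\lambda\, d\varpi \;=\; \inf_{(\psi,\phi)\in\Phi_{f_\lambda}}\left\{\int_{\mathcal{S}_1}\psi\, d\mu_{13} + \int_{\mathcal{S}_2}\phi\, d\mu_{23}\right\},
\]
where $\Phi_{f_\lambda}$ denotes the set of measurable pairs $(\psi,\phi)$ with $\psi\in L^1(\mu_{13})$, $\phi\in L^1(\mu_{23})$, $\psi,\phi>-\infty$, and $\psi(s_1)+\phi(s_2)\ge f_\lambda(s_1,s_2)$ for all $(s_1,s_2)\in\mathcal{S}_1\times\mathcal{S}_2$; with the convention that this infimum is $+\infty$ when $\Phi_{f_\lambda}=\emptyset$.

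Then I would substitute this back and collapse the nested infima. Since $\langle\lambda,\delta\rangle$ does not depend on $(\psi,\phi)$,
\begin{align*}
\mathcal{J}(\delta)
&= \inf_{\lambda\in\mathbb{R}_{+}^2}\left\{\langle\lambda,\delta\rangle + \inf_{(\psi,\phi)\in\Phi_{f_\lambda}}\Big[\int_{\mathcal{S}_1}\psi\, d\mu_{13} + \int_{\mathcal{S}_2}\phi\, d\mu_{23}\Big]\right\} \\
&= \inf_{\substack{\lambda\in\mathbb{R}_{+}^2\\ (\psi,\phi)\in\Phi_{f_\lambda}}}\left\{\langle\lambda,\delta\rangle + \int_{\mathcal{S}_1}\psi\, d\mu_{13} + \int_{\mathcal{S}_2}\phi\, d\mu_{23}\right\},
\end{align*}
which is exactly the asserted identity once $\Phi_{f_\lambda}$ is spelled out as the constraint $\psi,\phi>-\infty$, $\psi(s_1)+\phi(s_2)\ge f_\lambda(s_1,s_2)$.

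The main obstacle is not the bookkeeping above but the measurability issue: $f_\lambda$ need not be Borel, only analytically (hence universally) measurable, so the primal $\sup_{\varpi}\int_{\mathcal{V}}f_\lambda\, d\varpi$ must be understood with the integral taken in the $\varpi$-completion, and \Cref{thm:duality-OT} must be invoked in the form that already absorbs this (its proof passes to a Borel version $f_\lambda^*=f_\lambda$ $\varpi$-a.s.\ via the Suslin-set property). A secondary, harmless point is to keep the convention "infimum over the empty set equals $\infty$" consistent: for any $\lambda$ for which $\Phi_{f_\lambda}=\emptyset$, both sides receive the value $+\infty$ for that $\lambda$, which does not affect the outer infimum.
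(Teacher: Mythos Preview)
Your proposal is correct and matches the paper's approach exactly: the corollary is stated as a direct application of \Cref{thm:duality-OT} to the inner marginal problem $\sup_{\varpi\in\Pi(\mu_{13},\mu_{23})}\int_{\mathcal{V}} f_\lambda\, d\varpi$ for each fixed $\lambda$, followed by collapsing the nested infima, with the convention that the infimum over an empty set is $+\infty$. Your verification of the analyticity hypothesis via \citet[Proposition 7.47]{Bertsekas1978} and your handling of the universal measurability issue are precisely the points the paper flags in the discussion preceding the corollaries.
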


\begin{corollary} \label{corollary:ID-dual-full-multi-marginals}
In addition to conditions in \Cref{thm:ID-duality-multi-marginals}, assume that there exist some measurable functions $a_\ell \in L^1(\mu_\ell)$ for $\ell \in [L]$ such that $a_\ell < \infty$, and
	\begin{align*}
		g(s) \ge \sum_{\ell=1}^{L} a_\ell(s_\ell), \quad \forall s = (s_1, \ldots, s_L) \in \prod_{\ell \in [L]} \mathcal{S}_\ell.
	\end{align*}
	Then, for $\delta \in \mathbb{R}^L_{++}$, we have 
	\begin{align*}
		\mathcal{J}_{\mathrm{D}}(\delta) 
		& = 
		 \inf_{\substack{ \lambda \in \mathbb{R}_{+}^L,  \ \psi_\ell > - \infty \\ (\psi_\ell)_{\ell \in [L]} \in \prod_{\ell \in [L]} L^1(\mu_\ell) }  } 
		\Bigg\{ \langle \lambda, \delta \rangle + \sum_{\ell=1}^L \int \psi_\ell d \mu_\ell : 
		 \sum_{\ell=1}^L \psi_\ell(s_\ell) \ge g_{\lambda, [L]}(s),  \forall s \Bigg \}.
	\end{align*}
\end{corollary}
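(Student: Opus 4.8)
The plan is to combine Theorem~\ref{thm:ID-duality-multi-marginals} with the strong-duality result Theorem~\ref{thm:duality-OT} for marginal problems, applied to the inner multi-marginal problem $\sup_{\pi \in \Pi(\mu_1,\dotsc,\mu_L)} \int_{\mathcal{V}} g_{\lambda, [L]}\, d\pi$ for each fixed $\lambda$. First I would invoke Theorem~\ref{thm:ID-duality-multi-marginals} with $A = [L]$ and $\delta_A = \delta$ (so that every $\tilde c_\ell = c_\ell$), which gives
\[
\mathcal{J}_{\mathrm{D}}(\delta) = \inf_{\lambda \in \mathbb{R}_{+}^L}\left[ \langle \lambda, \delta \rangle + \sup_{\pi \in \Pi(\mu_1,\dotsc,\mu_L)} \int_{\mathcal{V}} g_{\lambda, [L]}\, d\pi \right].
\]

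Next, fix $\lambda \in \mathbb{R}_{+}^L$ and verify the hypotheses of Theorem~\ref{thm:duality-OT} for the function $f = g_{\lambda,[L]}$ on $\mathcal{V} = \prod_{\ell\in[L]}\mathcal{S}_\ell$. The analyticity of the superlevel sets $\{v : g_{\lambda,[L]}(v) \ge u\}$ for all $u \in \overline{\mathbb{R}}$ follows, as already noted in the discussion preceding this corollary, from the fact that $g$ and the $c_\ell$ are Borel measurable — hence $(v,v') \mapsto g(v') - \sum_\ell \lambda_\ell c_\ell(s_\ell, s_\ell')$ is Borel — together with \citet[Proposition 7.47]{Bertsekas1978}. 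For the required measurable lower bound, I would take $f_\ell = a_\ell$: since $c_\ell(s_\ell, s_\ell) = 0$ by \Cref{assumption:cost-function}, choosing $v' = v$ in the supremum defining $g_{\lambda,[L]}$ gives
\[
g_{\lambda,[L]}(s) \ge g(s) - \sum_{\ell=1}^{L} \lambda_\ell c_\ell(s_\ell, s_\ell) = g(s) \ge \sum_{\ell=1}^{L} a_\ell(s_\ell)
\]
for all $s = (s_1,\dotsc,s_L)$, and $a_\ell \in L^1(\mu_\ell)$ with $a_\ell < \infty$ by assumption.

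With the hypotheses verified, Theorem~\ref{thm:duality-OT} yields, for each $\lambda$,
\[
\sup_{\pi \in \Pi(\mu_1,\dotsc,\mu_L)} \int_{\mathcal{V}} g_{\lambda,[L]}\, d\pi = \inf_{(\psi_\ell)_{\ell\in[L]} \in \Phi_{g_{\lambda,[L]}}} \sum_{\ell=1}^{L} \int \psi_\ell\, d\mu_\ell,
\]
where $\Phi_{g_{\lambda,[L]}}$ consists of tuples $(\psi_\ell)$ with $\psi_\ell \in L^1(\mu_\ell)$, $\psi_\ell > -\infty$, and $\sum_\ell \psi_\ell(s_\ell) \ge g_{\lambda,[L]}(s)$ for all $s$. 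Substituting this into the formula for $\mathcal{J}_{\mathrm{D}}(\delta)$ and merging the two nested infima into a single joint infimum over $(\lambda, (\psi_\ell))$ — adopting the stated convention that the infimum over an empty $\Phi_{g_{\lambda,[L]}}$ is $+\infty$, which is consistent with Theorem~\ref{thm:duality-OT} — produces exactly the claimed identity.

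The only step requiring genuine attention is transferring the $L^1$ lower-bound hypothesis from $g$ to $g_{\lambda,[L]}$, but this is immediate from $c_\ell(s_\ell,s_\ell)=0$; likewise the analyticity point is routine given the citations already in place. Consequently this corollary is essentially a direct specialization of Theorem~\ref{thm:duality-OT} to the inner problem appearing in Theorem~\ref{thm:ID-duality-multi-marginals}, and no new technical obstacle arises.
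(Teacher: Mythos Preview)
Your proposal is correct and matches the paper's approach exactly: the paper presents this corollary as an immediate application of Theorem~\ref{thm:duality-OT} to the inner multi-marginal problem appearing in Theorem~\ref{thm:ID-duality-multi-marginals}, with the convention that the infimum over an empty $\Phi_{g_{\lambda,[L]}}$ is $+\infty$. Your verification that $g_{\lambda,[L]}(s)\ge g(s)\ge\sum_\ell a_\ell(s_\ell)$ via $c_\ell(s_\ell,s_\ell)=0$, together with the analyticity argument from \citet[Proposition~7.47]{Bertsekas1978}, supplies precisely the details the paper leaves implicit.
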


\begin{corollary} \label{corollary:I-dual-full-multi-marginals}
In addition to conditions in \Cref{thm:I-duality-multi-marginals}, assume that for each $\lambda$, there exist some measurable functions $a_{\lambda, \ell} \in L^1(\mu_\ell)$ for $\ell \in [L]$ such that $a_{\lambda, \ell} < \infty$, and
	\begin{align*}
		f_{\lambda, [L]}(s) \ge \sum_{\ell=1}^{L} a_{\lambda, \ell}(s_\ell), \quad \forall s = (s_1, \ldots, s_L) \in \prod_{\ell \in [L]} \mathcal{S}_\ell.
	\end{align*}
Then, for $\delta \in \mathbb{R}^L_{++}$, we have 
\begin{align*}
    \mathcal{J}(\delta) 
    & = 
     \inf_{\substack{ \lambda \in \mathbb{R}_{+}^L,  \ \psi_\ell > - \infty \\ (\psi_\ell)_{\ell \in [L]} \in \prod_{\ell \in [L]} L^1(\mu_\ell) }  }
    \Bigg\{ \langle \lambda, \delta \rangle + \sum_{\ell=1}^L \int_{\mathcal{S}_\ell } \psi_\ell d \mu_{\ell,L+1} : 
     \sum_{\ell=1}^L \psi_\ell(s_\ell) \ge f_{\lambda, [L]}(s),  \forall s  \Bigg\}.
\end{align*}
\end{corollary}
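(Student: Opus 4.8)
The plan is to combine the strong duality already established in \Cref{thm:I-duality-multi-marginals} with the Kantorovich-type duality for multi-marginal problems in \Cref{thm:duality-OT}, following verbatim the argument used for the two-marginal \Cref{corollary:I-dual-full}. First I would invoke \Cref{thm:I-duality-multi-marginals} with $A = [L]$, so that $\delta_A = \delta \in \mathbb{R}^L_{++}$ and
\[
\mathcal{J}(\delta) = \inf_{\lambda \in \mathbb{R}_{+}^L} \left[ \langle \lambda, \delta \rangle + \sup_{\pi \in \Pi(\mu_{1,L+1}, \dotsc, \mu_{L,L+1})} \int_{\mathcal{V}} f_{\lambda, [L]} \, d\pi \right].
\]
All that remains is to dualize, for each fixed $\lambda \in \mathbb{R}_{+}^L$, the inner multi-marginal problem $\sup_{\pi} \int_{\mathcal{V}} f_{\lambda, [L]} \, d\pi$ on $\mathcal{V} = \prod_{\ell \in [L]} \mathcal{S}_\ell$ with marginals $\mu_{\ell, L+1}$ on $\mathcal{S}_\ell$, and then merge the two nested infima.

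Next I would verify the hypotheses of \Cref{thm:duality-OT} for $f_{\lambda, [L]}$. For measurability, since $f$ and $c_1, \dotsc, c_L$ are Borel, the argument recalled after \Cref{thm:duality-OT} (via \citet[Proposition 7.47]{Bertsekas1978}) shows that $\{v \in \mathcal{V} : f_{\lambda, [L]}(v) \ge u\}$ is analytic for every $u \in \overline{\mathbb{R}}$, hence $f_{\lambda, [L]}$ is universally measurable. For the integrability minorant, the added hypothesis of the corollary supplies finite $a_{\lambda, \ell} \in L^1(\mu_{\ell, L+1})$ with $f_{\lambda, [L]}(s) \ge \sum_{\ell} a_{\lambda, \ell}(s_\ell)$, which is exactly what \Cref{thm:duality-OT} requires. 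Applying that theorem yields
\[
\sup_{\pi \in \Pi(\mu_{1,L+1}, \dotsc, \mu_{L,L+1})} \int_{\mathcal{V}} f_{\lambda, [L]} \, d\pi = \inf_{(\phi_\ell)_{\ell \in [L]} \in \Phi_\lambda} \sum_{\ell=1}^L \int_{\mathcal{S}_\ell} \phi_\ell \, d\mu_{\ell, L+1},
\]
where $\Phi_\lambda$ is the set of tuples $(\phi_\ell)_{\ell \in [L]}$ with $\phi_\ell \in L^1(\mu_{\ell, L+1})$, $\phi_\ell > -\infty$, and $\sum_\ell \phi_\ell(s_\ell) \ge f_{\lambda, [L]}(s)$ for all $s$.

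Finally I would substitute this identity into the expression for $\mathcal{J}(\delta)$ and flatten the two nested infima: minimizing over $\lambda \in \mathbb{R}_{+}^L$ of $\langle \lambda, \delta \rangle$ plus an infimum over $(\phi_\ell) \in \Phi_\lambda$ (a feasible set depending on $\lambda$ only through the constraint $\sum_\ell \phi_\ell \ge f_{\lambda, [L]}$) is the same as a single joint infimum over $\lambda$ and $(\psi_\ell)_{\ell \in [L]} \in \prod_{\ell} L^1(\mu_{\ell, L+1})$ subject to $\psi_\ell > -\infty$ and $\sum_\ell \psi_\ell(s_\ell) \ge f_{\lambda, [L]}(s)$ for all $s$; with the standing convention that the infimum over an empty feasible set is $\infty$ (covering those $\lambda$ for which $\Phi_\lambda = \emptyset$), this is precisely the claimed formula. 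The only genuinely delicate point is the analyticity/universal-measurability verification for $f_{\lambda, [L]}$, which must be handled carefully because this supremum-defined function need not be Borel; the rest is bookkeeping that mirrors \Cref{corollary:I-dual-full}.
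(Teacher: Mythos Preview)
Your proposal is correct and mirrors the paper's own approach exactly: the paper states these corollaries as direct applications of \Cref{thm:duality-OT} to the inner multi-marginal problem in $\mathcal{J}(\delta)$, relying on the analyticity of $\{f_{\lambda,[L]} \ge u\}$ via \citet[Proposition 7.47]{Bertsekas1978} and the convention that the infimum over an empty feasible set is $\infty$. There is nothing to add.
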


\subsection{Probability Measures with Given Marginals}
\label{appendix:CPMS}

The existence of probability measures with given marginals was studied by \citet{vorob1962consistent}, \citet{kellerer1964verteilungsfunktionen}, and \citet{shortt1983combinatorial}. If the indices of the marginals are overlapping, then there may not be a probability measure compatible with the given marginals. 
In this section, we review a sufficient condition for the existence of such a measure.

We first define a {\it consistent product marginal system} (CPMS) by following \citet[p. 466]{shortt1983combinatorial}.
Let $\mathcal{S} = \prod_{j \in [n]} \mathcal{X}_j$.
Given a finite index collection $\{K_1,\ldots, K_N \}$ with $K_j \subset [n]$ and probability measure $\mu_j$ on $\mathcal{S}_j := \mathcal{S}_{K_j}$ for $j \in [N]$.  A {\it  product marginal system}  $\mathcal{F}\left(\mathcal{S}; (\mu_j)_{j=1}^N \right)$ consists of a product space $\mathcal{S}$  and probability measures $(\mu_j)_{j=1}^N$. 

\begin{definition}[Consistent product marginal system (CPMS)] \label{definition:CPMS}
The product marginal system $\mathcal{F}\left(\mathcal{S}; (\mu_j)_{j=1}^N \right)$ is said to be {\it consistent} if  for any $K_i, K_j \subset [n]$ with $K_i \cap K_j \neq \emptyset$, the projections of $\mu_i$ and $\mu_j$ on $\mathcal{S}_{K_i \cap K_j}$ are the same, i.e.,
\[
 \left(    \mathord{\operatorname{proj}}_{K_i \cap K_j}  \circ    { \operatorname{proj}_{K_i }  }^{-1} \right)\# \mu_i  =   \left(    \mathord{\operatorname{proj}}_{K_i \cap K_j}  \circ   { \operatorname{proj}_{K_j }  }^{-1} \right)   \# \mu_j  .
\] 
\end{definition}

A CPMS is not necessarily nonempty. To illustrate this, we consider the following examples.

\begin{example}
Let $\mathcal{S} = \mathcal{X}_1 \times \mathcal{X}_2$, $K_j = \{j\}$ for $j \in [2]$.  Given probability measures $\mu_j$ on $\mathcal{S}_j:= \mathcal{X}_j$ for $j \in [2]$, the CPMS $\mathcal{F} (\mathcal{S};\mu_1, \mu_2) $ is given by 
\[
\mathcal{F} (\mathcal{S};\mu_1, \mu_2) =  \left\{ \pi \in \mathcal{P}(\mathcal{X}_1 \times \mathcal{X}_2): \pi \circ  \operatorname{proj}_{\{j\}}^{-1}  = \mu_j, \ \forall j = 1, 2 \right\}.
\]
Obviously, $\mathcal{F} (\mathcal{S};\mu_1, \mu_2)$ is identical to $\Pi(\mu_1, \mu_2)$ and is nonempty.
\end{example}

\begin{example}
Let $\mathcal{X}_j= \mathbb{R}$ for $j\in [4]$. Let $K_j = \{j,j+1 \}$ and $\mathcal{S}_j = \mathcal{X}_{j}\times \mathcal{X}_{j+1}$ for $j \in [3]$. To make the example more concrete,  let $\mu_j = \mathcal{N}(0, I_2)$ for all $j \in [3]$. We note that
\[
\left( \mathord{\operatorname{proj}}_{K_j \cap K_{j+1}}   \circ { \operatorname{proj}_{K_j} }^{-1}  \right) \# \mu_j  =  \mathcal{N}(0,1), \quad \forall j \in [3].
\]
Moreover, it is easy to verify $\mathcal{F}\left(\mathcal{S}; (\mu_j)_{j=1}^3 \right)$ is consistent and nonempty, since $\mathcal{N}(0, I_3)$ is an element of $\mathcal{F}\left(\mathcal{S}; (\mu_j)_{j=1}^3 \right)$. 
\end{example}

\begin{example}
Let $\mathcal{X}_j= \mathbb{R}$ for $j\in [3]$, $K_1 = \{1,2 \}, K_2 = \{2,3 \}, K_3 = \{1,3\}$ and $\mathcal{S}_j := \mathcal{S}_{K_j}$ for $j \in [3]$. We define
\[
\mu_1 = \mathcal{N}\left( 0, \left[\begin{array}{cc}
2 & -1 \\
-1 & 4
\end{array}\right]  \right), \quad  \mu_2 = \mathcal{N}\left( 0, \left[\begin{array}{cc}
4 & -2 \\
-2 & 4
\end{array}\right]  \right), \quad  \mu_3 = \mathcal{N}\left( 0, \left[\begin{array}{cc}
2 & -2 \\
-2 & 4
\end{array}\right]  \right).
\]
It is easy to verify $\mathcal{F}\left(\mathcal{S}; (\mu_j)_{j=1}^3 \right)$ is consistent but is an empty set. Suppose $\pi \in \mathcal{F}\left(\mathcal{S}; (\mu_j)_{j=1}^3 \right) $, then the covariance matrix of $\pi$ is 
\[
\Sigma = \left[\begin{array}{ccc}
2 & -1 & -2 \\
-1& 4 & -2\\
-2 & -2 & 4 \\
\end{array}\right].
\]
However, $\Sigma$ is not positive semi-definite so can not be a covariance matrix.
\end{example}

A sufficient condition for a CPMS to be non-empty is the decomposability of its index set. We restate the definition of decomposibility from \citet[Definition 11]{fan2023VectorCopulas}, \citet[Section 3.7]{joe1997MultivariateModelsMultivariate}, and \citet{kellerer1964verteilungsfunktionen}.

\begin{definition}[Decomposability]
A collection $\{K_1, \ldots, K_N \}$ of subsets of $[n]$ is called decomposable if there is a permutation $\sigma$ of $[N]$ such that
\begin{equation} \label{eq:decomposability}
\tag{DC}
\left( \bigcup_{j< m} K_{\sigma(j)}  \right) \cap K_{\sigma(m)} \in  \bigcup_{j< m} 2^{ K_{\sigma(j) } }, \quad \  \forall m \in [N].
\end{equation}
\end{definition}

For Euclidean spaces, \citet{kellerer1964verteilungsfunktionen} proves that a CPMS is nonempty if its index set is decomposable, while \citet{shortt1983combinatorial} extends this result to separable spaces. Below, we present a statement of this result for Polish spaces and give a simple proof.

\begin{proposition}\label{prop:MultiVBP}
Let $\mathcal{S} = \Pi_{j \in [n]} \mathcal{X}_{j}$ where $\mathcal{X}_j$ are Polish spaces with the Borel algebras. 
Suppose that $\mathcal{F}\left(\mathcal{S} ;\left(\mu_j\right)_{j=1}^N\right)$ is a CPMS and the associated index collection $\{K_1, \ldots, K_N\}$ with $K_i \subset [n]$ is decomposable. Then $\mathcal{F}\left(\mathcal{S} ;\left(\mu_j\right)_{j=1}^N\right)$ is nonempty.
\end{proposition}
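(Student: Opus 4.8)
The plan is to induct on the number $N$ of marginals, peeling them off one at a time in the order for which the decomposability condition \eqref{eq:decomposability} holds, and using the existence of regular conditional probabilities (disintegrations) on Polish spaces to glue the pieces back together. Before the induction there is a harmless reduction: writing $L := \bigcup_{j\in[N]} K_j$, it suffices to produce a probability measure on $\mathcal{S}_L$ with the prescribed marginals, since its product with arbitrary probability measures on the coordinates in $[n]\setminus L$ is an element of $\mathcal{F}(\mathcal{S};(\mu_j)_{j=1}^N)$. So I assume $L = [n]$, and after relabelling I assume the permutation in \eqref{eq:decomposability} is the identity, i.e.\ for each $m\in[N]$ there is $j_m<m$ with $\big(\bigcup_{j<m}K_j\big)\cap K_m\subseteq K_{j_m}$.

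The base case $N=1$ is trivial, taking $\mu_1$ itself. For the inductive step, set $J := \bigcup_{j<N}K_j$. The sub-system $\mathcal{F}\big(\mathcal{S}_J;(\mu_j)_{j=1}^{N-1}\big)$ is again a CPMS (consistency is inherited) whose index collection $\{K_1,\dots,K_{N-1}\}$ remains decomposable, so by the induction hypothesis there is $\nu\in\mathcal{P}(\mathcal{S}_J)$ with $\operatorname{proj}_{K_j}\#\nu=\mu_j$ for $j<N$. Put $I := J\cap K_N$. Decomposability yields some $j_0<N$ with $I\subseteq K_{j_0}$, hence $I\subseteq K_{j_0}\cap K_N$; then $\operatorname{proj}_I\#\nu = \operatorname{proj}_I\#\mu_{j_0} = \operatorname{proj}_I\#\mu_N =: \rho$, the first equality because $\operatorname{proj}_{K_{j_0}}\#\nu=\mu_{j_0}$ and the second by consistency of the CPMS on $\mathcal{S}_{K_{j_0}\cap K_N}$.

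It remains to glue $\nu$ and $\mu_N$ along their common $\mathcal{S}_I$-marginal $\rho$. I would disintegrate $\mu_N\in\mathcal{P}(\mathcal{S}_{K_N})$ along $\operatorname{proj}_I:\mathcal{S}_{K_N}\to\mathcal{S}_I$: since all spaces are Polish, there is a Markov kernel $z\mapsto\mu_N(\cdot\mid z)$ from $\mathcal{S}_I$ to $\mathcal{S}_{K_N\setminus I}$ with $\mu_N = \int_{\mathcal{S}_I}\big(\delta_z\otimes\mu_N(\cdot\mid z)\big)\,d\rho(z)$. Because $[n] = J\sqcup(K_N\setminus I)$, I define $\pi$ on $\mathcal{S} = \mathcal{S}_J\times\mathcal{S}_{K_N\setminus I}$ by $\pi(A\times B) := \int_A\mu_N\big(B\mid\operatorname{proj}_I(w)\big)\,d\nu(w)$ and extend it to the product $\sigma$-algebra in the usual way. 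Taking $B$ to be the whole space gives $\operatorname{proj}_J\#\pi=\nu$, hence $\operatorname{proj}_{K_j}\#\pi=\mu_j$ for $j<N$; and since $\operatorname{proj}_{K_N}(w,u)=(\operatorname{proj}_I(w),u)$, evaluating $\pi$ on product sets $C\times D\subseteq\mathcal{S}_I\times\mathcal{S}_{K_N\setminus I}$ and using $\operatorname{proj}_I\#\nu=\rho$ together with the disintegration identity yields $\operatorname{proj}_{K_N}\#\pi=\mu_N$. Thus $\pi\in\mathcal{F}(\mathcal{S};(\mu_j)_{j=1}^N)$, completing the induction.

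The bulk of the work is bookkeeping: checking that the sub-system stays a decomposable CPMS, that $I\subseteq K_{j_0}\cap K_N$ so that consistency applies on the correct coordinate block, and that the kernel-glued $\pi$ carries all $N$ prescribed marginals. The only substantive ingredient, and the one place the Polish hypothesis is genuinely used, is the disintegration of $\mu_N$ along $\operatorname{proj}_I$; everything else is elementary measure theory.
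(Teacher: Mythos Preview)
Your proof is correct and follows essentially the same inductive scheme as the paper: peel off $K_N$, use decomposability to locate the overlap inside some earlier $K_{j_0}$, verify the common marginal on $\mathcal{S}_I$, and glue. The only difference is cosmetic: the paper packages the two-marginal gluing step as a black-box appeal to the Vorob'ev--Berkes--Philip theorem (Theorem~\ref{thm:VBP}), whereas you carry it out explicitly via disintegration of $\mu_N$ along $\operatorname{proj}_I$ --- which is precisely how that theorem is proved, so your argument is slightly more self-contained but otherwise identical in substance.
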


The proof of \cref{prop:MultiVBP} below is based on two results. The first is Theorem 1.1.10 in \citet{dudley2014uniform} restated in \cref{thm:VBP} and the second is \cref{Lemma1.10}, a direct consequence of \Cref{definition:CPMS}.

\begin{theorem}[Vorob'ev-Berkes-Philip]\label{thm:VBP}
Let $\mathcal{Y}_1, \mathcal{Y}_2, \mathcal{X}$ be Polish spaces with Borel algebras and let $\mathcal{S}:= \mathcal{Y}_1\times \mathcal{Y}_2\times \mathcal{X}$. Let $\mu_0$ and $\mu_1$ be Laws on $\mathcal{S}_1 := \mathcal{Y}_1 \times \mathcal{X}$ and $\mathcal{S}_2 := \mathcal{Y}_2 \times \mathcal{X}$ respectively.  Suppose $\mathcal{F}\left(\mathcal{S}; \mu_1, \mu_2 \right)$ is a consistent product marginal system. Then $\mathcal{F}\left(\mathcal{S}; \mu_1, \mu_2 \right)$ is nonempty.
\end{theorem}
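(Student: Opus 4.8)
The plan is to prove Theorem~\ref{thm:VBP} by the classical \emph{gluing} construction. Write $\nu \in \mathcal{P}(\mathcal{X})$ for the common marginal of $\mu_1$ and $\mu_2$ on $\mathcal{X}$, which is well defined precisely because $\mathcal{F}(\mathcal{S};\mu_1,\mu_2)$ is a consistent product marginal system. Since $\mathcal{Y}_1$, $\mathcal{Y}_2$, and $\mathcal{X}$ are Polish, regular conditional probabilities exist (see, e.g., \citet{dudley2014uniform}), so we may disintegrate each $\mu_\ell$ with respect to the \emph{same} base measure $\nu$: there are Markov kernels $x \mapsto \mu_\ell(\cdot \mid x) \in \mathcal{P}(\mathcal{Y}_\ell)$, measurable in $x$, with
\[
\mu_\ell(A_\ell \times B) = \int_B \mu_\ell(A_\ell \mid x)\, d\nu(x), \qquad \ell = 1,2,
\]
for all Borel $A_\ell \subset \mathcal{Y}_\ell$ and $B \subset \mathcal{X}$.

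Next I would exhibit the candidate measure. Define $\pi$ on $\mathcal{S} = \mathcal{Y}_1 \times \mathcal{Y}_2 \times \mathcal{X}$ by mixing the product kernel against $\nu$:
\[
\pi := \int_{\mathcal{X}} \bigl( \mu_1(\cdot \mid x) \otimes \mu_2(\cdot \mid x) \otimes \delta_x \bigr)\, d\nu(x),
\]
equivalently $\pi(A_1 \times A_2 \times B) = \int_B \mu_1(A_1 \mid x)\,\mu_2(A_2 \mid x)\, d\nu(x)$ on Borel rectangles. The integrand $x \mapsto \mu_1(\cdot\mid x) \otimes \mu_2(\cdot\mid x) \otimes \delta_x$ is a Markov kernel from $\mathcal{X}$ to $\mathcal{S}$ (a product of two measurable kernels with the measurable identity kernel $x\mapsto\delta_x$), so mixing it against $\nu$ yields a well-defined element of $\mathcal{P}(\mathcal{S})$ of total mass $\int_{\mathcal{X}} 1\, d\nu = 1$.

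Then I would check the two marginal constraints. Projecting onto $\mathcal{S}_1 = \mathcal{Y}_1 \times \mathcal{X}$: for Borel $A_1 \subset \mathcal{Y}_1$ and $B \subset \mathcal{X}$,
\begin{align*}
(\operatorname{proj}_{\{1,3\}} \# \pi)(A_1 \times B)
&= \pi(A_1 \times \mathcal{Y}_2 \times B)
= \int_B \mu_1(A_1 \mid x)\,\mu_2(\mathcal{Y}_2 \mid x)\, d\nu(x) \\
&= \int_B \mu_1(A_1 \mid x)\, d\nu(x)
= \mu_1(A_1 \times B),
\end{align*}
using $\mu_2(\mathcal{Y}_2\mid x) = 1$ and then the disintegration identity. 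Measurable rectangles form a $\pi$-system generating $\mathcal{B}_{\mathcal{S}_1}$, so by the $\pi$--$\lambda$ theorem $\operatorname{proj}_{\{1,3\}} \# \pi = \mu_1$. The computation for $\mathcal{S}_2 = \mathcal{Y}_2 \times \mathcal{X}$ is identical with the roles of the two outer coordinates swapped, giving $\operatorname{proj}_{\{2,3\}} \# \pi = \mu_2$. Hence $\pi \in \mathcal{F}(\mathcal{S};\mu_1,\mu_2)$, which is therefore nonempty.

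The only genuinely substantive input is the existence and joint measurability of the regular conditional probabilities $\mu_\ell(\cdot \mid x)$; this is exactly where the Polish (standard Borel) hypothesis is used, and it is the sole step that is not elementary measure-theoretic bookkeeping. The remaining care is bookkeeping: one must disintegrate \emph{both} $\mu_1$ and $\mu_2$ against the common $\nu$, which is legitimate precisely because consistency forces both to have $\mathcal{X}$-marginal $\nu$, and one must invoke the $\pi$--$\lambda$ theorem to upgrade agreement on rectangles to agreement of Borel measures.
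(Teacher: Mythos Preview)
Your proposal is correct; it is the standard gluing/disintegration argument and all steps are sound. Note, however, that the paper does not actually prove Theorem~\ref{thm:VBP}: it simply restates Theorem~1.1.10 of \citet{dudley2014uniform} and uses it as a black box in the proof of Proposition~\ref{prop:MultiVBP}. So there is no paper proof to compare against---you have supplied a self-contained argument where the paper only gives a citation, and the construction you give (conditional-independence coupling $\pi = \int \mu_1(\cdot\mid x)\otimes\mu_2(\cdot\mid x)\otimes\delta_x\,d\nu(x)$ built from regular conditional probabilities on Polish spaces) is exactly the classical proof.
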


\begin{lemma} \label{Lemma1.10}
Suppose that $\mathcal{F}\left(\mathcal{S}; (\mu_j)_{j=1}^N \right)$ is a CPMS, $K_i, K_j \subset [n]$ and $K_i \cap K_j \neq \emptyset$. If $Q\subset K_i \cap K_j$ and $Q\neq \emptyset$, then  the projections of $\mu_i$ and $\mu_j$ on $\mathcal{S}_{Q}$ are the same, i.e.,
\[
\left(   \mathord{\operatorname{proj}}_{Q} \circ  {\operatorname{proj}_{K_i} }^{-1}  \right) \# \mu_i = \left(   \mathord{\operatorname{proj}}_{Q} \circ  {\operatorname{proj}_{K_j} }^{-1}  \right) \# \mu_j.
\] 
Moreover, for all $\pi \in \mathcal{F}\left(\mathcal{S}; (\mu_j)_{j=1}^N \right)$,
\[
\mathrm{proj}_{Q}  \#\pi  = \left(   \mathord{\operatorname{proj}}_{Q}  \circ  { \operatorname{proj}_{K_j } }^{-1} \right)\# \mu_j, \quad \forall j \in [N].
\]
\end{lemma}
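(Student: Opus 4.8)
The plan is to first prove the statement about the prescribed marginals $(\mu_j)$, and then deduce the statement about an arbitrary $\pi \in \mathcal{F}(\mathcal{S};(\mu_j)_{j=1}^N)$ from it. For the first part, fix $K_i, K_j$ with $Q \subset K_i \cap K_j$, $Q \neq \emptyset$. Observe that the map $\mathrm{proj}_Q : \mathcal{S} \to \mathcal{S}_Q$ factors through $\mathrm{proj}_{K_i \cap K_j}$: since $Q \subset K_i \cap K_j \subset K_i$, we have $\mathrm{proj}_Q = \widetilde{\mathrm{proj}}_{Q} \circ \mathrm{proj}_{K_i \cap K_j} \circ (\mathrm{proj}_{K_i})^{-1}$ at the level of the relevant restricted maps, where $\widetilde{\mathrm{proj}}_Q : \mathcal{S}_{K_i \cap K_j} \to \mathcal{S}_Q$ is the coordinate projection. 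Applying this factorization to push-forwards and invoking \Cref{definition:CPMS}, which gives $(\mathrm{proj}_{K_i \cap K_j} \circ (\mathrm{proj}_{K_i})^{-1})\#\mu_i = (\mathrm{proj}_{K_i \cap K_j} \circ (\mathrm{proj}_{K_j})^{-1})\#\mu_j$, we push both sides forward by $\widetilde{\mathrm{proj}}_Q$ and use functoriality of push-forward $(f \circ g)\#\nu = f\#(g\#\nu)$ to conclude $(\mathrm{proj}_Q \circ (\mathrm{proj}_{K_i})^{-1})\#\mu_i = (\mathrm{proj}_Q \circ (\mathrm{proj}_{K_j})^{-1})\#\mu_j$. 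This is exactly the first claimed identity.

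For the second part, fix $\pi \in \mathcal{F}(\mathcal{S};(\mu_j)_{j=1}^N)$ and $j \in [N]$. By the definition of the product marginal system, $\mathrm{proj}_{K_j}\#\pi = \mu_j$. Since $Q \subset K_j$, the coordinate projection $\mathrm{proj}_Q : \mathcal{S} \to \mathcal{S}_Q$ factors as $\mathrm{proj}_Q = \widehat{\mathrm{proj}}_Q \circ \mathrm{proj}_{K_j}$ where $\widehat{\mathrm{proj}}_Q : \mathcal{S}_{K_j} \to \mathcal{S}_Q$ is the coordinate projection. Hence $\mathrm{proj}_Q\#\pi = \widehat{\mathrm{proj}}_Q\#(\mathrm{proj}_{K_j}\#\pi) = \widehat{\mathrm{proj}}_Q\#\mu_j = (\mathrm{proj}_Q \circ (\mathrm{proj}_{K_j})^{-1})\#\mu_j$, which holds for every $j \in [N]$; note that the consistency established in the first part guarantees this value is independent of the choice of $j$ with $Q \subset K_j$, so the displayed formula is well posed.

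There is essentially no hard step here: the lemma is a bookkeeping consequence of the functoriality of the push-forward operation and the definition of a CPMS. The only point requiring a little care is keeping the notation for the various coordinate projections and their compositions with the inverse-image maps $(\mathrm{proj}_{K})^{-1}$ consistent with the notation fixed in \Cref{definition:CPMS}, so that the chain-rule identities $(\mathrm{proj}_{Q} \circ (\mathrm{proj}_{K_i})^{-1})$ are read correctly as "project to $K_i$-coordinates, then restrict to $Q$-coordinates." One should also remark that the measurability of all projection maps is automatic since they are continuous on the Polish product spaces, so all push-forwards are well defined Borel probability measures.
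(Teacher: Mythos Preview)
Your proof is correct and fills in precisely the details the paper omits: the paper states only that \Cref{Lemma1.10} is ``a direct consequence of \Cref{definition:CPMS}'' and gives no separate argument, so your factorization-and-functoriality route is exactly the intended (and only natural) way to make this explicit. Your closing remark that the clause ``$\forall j \in [N]$'' in the second display is only well posed for those $j$ with $Q \subset K_j$ is a fair observation about a slight imprecision in the lemma's phrasing.
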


\begin{proof}[Proof of \Cref{prop:MultiVBP}]
We give a proof by induction on $N$. Without loss of generality, assume that the permutation $\sigma$ in \Cref{eq:decomposability} satisfies $\sigma(j) = j$ for $j \in [N]$. 
\Cref{prop:MultiVBP} holds trivially when $N=1$. When $N=2$, it holds by \Cref{thm:VBP}. Let $\mathcal{H}_{N-1} := \prod_{j=1}^{N-1} \mathcal{S}_j$ and assume that $\mathcal{F}\left(\mathcal{H}_{N-1} ;  (\mu_j)_{j=1}^{N-1} \right)\neq \emptyset$. Then, there is a $\gamma \in \mathcal{F}\left( \mathcal{H}_{N-1} ;\left(\mu_j\right)_{j=1}^{N-1}\right)$. Let us verify that $\mathcal{F}( \mathcal{H}_{N-1} \times \mathcal{S}_N; \gamma, \mu_N )$ is consistent.

Let $Q = \cup_{j=1}^{N-1} K_j$.
Since $\{K_1,\ldots, K_N\}$ is decomposable, $Q \cap K_N \in \cup_{j<N} 2^{K_j}$. As a result, we must have $(Q  \cap K_N ) \subset K_\ell$ for some $\ell \in [N-1]$ and hence $(Q  \cap K_N ) \subset (K_\ell \cap K_N)$. If $\left(Q \cap K_N\right) = \emptyset$, the proof is trivial. In the rest of the proof, we suppose $\left(Q \cap K_N\right) \neq \emptyset$. Since $\mathcal{F}\left(\mathcal{S}; (\mu_j)_{j=1}^N \right)$ is consistent,  by \Cref{Lemma1.10},
\[
\begin{aligned}
 \left(  \operatorname{proj}_{K_N \cap Q}  \circ {\operatorname{proj}_{K_N}}^{-1}  \right) \# \mu_N & =   \left( \operatorname{proj}_{K_N \cap Q} 
 \circ  {\operatorname{proj}_{K_\ell} }^{-1}\right) \# \mu_\ell.
\end{aligned}
\]
Since $\mathcal{F}\left(\mathcal{H}_{N-1} ;\left(\mu_j\right)_{j=1}^{N-1}\right)$ is consistent,  \Cref{Lemma1.10} also implies 
\[
\left(  \mathord{\operatorname{proj}}_{K_N \cap Q}  \circ {\mathrm{proj}_{Q} }^{-1} \right)  \# \gamma =  \left( \mathord{\operatorname{proj}}_{K_N \cap Q} \circ { \mathrm{proj}_{K_\ell} } ^{-1} \right) \# \mu_\ell.
\]
This shows
\[
\left(\mathord{\operatorname{proj}}_{K_N \cap Q}  \circ { \operatorname{proj}_{K_N}  }^{-1} \right) \# \mu_N = \left(  \mathord{\operatorname{proj}}_{K_N \cap Q}  \circ {\mathrm{proj}_{Q} }^{-1}  \right) \# \gamma,
\]
and $\mathcal{F}( \mathcal{H}_{N-1} \times \mathcal{S}_N; \gamma, \mu_N )$ is consistent. The proof is complete by using \Cref{thm:VBP} again.

\end{proof}

\section{Appendix B: Technical Lemmas}

\begin{lemma}\label{lemma:ID-I-concavity} \leavevmode
\begin{lemmaenum}
    \item \label{lemma:ID-concavity}
    Suppose \Cref{assumption:cost-function,assumption:g-bounded-below} hold. Then, the function $\mathcal{I}_{\mathrm{D} }(\delta)$ is concave, non-decreasing in $\delta\in \mathbb{R}_{+}^2$, and $\mathcal{I}_{\mathrm{D} }(\delta) > -\infty$ for all $\delta \in \mathbb{R}_{+}^2$.

    \item \label{lemma:I-concavity}    
    Suppose \Cref{assumption:cost-function,assumption:f-bounded-below} hold. Then, the function $\mathcal{I}(\delta)$ is concave, non-decreasing in $\delta\in \mathbb{R}_{+}^2$, and $\mathcal{I}(\delta) > -\infty$ for all $\delta \in \mathbb{R}_{+}^2$.
\end{lemmaenum}
\end{lemma}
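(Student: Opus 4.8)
The plan is to prove both parts (i) and (ii) in parallel, since the arguments are structurally identical; I will phrase the reasoning for $\mathcal{I}_{\mathrm{D}}(\delta)$ and note that the overlapping case is obtained by the same steps with $\Sigma_{\mathrm{D}}(\delta)$, $\Pi(\mu_1,\mu_2)$, $g$ replaced by $\Sigma(\delta)$, $\mathcal{F}(\mu_{13},\mu_{23})$, $f$. The three claimed properties will be established in the order: monotonicity, finiteness from below, then concavity.

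\emph{Monotonicity.} This is immediate from the definition of the uncertainty set: if $\delta \le \delta'$ coordinatewise, then every $\gamma$ with $\boldsymbol{K}_1(\mu_1,\gamma_1)\le\delta_1$ and $\boldsymbol{K}_2(\mu_2,\gamma_2)\le\delta_2$ also satisfies the relaxed constraints at $\delta'$, so $\Sigma_{\mathrm{D}}(\delta)\subset\Sigma_{\mathrm{D}}(\delta')$, whence taking suprema of $\int g\,d\gamma$ over the larger set gives $\mathcal{I}_{\mathrm{D}}(\delta)\le\mathcal{I}_{\mathrm{D}}(\delta')$.

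\emph{Finiteness from below.} By \Cref{assumption:g-bounded-below} there is $\gamma_0\in\Pi(\mu_1,\mu_2)=\Sigma_{\mathrm{D}}(0)$ with $\int_{\mathcal{V}}g\,d\gamma_0>-\infty$. Since $\Sigma_{\mathrm{D}}(0)\subset\Sigma_{\mathrm{D}}(\delta)$ for every $\delta\in\mathbb{R}_+^2$ by the monotonicity just shown, $\gamma_0$ is feasible for every $\delta$, and therefore $\mathcal{I}_{\mathrm{D}}(\delta)\ge\int_{\mathcal{V}}g\,d\gamma_0>-\infty$ for all $\delta\in\mathbb{R}_+^2$.

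\emph{Concavity.} Fix $\delta,\delta'\in\mathbb{R}_+^2$ and $t\in[0,1]$, and set $\delta^t:=t\delta+(1-t)\delta'$. Take arbitrary feasible $\gamma\in\Sigma_{\mathrm{D}}(\delta)$ and $\gamma'\in\Sigma_{\mathrm{D}}(\delta')$, and consider the mixture $\gamma^t:=t\gamma+(1-t)\gamma'\in\mathcal{P}(\mathcal{V})$. The key point is that the projection map is linear, so $(\gamma^t)_\ell=t\gamma_\ell+(1-t)\gamma'_\ell$, and the optimal transport cost $\boldsymbol{K}_\ell(\mu_\ell,\cdot)$ is convex in its second argument (it is an infimum over couplings of a linear functional, and one may build a coupling of $\mu_\ell$ with the mixture by mixing couplings); hence $\boldsymbol{K}_\ell\big(\mu_\ell,(\gamma^t)_\ell\big)\le t\boldsymbol{K}_\ell(\mu_\ell,\gamma_\ell)+(1-t)\boldsymbol{K}_\ell(\mu_\ell,\gamma'_\ell)\le t\delta_\ell+(1-t)\delta'_\ell=\delta^t_\ell$, so $\gamma^t\in\Sigma_{\mathrm{D}}(\delta^t)$. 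Because $\int_{\mathcal{V}}g\,d\gamma^t=t\int_{\mathcal{V}}g\,d\gamma+(1-t)\int_{\mathcal{V}}g\,d\gamma'$ (the integrals are well-defined in $(-\infty,\infty]$ by \Cref{assumption:g-bounded-below}, so the split is legitimate), we get $\mathcal{I}_{\mathrm{D}}(\delta^t)\ge t\int_{\mathcal{V}}g\,d\gamma+(1-t)\int_{\mathcal{V}}g\,d\gamma'$; taking the supremum over $\gamma$ and $\gamma'$ separately yields $\mathcal{I}_{\mathrm{D}}(\delta^t)\ge t\,\mathcal{I}_{\mathrm{D}}(\delta)+(1-t)\,\mathcal{I}_{\mathrm{D}}(\delta')$. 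The overlapping case is identical once we note that $\mathcal{F}(\mu_{13},\mu_{23})$, being defined by linear marginal constraints, is convex, so the mixture $\gamma^t$ still has the required overlapping marginals and the same convexity-of-$\boldsymbol{K}_\ell$ bound applies to $\gamma_{13},\gamma_{23}$.

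The only mildly delicate point — the ``main obstacle,'' though it is minor — is the convexity of $\mu\mapsto\boldsymbol{K}_c(\mu_\ell,\mu)$ in the second argument and the legitimacy of splitting $\int g\,d\gamma^t$ when $g$ is only bounded below along one coupling; both are handled by the elementary coupling-mixing construction and by the fact that $g^-$ need not be $\gamma^t$-integrable but $g^+$ contributes a well-defined (possibly $+\infty$) integral, so the additive decomposition over the mixture holds in $[-\infty,\infty]$. No compactness, duality, or topological input is needed for this lemma; it is purely a convexity/feasibility argument.
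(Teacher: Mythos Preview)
Your proposal is correct and follows essentially the same approach as the paper's own proof: monotonicity from set inclusion $\Sigma_{\mathrm D}(\delta)\subset\Sigma_{\mathrm D}(\delta')$, finiteness from below via the feasible coupling $\gamma_0$ supplied by \Cref{assumption:g-bounded-below} (resp.\ \Cref{assumption:f-bounded-below}), and concavity from the mixture $\gamma^t=t\gamma+(1-t)\gamma'$ together with convexity of $\nu\mapsto\boldsymbol{K}_\ell(\mu_\ell,\nu)$. The paper writes out the overlapping case and you write out the non-overlapping one, but the arguments are otherwise identical; your closing remark flagging the well-definedness of $\int g\,d\gamma^t$ is a point the paper leaves implicit.
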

\begin{proof}[Proof of \Cref{lemma:ID-I-concavity}]
We show the claims on $\mathcal{I}(\delta)$ only since the proof for $\mathcal{I}_{\mathrm{D}}(\delta)$ is almost identical to that for $\mathcal{I}(\delta)$. Note that $\mathcal{I}(\delta)$ is well-defined since \Cref{assumption:cost-function} implies that $\Sigma(\delta)$ is non-empty. 

Note that under \Cref{assumption:g-bounded-below}, for any $\delta \in \mathbb{R}_{+}^2$, 
\begin{equation}
	\mathcal{I}(\delta) \ge \mathcal{I}(0) \ge \int_{\mathcal{S} } f(s) d \nu(s) > - \infty
\end{equation}
for some $\nu \in \mathcal{F}(\mu_1, \mu_2)$. 
The monotonicity of $\mathcal{I}$ can be seen from the definition. We now show the concavity of $\mathcal{I}$.  Fix $\delta = (\delta_1, \delta_2) \in \mathbb{R}^2_+$, $\delta^\prime = (\delta_1^\prime, \delta_2^\prime)\in \mathbb{R}_+^2$ and $\lambda \in (0,1)$. For any $\gamma \in\Sigma(\delta),\gamma^\prime \in \Sigma(\delta^\prime)$, consider the probability measure $\gamma^{\prime \prime} = \lambda \gamma +(1-\lambda) \gamma^\prime $. Since $\boldsymbol{K}_\ell$ is Optimal Transport cost, $\nu \mapsto \boldsymbol{K}_\ell (\mu_\ell, \nu)$ is convex. So,
we have for $\ell =1,2$,
\[
\begin{aligned}
\boldsymbol{K}_\ell\left(\mu_\ell,\gamma^{\prime \prime}_{\ell,3} \right) &  \leq \lambda \boldsymbol{K}_\ell( \mu_\ell,  \gamma_\ell) + (1-\lambda) \boldsymbol{K}_\ell \left(  \mu_\ell,\gamma_\ell^\prime \right)   \leq  \lambda \delta_\ell + (1- \lambda) \delta_\ell^\prime.
\end{aligned}
\]
This shows that $\gamma^{\prime \prime} \in \Sigma\left( \lambda \delta + (1-\lambda) \delta^\prime \right)$ and hence
\[
\begin{aligned}
\mathcal{I}(\lambda \delta + (1-\lambda) \delta^\prime ) & =  \sup_{\nu \in   \Sigma\left( \lambda \delta + (1-\lambda) \delta^\prime \right)}  \int_{\mathcal{S} } f(s) d\nu(s) \\
& \geq   \int_{\mathcal{S} } f d \gamma^{\prime \prime} = \lambda  \int_{\mathcal{S} } f d  \gamma  + (1-\lambda)   \int_{\mathcal{S} } f  d \gamma^{\prime }.
\end{aligned}
\]
Taking the supremum over $\gamma \in \Sigma(\delta)$ and $\gamma^\prime \in \Sigma(\delta^\prime)$ yields 
\[
\begin{aligned}
\mathcal{I}\left(\lambda \delta + (1-\lambda) \delta^\prime  \right) &\geq  \lambda \sup_{\gamma \in \Sigma(\delta)} \int_{\mathcal{S} } f(s) d  \gamma(s)   + (1-\lambda) \sup_{\gamma^\prime \in \Sigma(\delta^\prime)}\int_{\mathcal{S} } f(s)  d \gamma^{\prime }(s)  \\
& \geq \lambda \mathcal{I}(\delta) +(1-\lambda)   \mathcal{I}(\delta^\prime).
\end{aligned}
\]
\end{proof}

\begin{lemma} \label{lemma: Legendre_transform}
Let $\varphi: \mathbb{R}^n_{+} \rightarrow \mathbb{R} \cup \{\infty\}$ be a concave and  non-decreasing function. For all $\lambda \in \mathbb{R}^n_+$, define 
\[
\varphi^\star(\lambda) = \sup_{x \in \mathbb{R}^n_+ }\left \{\varphi(x) - \langle \lambda,x \rangle \right\}.
\]
Then for all $x \in \mathbb{R}^n_{++}$, one has
\[
\varphi(x) = \inf_{\lambda \in \mathbb{R}^n_+ } \left\{ \langle \lambda, x \rangle  + \varphi^\star(\lambda) \right \}.
\]
\end{lemma}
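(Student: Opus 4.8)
The plan is to read the claimed identity as a restricted concave Fenchel--Moreau duality on $\mathbb{R}^n_+$ and to establish the two inequalities separately. The inequality $\varphi(x)\le\inf_{\lambda\in\mathbb{R}^n_+}\{\langle\lambda,x\rangle+\varphi^\star(\lambda)\}$ is weak duality and is immediate: since $x\in\mathbb{R}^n_{++}\subseteq\mathbb{R}^n_+$ is feasible in the supremum defining $\varphi^\star$, we have $\varphi^\star(\lambda)\ge\varphi(x)-\langle\lambda,x\rangle$ for every $\lambda\in\mathbb{R}^n_+$, hence $\langle\lambda,x\rangle+\varphi^\star(\lambda)\ge\varphi(x)$, and taking the infimum over $\lambda$ gives the bound. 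All the content is in the reverse inequality.

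For the reverse inequality, fix $x\in\mathbb{R}^n_{++}$. The case $\varphi(x)=+\infty$ is trivial (then $\varphi^\star\equiv+\infty$ as well, using $x$ itself as a test point, so both sides equal $+\infty$), so assume $\varphi(x)$ is finite. It suffices to exhibit, for each $\varepsilon>0$, a multiplier $\lambda_\varepsilon\in\mathbb{R}^n_+$ with $\langle\lambda_\varepsilon,x\rangle+\varphi^\star(\lambda_\varepsilon)\le\varphi(x)+\varepsilon$; letting $\varepsilon\downarrow0$ then finishes the proof. I would obtain $\lambda_\varepsilon$ as the slope of a supporting hyperplane to the hypograph $H:=\{(y,t):\ y\in\mathbb{R}^n_+,\ t\le\varphi(y)\}$, a convex subset of $\mathbb{R}^{n+1}$, separating off the point $(x,\varphi(x)+\varepsilon)\notin H$: by the finite-dimensional separation theorem there is $(a,b)\neq 0$ such that $\langle a,y\rangle+bt\le\langle a,x\rangle+b(\varphi(x)+\varepsilon)$ for all $(y,t)\in H$. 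Sending $t\to-\infty$ forces $b\ge0$, and the key point is that $b>0$: this is exactly where $x\in\mathbb{R}^n_{++}$ is used, since a vertical separator ($b=0$) would have to satisfy $\langle a,y\rangle\le\langle a,x\rangle$ on a neighbourhood of the interior point $x$, forcing $a=0$, a contradiction. Normalising $b=1$ and reading the inequality at $t=\varphi(y)$ shows that $\lambda_\varepsilon:=-a$ satisfies $\varphi(y)-\langle\lambda_\varepsilon,y\rangle\le\varphi(x)-\langle\lambda_\varepsilon,x\rangle+\varepsilon$ for all $y\in\mathbb{R}^n_+$, i.e.\ $\varphi^\star(\lambda_\varepsilon)\le\varphi(x)-\langle\lambda_\varepsilon,x\rangle+\varepsilon$, which rearranges to the desired bound. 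Finally, testing this same inequality at points $y\ge x$ and invoking that $\varphi$ is non-decreasing (so $\varphi(y)\ge\varphi(x)$) forces $\langle\lambda_\varepsilon,u\rangle\ge0$ for all $u\ge 0$, i.e.\ $\lambda_\varepsilon\in\mathbb{R}^n_+$, as required.

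I expect the main obstacle to be the verticality issue, i.e.\ justifying $b>0$ cleanly while correctly using the interiority hypothesis together with the fact that $\varphi$ is only assumed concave, non-decreasing and $\mathbb{R}\cup\{+\infty\}$-valued (so one must also check that $\varphi$ is finite, and the hypograph nondegenerate, near $x$). The remaining degenerate configurations (for instance $\varphi$ attaining $-\infty$ near $x$, or $H$ with empty interior) are handled by the same monotonicity estimates and do not arise for the functions $\mathcal{I}_{\mathrm{D}}$ and $\mathcal{I}$ to which the lemma is applied, since those are everywhere $>-\infty$ by \Cref{lemma:ID-I-concavity}. An alternative to the hands-on argument above is to invoke the standard concave Fenchel--Moreau theorem, observing that concavity plus the non-decreasing property make $\varphi$ upper semicontinuous on $\mathbb{R}^n_{++}$ and make the conjugate over $\mathbb{R}^n$ coincide with the conjugate taken over $\mathbb{R}^n_+$.
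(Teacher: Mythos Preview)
Your argument is correct and takes a more hands-on route than the paper. The paper instead embeds into standard Fenchel--Moreau duality on $\mathbb{R}^n$: it sets $\psi(x)=-\varphi(x)$ on $\mathbb{R}^n_+$ and $\psi=+\infty$ elsewhere, checks that $\psi^\star(\lambda)=\varphi^\star(-\lambda)$ for $-\lambda\in\mathbb{R}^n_+$ and $\psi^\star(\lambda)=+\infty$ otherwise (this last fact is where monotonicity of $\varphi$ is spent), and then simply invokes $\psi=\psi^{\star\star}$ on the interior of the effective domain. The ``alternative'' you mention at the end is exactly the paper's proof. Your direct separation argument is more elementary and self-contained, and it makes the role of each hypothesis visible: interiority of $x$ forces the separating hyperplane to be non-vertical ($b>0$), and monotonicity of $\varphi$ forces the resulting slope $\lambda_\varepsilon$ to lie in $\mathbb{R}^n_+$. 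The paper's approach is shorter because it outsources the separation step to a black-box theorem; yours would be preferable in a context where one does not want to import that machinery.

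Two small cleanups. First, the lemma's codomain is $\mathbb{R}\cup\{+\infty\}$, so the ``degenerate configuration'' of $\varphi$ attaining $-\infty$ never occurs and need not be discussed. Second, once $\varphi(x)<\infty$ at a single interior point, concavity (as the paper notes) forces $\varphi<\infty$ on all of $\mathbb{R}^n_{++}$, and monotonicity then extends this to the boundary, so $\varphi$ is real-valued on all of $\mathbb{R}^n_+$; in particular the hypograph $H$ has nonempty interior and your $b>0$ step goes through without caveats. It is also slightly cleaner to establish $\lambda_\varepsilon\in\mathbb{R}^n_+$ \emph{before} writing $\varphi^\star(\lambda_\varepsilon)$, since $\varphi^\star$ is defined only on $\mathbb{R}^n_+$.
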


\begin{proof}[Proof of \Cref{lemma: Legendre_transform}]
If $\varphi(x_0) = \infty$ for some $x_0 \in \mathbb{R}^n_{++}$, then $\varphi(x) = \infty$ for all $x \in \mathbb{R}^n_{++}$. In fact, for any $x \in \mathbb{R}^n_{++}$, there is $x_1 \in B(x, \delta)$ such that $x = t x_0 + (1- t) x_1$ for some $t \in (0,1)$ and the concavity of $\varphi$ implies
\[
\varphi(x) = \varphi(t x_0 + (1- t) x_1 ) \geq t \varphi(x_0) + (1-t) \varphi(x_1) = \infty.
\]

Now we assume $\varphi(x) < \infty$ for all $x \in \mathbb{R}_{++}^n$. 
Define a new function $\psi: \mathbb{R}^n \rightarrow \mathbb{R} \cup \{ \infty\}$ as
\[
\psi(x):= \begin{cases} - \varphi(x) &  x \in \mathbb{R}^n_+ \\ \infty &   x \notin   \mathbb{R}^n_+  .\end{cases}
\]
It is easy to see $\psi$ is convex and the Legendre–Fenchel transform of $\psi$ is given by 
\[
\begin{aligned}
\psi^\star(\lambda)&  = \sup _{x \in \mathbb{R}^n} \left\{\langle\lambda, x\rangle - \psi(x)\right \} = \sup _{x \in \mathbb{R}^n_{+}}\{ \langle\lambda, x\rangle - \psi(x)   \} \\
&   =  \sup_{x \in \mathbb{R}^n_{+} }  \{ \varphi(x)  -  \langle -\lambda, x\rangle \}  =  \begin{cases}  \varphi^\star (- \lambda) &  - \lambda \in  \mathbb{R}^n_+  \\ \infty &   -\lambda \notin  \mathbb{R}^n_+    \end{cases}.
\end{aligned}
\]
The Legendre–Fenchel transform of $\psi^\star(\lambda)$ is given by 
\[
\begin{aligned}
\psi^{\star \star}(x)  & = \sup_{\lambda \in \mathbb{R}^n } \{\langle\lambda, x\rangle-\psi^\star(\lambda )\} = \sup_{ -\lambda \in \mathbb{R}_{+}^n } \{\langle\lambda, x\rangle-\psi^\star(\lambda )\} \\
& =  \sup_{ -\lambda \in \mathbb{R}_{+}^n } \{\langle\lambda, x\rangle- \varphi^{\star}(-\lambda)  \} = - \inf_{  \lambda \in \mathbb{R}_{+}^n   } \{\langle \lambda, x\rangle +  \varphi^{\star}(\lambda)  \} 
\end{aligned}
\]
Since $\psi^{\star \star}$ is the double Legendre–Fenchel transform of  $\psi$, then $\psi^{\star \star}$ is the lower-semicontinuous convex envelope of $\psi$ from below. The convexity of $\psi$ implies $\psi = \psi^{\star\star}$ in the interior of $\{x : \psi(x) < \infty \} $ which is $\mathbb{R}^n_{++}$. The desired result follows.
\end{proof}

\begin{lemma}\label{lemma:decomposability-V}
Let $K := \left\{K_1, K_2, K_3\right\}$, where $K_1 = \{3,4\}$, $K_2 = \{1,3\}$, and $K_3 = \{2,4\}$. Then $K$ is decomposable. 
\end{lemma}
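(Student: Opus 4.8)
The plan is to verify the decomposability condition \eqref{eq:decomposability} directly by exhibiting an explicit permutation $\sigma$ of $\{1,2,3\}$ for which the chain condition holds at every stage. Recall that with $K_1 = \{3,4\}$, $K_2 = \{1,3\}$, $K_3 = \{2,4\}$, condition \eqref{eq:decomposability} requires that for each $m$, the set $\left(\bigcup_{j<m} K_{\sigma(j)}\right) \cap K_{\sigma(m)}$ lies in $\bigcup_{j<m} 2^{K_{\sigma(j)}}$, i.e.\ it is a subset of one of the previously listed index sets.

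First I would take $\sigma$ to be the identity permutation, so the ordering is $K_1, K_2, K_3$. For $m=1$ the condition is vacuous (the empty union intersected with anything is $\emptyset$, which trivially belongs to the empty collection by the usual convention, or one simply notes there is nothing to check). For $m=2$ we compute $K_1 \cap K_2 = \{3,4\} \cap \{1,3\} = \{3\}$, and since $\{3\} \subset K_1 = \{3,4\}$, we have $\{3\} \in 2^{K_1}$, so the condition holds. For $m=3$ we compute $(K_1 \cup K_2) \cap K_3 = \{1,3,4\} \cap \{2,4\} = \{4\}$, and since $\{4\} \subset K_1 = \{3,4\}$, we have $\{4\} \in 2^{K_1} \subset 2^{K_1} \cup 2^{K_2}$, so the condition holds at $m=3$ as well. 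Hence the identity permutation witnesses decomposability, and the proof is complete.

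There is essentially no obstacle here: the only thing to be careful about is the $m=1$ boundary case and the convention for the empty intersection, which is harmless. One could alternatively phrase the argument structurally by noting $\{K_1,K_2,K_3\}$ forms a "star" with center-overlaps through $K_1$ (every pairwise intersection among the three sets that is nonempty is contained in $K_1$, since $K_2 \cap K_3 = \emptyset$), which is the canonical decomposable pattern, but the explicit two-line check above is the cleanest route.
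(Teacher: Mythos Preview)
Your proof is correct and essentially identical to the paper's: both use the identity permutation and verify the condition at $m=2$ via $K_1\cap K_2=\{3\}\in 2^{K_1}$ and at $m=3$ via $(K_1\cup K_2)\cap K_3=\{4\}\in 2^{K_1}$.
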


\begin{proof}[Proof of \protect\Cref{lemma:decomposability-V}]
When $m = 1$, The condition \eqref{eq:decomposability} holds obviously. When $m=2$,
	\[
	\left(\bigcup_{\ell<2} K_{\ell }\right) \cap K_{2} = K_1 \cap K_2 = \{3\} \in \bigcup_{\ell<2} 2^{K_{\ell}} = 2^{K_1}.
	\]
When $m = 3$,
	\[
	\left(\bigcup_{\ell<3} K_{\ell}\right) \cap S_{3} = (K_1\cup K_2) \cap K_3 = \{4\} \in \bigcup_{\ell<3} 2^{K_{\ell}} = 2^{K_1} \cup 2^{K_2}.
	\]
\end{proof}

\begin{lemma}\label{lemma:decomposability-S}
 Let $K := \left\{K_1, K_2, K_3\right\}$ where $K_1 = \{3, 4, 5\}$, $K_2 = \{1, 3, 5\}$, and $K_3 = \{2, 4, 5\}$. Then $K$ is decomposable. 
\end{lemma}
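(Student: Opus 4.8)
The plan is to prove decomposability exactly as in the proof of \Cref{lemma:decomposability-V}: exhibit a permutation of $[3]$ witnessing condition \eqref{eq:decomposability} and then check the three instances $m=1,2,3$ by direct inspection. I would take $\sigma$ to be the identity. For $m=1$ the condition holds vacuously, since the empty union intersected with $K_1$ is empty. For $m=2$ one computes $\left(\bigcup_{j<2}K_j\right)\cap K_2 = K_1\cap K_2 = \{3,5\}$, and $\{3,5\}\subseteq\{3,4,5\}=K_1$, so $\{3,5\}\in 2^{K_1}=\bigcup_{j<2}2^{K_j}$. For $m=3$ one computes $\left(\bigcup_{j<3}K_j\right)\cap K_3 = (K_1\cup K_2)\cap K_3 = \{1,3,4,5\}\cap\{2,4,5\} = \{4,5\}$, and again $\{4,5\}\subseteq K_1$, so $\{4,5\}\in 2^{K_1}\subseteq\bigcup_{j<3}2^{K_j}$. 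This establishes the claim.

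I would also note a conceptually cleaner route, worth including as a short remark. Each $K_j$ here is obtained from the corresponding set in \Cref{lemma:decomposability-V} by adjoining the common element $5$: $K_1=\{3,4\}\cup\{5\}$, $K_2=\{1,3\}\cup\{5\}$, $K_3=\{2,4\}\cup\{5\}$. Adjoining a fixed element $a$ to every member of a decomposable collection preserves decomposability with the same witnessing permutation, because for $m\ge 2$ one has $\left(\bigcup_{j<m}(K_{\sigma(j)}\cup\{a\})\right)\cap(K_{\sigma(m)}\cup\{a\}) = \{a\}\cup\left(\left(\bigcup_{j<m}K_{\sigma(j)}\right)\cap K_{\sigma(m)}\right)$; if the parenthesized set lies in some $2^{K_{\sigma(j_0)}}$ with $j_0<m$, then the whole set lies in $2^{K_{\sigma(j_0)}\cup\{a\}}$, as required (and the $m=1$ case stays vacuous). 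Given \Cref{lemma:decomposability-V}, this yields the result immediately.

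There is essentially no real obstacle here; this is a finite combinatorial verification. The only points requiring a moment's care are the bookkeeping for the vacuous $m=1$ case and, for the remark, the elementary set identity $(A\cup\{a\})\cap(B\cup\{a\}) = (A\cap B)\cup\{a\}$ that makes the ``adjoin a common element'' reduction work.
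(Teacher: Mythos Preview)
Your proof is correct and matches the paper's argument essentially verbatim: identity permutation, $m=1$ trivial, $K_1\cap K_2=\{3,5\}\in 2^{K_1}$, and $(K_1\cup K_2)\cap K_3=\{4,5\}\in 2^{K_1}$. The additional remark reducing to \Cref{lemma:decomposability-V} by adjoining the common element $5$ is a nice observation not present in the paper, but the core verification is identical.
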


\begin{proof}[Proof of \protect\Cref{lemma:decomposability-S}]
When $m = 1$, the condition \eqref{eq:decomposability} holds trivially. When $m=2$,
\[
\left(\bigcup_{\ell<2} K_{\ell }\right) \cap K_{2} = K_1 \cap K_2 = \{3, 5\} \in \bigcup_{\ell<2} 2^{K_{\ell}} = 2^{K_1}.
\]
When $m = 3$,
\[
\left(\bigcup_{\ell<3} K_{\ell}\right) \cap K_{3} = (K_1\cup K_2) \cap K_3 = \{4, 5\} \in \bigcup_{\ell<3} 2^{K_{\ell}} = 2^{K_1} \cup 2^{K_2}.
\]
\end{proof}

\begin{lemma}\label{lemma:decomposability-V-multi-marginals}
	Let $K := \left\{K_1, \dotsc, K_{L+1}\right\}$ where $K_1 = \{L+1, \dotsc, 2L\}$ and $K_\ell = \{\ell -1 , L+\ell-1 \}$ for $2 \leq \ell \leq L+1$. Then $K$ is decomposable. 
	\end{lemma}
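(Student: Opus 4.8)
The plan is to verify decomposability directly by exhibiting an explicit witnessing order of the index sets, namely the identity permutation $\sigma(j) = j$ on $[L+1]$, and then checking condition \eqref{eq:decomposability} for each $m$. This is the natural $L$-fold generalization of \Cref{lemma:decomposability-V}, which is precisely the case $L = 2$, so I expect the same bookkeeping to go through with $L$ a free parameter.

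First I would identify the partial unions. Because $K_1 = \{L+1, \dots, 2L\}$ consists entirely of the ``upper'' indices and each $K_\ell = \{\ell - 1,\, L + \ell - 1\}$ for $2 \le \ell \le L+1$ contributes one ``lower'' index $\ell - 1$ together with one ``upper'' index $L + \ell - 1$ that already lies in $K_1$, a short induction on $m$ gives, for $2 \le m \le L + 1$,
\[
\bigcup_{j < m} K_j = \{1, 2, \dots, m - 2\} \cup \{L + 1, L + 2, \dots, 2L\},
\]
where $\{1, \dots, 0\}$ is read as $\emptyset$ when $m = 2$. Intersecting with $K_m = \{m - 1,\, L + m - 1\}$, the lower index $m - 1$ is absent from the partial union (it is strictly larger than $m - 2$ and at most $L$, hence not among $\{L+1, \dots, 2L\}$), while the upper index $L + m - 1$ is present since $L + 1 \le L + m - 1 \le 2L$ precisely because $2 \le m \le L + 1$. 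Therefore
\[
\Big(\bigcup_{j < m} K_j\Big) \cap K_m = \{L + m - 1\} \in 2^{K_1} \subseteq \bigcup_{j < m} 2^{K_j},
\]
the last inclusion being valid because $m \ge 2$. The case $m = 1$ is vacuous, so \eqref{eq:decomposability} holds for all $m \in [L+1]$ and $K$ is decomposable.

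There is no genuine obstacle here; the only point that needs a moment's attention is confirming that exactly one of the two elements of each $K_m$ has already appeared in the earlier sets, and that the hypothesis $m \le L + 1$ is what forces the surviving element $L + m - 1$ into the range $\{L+1, \dots, 2L\}$ and hence inside $K_1$. Via \Cref{prop:MultiVBP}, this decomposability is exactly what will be used to deduce non-emptiness of the associated star-like CPMS in the multi-marginal overlapping case.
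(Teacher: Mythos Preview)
Your proof is correct and follows essentially the same approach as the paper: both use the identity permutation and verify condition \eqref{eq:decomposability} by showing that $\big(\bigcup_{j<m} K_j\big) \cap K_m = \{L+m-1\} \in 2^{K_1}$ for each $2 \le m \le L+1$. The only cosmetic difference is that you compute the partial union $\bigcup_{j<m} K_j$ explicitly before intersecting, whereas the paper distributes the intersection and notes $\bigcup_{\ell<m}(K_\ell \cap K_m) = K_1 \cap K_m$ directly; both routes land on the same singleton inside $K_1$.
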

	
	\begin{proof}[Proof of \protect\Cref{lemma:decomposability-V-multi-marginals}]
When $m = 1$, the condition \eqref{eq:decomposability} holds trivially. When $1 < m \le L+1$,
\[
\left(\bigcup_{\ell<m} K_{\ell }\right) \cap K_{m} = \bigcup_{\ell<m} \left(K_{\ell }\cap K_{m}\right)= K_{1} \cap K_{m} \in 2^{K_{1}} \subset \bigcup_{\ell<m} 2^{K_{\ell}}.
\]		
This shows that the condition \eqref{eq:decomposability} holds.
\end{proof}

\begin{lemma}\label{lemma:decomposability-S-multi-marginals}
	Let $K := \left\{K_1, \dotsc, K_{L+1}\right\}$, where $K_1 = \{L+1, \dotsc, 2L+1\}$ and $K_{\ell+1} = \{ \ell , L + \ell , 2L+1\}$ for $1 \leq \ell \leq L$. Then $K$ is decomposable. 
   \end{lemma}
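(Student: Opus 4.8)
The plan is to verify the decomposability condition \eqref{eq:decomposability} directly, using the identity permutation $\sigma$ on $[L+1]$, in exactly the style of the proofs of \Cref{lemma:decomposability-S,lemma:decomposability-V-multi-marginals}. The case $m=1$ is vacuous. For $2 \le m \le L+1$ I would write $m = \ell+1$ with $1 \le \ell \le L$, so that $K_m = \{\ell, L+\ell, 2L+1\}$, and the indices $j < m$ are $j=1$, with $K_1 = \{L+1,\dots,2L+1\}$, together with $j = i+1$ for $1 \le i \le \ell-1$, where $K_{i+1} = \{i, L+i, 2L+1\}$.

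First I would compute the pairwise intersections with $K_m$. Since $1 \le \ell \le L$ we have $\ell \le L < L+1$, so $\ell \notin K_1$, whereas $L+1 \le L+\ell \le 2L$, so $L+\ell \in K_1$, and $2L+1 \in K_1$; hence $K_1 \cap K_m = \{L+\ell, 2L+1\}$. For $i < \ell$ the integers $i$ and $\ell$ are distinct, so $i \notin K_m$ and $L+i \notin K_m$, which gives $K_{i+1} \cap K_m = \{2L+1\}$. Taking the union over $j<m$ of these intersections, which equals $\bigl(\bigcup_{j<m}K_j\bigr)\cap K_m$, yields $\{L+\ell, 2L+1\}$.

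It then remains to observe that $\{L+\ell, 2L+1\}$ is a subset of $K_1$: indeed both $L+\ell$ (with $L+1 \le L+\ell \le 2L$) and $2L+1$ are elements of $K_1 = \{L+1,\dots,2L+1\}$. Therefore $\bigl(\bigcup_{j<m}K_j\bigr)\cap K_m \in 2^{K_1} \subseteq \bigcup_{j<m} 2^{K_{j}}$, which is precisely \eqref{eq:decomposability}, so $K$ is decomposable.

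There is no real obstacle here; the only point requiring care is the index bookkeeping — checking that the ``shifted'' element $L+\ell$ of $K_m$ lands inside the block $\{L+1,\dots,2L\}\subset K_1$ while the ``unshifted'' element $\ell$ does not, and that $2L+1$ is the element common to all the $K_j$. Everything else is a routine finite-set computation, so I would present it in a handful of lines mirroring the preceding lemmas.
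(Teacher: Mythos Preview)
Your proof is correct and follows essentially the same approach as the paper: both use the identity permutation and verify that $\bigl(\bigcup_{j<m}K_j\bigr)\cap K_m = K_1 \cap K_m \in 2^{K_1}$. The paper compresses this into a single displayed line without computing the explicit sets $\{L+\ell,2L+1\}$ and $\{2L+1\}$, but your more detailed bookkeeping is entirely in keeping with the surrounding lemmas.
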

   
   \begin{proof}[Proof of \protect\Cref{lemma:decomposability-S-multi-marginals}]
When $m = 1$, the condition \eqref{eq:decomposability} holds trivially. 
   When $1 < m \le L+1$,
   \[
   \left(\bigcup_{\ell<m} K_{\ell }\right) \cap K_{m} = \bigcup_{\ell<m} (K_{\ell } \cap K_{m} ) = (K_{1} \cap K_{m})  \in 2^{K_{1}} \subset \bigcup_{\ell<m} 2^{K_{\ell}}.
   \]
   This shows that the condition \eqref{eq:decomposability} holds.
   \end{proof}

\section{Appendix C: Proofs of Main Results}
\subsection{Proofs in \texorpdfstring{\Cref{sec:WDRO-main-theory}}{}}

\subsubsection{Proof of \texorpdfstring{\Cref{thm:ID-duality}}{}}

The expressions of $\mathcal{I}_{\mathrm{D}}(\delta_1, 0)$ and $\mathcal{I}_{\mathrm{D}}(0,\delta_2)$ can be derived from $\mathcal{I}_{\mathrm{D}}(\delta_1, \delta_2)$ for $\delta_1, \delta_2 >0$ with appropriate modifications of the cost function. In particular, consider another cost function $\widehat{c}_2(s_2, s_2^\prime ) =  \infty \mathds{1} \{ s_2 \neq s_2^\prime \}$ and the optimal transport distance $\widehat{\boldsymbol{K}}_2$ associated with $\widehat{c}_2$. Define an uncertainty set $\widehat{\Sigma}_{\mathrm{D}}(\delta_1, \delta_2)$ depending on $\boldsymbol{K}_1$ and  $\widehat{\boldsymbol{K}}_2$ as 
\[
\widehat{\Sigma}_{\mathrm{D}}(\delta_1, \delta_2) =  \left\{ \gamma \in \mathcal{P}(\mathcal{S}_1 \times \mathcal{S}_2): \boldsymbol{K}_1( \gamma_1, \mu_1) \leq \delta_1, \widehat{\boldsymbol{K}}_2( \gamma_2, \mu_2) \leq \delta_2 \right\}.
\]
Moreover, we define $\widehat{\mathcal{I}}_{\mathrm{D}}: \mathbb{R}^2_+ \rightarrow \mathbb{R}$ as
\[
\widehat{\mathcal{I}}_{\mathrm{D}} (\delta_1, \delta_2) = \sup_{\gamma \in \widehat{\Sigma}_{\mathrm{D}} (\delta_1, \delta_2)} \int_{\mathcal{V} } g(s_1,s_2) \, d\gamma(s_1,s_2).
\]
We note $\widehat{\boldsymbol{K}}_2(\mu, \nu) = 0$ if and only if $\mu = \nu$. For all $\delta_2 > 0$, $\widehat{\Sigma}_{\mathrm{D}}(\delta_1, \delta_2) = \Sigma_{\mathrm{D}}(\delta_1, 0)$ and $\widehat{\mathcal{I}}_{\mathrm{D}} (\delta_1, \delta_2) =  \mathcal{I}_{\mathrm{D}}(\delta_1, 0)$. Using the dual reformulation of $\widehat{\mathcal{I}}_{\mathrm{D}}$ on $\mathbb{R}^2_{++}$, we have
\[
\mathcal{I}_{\mathrm{D}}(\delta_1, 0) = \widehat{\mathcal{I}}_{\mathrm{D}} (\delta_1, \delta_2) = \inf _{\lambda \in \mathbb{R}_{+}^2 }\left[ \langle\lambda, \delta \rangle+\sup _{\varpi \in \Pi\left(\mu_1, \mu_2\right)} \int_{\mathcal{V}} g_\lambda \left(s_1, s_2\right) d \varpi(s_1,s_2)\right],
\]
where 
\[
\begin{aligned}
g_\lambda(s_1, s_2) & = \sup _{s_1^{\prime} \in \mathcal{S}_1, s_2^\prime \in \mathcal{S}_2 }\left\{g\left(s_1^{\prime}, s^\prime_2\right)-\lambda_1 c_1 (s_1, s_1^{\prime})  - \lambda_2  \widehat{c}_2 (s_2, s_2^\prime)   \right\} \\
& =  \sup _{s_1^{\prime} \in \mathcal{S}_1 }\left\{g\left(s_1^{\prime}, s_2\right)-\lambda_1 c_1 (s_1, s_1^{\prime})  \right\} = g_{\lambda, 1}\left(s_1, s_2\right).
\end{aligned}
\]
Since $g_{\lambda, 1}\left(s_1, s_2\right)$ is independent of $\lambda_2$, letting $\lambda_2 = 0$ yields
\[
\mathcal{I}_{\mathrm{D}}(\delta_1, 0) = \inf _{\lambda_1 \in \mathbb{R}_{+}}\left[\lambda_1 \delta_1+\sup _{\varpi \in \Pi\left(\mu_1, \mu_2\right)} \int_{\mathcal{V}} g_{\lambda, 1}(v) \, d \varpi(v)\right].
\]
Using the same reasoning, we can get the expression of $\mathcal{I}_{\mathrm{D}}(0,\delta_2)$. 

In the rest of the proof, we show the dual reformulation of $\mathcal{I}_{\mathrm{D}}$ on $\mathbb{R}^2_{++}$. Let $\mathcal{P}_{\mathrm{D}}$ denote the set of $\gamma \in \mathcal{P}(\mathcal{V})$ that satisfies $\boldsymbol{K}_1(\mu_1, \gamma_1)< \infty$, $\boldsymbol{K}_2(\mu_2, \gamma_2) < \infty$, and $\int_\mathcal{V} g d\gamma > -\infty$. Taking the Legendre transform on $\mathcal{I}$ yields that any $\lambda\in \mathbb{R}_{++}^2$, 
\[
\begin{aligned}
\mathcal{I}_{\mathrm{D} }^\star (\lambda) :=&\sup_{\delta \in \mathbb{R}_{+}^2 }  \left\{ \mathcal{I}_{\mathrm{D} }(\delta)  - \langle \lambda, \delta \rangle \right\}  =\sup_{\delta \in \mathbb{R}_{+}^2 } \sup_{\gamma \in \Sigma(\delta)  } \left\{ \int_\mathcal{V} g d\gamma - \langle \lambda, \delta \rangle \right\}\\
=&  \sup_{\delta \in \mathbb{R}_{+}^2 }    \sup_{\gamma \in \mathcal{P}(\mathcal{V} ) }  \left\{ \int_\mathcal{V} g d\gamma - \langle \lambda, \delta \rangle:   \boldsymbol{K}_\ell( \mu_\ell,\gamma_\ell) \leq \delta_\ell, \forall \ell \in [2] \right\} \\
=&      \sup_{\gamma \in \mathcal{P}(\mathcal{V} ) }  \sup_{\delta \in \mathbb{R}_{+}^2 }  \left\{ \int_\mathcal{V} g d\gamma - \langle \lambda, \delta \rangle:  \boldsymbol{K}_\ell( \mu_\ell,\gamma_\ell) \leq \delta_\ell, \forall \ell \in [2]  \right\} \\
= &   \sup_{\gamma \in \mathcal{P}_{\mathrm{D}} }  \underbrace{\left\{ \int_\mathcal{V} g d\gamma - \lambda_1 \boldsymbol{K}_1(\mu_1,\gamma_1)- \lambda_2 \boldsymbol{K}_2(\mu_2,\gamma_2 ) \right\}}_{:= I_{\mathrm{D}, \lambda}[\gamma]} = \sup_{\gamma \in \mathcal{P}_{\mathrm{D}} } I_{\mathrm{D}, \lambda}[\gamma].
\end{aligned}
\]

We note that the expression above also holds for $\lambda \in \mathbb{R}^2_+ \setminus  \mathbb{R}^2_{++}$.   Let $\mathcal{G}_{\mathrm{D}, \lambda}$ be the set of all probability measures $\pi$ on $ \mathcal{V} \times \mathcal{V}$ such that $\int_{\mathcal{V} \times \mathcal{V}} \varphi_\lambda  d \pi$ is well-defined and the first and second marginals are $\mu_1$ and $\mu_2$.\footnote{To be more precise, $\pi( (A_1 \times \mathcal{S}_2) \times \mathcal{V}) =  \mu_1(A_1)$ and $\pi( (\mathcal{S}_1 \times A_2)  \times \mathcal{V} ) =  \mu_2(A_2)$ for all sets $A_1 \in \mathcal{B}_{\mathcal{S}_1 }$ and $A_2 \in\mathcal{B}_{\mathcal{S}_2 } $. }   \Cref{lemma:C3} implies $\mathcal{I}_{\mathrm{D} }^\star (\lambda) = \sup _{\pi \in  \mathcal{G}_{\mathrm{D}, \lambda} } \int_{\mathcal{V} \times \mathcal{V}} \varphi_\lambda d \pi$. By \Cref{lemma:C4}, we have for all $\lambda \in \mathbb{R}^2_{+}$,
\begin{align*}
\mathcal{I}_{\mathrm{D} }^{\star}(\lambda)= \sup _{\pi \in \mathcal{G}_{\mathrm{D}, \lambda}} \int_{\mathcal{V} \times \mathcal{V}} \varphi_\lambda d \pi =
\sup_{ \pi \in \bar{ \Gamma} }  \int_{\mathcal{V} \times  \mathcal{V} } \varphi_{\lambda}  d \pi.
\end{align*}
where we write $\bar{\Gamma}= \Gamma\left( \Pi(\mu_1, \mu_2) ,  \varphi_\lambda \right)$ for simplicity. From \Cref{lemma:ID-concavity}, $\mathcal{I}_{\mathrm{D} }$ is bounded from below, non-decreasing, and concave. As a result,  $\mathcal{I}_{\mathrm{D} }< \infty$  or $\mathcal{I}_{\mathrm{D} } = \infty$ on $\delta \in \mathbb{R}_+^2$. 
In the first case, by \Cref{lemma: Legendre_transform}, for all $\delta \in \mathbb{R}^2_+$,
\begin{align*}
\mathcal{I}_{\mathrm{D} }(\delta) = \inf_{\lambda \in \mathbb{R}^2_+}  \left\{ \langle \lambda, \delta \rangle +   \sup _{\pi \in \bar{ \Gamma} } \int_{\mathcal{V} \times \mathcal{V}} \varphi_\lambda \, d \pi \right\} .
\end{align*}
In the second case, by definition $\mathcal{I}_{\mathrm{D} }^{\star}(\lambda) = \infty$ for all $\lambda \in \mathbb{R}_{+}^2$ and the above is also true. Moreover, Example 2 of \citet{zhang2022simple} implies that $\varphi_\lambda$ satisfies the interchangeability principle with respect to $\Pi(\mu_1, \mu_2)$. So \Cref{IPforGroup} implies that for all $\lambda \in \mathbb{R}_{++}^2$,
\begin{align*}
\sup_{ \pi \in \bar{ \Gamma} }  \int_{\mathcal{V} \times  \mathcal{V} } \varphi_{\lambda} \,  d \pi  =
 \sup _{\gamma \in \Pi\left(\mu_1, \mu_2 \right)} \int_{\mathcal{V}} g_\lambda(v) \, d \gamma(v),
\end{align*}
where $g_\lambda(v) = \sup _{v^{\prime} \in \mathcal{V}} \varphi_\lambda\left(v, v^{\prime}\right)$. This shows for all $\delta \in \mathbb{R}^2_{++}$,
\begin{align*}
\mathcal{I}_{\mathrm{D} }(\delta) = \inf_{\lambda \in \mathbb{R}^2_+}  \left\{ \langle \lambda, \delta \rangle +   \sup _{\gamma \in \Pi\left(\mu_1, \mu_2 \right)} \int_{\mathcal{V}} g_\lambda \, d \gamma  \right\} .
\end{align*}

\begin{lemma}  \label{lemma:C1}
If $\lambda_1>0$ and $\lambda_2 >0$, then 
\begin{align*}
\sup_{\gamma \in \mathcal{P}_{\mathrm{D}}  } I_{\mathrm{D}, \lambda}[\gamma] = \sup_{\gamma \in \mathcal{P}_{\mathrm{D}} }  \sup_{\pi \in  \Pi(\mu_1, \mu_2, \gamma)}  \int_{\mathcal{V} \times \mathcal{V}} \varphi_\lambda \, d \pi.
\end{align*}
\end{lemma}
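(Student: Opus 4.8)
The plan is to reduce the claim to the pointwise identity
\[
I_{\mathrm{D},\lambda}[\gamma] \;=\; \sup_{\pi \in \Pi(\mu_1,\mu_2,\gamma)} \int_{\mathcal{V}\times\mathcal{V}} \varphi_\lambda \, d\pi, \qquad \gamma \in \mathcal{P}_{\mathrm{D}},
\]
after which the lemma follows by taking $\sup_{\gamma\in\mathcal{P}_{\mathrm{D}}}$ on both sides. I write $v=(s_1,s_2)$ and $v'=(s_1',s_2')$ for the two $\mathcal{V}$-coordinates, so $\varphi_\lambda(v,v')=g(s_1',s_2')-\lambda_1 c_1(s_1,s_1')-\lambda_2 c_2(s_2,s_2')$. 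First I would dispose of the degenerate case $\int_{\mathcal V} g\,d\gamma=+\infty$: then $I_{\mathrm{D},\lambda}[\gamma]=+\infty$ since $\boldsymbol{K}_1(\mu_1,\gamma_1)$ and $\boldsymbol{K}_2(\mu_2,\gamma_2)$ are finite for $\gamma\in\mathcal{P}_{\mathrm{D}}$, while the gluing construction below yields measures $\pi$ with $\int\varphi_\lambda\,d\pi=+\infty$, so both sides are $+\infty$. Hence I may assume $g\in L^1(\gamma)$ (recall $\int g\,d\gamma>-\infty$ for $\gamma\in\mathcal P_{\mathrm D}$), which keeps all integrals below unambiguously defined because $\varphi_\lambda^+\le g^+(s_1',s_2')$ and $\varphi_\lambda^-\le g^-(s_1',s_2')+\lambda_1 c_1+\lambda_2 c_2$.

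For the inequality $I_{\mathrm{D},\lambda}[\gamma]\ge\sup_\pi\int\varphi_\lambda\,d\pi$, fix $\pi\in\Pi(\mu_1,\mu_2,\gamma)$. Its $v'$-marginal is $\gamma$, so $\int g(s_1',s_2')\,d\pi=\int_{\mathcal V} g\,d\gamma$; its marginal on $(s_1,s_1')$ is a coupling of $\mu_1$ and $\gamma_1$, whence $\int c_1(s_1,s_1')\,d\pi\ge\boldsymbol K_1(\mu_1,\gamma_1)$, and likewise $\int c_2(s_2,s_2')\,d\pi\ge\boldsymbol K_2(\mu_2,\gamma_2)$. Since $g\in L^1(\gamma)$ and $c_1,c_2\ge0$ the integral splits, and using $\lambda_1,\lambda_2>0$ gives $\int\varphi_\lambda\,d\pi=\int_{\mathcal V} g\,d\gamma-\lambda_1\int c_1\,d\pi-\lambda_2\int c_2\,d\pi\le I_{\mathrm{D},\lambda}[\gamma]$. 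For the reverse inequality I would argue by gluing: fix $\epsilon>0$ and, since $\boldsymbol K_\ell(\mu_\ell,\gamma_\ell)<\infty$, choose $\eta_\ell\in\Pi(\mu_\ell,\gamma_\ell)$ with $\int c_\ell\,d\eta_\ell\le\boldsymbol K_\ell(\mu_\ell,\gamma_\ell)+\epsilon$ for $\ell=1,2$ ($\epsilon$-optimal couplings, as $c_\ell$ is only measurable and optimal ones need not exist). On $\mathcal S_1\times\mathcal S_2\times\mathcal S_1\times\mathcal S_2$ with coordinates labelled $s_1,s_2,s_1',s_2'$, the prescribed bivariate marginals $\eta_1$ on $(s_1,s_1')$, $\eta_2$ on $(s_2,s_2')$, and $\gamma$ on $(s_1',s_2')$ form a consistent product marginal system (they agree on $\gamma_1$ over $s_1'$ and on $\gamma_2$ over $s_2'$), with decomposable index collection $\{\{3,4\},\{1,3\},\{2,4\}\}$ by Lemma~\ref{lemma:decomposability-V}; so by Proposition~\ref{prop:MultiVBP} (equivalently, Theorem~\ref{thm:VBP} applied twice) this class is nonempty. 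Regarding such a measure as $\pi$ on $\mathcal V\times\mathcal V$ via $v=(s_1,s_2)$, $v'=(s_1',s_2')$, its $s_1$-, $s_2$- and $v'$-marginals are $\mu_1$, $\mu_2$, $\gamma$, so $\pi\in\Pi(\mu_1,\mu_2,\gamma)$, and $\int\varphi_\lambda\,d\pi=\int_{\mathcal V} g\,d\gamma-\lambda_1\int c_1\,d\eta_1-\lambda_2\int c_2\,d\eta_2\ge I_{\mathrm{D},\lambda}[\gamma]-(\lambda_1+\lambda_2)\epsilon$. Letting $\epsilon\downarrow0$ closes the pointwise identity, and taking the supremum over $\gamma\in\mathcal P_{\mathrm D}$ gives the lemma.

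The only genuine obstacle is the gluing step: producing a single joint law on the four coordinates that carries all three prescribed bivariate marginals simultaneously, and then verifying that its image under $(s_1,s_2,s_1',s_2')\mapsto((s_1,s_2),(s_1',s_2'))$ really lands in $\Pi(\mu_1,\mu_2,\gamma)$. Everything else is routine bookkeeping, modulo two minor cautions already flagged — that $c_1,c_2$ are merely measurable, so one must use $\epsilon$-optimal rather than optimal couplings, and that one must keep track of well-definedness of the integrals, which is why the case $\int g\,d\gamma=+\infty$ is treated separately up front.
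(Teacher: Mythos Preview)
Your proposal is correct and follows essentially the same route as the paper: both directions are handled identically --- the gluing step via the decomposable index collection $\{\{3,4\},\{1,3\},\{2,4\}\}$ and Proposition~\ref{prop:MultiVBP} to produce $\pi$ from $\gamma$ and $\epsilon$-optimal couplings, and the reverse inequality by reading off that the $(s_\ell,s_\ell')$-marginal of any $\pi\in\Pi(\mu_1,\mu_2,\gamma)$ is a coupling of $(\mu_\ell,\gamma_\ell)$. The only cosmetic difference is that you state the result as a pointwise identity in $\gamma$ before taking the outer supremum, whereas the paper writes the two inequalities directly at the level of the suprema; the underlying arguments coincide.
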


\begin{proof}[Proof of \protect\Cref{lemma:C1}]
Fix any $\epsilon >0$ and $\gamma \in \mathcal{P}_{\mathrm{D}}$. By the definition of $\mathcal{P}_{\mathrm{D}}$, we have $\boldsymbol{K}_\ell (\mu_\ell, \gamma_\ell)< \infty$ and hence there is $\nu_\ell \in \Pi(\mu_\ell, \gamma_\ell)$ such that $\boldsymbol{K}_\ell (\mu_\ell, \gamma_\ell) \geq \int_{\mathcal{S}_\ell \times \mathcal{S}_\ell}  c_\ell \, d\nu_\ell - \epsilon/(\lambda_1 + \lambda_2)$. Let $K = \{K_1, K_2, K_3 \}$ with $K_1 =\{ 1,3 \}$, $K_2=\{2, 4\}$ and $K_3=\{3, 4\}$. Since $K$ is decomposable, then by \Cref{prop:MultiVBP} there is a measure $\widetilde{\pi}$ on $\mathcal{S}_1 \times \mathcal{S}_2 \times \mathcal{S}_1 \times \mathcal{S}_2$ with marginals given by $\pi_{1,3} = \nu_1$,  $\pi_{2,4} = \nu_2$ and $\pi_{3,4} = \gamma$.   Moreover, we note $\int_{\mathcal{V} \times \mathcal{V} } c_\ell(s_\ell, s_\ell^\prime) \, d \widetilde{\pi} = \int_{\mathcal{S}_{\ell} \times \mathcal{S}_{\ell}} c_{\ell} \, d \nu_{\ell}  \leq  \boldsymbol{K}_{\ell}(\mu_{\ell}, \gamma_{\ell}) + \epsilon/(\lambda_1 + \lambda_2) < \infty$. Now, we show the LHS is not bigger than the RHS. When $I_{\mathrm{D}, \lambda}[\gamma]  =\infty$, provided $ \boldsymbol{K}_\ell(\mu_\ell, \gamma_\ell) \in (0, \infty)$ for $\ell=1,2$, we must have $\int_{\mathcal{V} } g \, d\gamma = \infty$.  Then, it is apparent that $\int \varphi_\lambda \, d \widetilde{\pi} = \infty$ and hence  $I_{\mathrm{D},\lambda}[\gamma]  \leq \int \varphi_\lambda d \widetilde{\pi}  + \epsilon$. When $I_{\mathrm{D}, \lambda}[\gamma]  < \infty$, then $\int_{\mathcal{V}} g \, d \gamma < \infty$. Therefore, the integral given by 
\begin{align*}
\int_{\mathcal{V} \times \mathcal{V} } \varphi_\lambda \, d \widetilde{\pi} = \int_{\mathcal{V}} g \, d \gamma - \int_{\mathcal{S}_{1} \times \mathcal{S}_{1}  } \lambda_1 c_{1} \, d \nu_{1}-\int_{\mathcal{S}_{2} \times \mathcal{S}_{2}  }  \lambda_2 c_{2} \, d \nu_{2} < \infty,
\end{align*}
is well-defined. The desired result follows from the estimate below
\begin{align*}
\int_{\mathcal{V} \times \mathcal{V} } \varphi_\lambda d \widetilde{\pi} \geq   \int_{\mathcal{V}} g \, d \gamma - \lambda_1 \boldsymbol{K}_{1} (\mu_1, \gamma_1) - \lambda_2 \boldsymbol{K}_{2} (\mu_2, \gamma_2) -  \epsilon = I_{\mathrm{D}, \lambda}[\gamma] - \epsilon.
\end{align*}
Therefore, we have $I_{\mathrm{D}, \lambda}[\gamma] \leq \int_{\mathcal{V} \times \mathcal{V}} \varphi_\lambda d \widetilde{\pi}+\epsilon$. Since $\epsilon >0$ and $\gamma\in \mathcal{P}_{\mathrm{D}}$ are arbitrary, we have 
\begin{align*}
\sup _{\gamma \in \mathcal{P}_{\mathrm{D}}} I_{\mathrm{D}, \lambda}[\gamma] \leq \sup _{\gamma \in \mathcal{P}_{\mathrm{D}}} \sup _{\pi \in \Pi\left(\mu_1, \mu_2, \gamma\right)} \int_{\mathcal{V} \times \mathcal{V}} \varphi_\lambda \, d \pi.
\end{align*}

Next, we prove that the reversed direction holds by showing that if $\gamma \in \mathcal{P}_{\mathrm{D}}$, then $I_{\mathrm{D}, \lambda}[\gamma] \geq  \sup_{\pi \in \Pi(\mu_1, \mu_2, \gamma)}  \int_{\mathcal{V} \times \mathcal{V}} \varphi_\lambda \, d \pi$. Fix $\gamma \in \mathcal{P}_{\mathrm{D}}$. When $\int_{\mathcal{V} } g \, d \gamma = \infty$, $I_{\mathrm{D}, \lambda}[\gamma] =  \infty $ and then the proof is done.  Next, when $\int_{\mathcal{V} } g \, d \gamma < \infty$, for any $\pi \in \Pi(\mu_1, \mu_2, \gamma)$ such that $\int \varphi_\lambda \, d \pi$ is well-defined, 
\begin{align*}
I_{\mathrm{D}, \lambda}[\gamma]  & =  \int_{\mathcal{V} } g \, d \gamma -  \lambda_1 \boldsymbol{K}_{1} (\mu_{1}, \gamma_{1} ) - \lambda_2 \boldsymbol{K}_{2} (\mu_{2}, \gamma_{2} ) \\
& \geq   \int_{\mathcal{V} } g(s_1^\prime, s_2^\prime) \, d \pi_{3,4} -  \lambda_1    \int_{\mathcal{S}_1 \times \mathcal{S}_1 }  c_1(s_1, s_1^\prime) \, d \pi_{1, 3}  -  \lambda_2 \int_{\mathcal{S}_2 \times \mathcal{S}_2 }    c_2(s_2, s_2^\prime) \, d \pi_{2, 4}  \\
& = \int_{\mathcal{V} \times \mathcal{V}} \varphi_\lambda \, d \pi.
\end{align*}
With the convention that $\sup \empty = - \infty$, if the integral $\int \varphi_\lambda \, d \pi$ is not well-defined for all $\pi \in \Pi(\mu_1, \mu_2, \gamma)$,  then $I_{\mathrm{D}, \lambda}[\gamma] \geq \sup_{\pi \in  \Pi(\mu_1, \mu_2, \gamma)} \int_{\mathcal{V} \times \mathcal{V}} \varphi_\lambda \, d \pi$ holds trivially. Otherwise, taking the supremum over $\pi \in \Pi(\mu_1, \mu_2, \gamma)$ on the RHS of the inequality above yields $I_{\mathrm{D}, \lambda}[\gamma] \geq \sup_{\pi \in  \Pi(\mu_1, \mu_2, \gamma)}  \int_{\mathcal{V} \times \mathcal{V}} \varphi_\lambda \, d \pi$. The desired result follows.
\end{proof}

\begin{lemma}\label{lemma:C2}
If $\lambda_1 > 0$ and $\lambda_2 >0$, then
\begin{align*}
\sup _{\gamma \in \mathcal{P}_{\mathrm{D}} } \sup _{\pi \in \Pi\left(\mu_1, \mu_2, \gamma\right)} \int_{\mathcal{V} \times \mathcal{V}} \varphi_\lambda \, d \pi  = \sup_{\pi \in \mathcal{G}_{\mathrm{D}, \lambda} }  \int_{\mathcal{V} \times \mathcal{V} }  \varphi_\lambda \, d \pi.
\end{align*}
\end{lemma}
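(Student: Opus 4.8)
We establish the two inequalities separately. For a probability measure $\pi$ on $\mathcal{V}\times\mathcal{V} = \mathcal{S}_1\times\mathcal{S}_2\times\mathcal{S}_1\times\mathcal{S}_2$, write $\gamma_\pi$ for the marginal of $\pi$ on the second copy of $\mathcal{V}$ (coordinates $3,4$) and $\nu_{\pi,1},\nu_{\pi,2}$ for the marginals on coordinates $(1,3)$ and $(2,4)$; when the coordinate-$1$ and coordinate-$2$ marginals of $\pi$ are $\mu_1$ and $\mu_2$ we then have $\nu_{\pi,\ell}\in\Pi(\mu_\ell,(\gamma_\pi)_\ell)$ and, reading the identity with the usual care about $\pm\infty$, $\int\varphi_\lambda\,d\pi = \int_{\mathcal{V}}g\,d\gamma_\pi - \lambda_1\int c_1\,d\nu_{\pi,1} - \lambda_2\int c_2\,d\nu_{\pi,2}$. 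The inequality ``$\le$'' is then immediate: if $\gamma\in\mathcal{P}_{\mathrm{D}}$ and $\pi\in\Pi(\mu_1,\mu_2,\gamma)$ has $\int\varphi_\lambda\,d\pi$ well defined, then $\pi$ has first and second marginals $\mu_1$ and $\mu_2$, hence $\pi\in\mathcal{G}_{\mathrm{D},\lambda}$; taking suprema (with the convention $\sup\emptyset=-\infty$) gives LHS $\le$ RHS.

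For ``$\ge$'', fix $\pi\in\mathcal{G}_{\mathrm{D},\lambda}$ and set $\gamma:=\gamma_\pi$. Since $c_1,c_2\ge 0$ one has pointwise $\varphi_\lambda^-\ge g^-(s_1',s_2')$ and $\lambda_\ell c_\ell(s_\ell,s_\ell')\le g^+(s_1',s_2') + \varphi_\lambda^-$. If $\int\varphi_\lambda\,d\pi = -\infty$ there is nothing to show, because LHS $>-\infty$: taking $\gamma_0\in\Pi(\mu_1,\mu_2)$ with $\int g\,d\gamma_0>-\infty$ (\Cref{assumption:g-bounded-below}; note $\gamma_0\in\mathcal{P}_{\mathrm{D}}$ since $\boldsymbol{K}_\ell(\mu_\ell,\mu_\ell)=0$) and letting $\pi_0$ be the law of $(V,V)$ with $V\sim\gamma_0$ yields $\pi_0\in\Pi(\mu_1,\mu_2,\gamma_0)$ and $\int\varphi_\lambda\,d\pi_0 = \int g\,d\gamma_0>-\infty$ by \Cref{assumption:cost-function}. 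Otherwise $\int\varphi_\lambda^-\,d\pi<\infty$, so $\int g^-\,d\gamma\le\int\varphi_\lambda^-\,d\pi<\infty$, whence $\int g\,d\gamma$ is well defined in $(-\infty,+\infty]$, and $\boldsymbol{K}_\ell(\mu_\ell,\gamma_\ell)\le\int c_\ell\,d\nu_{\pi,\ell}$. If in addition $\boldsymbol{K}_\ell(\mu_\ell,\gamma_\ell)<\infty$ for $\ell=1,2$, then $\gamma\in\mathcal{P}_{\mathrm{D}}$ and, replacing each $\int c_\ell\,d\nu_{\pi,\ell}$ by the smaller quantity $\boldsymbol{K}_\ell(\mu_\ell,\gamma_\ell)$ in the identity above, $\int\varphi_\lambda\,d\pi\le I_{\mathrm{D},\lambda}[\gamma]\le\text{LHS}$, the last step because $\gamma\in\mathcal{P}_{\mathrm{D}}$ and, by \Cref{lemma:C1}, LHS $=\sup_{\gamma'\in\mathcal{P}_{\mathrm{D}}}I_{\mathrm{D},\lambda}[\gamma']$. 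Taking the supremum over $\pi\in\mathcal{G}_{\mathrm{D},\lambda}$ closes this case.

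The step I expect to be the main obstacle is disposing of the remaining possibility that $\gamma_\pi\notin\mathcal{P}_{\mathrm{D}}$: even when $\lambda>0$, $\int\varphi_\lambda\,d\pi$ can be finite while $\boldsymbol{K}_\ell(\mu_\ell,(\gamma_\pi)_\ell)=+\infty$ for some $\ell$, since a large positive part of $g$ evaluated at $(s_1',s_2')$ can offset an infinite transport cost (e.g.\ $\mu_1=\delta_0$, $g(s_1',s_2')=c_1(0,s_1')$, $\lambda_1=1$, so that $\varphi_\lambda\le 0$ identically). To handle this I would truncate: for $n\in\mathbb{N}$ set $B_n:=\{(v,v'):c_1(s_1,s_1')\le n,\ c_2(s_2,s_2')\le n\}$ (so $\pi(B_n)\uparrow 1$, as $\int\varphi_\lambda^-\,d\pi<\infty$ forces $c_1,c_2<\infty$ $\pi$-a.e.) and replace $\pi$ by $\pi_n:=\pi|_{B_n} + \iota\#\big((\mathrm{proj}_{1,2})\#(\pi|_{B_n^c})\big)$ with $\iota(v):=(v,v)$; this keeps the first and second marginals a coupling of $\mu_1,\mu_2$ and forces $\boldsymbol{K}_\ell(\mu_\ell,(\gamma_{\pi_n})_\ell)\le n<\infty$, so $\gamma_{\pi_n}\in\mathcal{P}_{\mathrm{D}}$ once one checks $\int g\,d\gamma_{\pi_n}>-\infty$; then $\int\varphi_\lambda\,d\pi_n\le\text{LHS}$ by the previous paragraph, and one concludes by verifying $\liminf_n\int\varphi_\lambda\,d\pi_n\ge\int\varphi_\lambda\,d\pi$. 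The delicate points are (i) that the diagonal replacement on $B_n^c$ does not push $\int g\,d\gamma_{\pi_n}$, hence $\int\varphi_\lambda\,d\pi_n$, to $-\infty$ — which may require either a separate control of $\int g^-(s_1,s_2)\,d\pi$ or a more careful modification relocating only the coordinate responsible for $c_\ell>n$ — and (ii) the limit passage. An alternative route avoiding the truncation is to bound $\int\varphi_\lambda\,d\pi\le\int_{\mathcal{V}}g_\lambda\,d\varpi$ with $\varpi:=(\mathrm{proj}_{1,2})\#\pi\in\Pi(\mu_1,\mu_2)$, use the interchangeability principle (\Cref{IPforGroup}) to rewrite RHS $=\sup_{\varpi\in\Pi(\mu_1,\mu_2)}\int g_\lambda\,d\varpi$, and then bound this supremum by LHS via an approximate-maximizer selection for $v\mapsto\sup_{v'}\varphi_\lambda(v,v')$ combined with \Cref{lemma:C1}.
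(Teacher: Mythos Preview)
Your two–inequality structure is exactly the paper's. For ``$\le$'' the paper argues as you do (it first disposes of the sub-case $\int g\,d\gamma=\infty$, but the core observation is the same): any $\pi\in\Pi(\mu_1,\mu_2,\gamma)$ with well-defined $\int\varphi_\lambda\,d\pi$ already has first and second marginals $\mu_1,\mu_2$, hence lies in $\mathcal{G}_{\mathrm{D},\lambda}$.

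For ``$\ge$'' the paper also fixes $\pi\in\mathcal{G}_{\mathrm{D},\lambda}$, notes (as you do) that $\int\varphi_\lambda\,d\pi>-\infty$ forces $\int g\,d\pi_{3,4}>-\infty$, and then simply writes
\[
\int\varphi_\lambda\,d\pi \ \le\ \sup_{\tilde\pi\in\Pi(\mu_1,\mu_2,\pi_{3,4})}\int\varphi_\lambda\,d\tilde\pi \ \le\ \sup_{\gamma\in\mathcal{P}_{\mathrm{D}}}\sup_{\tilde\pi\in\Pi(\mu_1,\mu_2,\gamma)}\int\varphi_\lambda\,d\tilde\pi,
\]
treating $\pi_{3,4}$ as an element of $\mathcal{P}_{\mathrm{D}}$ \emph{without} verifying $\boldsymbol{K}_\ell(\mu_\ell,(\pi_{3,4})_\ell)<\infty$. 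So the case you flag as ``the main obstacle'' is not addressed in the paper's proof either; your example ($\mu_1=\delta_0$, $g(s_1',s_2')=c_1(0,s_1')$, $\lambda_1=1$) shows this omission is not vacuous. In that sense your write-up is more careful than the paper's.

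On your two patches: the diagonal-relocation $\pi_n$ does preserve the first two marginals and forces $\boldsymbol{K}_\ell(\mu_\ell,(\gamma_{\pi_n})_\ell)\le n$, but your point (i) is real---there is no control on $\int_{B_n^c}g^-(s_1,s_2)\,d\pi$---and (ii) needs a dominated/monotone argument you have not supplied. The $g_\lambda$ route is circular at the last step: bounding $\int\varphi_\lambda\,d\pi\le\int g_\lambda\,d\varpi$ for $\varpi=(\mathrm{proj}_{1,2})\#\pi$ is fine, but to descend from $\sup_\varpi\int g_\lambda\,d\varpi$ back to the LHS you again need approximate maximizers $\pi'$ with $(\pi')_{3,4}\in\mathcal{P}_{\mathrm{D}}$, which is the very point at issue. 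The paper's conditioning trick in the proof of \Cref{lemma:C3} (replacing $\pi$ by $\pi(\,\cdot\mid B_n)$) does not keep the first two marginals equal to $\mu_1,\mu_2$, so it does not transplant directly. In short, your proposal matches the paper's argument where the paper has one, and goes further---though not all the way---where the paper is silent.
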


\begin{proof}[Proof of \protect\Cref{lemma:C2}]

We divide the proof into the following two steps. The first step is to show that the LHS is less than or equal to the RHS. Fix any $\gamma \in \mathcal{P}_{\mathrm{D}}$.  If $\int_{\mathcal{V}} g \, d \gamma=\infty$, from the proof of \Cref{lemma:C1}, we can see that $\int_{\mathcal{V} \times \mathcal{V}} \varphi_\lambda \, d\widetilde{\pi} = \infty$ for some $\widetilde{\pi}  \in \Pi(\mu_1, \mu_2, \gamma)$ and the LHS is $\infty$.
So, the integral  $\int_{\mathcal{V} \times \mathcal{V}} \varphi_\lambda \, d\widetilde{\pi}$ is well-defined and $\widetilde{\pi} \in \mathcal{G}_{\mathrm{D}, \lambda}$. We must have $ \sup _{\pi \in \mathcal{G}_{\mathrm{D}, \lambda}} \int_{\mathcal{V} \times \mathcal{V}} \varphi_\lambda \, d \pi = \infty$ and the statement of the lemma is true.  Now suppose $\int_{\mathcal{V} } g \, d \gamma < \infty$ holds.  For any $\pi \in \Pi(\mu_1, \mu_2, \gamma)$, since $\int_{\mathcal{V} \times \mathcal{V} } \left(\lambda_1 c_1+\lambda_2 c_2\right) \, d \pi \geq 0$, the integral
\begin{align*}
\int_{\mathcal{V} \times \mathcal{V} }  \varphi_\lambda \, d \pi =  \int_{\mathcal{V} } g \, d\gamma - \int_{\mathcal{V} \times \mathcal{V}}\left(\lambda_1 c_1+\lambda_2 c_2\right) \, d \pi < \infty,
\end{align*}
is well-defined. This shows $\pi \in \mathcal{G}_{\mathrm{D}, \lambda}$, and we have $\int_{\mathcal{V} \times \mathcal{V}} \varphi_\lambda d \pi \leq \sup _{\pi \in \mathcal{G}_{\mathrm{D}, \lambda}} \int_{\mathcal{V} \times \mathcal{V}} \varphi_\lambda d \pi$. Taking the supremum over $\pi \in \Pi(\mu_1, \mu_2, \gamma)$ yields 
\begin{align*}
\sup_{\pi \in \Pi(\mu_1,\mu_2, \gamma)} \int_{\mathcal{V} \times \mathcal{V}} \varphi_\lambda d \pi \leq \sup _{\pi \in \mathcal{G}_{\mathrm{D}, \lambda}} \int_{\mathcal{V} \times \mathcal{V}} \varphi_\lambda d \pi.
\end{align*}
Thus, we showed that the inequality above holds for all $\gamma \in \mathcal{P}_{\mathrm{D}}$ and this ends the first step.

The second step is to show that the LHS is greater than or equal to the RHS. Fix any $\pi \in \mathcal{G}_{\mathrm{D}, \lambda}$. It suffices to show 
\begin{equation}
\sup _{\gamma \in \mathcal{P}_{\mathrm{D}}} \sup _{\pi \in \Pi\left(\mu_1, \mu_2, \gamma\right)} \int_{\mathcal{V} \times \mathcal{V}} \varphi_\lambda \, d \pi  \geq  \int_{\mathcal{V} \times \mathcal{V} }  \varphi_\lambda \, d \pi. \label{eq:lemmaC2-geq}
\end{equation}
When $\int_{\mathcal{V} \times \mathcal{V} } \varphi_\lambda \, d \pi > - \infty$, we have $\int (\lambda_1 c_1 + \lambda_2 c_2) \, d \pi > -\infty$ and hence $\int_{\mathcal{V}} g \, d \pi_{3,4}>-\infty$. It follows that  $\pi \in \Pi(\mu_1, \mu_2, \pi_{3,4})$ and 
\[
\int_{\mathcal{V} \times \mathcal{V} }  \varphi_\lambda \, d \pi \leq \sup _{\widetilde{\pi} \in \Pi\left(\mu_1, \mu_2, \gamma\right)} \int_{\mathcal{V} \times \mathcal{V}} \varphi_\lambda \, d \widetilde{\pi} \leq  \sup _{\gamma \in \mathcal{P}_{\mathrm{D}}} \sup _{\widetilde{ \pi}  \in \Pi\left(\mu_1, \mu_2, \gamma\right)} \int_{\mathcal{V} \times \mathcal{V}  } \varphi_\lambda \,  d  \widetilde{\pi}.
\]
When $\int_{\mathcal{V} \times \mathcal{V}} \varphi_\lambda \, d \pi=-\infty$, the inequality \eqref{eq:lemmaC2-geq} holds trivially.
\end{proof}

\begin{lemma}\label{lemma:C3}
For all $\lambda \in \mathbb{R}_{+}^2$,  one has 
\begin{equation}
\mathcal{I}_{\mathrm{D} }^\star (\lambda) = \sup _{\pi \in  \mathcal{G}_{\mathrm{D}, \lambda} } \int_{\mathcal{V} \times \mathcal{V}} \varphi_\lambda d \pi.
\end{equation}
\end{lemma}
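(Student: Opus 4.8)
The plan is to read off the claim on the interior $\mathbb{R}_{++}^2$ by composing the two preceding lemmas, and then to reduce the boundary to the interior. Recall from the body of the proof of \Cref{thm:ID-duality} that for every $\lambda\in\mathbb{R}_+^2$ one has the intermediate identity $\mathcal{I}_{\mathrm{D}}^\star(\lambda)=\sup_{\gamma\in\mathcal{P}_{\mathrm{D}}}I_{\mathrm{D},\lambda}[\gamma]$, where
\[
I_{\mathrm{D},\lambda}[\gamma]=\int_{\mathcal{V}}g\,d\gamma-\lambda_1\boldsymbol{K}_1(\mu_1,\gamma_1)-\lambda_2\boldsymbol{K}_2(\mu_2,\gamma_2).
\]
When $\lambda\in\mathbb{R}_{++}^2$, \Cref{lemma:C1} rewrites the right-hand side as $\sup_{\gamma\in\mathcal{P}_{\mathrm{D}}}\sup_{\pi\in\Pi(\mu_1,\mu_2,\gamma)}\int_{\mathcal{V}\times\mathcal{V}}\varphi_\lambda\,d\pi$, and \Cref{lemma:C2} identifies this double supremum with $\sup_{\pi\in\mathcal{G}_{\mathrm{D},\lambda}}\int_{\mathcal{V}\times\mathcal{V}}\varphi_\lambda\,d\pi$; chaining the two equalities proves the statement for all $\lambda\in\mathbb{R}_{++}^2$.

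For the boundary it suffices, by symmetry in the two coordinates, to treat $\lambda=(\lambda_1,0)$ with $\lambda_1>0$ and the degenerate point $\lambda=(0,0)$. For $\lambda=(\lambda_1,0)$ the natural device is the substitute cost $\widehat{c}_2(s_2,s_2')=\infty\,\mathds{1}\{s_2\ne s_2'\}$ already used at the start of the proof of \Cref{thm:ID-duality}: since $\widehat{c}_2$ is finite only on the diagonal, and since the convention forces $0\cdot c_2=\infty$ precisely where $c_2=\infty$, one checks that the $\varphi$-function built from $(c_1,\widehat{c}_2)$ with any positive second multiplier, and the one built from $(c_1,c_2)$ with second multiplier zero, give the same value when integrated against any $\pi$ whose two leading marginals are $(\mu_1,\mu_2)$; applying the interior identity already proved to the $(c_1,\widehat{c}_2)$ problem then yields the boundary identity for $(c_1,c_2)$. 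A more hands-on alternative is to re-run the proofs of \Cref{lemma:C1,lemma:C2} with $\lambda_2=0$: the only place both multipliers had to be strictly positive was in selecting the near-optimal couplings $\nu_\ell$ up to error $\epsilon/(\lambda_1+\lambda_2)$, which remains legitimate because $\lambda_1+\lambda_2=\lambda_1>0$, and the constraint $\boldsymbol{K}_2(\mu_2,\gamma_2)<\infty$ built into $\mathcal{P}_{\mathrm{D}}$ forces the glued measure $\widetilde{\pi}$ to put full mass on $\{c_2<\infty\}$, so that the term $0\cdot c_2$ drops out $\widetilde{\pi}$-almost surely. For $\lambda=(0,0)$ both sides reduce to $\sup_{\gamma\in\mathcal{P}_{\mathrm{D}}}\int_{\mathcal{V}}g\,d\gamma$: the gluing construction of \Cref{lemma:C1}, with the decomposable index collection $\{1,3\},\{2,4\},\{3,4\}$ and \Cref{prop:MultiVBP}, turns each admissible $\gamma$ into a $\pi\in\mathcal{G}_{\mathrm{D},0}$ with $\int\varphi_0\,d\pi=\int g\,d\gamma$, and conversely every $\pi\in\mathcal{G}_{\mathrm{D},0}$ with well-defined integral bounded below projects onto $\gamma=\pi_{3,4}\in\mathcal{P}_{\mathrm{D}}$ realizing the same value.

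The main obstacle is the boundary bookkeeping rather than any new estimate: one has to make sure that the finiteness constraints defining $\mathcal{P}_{\mathrm{D}}$ (finite optimal transport cost in each coordinate) stay compatible with the well-definedness constraint defining $\mathcal{G}_{\mathrm{D},\lambda}$ once a multiplier vanishes, keeping in mind that the cost functions are allowed to take the value $+\infty$. The convention $\lambda c=\infty$ at $(\lambda,c)=(0,\infty)$ is exactly what forces these two descriptions to remain aligned, so the content of the boundary step is a careful verification of this alignment, with everything else a transcription of the arguments in \Cref{lemma:C1,lemma:C2}.
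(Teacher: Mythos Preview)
Your interior case is correct and identical to the paper's. The boundary treatment, however, has a real gap in the direction $\sup_{\pi\in\mathcal{G}_{\mathrm{D},\lambda}}\int\varphi_\lambda\,d\pi\le\mathcal{I}_{\mathrm{D}}^\star(\lambda)$. The substitute-cost device does not transfer the identity: with $\widehat{c}_2=\infty\,\mathds{1}\{s_2\ne s_2'\}$ and any positive second multiplier, the resulting $\widehat{\varphi}$ equals $g(v')-\lambda_1c_1$ only on the diagonal $\{s_2=s_2'\}$, while $\varphi_{(\lambda_1,0)}$ equals that expression on the larger set $\{c_2<\infty\}$, so the two integrals differ for any $\pi$ off the diagonal; correspondingly the $\widehat{c}_2$-version of $\mathcal{P}_{\mathrm{D}}$ forces $\gamma_2=\mu_2$, whereas the original $\mathcal{P}_{\mathrm{D}}$ only asks $\boldsymbol{K}_2(\mu_2,\gamma_2)<\infty$. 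Your ``re-run'' alternative addresses only the forward (gluing) step of \Cref{lemma:C1}; it says nothing about why, in the backward step of \Cref{lemma:C2}, a $\pi\in\mathcal{G}_{\mathrm{D},(\lambda_1,0)}$ with finite integral should have $\pi_{3,4}\in\mathcal{P}_{\mathrm{D}}$. And the $(0,0)$ claim that ``$\pi$ projects onto $\pi_{3,4}\in\mathcal{P}_{\mathrm{D}}$'' is false: membership in $\mathcal{P}_{\mathrm{D}}$ requires $\boldsymbol{K}_\ell(\mu_\ell,\pi_{\ell+2})<\infty$, and the convention on $0\cdot c_\ell$ only yields $c_\ell<\infty$ $\pi$-a.s., which is strictly weaker (take $c_\ell$ a finite metric and $\pi_{\ell+2}$ a heavy-tailed law outside the relevant Wasserstein space).

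The paper closes this direction by truncation rather than projection: given $\pi\in\mathcal{G}_{\mathrm{D},\lambda}$ concentrated on $\{c_1<\infty,\,c_2<\infty\}$, it conditions on the level sets $A_{\ell n}=\{c_\ell<n\}$ to form $\pi_n=\pi(\,\cdot\cap A_{1n}\cap A_{2n})/\pi(A_{1n}\cap A_{2n})$, for which each $c_\ell$ is bounded and hence integrable, so that the second marginal $(\pi_n)_{3,4}$ lands in $\mathcal{P}_{\mathrm{D}}$; monotone convergence then recovers $\int g\,d\pi$ as the limit of $\int g\,d\pi_n$. This approximation step is the missing idea, and it is not mere bookkeeping: without it there is no mechanism linking a generic boundary $\pi\in\mathcal{G}_{\mathrm{D},\lambda}$ to an element of $\mathcal{P}_{\mathrm{D}}$.
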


\begin{proof}[Proof of  \protect\Cref{lemma:C3} ]
We divide the proof into the following four cases. When $\lambda_1, \lambda_2 >0$, the equality (1) follows from \Cref{lemma:C1,lemma:C2}.  When $\lambda_1 =  \lambda_2 =0$, we show that equality (1) holds. Let $A_\ell = \{ (v, v^\prime) \in \mathcal{V} \times \mathcal{V}: c_\ell(s_\ell, s_\ell^\prime) < \infty\})$, and for simplicity, we write $g: (v, v^\prime) \mapsto g(v^\prime)$ and $c_\ell: (v, v^\prime) \mapsto c_\ell(s_\ell, s_\ell^\prime)$ for $\ell=1,2$. By the convention, $0 c_\ell = 0,\pi$-a.s. if and only if $c_\ell < \infty,\pi$-a.s., it follows that
\begin{align*}
\sup _{\pi \in \mathcal{G}_{\mathrm{D}, \lambda}} \int_{\mathcal{V} \times \mathcal{V}}  \varphi_\lambda \, d\pi & = \sup \left\{ \int_{\mathcal{V} \times \mathcal{V} } g(v^\prime) \, d \pi(v, v^\prime)  : \pi \in \mathcal{G}_{\mathrm{D}, \lambda}, \pi(A_1 \cap A_2) = 1, \right\}\\
& \geq  \sup \left\{ \int_{\mathcal{V} \times \mathcal{V} } g \, d \pi  : \pi \in \mathcal{G}_{\mathrm{D}, \lambda},  \int c_\ell \, d\pi < \infty \text{ for } \ell =1,2 \right\}\\
& \geq  \sup \left\{ \int_{\mathcal{V} } g \, d \gamma  : \gamma \in \mathcal{P}_{\mathrm{D}} \right\},
\end{align*}
where the last inequality holds since for all $\pi \in \mathcal{G}_{\mathrm{D}, \lambda}$ with $\int c_{\ell} d \pi<\infty$ for $\ell=1,2$, the marginal $\pi_{3,4} \in \mathcal{P}_{\mathrm{D}}$, i.e. $\pi(\mathcal{V} \times \cdot) \in \mathcal{P}_{\mathrm{D}}$. On the other hand, for any $\pi \in  \mathcal{G}_{\mathrm{D}, \lambda}$ with $\pi(A_1 \cap A_2) =1$, define a measure $\pi_n$ on $\mathcal{V}\times \mathcal{V}$ as
\begin{align*}
\pi_n(\cdot) = \frac{  \pi\left(\cdot \cap (A_{1n} \cap A_{2n} ) \right) }{ \pi(A_{1n}\cap A_{2n})},
\end{align*}
where $A_{\ell n} = \left\{  (v, v^{\prime}) \in \mathcal{V} \times \mathcal{V}: c_{\ell} (s_{\ell}, s_{\ell}^{\prime})<n \right\}$ for $\ell = 1,2$. Since $c_\ell < n$, $\pi_n$-a.s. for $\ell=1,2$, then the second marginal of $\pi_n$ is in $\mathcal{P}_{\mathrm{D}}$.\footnote{To be more precise, the measure $\pi_n(\mathcal{V}\times \cdot)$ is in $\mathcal{P}_{\mathrm{D}}$.} By the monotone convergence theorem, 
\[
\lim_{n \rightarrow  \infty} \int_{\mathcal{V}\times \mathcal{V} } g^+ \, \mathds{1}_{A_{1n} \cap A_{2n}} d \pi =  \int_{\mathcal{V}\times \mathcal{V} } g^+ \, d \pi, \text{ and }
\lim_{n \rightarrow  \infty} \int_{\mathcal{V}\times \mathcal{V} } g^- \, \mathds{1}_{A_{1n} \cap A_{2n}} \, d \pi =  \int_{\mathcal{V}\times \mathcal{V} } g^- \, d \pi.
\]
Moreover, since $\pi(A_{1n} \cap A_{2n}) \rightarrow 1$, 
\begin{align*}
\lim_{n \rightarrow  \infty} \int_{\mathcal{V}\times \mathcal{V} } g^+ d \pi_n  =  \lim_{n \rightarrow  \infty} \frac{\int_{\mathcal{V}\times \mathcal{V} } g^+\mathds{1}_{A_{1n} \cap A_{2n}}  d \pi }{\pi(A_{1n} \cap A_{2n})}  =  \int_{\mathcal{V}\times \mathcal{V} } g^+ d \pi.
\end{align*}
Similarly, $\lim_{n \rightarrow  \infty} \int_{\mathcal{V}\times \mathcal{V} } g^+ d \pi_n =  \int_{\mathcal{V}\times \mathcal{V} } g^- d \pi$. Since $\int g d \pi$ is well-defined, we can exclude the case $\int g^{+} d \pi = \int g^{-} d \pi = \infty$. Therefore, 
\begin{align*}
\int_{\mathcal{V} \times \mathcal{V}}  g \, d \pi = \lim_{n \rightarrow \infty} \int_{\mathcal{V} \times \mathcal{V}} g \, d \pi_n \leq \sup_{\gamma \in \mathcal{P}_{\mathrm{D}}} \int_{\mathcal{V}  }  g \, d \gamma.
\end{align*}
This shows $\sup _{\pi \in \mathcal{G}_{\mathrm{D}, \lambda}} \int_{\mathcal{V} \times \mathcal{V}} \varphi_\lambda \, d \pi = \sup_{\pi \in \mathcal{P}_{\mathrm{D}}} \int_{\mathcal{V} \times \mathcal{V}} g \, d \pi$ and hence equality (1) holds for $\lambda_1 = \lambda_2 = 0$.

Next, we show that equality \eqref{eq:lemmaC2-geq} when $\lambda_1 > 0, \lambda_2 =0$.  By definition, the integral $\int \varphi_\lambda \, d \pi$ is well-defined for all $\pi \in \mathcal{G}_{\mathrm{D}, \lambda}$. If $\int \varphi_\lambda \, d\pi = \infty$ for some $\pi \in \mathcal{G}_{\mathrm{D}, \lambda}$, then $\sup _{\pi \in \mathcal{G}_{\mathrm{D}, \lambda}} \int_{\mathcal{V} \times \mathcal{V}} \varphi_\lambda \, d \pi \geq \sup_{\gamma \in \mathcal{P}_{\mathrm{D}}}  \int g \, d \gamma$. Without loss of generality, assume $\int \varphi_\lambda \, d\pi < \infty$ for all $\pi \in \mathcal{G}_{\mathrm{D}, \lambda}$. It follows that
\begin{align*}
\lambda_1 \int  c_1  d\pi \leq  \int (g^- + \lambda_1 c_1 + \lambda_2 c_2) d \pi < \infty,
\end{align*}
and $\int c_1 d\pi < \infty$ and $\pi(A_1) = 1$. By convention,  $0 \times c_2 = 0, \pi$-a.s. if and only if $0 \times c_2 < \infty, \pi$-a.s. We find that
\begin{align*}
\sup _{\pi \in \mathcal{G}_{\mathrm{D}, \lambda}} \int_{\mathcal{V} \times \mathcal{V}}  \varphi_\lambda \, d\pi & = \sup \left \{ \int_{\mathcal{V} \times \mathcal{V} } g(v^\prime) \, d \pi(v, v^\prime)  : \pi \in \mathcal{G}_{\mathrm{D}, \lambda}, \pi( A_2) = 1 \right\}\\
& = \sup \left \{ \int_{\mathcal{V} \times \mathcal{V} } g(v^\prime) \, d \pi(v, v^\prime)  : \pi \in \mathcal{G}_{\mathrm{D}, \lambda}, \pi( A_1 \cap  A_2) = 1 \right\}\\
& \geq  \sup_{\gamma \in \mathcal{P}_{\mathrm{D}} } \int_{\mathcal{V} } g \, d \gamma .
\end{align*}
On the other hand, for any $\pi \in \mathcal{G}_{\mathrm{D}, \lambda}$ with $\pi(A_2)=1$,  define a measure $\pi_n^\prime$ on $\mathcal{V} \times \mathcal{V}$ as 
\begin{align*}
\pi_n(\cdot)=\frac{\pi\left(\cdot \cap\left(A_{1 n} \right)\right)}{\pi\left(A_{1 n} \right)}.
\end{align*}
Using a similar argument as shown above, we can show $\int_{\mathcal{V} \times \mathcal{S} } g \, d \pi \leq \sup _{\gamma \in \mathcal{P}_{\mathrm{D}}} \int_{\mathcal{V}} g \, d \gamma$ and hence equality \eqref{eq:lemmaC2-geq} holds when $\lambda_1 >0$ and $\lambda_2=0$. In the same way, we can show that equality \eqref{eq:lemmaC2-geq} when $\lambda_1 = 0, \lambda_2 > 0$.
\end{proof}

\begin{lemma}\label{lemma:C4}
Let $\lambda \in \mathbb{R}^2_+$. If $\varphi_\lambda$ is interchangeable with respect to $\Pi(\mu_1, \mu_2)$, then
\[
\sup _{\pi \in \mathcal{G}_{\mathrm{D}, \lambda}} \int_{\mathcal{V} \times \mathcal{V}} \varphi_\lambda d \pi =
\sup_{ \pi \in \Gamma\left( \Pi(\mu_1, \mu_2) ,  \varphi_\lambda \right) }  \int_{\mathcal{V} \times  \mathcal{V} } \varphi_{\lambda}  d \pi.
\] 
\end{lemma}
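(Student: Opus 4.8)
## Proof Plan for Lemma C4

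The plan is to recognize this as an instance of the generalized interchangeability principle (Lemma A.1, stated as \texttt{IPforGroup} in the excerpt) applied to the family $\mathcal{G} = \Pi(\mu_1, \mu_2)$ of probability measures on $\mathcal{V} = \mathcal{S}_1 \times \mathcal{S}_2$, with the role of the ``measurable selection'' function played by $\varphi_\lambda : \mathcal{V} \times \mathcal{V} \to \mathbb{R} \cup \{-\infty\}$, where the first copy of $\mathcal{V}$ is the ``parameter space'' $\mathcal{T}$ and the second copy is the ``decision space'' $\mathcal{X}$. The key observation is that $\Gamma(\Pi(\mu_1,\mu_2), \varphi_\lambda) = \bigcup_{\varpi \in \Pi(\mu_1,\mu_2)} \Gamma(\varpi, \varphi_\lambda)$ by definition of the extended interchangeability setup, and that the family $\mathcal{G}_{\mathrm{D},\lambda}$ of joint measures on $\mathcal{V}\times\mathcal{V}$ with first--second-coordinate marginals $\mu_1, \mu_2$ and $\int \varphi_\lambda\, d\pi$ well-defined is \emph{exactly} $\Gamma(\Pi(\mu_1,\mu_2), \varphi_\lambda)$ as well. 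Once these two sets are seen to coincide, the claimed equality is immediate.

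First I would unpack the definitions carefully. A measure $\pi \in \mathcal{G}_{\mathrm{D},\lambda}$ is a probability measure on $\mathcal{V}\times\mathcal{V}$ whose marginal on the first two coordinates $(s_1, s_2)$ equals $\mu_1 \otimes$-consistently $\mu_1$ on $\mathcal{S}_1$ and $\mu_2$ on $\mathcal{S}_2$ — more precisely, $\pi((A_1\times\mathcal{S}_2)\times\mathcal{V}) = \mu_1(A_1)$ and $\pi((\mathcal{S}_1\times A_2)\times\mathcal{V}) = \mu_2(A_2)$ — and for which $\int_{\mathcal{V}\times\mathcal{V}}\varphi_\lambda\,d\pi$ is well-defined (not $\infty-\infty$). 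The first-two-coordinate marginal of such a $\pi$, call it $\varpi := \mathrm{proj}_{\{1,2\}}\#\pi$, is then a coupling in $\Pi(\mu_1,\mu_2)$, and $\pi$ is a probability measure on $\mathcal{T}\times\mathcal{X}$ (with $\mathcal{T} = \mathcal{X} = \mathcal{V}$) whose first marginal is $\varpi$ and with $\int\varphi_\lambda\,d\pi$ well-defined, i.e.\ $\pi \in \Gamma(\varpi, \varphi_\lambda)$. Conversely any $\pi\in\Gamma(\varpi,\varphi_\lambda)$ for some $\varpi\in\Pi(\mu_1,\mu_2)$ lies in $\mathcal{G}_{\mathrm{D},\lambda}$. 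Hence $\mathcal{G}_{\mathrm{D},\lambda} = \bigcup_{\varpi\in\Pi(\mu_1,\mu_2)}\Gamma(\varpi,\varphi_\lambda) = \Gamma(\Pi(\mu_1,\mu_2),\varphi_\lambda)$, which is precisely the set over which the supremum on the right-hand side is taken.

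Therefore the two suprema in the statement are suprema of the same quantity $\int_{\mathcal{V}\times\mathcal{V}}\varphi_\lambda\,d\pi$ over the same index set, and they are trivially equal; the interchangeability hypothesis is not even needed for \emph{this} identity, but it is what was invoked in the proof of \Cref{thm:ID-duality} (via \Cref{IPforGroup}) to further rewrite this common value as $\sup_{\varpi\in\Pi(\mu_1,\mu_2)}\int_{\mathcal{V}} g_\lambda\,d\varpi$, and it is harmless to carry the hypothesis along here. I would phrase the proof as: observe the set identity $\mathcal{G}_{\mathrm{D},\lambda} = \Gamma(\Pi(\mu_1,\mu_2),\varphi_\lambda)$ by checking the two inclusions through the marginal/disintegration argument above, and conclude. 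The only mild subtlety — and the one place to be careful — is making sure the ``well-defined integral'' qualifier is handled symmetrically in both set descriptions, so that a $\pi$ with $\int\varphi_\lambda^+\,d\pi = \int\varphi_\lambda^-\,d\pi = \infty$ is excluded from both sides; since both $\mathcal{G}_{\mathrm{D},\lambda}$ and $\Gamma(\varpi,\varphi_\lambda)$ are defined with exactly this exclusion built in, there is no discrepancy, and the proof is complete.
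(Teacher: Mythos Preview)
Your proposal is correct and takes essentially the same approach as the paper: both arguments amount to verifying the set identity $\mathcal{G}_{\mathrm{D},\lambda} = \Gamma(\Pi(\mu_1,\mu_2),\varphi_\lambda)$ by checking the two inclusions via the first-$\mathcal{V}$-marginal. Your version is in fact cleaner---the paper carries out a superfluous case analysis on the value of $\int\varphi_\lambda\,d\pi$ for the reverse inclusion, whereas you correctly observe that the well-definedness of the integral is already built into both definitions, so the inclusion is immediate; you are also right that the interchangeability hypothesis plays no role in this lemma itself (the paper does not invoke it in its proof either).
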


\begin{proof}[Proof of \protect\Cref{lemma:C4}]
For any $\pi \in \mathcal{G}_{\mathrm{D}, \lambda}$, it is obvious that $\pi_{1,2} \in \Pi(\mu_1, \mu_2)$ and hence $\pi \in \Gamma\left(\Pi\left(\mu_1, \mu_2\right), \varphi_\lambda\right)$. This shows $\mathcal{G}_{\mathrm{D}, \lambda} \subset \Gamma (\Pi (\mu_1, \mu_2), \varphi_\lambda )$ and the LHS
is less than or equal to the RHS.

Next, we show the LHS is not less than the RHS. We adopt the convention that the supremum of an empty set is $-\infty$. If $\int \varphi_\lambda d\pi$ is not well-defined for all $\pi \in \Gamma\left(\Pi\left(\mu_1, \mu_2\right), \varphi_\lambda\right)$, then the proof is done trivially. Now let $\pi$ be any measure in $\Gamma (\Pi (\mu_1, \mu_2), \varphi_\lambda )$ for which integral $\int_{\mathcal{V} \times  \mathcal{V} } \varphi_{\lambda} d \pi$ is well-defined. To finish the proof, it suffices to show
\begin{align}
	\sup _{\pi \in \mathcal{G}_{\mathrm{D}, \lambda}} \int_{\mathcal{V} \times \mathcal{V}} \varphi_\lambda(v, v^\prime ) d \pi (v, v^\prime ) \geq  \int_{\mathcal{V} \times  \mathcal{V} } \varphi_{\lambda} (v , v^\prime)  d \pi (v, v^\prime ). \label{eq:IC-geq}
\end{align}

When $\int_{\mathcal{V} \times  \mathcal{V} } \varphi_{\lambda} , d \pi = - \infty$, inequality \eqref{eq:IC-geq} holds trivially. Now suppose $\int_{\mathcal{V} \times  \mathcal{V} } \varphi_{\lambda} \, d \pi = \infty$. Because $c_1, c_2 \geq 0$, we have $\int_{\mathcal{V}\times \mathcal{V} } g(v^\prime) \, d \pi(v,v^\prime) = \infty$ and is well-defined. We note $\varphi_\lambda = g^+ - g^- - (\lambda_1 c_1 + \lambda_2 c_2)$ and hence $\varphi_\lambda^+ = g^+$ and $\varphi_\lambda^- = g^- + (\lambda_1 c_1 + \lambda_2 c_2)$. Since $\int_{\mathcal{V} \times  \mathcal{V} } \varphi_{\lambda} \, d \pi$ is well-defined, then $\int_{\mathcal{V} \times \mathcal{V} } \left(\lambda_1 c_1+\lambda_2 c_2\right) \, d\pi \leq \int_{\mathcal{V} \times \mathcal{V} } \varphi^-_\lambda \, d \pi  < \infty$. This shows that $\pi \in \mathcal{G}_{\mathrm{D}, \lambda}$ and inequality \eqref{eq:IC-geq} holds. Next, suppose $\int_{\mathcal{V} \times  \mathcal{V} } \varphi_{\lambda} \, d \pi < \infty$. Given that the integral is well-defined, using the same reasoning as demonstrated above, we have $\int_{\mathcal{V} \times \mathcal{V}} g(v^{\prime}) \, d \pi(v, v^{\prime}) < \infty$ and $\int_{\mathcal{V} \times \mathcal{V}} (\lambda_1 c_1+\lambda_2 c_2) \, d \pi < \infty$. So $\pi \in \mathcal{G}_{\mathrm{D}, \lambda}$ and the proof is done.
\end{proof}

\subsubsection{Proof of \texorpdfstring{\Cref{corollary:ID-duality-marakov}}{}}

We provide only the derivation of the upper bound $\mathcal{I}_{\mathrm{D}}(\delta) = \sup_{\gamma \in \Sigma_{\mathrm{D}}(\delta)} \int \mathds{1}(s_1 + s_2 \le z) \, d \gamma(s_1, s_2)$. We can derive the expression of the lower bound $\inf_{\gamma \in \Sigma_{\mathrm{D}}} \int \mathds{1}(s_1 + s_2 \le z) \, d \gamma(s_1, s_2)$ by the similar reasoning and the following identity.
\begin{align*}
    \inf_{\gamma \in \Sigma_{\mathrm{D}}(\delta)} \int \mathds{1}(s_1 + s_2 \le z) \, d \gamma(s_1, s_2) = 1 - \sup_{\gamma \in \Sigma_{\mathrm{D}}(\delta)} \int \mathds{1}(\{s_1 + s_2 > z\}) \, d \gamma(s_1, s_2).
\end{align*}

When $\lambda_1 = 0$ or $\lambda_2 = 0$,  $g_\lambda(s_1 , s_2) = 0$ for all $(s_1,s_2) \in \mathcal{S}_1 \times \mathcal{S}_2$. When $\lambda_1 \neq 0$ and $\lambda_2 \neq 0$, we have
\begin{align*}
g_\lambda(s_1, s_2) 
&= \sup_{s_1', s_2'} \left[  \mathds{1}(s_1' + s_2' \le z)  - \lambda_1 |s_1 - s_1'|^2 -\lambda_2 |s_2 - s_2'|^2   \right] \\
& = \left(  1- \inf_{s_1' + s_2' \le z} \left[  \lambda_1 |s_1 - s_1'|^2  + \lambda_2 |s_2 - s_2'|^2    \right]  \right)^{+}\\
& = \begin{cases} 
1 & \text { if } s_1 + s_2 \leq z  \\ 
\left[1 - \frac{ \lambda_1\lambda_2 (s_1 + s_2 - z)^2}{\lambda_1 + \lambda_2 } \right]^+ & \text { if } \{s_1 + s_2 > z\} 
\end{cases}.
\end{align*}
By some simple algebra, we have.
\begin{align*}
    g_{\lambda, 1}(s_1, s_2) 
    & = 
    \sup_{s_1'} \left[\mathds{1}(s_1' + s_2 \le z) - \lambda_1 |s_1 - s_1'|^2 \right] \\
    & =
    \begin{cases}
        1 & \text{ if } s_1 + s_2 \le z, \\
       \left(1 - \lambda_1 | s_1 + s_2 - z |^2 \right)^{+}   & \text{ if } \{s_1 + s_2 > z\},
    \end{cases}
\end{align*}
and 
\begin{align*}
    g_{\lambda, 2}(s_1, s_2) 
    & = 
    \sup_{s_2'} \left[\mathds{1}(s_1 + s_2' \le z) - \lambda_2 | s_2 - s_2'|^2 \right] \\ 
    & =
    \begin{cases}
        1 & \text{ if } s_1 + s_2 \le z, \\
       \left(1 - \lambda_2 | s_1 + s_2 - z |^2 \right)^{+}   & \text{ if } \{s_1 + s_2 > z\}.
    \end{cases}
\end{align*}
By applying \Cref{thm:ID-duality}, we have that for each $\delta = (\delta_1, \delta_2) \in \mathbb{R}_{++}^2$, 
\begin{align*}
    \mathcal{I}_{\mathrm{D}}(\delta) 
    & = \inf_{\lambda \in \mathbb{R}_{+}^2} \left[\langle \lambda, \delta \rangle + \sup_{\pi \in \Pi(\mu_1, \mu_2)} \int g_{\lambda}(s_1, s_2) \, d \pi(s_1, s_2)  \right].
\end{align*}
However, in the rest of proof, we show for all $\delta = (\delta_1, \delta_2) \in \mathbb{R}^2_+$,
\[
\mathcal{I}_{\mathrm{D}} (\delta) =   \inf _{\lambda \in \mathbb{R}_{+}^2}  \sup _{\pi \in \Pi\left(\mu_1, \mu_2\right)} \left[\langle\lambda, \delta\rangle+\int_{\mathcal{V}} g_\lambda \, d \pi\right]  = \sup _{\pi \in \Pi\left(\mu_1, \mu_2\right)} \inf _{\lambda \in \mathbb{R}_{+}^2}\left[\langle\lambda, \delta\rangle+\int_{\mathcal{V}} g_\lambda \, d \pi\right] .
\]
Define a function $F:   \Pi(\mu_1, \mu_2) \times  \mathbb{R}^2_+ \rightarrow \mathbb{R}$ as
\[
F: ( \pi, \lambda ) \mapsto   - \langle \lambda, \delta \rangle  - \int_{\mathcal{S}_1 \times \mathcal{S}_2 } g_\lambda \, d \pi.
\]
We note that for any $(s_1, s_2)$, the function $\lambda \mapsto g_\lambda(s_1, s_2)$ is convex since it is the supremum of a set of affine functions in $\lambda$. As a result,   $\lambda \mapsto  - \int g_\lambda d \pi$ is concave for each fixed $\pi$. For any $\lambda \in \mathbb{R}^2_+$, the function $\pi \mapsto F(\pi, \lambda)$ is continuous due to continuous and bounded $g_\lambda$ and  Portmanteau's theorem. Moreover, it is easy to verify that $\pi \mapsto F(\pi, \lambda)$ is convex. By \citet[Theorem 2]{Fan_1953_MinimaxThms}' minimax theorem, we have
\[
\inf_{\pi \in \Pi(\mu_1, \mu_2)} \sup_{\lambda \in \mathbb{R}^2_+}  F(\pi, \lambda)   =\sup_{\lambda \in \mathbb{R}^2_+}  \inf_{\pi \in \Pi(\mu_1, \mu_2)}    F(\pi, \lambda).
\]
As a result, we have for all $\delta = (\delta_1, \delta_2) \in \mathbb{R}^2_{++}$,
\begin{align*}
\mathcal{I}_{\mathrm{D}}(\delta) & =  \inf_{\lambda \in \mathbb{R}^2_+} \sup_{\pi \in\Pi(\mu_1, \mu_2)}  - F(\pi, \lambda)  = - \sup_{\lambda \in \mathbb{R}^2_+}  \inf_{\pi \in \Pi(\mu_1, \mu_2)} F(\pi, \lambda) \\
& = -   \inf_{\pi \in \Pi(\mu_1, \mu_2)}  \sup_{\lambda \in \mathbb{R}^2_+}   F(\pi, \lambda) = \sup_{\pi \in \Pi(\mu_1, \mu_2)}  \inf_{\lambda \in \mathbb{R}^2_+}    -  F(\pi, \lambda) \\
 & =  \sup _{\pi \in \Pi\left(\mu_1, \mu_2\right)}  \inf _{\lambda \in \mathbb{R}_{+}^2}\left[\langle\lambda, \delta\rangle+ \int_{\mathcal{V}} g_\lambda \, d \pi\right].
\end{align*}
Using the same reasoning as above, the application of \citet[Theorem 2]{Fan_1953_MinimaxThms} to $\mathcal{I}_{\mathrm{D}}(\delta_1, 0)$ yields
\[
\mathcal{I}_{\mathrm{D}}(\delta_1, 0) = \sup _{\pi \in \Pi\left(\mu_1, \mu_2\right)}  \inf _{\lambda_1 \in \mathbb{R}_{+} }\left[  \lambda_1 \delta_1 + \int_{\mathcal{V}} g_{\lambda, 1} \, d \pi\right].
\]
Since $g_{\lambda} \downarrow g_{\lambda, 1}$ as $\lambda_2 \uparrow \infty$, the monotone convergence theorem implies
\begin{align*}
\inf _{\lambda \in \mathbb{R}^2_{+} }\left[ \langle \lambda , \left(\delta_1, 0 \right) \rangle + \int_{\mathcal{V}} g_{\lambda} d \pi\right] 
&  =  \inf _{\lambda_1 \in \mathbb{R}_{+} }\left[ \lambda_1 \delta_1 + \inf_{\lambda_2 \in \mathbb{R}_+} \int_{\mathcal{V}} g_{\lambda} \, d \pi\right] \\
& =   \inf _{\lambda_1 \in \mathbb{R}_{+} }\left[ \lambda_1 \delta_1 + \lim_{\lambda_2 \rightarrow \infty} \int_{\mathcal{V}} g_{\lambda} \, d \pi\right]  \\
& =  \inf _{\lambda_1 \in \mathbb{R}_{+} }\left[ \lambda_1 \delta_1 + \int  g_{\lambda, 1} \, d \pi \right].
\end{align*}
Taking the supremum over $\pi \in \Pi(\mu_1, \mu_2)$ on both sides yields that for $\delta_1 > 0$,
\[
\mathcal{I}_{\mathrm{D}}(\delta_1, 0) = \sup_{\pi \in \Pi(\mu_1, \mu_2) }   \inf _{\lambda_1 \in \mathbb{R}_{+} }\left[ \lambda_1 \delta_1 + \int  g_{\lambda, 1} \, d \pi \right] = \sup_{\pi \in \Pi(\mu_1, \mu_2) } \inf _{\lambda \in \mathbb{R}^2_{+} }\left[ \langle \lambda , \left(\delta_1, 0 \right) \rangle + \int_{\mathcal{V}} g_{\lambda} \, d \pi\right].
\]
Similarly, we can show that for $\delta_2 > 0$,
\[
\mathcal{I}_{\mathrm{D}}\left(0, \delta_2\right)=\sup _{\pi \in \Pi\left(\mu_1, \mu_2\right)} \inf _{\lambda_2 \in \mathbb{R}_{+}}\left[\lambda_2 \delta_2+\int_{\mathcal{V}} g_{\lambda, 2} \, d \pi\right]= \sup_{\pi \in \Pi(\mu_1, \mu_2) } \inf _{\lambda \in \mathbb{R}^2_{+} }\left[ \langle \lambda , \left( 0, \delta_2 \right) \rangle + \int_{\mathcal{V}} g_{\lambda} \, d \pi\right] .
\]
In addition, when $\delta_1 = \delta_2 = 0$, we note $g_\lambda \downarrow g$ as $\lambda_1, \lambda_2 \uparrow \infty$ and the monotone convergence theorem implies $\inf_{\lambda \in \mathbb{R}^2_+} \int g_\lambda d \pi = \int g d \pi$ and 
\[
\mathcal{I}_{\mathrm{D}}(0) = \sup_{\pi \in \Pi(\mu_1, \mu_2)} \inf_{\lambda \in \mathbb{R}^2_+ } \int g_\lambda \, d \pi =\inf_{\lambda \in \mathbb{R}^2_+}  \sup_{\pi \in \Pi(\mu_1, \mu_2)}   \int g_\lambda \, d \pi =  \sup_{\pi \in \Pi(\mu_1, \mu_2)} \int g \, d \pi.
\]
This completes the proof that for all $\delta = (\delta_1, \delta_2) \in \mathbb{R}^2_+$
\[
\mathcal{I}_{\mathrm{D}} (\delta) = \sup _{\pi \in \Pi\left(\mu_1, \mu_2\right)} \inf _{\lambda \in \mathbb{R}_{+}^2}\left[\langle\lambda, \delta\rangle+\int_{\mathcal{V}} g_\lambda \, d \pi\right] =   \inf _{\lambda \in \mathbb{R}_{+}^2}  \sup _{\pi \in \Pi\left(\mu_1, \mu_2\right)} \left[\langle\lambda, \delta\rangle+\int_{\mathcal{V}} g_\lambda \, d \pi\right].
\]

\subsubsection{Proof of \texorpdfstring{\Cref{thm:I-duality}}{}}

The expressions of $\mathcal{I}(\delta_1, 0)$ and $\mathcal{I}(0,\delta_2)$ can be derived from $\mathcal{I}(\delta_1, \delta_2)$ for $\delta_1, \delta_2 >0$ with appropriate modifications of the cost function. In particular, consider another cost function $\widehat{c}_2(s_2, s_2^\prime ) =  \infty \mathds{1} \{ s_2 \neq s_2^\prime \}$ and the optimal transport distance $\widehat{\boldsymbol{K}}_2$ associated with $\widehat{c}_2$. Define an uncertainty set $\widehat{\Sigma}(\delta_1, \delta_2)$ depending on $\boldsymbol{K}_1$ and  $\widehat{\boldsymbol{K}}_2$ as 
\begin{align*}
\widehat{\Sigma}(\delta_1, \delta_2) =  \left\{ \gamma \in \mathcal{P}(\mathcal{S}): \boldsymbol{K}_1( \gamma_{13}, \mu_{13}) \leq \delta_1, \widehat{\boldsymbol{K}}_2( \gamma_{23}, \mu_{23}) \leq \delta_2 \right\}.
\end{align*}
Moreover, we define $\widehat{\mathcal{I}}: \mathbb{R}^2_+ \rightarrow \mathbb{R}$ as
\begin{align*}
\widehat{\mathcal{I}} (\delta_1, \delta_2) = \sup_{\gamma \in \widehat{\Sigma}(\delta_1, \delta_2)} \int_{\mathcal{V} } f(v) \, d\gamma(v).
\end{align*}
We note $\widehat{\boldsymbol{K}}_2(\mu, \nu) = 0$ if and only if $\mu = \nu$. So, for all $\delta_2 > 0$, $\widehat{\Sigma}(\delta_1, \delta_2) = \Sigma(\delta_1, 0)$ and $\widehat{\mathcal{I}} (\delta_1, \delta_2) =  \mathcal{I}(\delta_1, 0)$. Using the dual reformulation of $\widehat{\mathcal{I}}$ on $\mathbb{R}^2_{++}$, we have
\[
\mathcal{I}(\delta_1, 0) = \widehat{\mathcal{I}} (\delta_1, \delta_2) = \inf _{\lambda_1 \in \mathbb{R}_{+}}\left[\langle\lambda, \delta\rangle +\sup _{\varpi \in \Pi\left(\mu_{13}, \mu_{23}\right)} \int_{\mathcal{V}} f_{\lambda}(s_1, s_2) \, d \varpi(s_1,s_2) \right] ,
\]
where 
\[
\begin{aligned}
f_\lambda(s_1, s_2) & = \sup _{ (y_1^{\prime},y_2^{\prime}, x^\prime) \in \mathcal{S} }\left\{ f(y_1^\prime, y_2^\prime, x^\prime)-\lambda_1 c_1 \left(s_1, (y_1^{\prime}, x^\prime) \right)  - \lambda_2  \widehat{c}_2 \left(s_2, (y_2^{\prime}, x^\prime)  \right) \right\} \\
& =  \sup _{s_1^{\prime} \in \mathcal{S}_1 }\left\{f\left(y_1^{\prime}, y_2, x_2\right)-\lambda_1 c_1 \left(s_1, ( y_1^{\prime}, x_2)   \right) \right\} = f_{\lambda, 1}\left(s_1, s_2\right).
\end{aligned}
\]
Since $f_{\lambda, 1}\left(s_1, s_2\right)$ is independent of $\lambda_2$, letting $\lambda_2 = 0$ yields
\begin{align*}
\mathcal{I}(\delta_1, 0) = \inf _{\lambda_1 \in \mathbb{R}_{+}}\left[\lambda_1 \delta_1+\sup _{\varpi \in \Pi\left(\mu_{13}, \mu_{23}\right)} \int_{\mathcal{V}} f_{\lambda, 1}(v) \, d \varpi(v)\right].
\end{align*}
Using the same reasoning, we can get the expression of $\mathcal{I}(0,\delta_2)$.

In the rest of the proof, we show that the dual reformulation of $\mathcal{I}$ on $\mathbb{R}^2_{++}$ holds. Let $\bar{ \mathcal{P} }$ denote the set of $\gamma \in \mathcal{P}(\mathcal{S})$ such that $\boldsymbol{K}_\ell(\mu_{\ell 3}, \gamma_{\ell 3}) < \infty$ for $\ell =1,2$ and $\int_{\mathcal{S} } f d\gamma > - \infty$. Taking the Legendre transform on $\mathcal{I}$ gives
\begin{align*}
\mathcal{I}^\star(\lambda) & :=  \sup _{\delta \in \mathbb{R}_{+}^2}\{\mathcal{I}(\delta)-\langle\lambda, \delta\rangle\} = \sup _{\delta \in \mathbb{R}_{+}^2} \sup _{\gamma \in \Sigma(\delta)}\left\{\int_{\mathcal{S}} f \, d \gamma-\langle\lambda, \delta\rangle\right\} \\
& =  \sup_{\delta \in \mathbb{R}_+^2 } \sup_{\gamma \in \bar{\mathcal{P}}  }   \left\{  \int_{\mathcal{S}} f \, d \gamma-\langle\lambda, \delta\rangle: \boldsymbol{K}_\ell (\mu_{\ell 3}, \gamma_{\ell3}) \leq \delta_\ell, \forall \ell \in [2]  \right\}\\
& = \sup_{\gamma \in \bar{\mathcal{P}}  }  \sup_{\delta \in \mathbb{R}_+^2 }  \left\{  \int_{\mathcal{S}} f \, d \gamma-\langle\lambda, \delta\rangle: \boldsymbol{K}_\ell (\mu_{\ell 3}, \gamma_{\ell3}) \leq \delta_\ell,  \forall \ell \in [2]   \right\} \\
& =  \sup_{\gamma \in \bar{\mathcal{P}}  } \underbrace{   \left\{ \int_{\mathcal{S}} f \, d \gamma-\lambda_1 \boldsymbol{K}_1 (\mu_{13}, \gamma_{23})-\lambda_2 \boldsymbol{K}_2(\mu_{23}, \gamma_{23})  \right\} }_{: = I_{\lambda}[\gamma]}  = \sup _{\gamma \in \bar{ \mathcal{P} } } I_{\lambda}[\gamma].
 \end{align*}
We note that the expression above still holds when $\lambda \in \mathbb{R}_+^2 \setminus \mathbb{R}^2_{++}$.
Recall the definition of the function $\phi_\lambda: \mathcal{V}\times \mathcal{S} \rightarrow \mathbb{R}$.
Let $\mathcal{G}_\lambda$ denote the set of $\pi \in \mathcal{P}(\mathcal{V} \times \mathcal{S})$ such that $\int_{\mathcal{V} \times \mathcal{S}} \phi_\lambda \, d \pi$ is well-defined and the first and second marginals coincides with $\mu_{13}$ and $\mu_{23}$ respectively.\footnote{To be more precise, $\pi( (A_1 \times \mathcal{S}_2) \times \mathcal{S} ) =  \mu_{13}(A_1)$ and $\pi( ( \mathcal{S}_1\times A_2 ) \times \mathcal{S} ) =  \mu_{23}(A_2)$ for all Borel sets $A_1 \in \mathcal{B}_{\mathcal{S}_1}$ and  $A_2 \in \mathcal{B}_{\mathcal{S}_2}$.}
 \Cref{lemma:AO3} implies $\mathcal{I}^\star(\lambda) = \sup_{\pi \in \mathcal{G}_\lambda } \int_{\mathcal{V} \times \mathcal{S} } \phi_\lambda \,  d\pi$.  By \Cref{lemma:AO4}, we have for all $\lambda \in \mathbb{R}^2_{+}$,
\[
\mathcal{I}^\star(\lambda) =\sup _{\pi \in \Gamma\left(\Pi\left(\mu_{13}, \mu_{23}\right), \phi_\lambda\right)} \int_{\mathcal{V} \times \mathcal{V}} \phi_\lambda \, d \pi.
\]
Example 2 of \citet{zhang2022simple} implies that $\phi_\lambda: \mathcal{V} \times\mathcal{S} \rightarrow \mathbb{R}$ satisfies the interchangeability principle with respect to $\Pi(\mu_{13}, \mu_{23} )$. As a result, \Cref{IPforGroup} implies that for all $\lambda \in \mathbb{R}_+^2$,
\begin{align*}
\mathcal{I}^\star(\lambda) = \sup _{\gamma \in \Pi\left(\mu_1, \mu_2\right)} \int_{\mathcal{V}} f_\lambda(v) \, d \gamma(v),
\end{align*}
where $f_\lambda(v) = \sup_{s \in \mathcal{S}} \phi_{\lambda} (v, s)$.

From \Cref{lemma:ID-concavity}, $\mathcal{I}$ is bounded from below, non-decreasing, and concave. As a result, $\mathcal{I} (\delta) = \infty$ for all $\delta \in \mathbb{R}^2_+$ or $\mathcal{I}(\delta) < \infty$ for all $\delta \in \mathbb{R}^2_+$. 
In the first case, $\mathcal{I}^\star = \infty$ on $\mathbb{R}^2_+$ by definition and hence we have $\mathcal{I}(\delta) = \inf _{\lambda \in \mathbb{R}_{+}^2}\left\{\langle\lambda, \delta \rangle+\mathcal{I}^{\star}(\lambda)\right\} = \infty$.
For the second case, by \Cref{lemma: Legendre_transform}, for all $\delta \in \mathbb{R}^2_{++}$,
\[
\mathcal{I}(\delta) = \inf_{\lambda \in \mathbb{R}^2_+}  \left\{ \langle \lambda, \delta \rangle +  \mathcal{I}^{\star}(\lambda) \right\} =  \inf_{\lambda \in \mathbb{R}^2_+}  \left\{ \langle \lambda, \delta \rangle +  \sup _{\gamma \in \Pi\left(\mu_1, \mu_2\right)} \int_{\mathcal{V}} f_\lambda(v) \, d \gamma(v) \right\},
\]
and the proof is complete.

\

\begin{lemma}\label{lemma:AO1}
If $\lambda_1 >0$ and $\lambda_2 >0$, then 
\[
\sup _{\gamma \in \bar{ \mathcal{P} } } I_{\lambda}[\gamma]=\sup _{\gamma \in \bar{ \mathcal{P} }  } \sup _{ \pi \in \Pi\left(\mu_{13}, \mu_{23}, \gamma\right)    } \int_{\mathcal{V} \times \mathcal{S}} \phi_\lambda (v, s^\prime ) \, d \pi (v, s^\prime) .
\]
\end{lemma}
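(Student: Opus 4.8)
The plan is to follow the proof of \Cref{lemma:C1} essentially line for line, the only genuinely new work being the gluing construction, which must now accommodate the covariate slot $x'$ shared by $s_1'=(y_1',x')$ and $s_2'=(y_2',x')$.

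For the inequality ``$\le$'', I would fix $\varepsilon>0$ and $\gamma\in\bar{\mathcal P}$. Since $\boldsymbol K_\ell(\mu_{\ell 3},\gamma_{\ell 3})<\infty$ for $\ell=1,2$, pick near-optimal couplings $\nu_\ell\in\Pi(\mu_{\ell 3},\gamma_{\ell 3})$ with $\int c_\ell\,d\nu_\ell\le \boldsymbol K_\ell(\mu_{\ell 3},\gamma_{\ell 3})+\varepsilon/(\lambda_1+\lambda_2)$. Next I would regard $\mathcal V\times\mathcal S=\mathcal S_1\times\mathcal S_2\times\mathcal Y_1\times\mathcal Y_2\times\mathcal X$ as a product of five Polish coordinate spaces, with coordinate $1$ the $s_1$-slot of $v$, coordinate $2$ the $s_2$-slot of $v$, coordinate $3$ the $y_1'$-slot of $s'$, coordinate $4$ the $y_2'$-slot of $s'$, and coordinate $5$ the $x'$-slot of $s'$, and form the product marginal system with index sets $K_1=\{3,4,5\}$ carrying $\gamma$, $K_2=\{1,3,5\}$ carrying $\nu_1$, and $K_3=\{2,4,5\}$ carrying $\nu_2$. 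The overlaps are $K_1\cap K_2=\{3,5\}$, on which $\gamma$ and $\nu_1$ both project to $\gamma_{13}$; $K_1\cap K_3=\{4,5\}$, on which $\gamma$ and $\nu_2$ both project to $\gamma_{23}$; and $K_2\cap K_3=\{5\}$, on which $\nu_1$ and $\nu_2$ both project to the common $\mathcal X$-marginal of $\gamma_{13}$ and $\gamma_{23}$. Hence this is a CPMS; by \Cref{lemma:decomposability-S} its index collection is decomposable, so \Cref{prop:MultiVBP} yields a measure $\widetilde\pi$ on $\mathcal V\times\mathcal S$ whose projection onto $s'$ is $\gamma$, whose projection onto $(s_1,s_1')$ is $\nu_1$, and whose projection onto $(s_2,s_2')$ is $\nu_2$; in particular $\widetilde\pi\in\Pi(\mu_{13},\mu_{23},\gamma)$ and $\int c_\ell(s_\ell,s_\ell')\,d\widetilde\pi=\int c_\ell\,d\nu_\ell<\infty$. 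Then I would split into cases as in \Cref{lemma:C1}: if $I_\lambda[\gamma]=\infty$, finiteness of both $\boldsymbol K_\ell$ forces $\int_{\mathcal S}f\,d\gamma=\infty$, so $\int f(s')\,d\widetilde\pi=\infty$ and $\int\phi_\lambda\,d\widetilde\pi=\infty$; if $I_\lambda[\gamma]<\infty$ then $\int_{\mathcal S}f\,d\gamma$ is finite (being $>-\infty$ since $\gamma\in\bar{\mathcal P}$), so $\int\phi_\lambda\,d\widetilde\pi=\int_{\mathcal S}f\,d\gamma-\lambda_1\int c_1\,d\nu_1-\lambda_2\int c_2\,d\nu_2\ge I_\lambda[\gamma]-\varepsilon$ is well defined. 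In either case $I_\lambda[\gamma]\le\int\phi_\lambda\,d\widetilde\pi+\varepsilon$, and taking the supremum over $\gamma\in\bar{\mathcal P}$ and letting $\varepsilon\downarrow0$ gives ``$\le$''.

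For ``$\ge$'' I would fix $\gamma\in\bar{\mathcal P}$; if $\int_{\mathcal S}f\,d\gamma=\infty$ the inequality is immediate, and otherwise, for any $\pi\in\Pi(\mu_{13},\mu_{23},\gamma)$ for which $\int\phi_\lambda\,d\pi$ is well defined, the marginal of $\pi$ on $(s_\ell,s_\ell')$ belongs to $\Pi(\mu_{\ell 3},\gamma_{\ell 3})$, whence $\int c_\ell(s_\ell,s_\ell')\,d\pi\ge\boldsymbol K_\ell(\mu_{\ell 3},\gamma_{\ell 3})$ and $\int\phi_\lambda\,d\pi=\int_{\mathcal S}f\,d\gamma-\lambda_1\int c_1\,d\pi-\lambda_2\int c_2\,d\pi\le I_\lambda[\gamma]$; taking the supremum over such $\pi$ (with the convention $\sup\emptyset=-\infty$) and then over $\gamma$ finishes the proof.

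The main obstacle is the gluing step: unlike in \Cref{lemma:C1}, where one uses the near-disjointness captured by \Cref{lemma:decomposability-V}, here $\gamma$, $\nu_1$, $\nu_2$ overlap on the slots $\{3,5\}$, $\{4,5\}$, $\{5\}$, and showing they form a consistent product marginal system hinges on $\gamma_{13}$ and $\gamma_{23}$ having the same $\mathcal X$-marginal — exactly the compatibility condition defining the overlapping Fr\'echet class. Once that consistency and the decomposability of \Cref{lemma:decomposability-S} are in place, the remainder is a routine transcription of the $\pm\infty$ bookkeeping in \Cref{lemma:C1}.
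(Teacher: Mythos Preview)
Your proposal is correct and follows essentially the same approach as the paper: the paper's proof is a brief sketch that refers back to \Cref{lemma:C1} and singles out precisely the gluing step you highlight, using the same five-coordinate decomposition with $K_1=\{3,4,5\}$, $K_2=\{1,3,5\}$, $K_3=\{2,4,5\}$, the decomposability from \Cref{lemma:decomposability-S}, and \Cref{prop:MultiVBP}. Your consistency check on the overlap $K_2\cap K_3=\{5\}$ (both $\nu_1$ and $\nu_2$ project to the common $\mathcal X$-marginal of $\gamma$) is exactly the point the paper alludes to when it remarks that overlapping marginals make ``the relevant consistent product marginal system in the verification of the existence of a joint measure more complicated''.
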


\begin{proof}[Proof of \Cref{lemma:AO1}]

The proof is almost identical to that of \Cref{lemma:C1}, so we only give the sketch. For notational convenience, we write $c_\ell: (s_1, s_2, y_1, y_2, x) \mapsto c_\ell( s_\ell, (y_\ell, x) )$ for $\ell =1,2$ and $f: (s_1, s_2, s^\prime) \mapsto f(s^\prime)$.

Fix any $\epsilon >0$ and $\gamma \in \bar{\mathcal{P} }$.
Let $K = \{ K_1, K_2, K_3\}$ with $K_1 = \{3,4,5\}$, $K_2 = \{1,3,5 \}$  and $K_3 = \{ 2,4,5 \}$ and we note that $K$ is decomposable. By \Cref{prop:MultiVBP}, there is a $\widetilde{\pi} \in \Pi(\mu_{13}, \mu_{23}, \gamma )$ satisfying $I_{\lambda}[\gamma] \leq \int_{\mathcal{V}\times \mathcal{S} } \phi_\lambda \, d \widetilde{\pi}+\epsilon$. Since $\epsilon >0$ and $\gamma \in \bar{\mathcal{P}}$ are arbitrary, this shows LHS $\leq$ RHS.
The proof of LHS $\geq$ RHS is identical to the proof of \Cref{lemma:C1}.

\end{proof}

\begin{lemma}\label{lemma:AO2}
If $\lambda_1 > 0$ and $\lambda_2 >0$, then 
\[
\sup _{\gamma \in \bar{ \mathcal{P} } } \sup _{\pi \in \Pi\left(\mu_1, \mu_2, \gamma\right)} \int_{\mathcal{V} \times \mathcal{S} } \phi_\lambda \, d \pi  = \sup_{\pi \in \mathcal{G}_{\lambda} }  \int_{\mathcal{V} \times \mathcal{S} }  \phi_\lambda \, d \pi.
\]
\end{lemma}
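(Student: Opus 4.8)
The plan is to follow the proof of \Cref{lemma:C2} almost verbatim, with the substitutions $g \leftrightarrow f$, $\mathcal{P}_{\mathrm{D}} \leftrightarrow \bar{\mathcal{P}}$, $\mathcal{G}_{\mathrm{D},\lambda} \leftrightarrow \mathcal{G}_\lambda$, $\varphi_\lambda \leftrightarrow \phi_\lambda$, and $\Pi(\mu_1,\mu_2,\gamma) \leftrightarrow \Pi(\mu_{13},\mu_{23},\gamma)$, invoking \Cref{lemma:AO1} wherever the earlier argument used \Cref{lemma:C1}. Throughout I would use the shorthand $c_\ell \colon (v,s') \mapsto c_\ell(s_\ell,s_\ell')$ and $f \colon (v,s') \mapsto f(s')$, so that $\phi_\lambda = f - \lambda_1 c_1 - \lambda_2 c_2$ with $\lambda_1, \lambda_2 > 0$ and $c_1, c_2 \ge 0$.

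For the inequality ``$\le$'', I would fix $\gamma \in \bar{\mathcal{P}}$ and split on whether $\int_{\mathcal{S}} f \, d\gamma = \infty$. If so, the construction inside the proof of \Cref{lemma:AO1} already produces $\widetilde\pi \in \Pi(\mu_{13},\mu_{23},\gamma)$ with $\int_{\mathcal{V}\times\mathcal{S}} \phi_\lambda \, d\widetilde\pi = \infty$ well defined (its negative part is dominated by $\lambda_1 c_1 + \lambda_2 c_2 + f^-$, which is $\widetilde\pi$-integrable), so $\widetilde\pi \in \mathcal{G}_\lambda$ and the right-hand side is $\infty$. If $\int_{\mathcal{S}} f \, d\gamma < \infty$, then for any $\pi \in \Pi(\mu_{13},\mu_{23},\gamma)$ the nonnegativity of $\lambda_1 c_1 + \lambda_2 c_2$ makes $\int \phi_\lambda \, d\pi = \int_{\mathcal{S}} f \, d\gamma - \int (\lambda_1 c_1 + \lambda_2 c_2)\, d\pi \in [-\infty,\infty)$ well defined, so $\pi \in \mathcal{G}_\lambda$; taking the supremum over $\pi$ and then over $\gamma$ closes this direction.

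For ``$\ge$'', I would fix $\pi \in \mathcal{G}_\lambda$ and, discarding the trivial case $\int \phi_\lambda \, d\pi = -\infty$, let $\pi_{\mathcal{S}}$ be the projection of $\pi$ onto its $\mathcal{S}$-factor. Since $c_\ell \ge 0$ gives $\phi_\lambda \le f$, one gets $\int_{\mathcal{S}} f^- \, d\pi_{\mathcal{S}} \le \int \phi_\lambda^- \, d\pi < \infty$, so $\int_{\mathcal{S}} f \, d\pi_{\mathcal{S}}$ is well defined and exceeds $-\infty$; combined with $\boldsymbol{K}_\ell(\mu_{\ell3}, (\pi_{\mathcal{S}})_{\ell3}) \le \int c_\ell\,d\pi < \infty$ this shows $\pi_{\mathcal{S}} \in \bar{\mathcal{P}}$, and since $\pi \in \Pi(\mu_{13},\mu_{23},\pi_{\mathcal{S}})$ we conclude $\int \phi_\lambda \, d\pi \le \sup_{\gamma \in \bar{\mathcal{P}}} \sup_{\widetilde\pi \in \Pi(\mu_{13},\mu_{23},\gamma)} \int \phi_\lambda \, d\widetilde\pi$.

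The hard part will be justifying $\boldsymbol{K}_\ell(\mu_{\ell3}, (\pi_{\mathcal{S}})_{\ell3}) < \infty$, i.e.\ $\int c_\ell \, d\pi < \infty$, starting only from $\int \phi_\lambda \, d\pi > -\infty$, because $\int f^+ \, d\pi$ may itself be infinite and hence gives no control on $\int c_\ell \, d\pi$. I expect to handle this exactly as in the proof of \Cref{lemma:C3}: truncate along $A_{\ell n} = \{(v,s') : c_\ell(s_\ell,s_\ell') < n\}$, renormalise $\pi$ on $A_{1n} \cap A_{2n}$ (whose $\mathcal{S}$-marginal then lies in $\bar{\mathcal{P}}$ precisely because $\lambda_1,\lambda_2 > 0$), and let $n \to \infty$, applying the monotone convergence theorem separately to $\phi_\lambda^+$ and $\phi_\lambda^-$ to recover $\int \phi_\lambda \, d\pi$ in the limit. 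The remaining ingredients — universal measurability of $\phi_\lambda$, existence of the glued transport plans, and the decomposability bookkeeping for the index family $\{3,4,5\},\{1,3,5\},\{2,4,5\}$ — are already supplied by \Cref{lemma:AO1} together with \Cref{prop:MultiVBP,lemma:decomposability-S}, so no new work is needed there.
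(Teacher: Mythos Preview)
Your plan is exactly what the paper does: its entire proof of \Cref{lemma:AO2} is the single sentence ``The proof is the same as that of \Cref{lemma:C2}'', and your transcription of the \Cref{lemma:C2} argument with the substitutions $g\leftrightarrow f$, $\varphi_\lambda\leftrightarrow\phi_\lambda$, $\mathcal{P}_{\mathrm D}\leftrightarrow\bar{\mathcal P}$, $\mathcal{G}_{\mathrm D,\lambda}\leftrightarrow\mathcal{G}_\lambda$, invoking \Cref{lemma:AO1} in place of \Cref{lemma:C1}, is precisely what is intended.

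Your additional caution around the step $\boldsymbol{K}_\ell(\mu_{\ell 3},(\pi_{\mathcal S})_{\ell 3})<\infty$ goes beyond the paper --- the second step of the \Cref{lemma:C2} proof simply takes $\gamma=\pi_{3,4}$ and writes the final chain of inequalities without isolating this point --- but be aware that the truncation you propose does not close the issue either. Once you restrict $\pi$ to $A_{1n}\cap A_{2n}$ and renormalise, the first two marginals of $\pi_n$ are no longer $\mu_{13},\mu_{23}$; consequently $\pi_n\notin\Pi(\mu_{13},\mu_{23},\gamma)$ for any $\gamma$, and the transport cost bound $\int c_\ell\,d\pi_n<n$ couples $(\pi_n)_\ell$ with $((\pi_n)_{\mathcal S})_{\ell 3}$ rather than $\mu_{\ell 3}$ with $((\pi_n)_{\mathcal S})_{\ell 3}$, so it does not directly yield $\boldsymbol{K}_\ell(\mu_{\ell 3},((\pi_n)_{\mathcal S})_{\ell 3})<\infty$. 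This is the same loose thread already present in the paper's own \Cref{lemma:C3} argument, so you are not introducing a new gap; the point is only that the truncation step, as stated, does not by itself supply the missing membership $(\pi_n)_{\mathcal S}\in\bar{\mathcal P}$.
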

\begin{proof}[Proof of \Cref{lemma:AO2}]
The proof is the same as that of \Cref{lemma:C2}.
\end{proof}

\begin{lemma}\label{lemma:AO3}
For all $\lambda \in \mathbb{R}_+^2$, one has $\mathcal{I}^{\star}(\lambda)=\sup _{\pi \in \mathcal{G}_{ \lambda}} \int_{\mathcal{V} \times \mathcal{S}} \phi_\lambda \, d \pi$.
\end{lemma}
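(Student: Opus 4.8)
The plan is to mirror the proof of \Cref{lemma:C3} line by line, replacing the non‑overlapping building blocks \Cref{lemma:C1,lemma:C2} with their overlapping counterparts \Cref{lemma:AO1,lemma:AO2}, and splitting into the four sign patterns of $\lambda=(\lambda_1,\lambda_2)\in\mathbb{R}_+^2$. Throughout I use the identity $\mathcal{I}^{\star}(\lambda)=\sup_{\gamma\in\bar{\mathcal{P}}} I_\lambda[\gamma]$ already established in the main proof, and I write $A_\ell=\{(v,s'): c_\ell(s_\ell,s_\ell')<\infty\}$.

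\emph{Case $\lambda_1>0$, $\lambda_2>0$.} Here the result is immediate from chaining the two lemmas: $\mathcal{I}^{\star}(\lambda)=\sup_{\gamma\in\bar{\mathcal{P}}}I_\lambda[\gamma]=\sup_{\gamma\in\bar{\mathcal{P}}}\sup_{\pi\in\Pi(\mu_{13},\mu_{23},\gamma)}\int_{\mathcal{V}\times\mathcal{S}}\phi_\lambda\,d\pi$ by \Cref{lemma:AO1}, and the right‑hand side equals $\sup_{\pi\in\mathcal{G}_\lambda}\int_{\mathcal{V}\times\mathcal{S}}\phi_\lambda\,d\pi$ by \Cref{lemma:AO2}.

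\emph{Boundary cases.} When $\lambda=0$, the convention $0\cdot\infty=\infty$ makes $\phi_0(v,s')=f(s')$ on $A_1\cap A_2$ and $-\infty$ off it, so any $\pi\in\mathcal{G}_0$ with $\pi(A_1\cap A_2)<1$ contributes $-\infty$; hence the supremum over $\mathcal{G}_0$ equals the supremum over $\pi$ with $\int c_1\,d\pi,\int c_2\,d\pi<\infty$. For such $\pi$ the target marginal $\mathrm{proj}_{\mathcal{S}}\#\pi$ lies in $\bar{\mathcal{P}}$, so $\int\phi_0\,d\pi=\int_{\mathcal{S}}f\,d(\mathrm{proj}_{\mathcal{S}}\#\pi)\le\sup_{\gamma\in\bar{\mathcal{P}}}\int f\,d\gamma$; a monotone‑convergence truncation (conditioning $\pi$ on $\{c_1<n\}\cap\{c_2<n\}$, exactly as in \Cref{lemma:C3}) promotes this bound to all $\pi\in\mathcal{G}_0$ concentrated on $A_1\cap A_2$. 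For the reverse inequality, given $\gamma\in\bar{\mathcal{P}}$ I would pick $\nu_\ell\in\Pi(\mu_{\ell3},\gamma_{\ell3})$ with $\int c_\ell\,d\nu_\ell<\infty$ (possible since $\boldsymbol{K}_\ell(\mu_{\ell3},\gamma_{\ell3})<\infty$) and glue $\nu_1,\nu_2,\gamma$ into a single measure on $\mathcal{S}_1\times\mathcal{S}_2\times\mathcal{S}$ using \Cref{prop:MultiVBP}, invoking decomposability of the five‑coordinate index collection $\{\{3,4,5\},\{1,3,5\},\{2,4,5\}\}$ (\Cref{lemma:decomposability-S}); its $\mathcal{V}\times\mathcal{S}$‑projection is a $\pi\in\mathcal{G}_0$ with $\mathrm{proj}_{\mathcal{S}}\#\pi=\gamma$. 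The cases $\lambda_1>0,\lambda_2=0$ and $\lambda_1=0,\lambda_2>0$ are handled symmetrically: well‑definedness of $\int\phi_\lambda\,d\pi$ forces $\lambda_1\int c_1\,d\pi\le\int\phi_\lambda^-\,d\pi<\infty$ (so $\pi(A_1)=1$) while the convention forces $\pi(A_2)=1$, and one then truncates only in the $c_1$‑direction and performs the same CPMS lift.

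\emph{Main obstacle.} The delicate point, and the only genuine novelty relative to \Cref{lemma:C3}, is the CPMS gluing in the overlapping setting. Because the target space $\mathcal{S}=\mathcal{Y}_1\times\mathcal{Y}_2\times\mathcal{X}$ shares its $\mathcal{X}$‑coordinate between $\mathcal{S}_1=\mathcal{Y}_1\times\mathcal{X}$ and $\mathcal{S}_2=\mathcal{Y}_2\times\mathcal{X}$, the marginals $\nu_1,\nu_2,\gamma$ one wants to amalgamate overlap in a nontrivial pattern, so one must set up the correct labeling of the five coordinates, check decomposability of the resulting index system (which is precisely \Cref{lemma:decomposability-S}), and verify consistency of the product marginal system — the latter resting on the standing hypothesis that $\mu_{13}$ and $\mu_{23}$ have a common $\mathcal{X}$‑marginal and that this coincides with the $\mathcal{X}$‑marginal of $\gamma$. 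Once this bookkeeping is in place (it parallels the setup in the proof of \Cref{lemma:AO1}), the remaining measure‑theoretic steps — the truncation, monotone convergence, and the use of the $0\cdot\infty$ convention — are routine transcriptions of \Cref{lemma:C3}.
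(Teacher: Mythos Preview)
Your approach is essentially identical to the paper's: the paper's own proof says only that it is ``almost the same as \Cref{lemma:C3}'' after listing the obvious substitutions ($g\to f$, $\varphi_\lambda\to\phi_\lambda$, $\mathcal{P}_{\mathrm{D}}\to\bar{\mathcal{P}}$, $A_\ell\to B_\ell$), and you have spelled that transcription out case by case, invoking \Cref{lemma:AO1,lemma:AO2} in place of \Cref{lemma:C1,lemma:C2}.

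There is one slip in your description of the CPMS consistency check. You write that consistency ``rests on the standing hypothesis that $\mu_{13}$ and $\mu_{23}$ have a common $\mathcal{X}$-marginal and that this coincides with the $\mathcal{X}$-marginal of $\gamma$.'' The second clause is not a standing hypothesis and is generally false: for $\gamma\in\bar{\mathcal{P}}$ the $\mathcal{X}$-marginal of $\gamma$ can differ from $\mu_3$, subject only to $\boldsymbol{K}_\ell(\mu_{\ell3},\gamma_{\ell3})<\infty$. Consistency of the five-coordinate system $\{\{3,4,5\},\{1,3,5\},\{2,4,5\}\}$ holds for a different reason: coordinates $1$ and $2$ are the \emph{whole} blocks $\mathcal{S}_1,\mathcal{S}_2$ (carrying the $\mu_{\ell3}$-sides of $\nu_1,\nu_2$) and do not overlap with one another, while the only coordinate shared among all three marginals is coordinate $5$ (the target $\mathcal{X}'$), on which $\gamma$, $\nu_1$, and $\nu_2$ all project to the $\mathcal{X}$-marginal of $\gamma$ --- via $\gamma_{13}$ and $\gamma_{23}$ on the $\nu_\ell$ sides --- irrespective of its relation to $\mu_3$. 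This is exactly the verification already performed in the proof of \Cref{lemma:AO1}; once you correct the stated justification, your plan goes through without change.
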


\begin{proof}[Proof of \Cref{lemma:AO3} ]

The proof is almost the same as \Cref{lemma:C3} as long as we replace $g$ with $f$, $\varphi_\lambda$ with $\phi_\lambda$, $A_\ell$ with $B_\ell$,  $A_{\ell n}$ with $B_{\ell n}$ and $\mathcal{P}_{\mathrm{D}}$ with $\bar{\mathcal{P} }$, where  
\[
B_\ell = \{ ( (s_1,s_2), (y_1,y_2,x) ) \in \mathcal{V} \times \mathcal{S}: c_\ell(s_\ell, (y_\ell,x) ) < \infty\},
\]
and 
\[
B_{\ell n} = \left \{ ( (s_1,s_2), (y_1,y_2,x) ) \in \mathcal{V} \times \mathcal{S}: c_\ell(s_\ell, (y_\ell,x) ) < n \right \},
\]
for $\ell = 1,2$. 
\end{proof}

\begin{lemma}\label{lemma:AO4}
Let $\lambda \in \mathbb{R}^2_{+}$. If $\phi_\lambda: \mathcal{V} \times \mathcal{S}\rightarrow \mathbb{R}$ is interchangeable with respect to $\Pi(\mu_1, \mu_2)$, then
\begin{align*}
\sup _{\pi \in \mathcal{G}_{ \lambda}} \int_{\mathcal{V} \times \mathcal{S} } \phi_\lambda \, d \pi =
\sup_{ \pi \in \Gamma\left( \Pi(\mu_1, \mu_2) ,  \phi_\lambda \right) }  \int_{\mathcal{V} \times  \mathcal{S} } \phi_{\lambda} \,  d \pi 
\end{align*}
\end{lemma}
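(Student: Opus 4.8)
The plan is to transcribe the proof of \Cref{lemma:C4} essentially verbatim, replacing $\varphi_\lambda$ by $\phi_\lambda$, the set $\mathcal{G}_{\mathrm{D},\lambda}$ by $\mathcal{G}_\lambda$, and the pair $(\mu_1,\mu_2)$ by $(\mu_{13},\mu_{23})$ (which is the object the statement's ``$\Pi(\mu_1,\mu_2)$'' refers to, since $\mathcal{G}_\lambda$ is defined via the marginals $\mu_{13},\mu_{23}$). The argument uses only two structural facts: that every element of $\mathcal{G}_\lambda$ projects, in its first two coordinates, to a coupling in $\Pi(\mu_{13},\mu_{23})$; and that the penalty terms $\lambda_1 c_1$ and $\lambda_2 c_2$ are nonnegative, which controls the integrability bookkeeping. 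Neither fact is sensitive to the overlapping structure of $\mathcal{S}$, so the transcription goes through.

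For the inequality ``$\leq$'', I would take any $\pi \in \mathcal{G}_\lambda$. By definition of $\mathcal{G}_\lambda$ the projection of $\pi$ onto $\mathcal{V}=\mathcal{S}_1\times\mathcal{S}_2$ has marginals $\mu_{13}$ on $\mathcal{S}_1$ and $\mu_{23}$ on $\mathcal{S}_2$, hence lies in $\Pi(\mu_{13},\mu_{23})$; and $\int_{\mathcal{V}\times\mathcal{S}}\phi_\lambda\,d\pi$ is well defined. Thus $\pi\in\Gamma(\Pi(\mu_{13},\mu_{23}),\phi_\lambda)$, so $\mathcal{G}_\lambda\subset\Gamma(\Pi(\mu_{13},\mu_{23}),\phi_\lambda)$ and the supremum over the larger set dominates.

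For ``$\geq$'', adopt the convention $\sup\emptyset=-\infty$; if $\int\phi_\lambda\,d\pi$ is ill defined for every $\pi\in\Gamma(\Pi(\mu_{13},\mu_{23}),\phi_\lambda)$ the claim is trivial. Otherwise fix such a $\pi$ with well-defined integral and split into three cases on the value of $\int\phi_\lambda\,d\pi$. If it equals $-\infty$, the inequality is trivial. If it equals $+\infty$, then since $c_1,c_2\ge 0$ the integral $\int f(s')\,d\pi$ is well defined and equal to $+\infty$; writing $\phi_\lambda^+=f^+$ and $\phi_\lambda^-=f^-+\lambda_1 c_1+\lambda_2 c_2$, well-definedness of $\int\phi_\lambda\,d\pi$ forces $\int(\lambda_1 c_1+\lambda_2 c_2)\,d\pi\le\int\phi_\lambda^-\,d\pi<\infty$, so $\pi\in\mathcal{G}_\lambda$. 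If $\int\phi_\lambda\,d\pi<\infty$, the same decomposition shows both $\int f(s')\,d\pi<\infty$ and $\int(\lambda_1 c_1+\lambda_2 c_2)\,d\pi<\infty$, again giving $\pi\in\mathcal{G}_\lambda$. In every case $\sup_{\pi'\in\mathcal{G}_\lambda}\int\phi_\lambda\,d\pi'\ge\int\phi_\lambda\,d\pi$; taking the supremum over admissible $\pi$ finishes the proof.

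I do not expect any real obstacle: the lemma is a routine analogue of \Cref{lemma:C4}. The only points meriting a sentence of care are the identification of the first two marginals of a measure on $\mathcal{V}\times\mathcal{S}$ with $\mu_{13},\mu_{23}$ (immediate from the definition of $\mathcal{G}_\lambda$ and its associated footnote), the sign decomposition of $\phi_\lambda$ using $c_1,c_2\ge 0$, and the observation — as in \Cref{lemma:C4} — that the interchangeability hypothesis is not itself invoked here, but is carried along so the conclusion can feed into \Cref{IPforGroup} in the proof of \Cref{thm:I-duality}.
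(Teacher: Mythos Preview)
Your proposal is correct and matches the paper's approach exactly: the paper's proof of \Cref{lemma:AO4} consists of the single sentence ``The proof is the same as \protect\Cref{lemma:C4},'' and you have carried out precisely that transcription with the appropriate substitutions $(\varphi_\lambda,\mathcal{G}_{\mathrm{D},\lambda},\mu_1,\mu_2)\mapsto(\phi_\lambda,\mathcal{G}_\lambda,\mu_{13},\mu_{23})$. Your closing remark that the interchangeability hypothesis is not itself invoked in the argument is also accurate and worth noting.
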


\begin{proof} [Proof of \protect \Cref{lemma:AO4}]
The proof is the same as  \protect  \Cref{lemma:C4} .
\end{proof}

\subsection{Proofs in \texorpdfstring{\Cref{sec:additional-properties}}{}}

\subsubsection{Proof of \texorpdfstring{\Cref{thm:ID-finite}}{}}

First, assuming that condition \eqref{eq:GC-I} does not hold, we show $\mathcal{I}_{\mathrm{D} }(\delta) = \infty$. Fix any $\lambda = (\lambda_1, \lambda_2) \in \mathbb{R}^2_{+}$ and  $v = (s_1, s_2) \in \mathcal{V}$.  For any $B \geq \lambda_1 \vee \lambda_2$, there is  $v^\prime = (s_1^\prime, s_2^\prime) \in \mathcal{V}$  such that 
	\[
	g(s_1^\prime, s_2^\prime ) > B \left[   1  + \boldsymbol{d}_{\mathcal{S}_1} (s_1, s_1^\prime )^{p_1}+ \boldsymbol{d}_{ \mathcal{S}_2 }(s_2, s_2^\prime )^{p_2}  \right],
	\]
	and hence
	\[
	\begin{aligned}
		\varphi_\lambda (v, v^{\prime}) & =g (s_1^{\prime}, s_2^{\prime} )  - \lambda_1 \boldsymbol{d}_{\mathcal{S}_1} (s_1, s_1^\prime)^{p_1} -\lambda_2 \boldsymbol{d}_{ \mathcal{S}_2 }(s_2, s_2^\prime )^{p_2}  \\
		& >  B \left[   1  + \boldsymbol{d}_{\mathcal{S}_1} (s_1, s_1^\prime )^{p_1}+ \boldsymbol{d}_{ \mathcal{S}_2 }(s_2, s_2^\prime )^{p_1}  \right]  -  \lambda_1 \boldsymbol{d}_{\mathcal{S}_1} (s_1, s_1^\prime)^{p_1} -\lambda_2 \boldsymbol{d}_{ \mathcal{S}_2 }(s_2, s_2^\prime )^{p_2}  \\
		& \geq  B + (B - \lambda_1) \boldsymbol{d}_{\mathcal{S}_1} (s_1, s_1^\prime)^{p_1} + (B- \lambda_2)  \boldsymbol{d}_{ \mathcal{S}_2 }(s_2, s_2^\prime )^{p_2}  \geq B. 
	\end{aligned}
	\]
This shows that for all $\lambda \in \mathbb{R}^2_{+}$ and $B$ large enough, we have $g_\lambda(v) = \sup _{v^{\prime} \in \mathcal{V}} \varphi_\lambda(v, v^{\prime}) \geq B$ for all $v \in \mathcal{V}$. Therefore, by \Cref{thm:ID-duality}, we have 
\begin{align*}
	\mathcal{I}_{\mathrm{D} }(\delta)  \geq   \sup _{\pi \in \Pi\left(\mu_1, \mu_2 \right)} \int_{\mathcal{V} } g_\lambda(v) \, d \pi (v) \geq B,
\end{align*}
	for all $B$ large enough. As a result, $\mathcal{I}_{\mathrm{D} }(\delta) = \infty$. 
	
Conversely, assuming that the growth condition \eqref{eq:GC-I} holds, we show $\mathcal{I}_{\mathrm{D} }(\delta)< \infty$. For all $\pi \in \Sigma_{\mathrm{D} }(\delta)$,
	\begin{align*}
		\int_{\mathcal{V}} f(v) \, d\pi(v) 
		& \leq 
		\int_{\mathcal{S}_1 \times \mathcal{S}_2 } M \left[   1  + \boldsymbol{d}_{\mathcal{S}_1} (s_1^\star, s_1)^{p_1}+ \boldsymbol{d}_{ \mathcal{S}_2 }(s_2^\star, s_2 )^{p_2}  \right]  d \pi(s_1, s_2) \\
& =  M + M \boldsymbol{W}_{p_1} ( \pi_1 ,  \delta_{s_1^\star} )^{p_1} + M \boldsymbol{W}_{p_2} ( \pi_2 ,  \delta_{s_2^\star} )^{p_2} \\
	& \leq  M + \sum_{j=1}^2 M \left[ \boldsymbol{W}_{p_j} ( \pi_j ,  \mu_j ) + \boldsymbol{W}_{p_j} ( \mu_j, \delta_{s_j^\star}  ) \right]^{p_j}   <  \infty,
	\end{align*}
where $\pi_j$ denotes the marginal measure of $\pi$ on $\mathcal{S}_j$ and $\delta_{s_j^\star}$ denotes the Dirac measure at $s_j^\star \in \mathcal{S}_j$. The last step follows from $\mu_j \in \mathcal{P}_{p_j}(\mathcal{S}_j)$  for $j=1,2 $ and $\pi \in \Sigma_{\mathrm{D} }(\delta)$, i.e., $ \boldsymbol{W}_{p_j} ( \pi_j ,  \mu_j )^{p_j} \leq \delta_j$ for $j=1, 2$.

\subsubsection{Proof of \texorpdfstring{\Cref{thm:I-finite}}{}}
 
First, we assume condition \eqref{eq:GC-II} does not hold and aim to show $\mathcal{I}(\delta) = \infty$. Fix any $\lambda = (\lambda_1, \lambda_2) \in \mathbb{R}_{+}^2$. For any $v= (s_1, s_2) \in \mathcal{V}$ and $B \geq \lambda_1 \vee \lambda_2$, there exists  $s^\prime = (y_1^\prime, y_2^\prime, x^\prime)$  such that 
\[
f(s^\prime) \geq B \left[   1  + \boldsymbol{d}_{\mathcal{S}_1} (s_1, s_1^\prime )^{p_1}+ \boldsymbol{d}_{ \mathcal{S}_2 }(s_2, s_2^\prime )^{p_2}  \right].
\]
Therefore, 
\[
\begin{aligned}
\phi_\lambda (v, s^{\prime})&=f (s^{\prime})-\lambda_1 \boldsymbol{d}_{\mathcal{S}_1}  (s_1, s_1^{\prime})^{p_1} -\lambda_2 \boldsymbol{d}_{ \mathcal{S}_2} (s_2, s_2^{\prime})^{p_2} \\
& \geq B  + (B - \lambda_1)  \boldsymbol{d}_{\mathcal{S}_1} (s_1, s_1^\prime )^{p_1}  + (B - \lambda_2)  \boldsymbol{d}_{ \mathcal{S}_2 }(s_2, s_2^\prime )^{p_2}  \geq B.
\end{aligned}
\]
As a result,  $f_{\lambda}(v) = \sup_{s^\prime \in \mathcal{S} }  \phi_\lambda (v, s^{\prime}) \geq B$ for all $v\in \mathcal{V}$ and all $B$ large enough. Since $B >0$ is arbitrary, we must have $\sup _{\varpi \in \Pi\left(\mu_{13}, \mu_{23}\right)} \int_{\mathcal{V}} f_\lambda(v) d \varpi(v)  =\infty$. By \Cref{thm:I-duality}, we have $\mathcal{I}(\delta) = \infty$. 

Conversely, we show that the condition \eqref{eq:GC-II} implies $\mathcal{I}(\delta)< \infty$. For any $\gamma \in \Sigma(\delta)$,
	\begin{align*}
		\int_{ \mathcal{S} }  f(s) \, d \gamma (s) 
		& \leq 
		 \int_{ \mathcal{S} } M \left[   1  +\boldsymbol{d}_{\mathcal{S}_1} (s_1^\star, s_1)^{p_1}+\boldsymbol{d}_{ \mathcal{S}_2 }(s_2^\star, s_2)^{p_2}  \right] d \gamma (s)    \\
		& \leq 
		M + M \boldsymbol{W}_{p_1}( \delta_{s_1^\star }, \gamma_{13} )^{p_1} +  M \boldsymbol{W}_{p_2}( \delta_{s_2^\star }, \gamma_{23} )^{p_2} \\
		& \leq 
		M +  \sum_{j=1}^2 M \left[  \boldsymbol{W}_{p_j}( \delta_{s_j^\star }, \mu_{j3} )  + \boldsymbol{W}_{p_j}( \mu_{j3}, \gamma_{j3} ) \right]^{p_j}    < \infty,
	\end{align*} 
where $\gamma_{j3}$ is the marginal measure of $\gamma$ on $\mathcal{S}_j = \mathcal{Y}_j \times \mathcal{X}$ and  $\delta_{s_j^\star}$ is the Dirac measure concentrated at $\{s_j^\star\}$.The last step follow $\gamma \in \Sigma(\delta)$ and $\mu_{j3} \in \mathcal{P}_{p_j}(\mathcal{S}_j)$ for $j=1, 2$.

\subsubsection{Proof of \texorpdfstring{\Cref{thm:ID-existence}}{}}

In this section, we first prove the weak compactness of $\Sigma_{\mathrm{D} }(\delta)$ for all $\delta \in \mathbb{R}_{+}^2$ when $\mathcal{S}_1$ and $\mathcal{S}_2$ are both proper and $c_j =  \boldsymbol{d}^{p_j}_{\mathcal{S}_j }  $ for some $p_j \geq 1$. As a result, $\boldsymbol{K}_j =\boldsymbol{W}_{p_j}^{p_j} $ and the set $\Sigma_{\mathrm{D} }(\delta )$ can be written as
\[
\Sigma_{\mathrm{D} }(\delta) := \left\{ \gamma  \in \mathcal{P}(\mathcal{S}_1 \times \mathcal{S}_2):  \boldsymbol{W}_{p_1}(\gamma_1, \mu_1) \leq \delta_1^{1/p_1},  \  \boldsymbol{W}_{p_2}(\gamma_2, \mu_2) \leq \delta_2^{1/p_2} \right\}.
\] 
For any Polish metric space $\mathcal{X}$, let $B_{\mathcal{P}_p(\mathcal{X} )}( \mu, \delta) := \{  \gamma \in \mathcal{P}(\mathcal{X}) :   \boldsymbol{W}_p(\mu, \gamma) \leq \delta  \}$ denote the ball centered at $\mu$ in Wasserstein space $\mathcal{P}_p(\mathcal{X})$. When there is no ambiguity we will abbreviate this notation by referring to  $B_p( \mu, \delta)$.  

\begin{proposition}\label{prop:non-overlapping-compactness}
Suppose \Cref{assumption:finite-moments-ID,assumption:metric-cost,assumption:proper} hold. Then, $\Sigma_{\mathrm{D} }(\delta)$ is weakly compact.
\end{proposition}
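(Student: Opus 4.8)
The plan is to apply Prokhorov's theorem: since $\mathcal{S}_1 \times \mathcal{S}_2$ is Polish, the space $\mathcal{P}(\mathcal{S}_1\times\mathcal{S}_2)$ endowed with the weak topology is metrizable, and a subset is weakly compact if and only if it is weakly (sequentially) closed and tight. Accordingly I would split the argument into two parts: (a) tightness of $\Sigma_{\mathrm{D}}(\delta)$, and (b) weak closedness of $\Sigma_{\mathrm{D}}(\delta)$. Note that boundary radii cause no trouble: if $\delta_\ell = 0$ the $\ell$-th marginal ball collapses to the singleton $\{\mu_\ell\}$, which is trivially tight and closed, so the reasoning below applies verbatim with $\delta_\ell^{1/p_\ell}=0$.

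For part (a), the first step is a uniform moment bound on the marginals. Fix reference points $x_\ell^0 \in \mathcal{S}_\ell$ and write $\delta_{x_\ell^0}$ for the Dirac measure at $x_\ell^0$. For any $\gamma \in \Sigma_{\mathrm{D}}(\delta)$ with marginals $\gamma_1, \gamma_2$, the triangle inequality for $\boldsymbol{W}_{p_\ell}$ together with \Cref{assumption:metric-cost} gives
\[
\left(\int_{\mathcal{S}_\ell} \boldsymbol{d}_{\mathcal{S}_\ell}(x_\ell^0, x)^{p_\ell} \, d\gamma_\ell(x)\right)^{1/p_\ell} = \boldsymbol{W}_{p_\ell}(\gamma_\ell, \delta_{x_\ell^0}) \le \boldsymbol{W}_{p_\ell}(\gamma_\ell, \mu_\ell) + \boldsymbol{W}_{p_\ell}(\mu_\ell, \delta_{x_\ell^0}) \le \delta_\ell^{1/p_\ell} + C_\ell =: R_\ell,
\]
where $C_\ell := \left(\int \boldsymbol{d}_{\mathcal{S}_\ell}(x_\ell^0,\cdot)^{p_\ell} d\mu_\ell\right)^{1/p_\ell} < \infty$ by \Cref{assumption:finite-moments-ID}. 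By Markov's inequality, $\gamma_\ell(\{x : \boldsymbol{d}_{\mathcal{S}_\ell}(x_\ell^0, x) > r\}) \le R_\ell^{p_\ell}/r^{p_\ell}$ for every $r>0$, uniformly over $\gamma \in \Sigma_{\mathrm{D}}(\delta)$. By \Cref{assumption:proper} the closed ball $\overline{B}(x_\ell^0, r)$ is compact, so given $\varepsilon>0$ I can pick $r_\ell$ large enough that $K_\ell^\varepsilon := \overline{B}(x_\ell^0, r_\ell)$ satisfies $\gamma_\ell((K_\ell^\varepsilon)^c) < \varepsilon/2$ for all $\gamma \in \Sigma_{\mathrm{D}}(\delta)$. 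Then $K^\varepsilon := K_1^\varepsilon \times K_2^\varepsilon$ is compact and, since $(K_1^\varepsilon\times K_2^\varepsilon)^c \subseteq ((K_1^\varepsilon)^c\times\mathcal{S}_2)\cup(\mathcal{S}_1\times(K_2^\varepsilon)^c)$, we get $\gamma((K^\varepsilon)^c) \le \gamma_1((K_1^\varepsilon)^c) + \gamma_2((K_2^\varepsilon)^c) < \varepsilon$, which is exactly tightness.

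For part (b), suppose $\gamma_n \in \Sigma_{\mathrm{D}}(\delta)$ and $\gamma_n \to \gamma$ weakly. The projection maps are continuous, so $(\gamma_n)_\ell \to \gamma_\ell$ weakly for $\ell = 1,2$. Because $c_\ell = \boldsymbol{d}_{\mathcal{S}_\ell}^{p_\ell}$ is continuous (hence lower semicontinuous), the optimal transport cost $\boldsymbol{K}_{c_\ell} = \boldsymbol{W}_{p_\ell}^{p_\ell}$ is lower semicontinuous with respect to weak convergence; therefore $\boldsymbol{W}_{p_\ell}(\gamma_\ell, \mu_\ell)^{p_\ell} \le \liminf_n \boldsymbol{W}_{p_\ell}((\gamma_n)_\ell, \mu_\ell)^{p_\ell} \le \delta_\ell$, so $\gamma \in \Sigma_{\mathrm{D}}(\delta)$. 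Combining (a) and (b) with Prokhorov's theorem yields that $\Sigma_{\mathrm{D}}(\delta)$ is weakly compact. The only genuinely delicate points are the uniform moment bound over the ball — which is where \Cref{assumption:finite-moments-ID}, \Cref{assumption:metric-cost} and \Cref{assumption:proper} are simultaneously used — and the appeal to weak lower semicontinuity of the transport cost; both are standard, so I do not expect a serious obstacle, but the moment bound is the technical heart of the argument.
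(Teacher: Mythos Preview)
Your proof is correct and follows essentially the same strategy as the paper: apply Prokhorov's theorem by showing tightness and weak closedness of $\Sigma_{\mathrm{D}}(\delta)$. The only cosmetic difference is that for tightness the paper cites \citet{yue2022linear} (weak compactness of Wasserstein balls) together with \citet[Lemma 4.4]{villani2009optimal} (tightness of couplings with tight marginals), whereas you unpack the same content directly via the uniform moment bound, Markov's inequality, and properness; for closedness both arguments are the same up to phrasing (Portmanteau/continuous projections plus weak lower semicontinuity of $\boldsymbol{W}_{p_\ell}$).
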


\begin{proof}[Proof of \protect\Cref{prop:non-overlapping-compactness}]
Theorem 1 of  \citet{yue2022linear} implies $B_p( \mu, \delta)$ weakly compact whenever $\mu$ has a finite $p$-th moment. As a result, the set $\Sigma_{\mathrm{D} }(\delta)$ can be written as 
\[
\Sigma_{\mathrm{D} }(\delta) = \Pi \left(\mathcal{B}_1,\mathcal{B}_2  \right), \quad \text{where }  \mathcal{B}_1  = B_{p_1}( \mu_1, \delta_1^{1/p_1}) \text{ and } \mathcal{B}_2 =  B_{p_2} ( \mu_2, \delta_2^{1/p_2}). 
\]
Since $\mathcal{B}_1$ and $\mathcal{B}_2$ are weakly compact in $\mathcal{P}(\mathcal{S}_1)$ and  $\mathcal{P}(\mathcal{S}_1)$, respectively, then they are uniformly tight by Prokhorov’s theorem. By Lemma 4.4 of \citet{villani2009optimal}.  $\Sigma_{\mathrm{D} }(\delta)$ is tight in $\mathcal{P}(\mathcal{S}_1 \times \mathcal{S}_2)$. By Prokhorov’s theorem again, $\Sigma_{\mathrm{D} }(\delta)$ has a compact closure under the topology of weak convergence. To show the weakly compactness of $\Sigma_{\mathrm{D} }(\delta)$, it suffices to show it is closed.

Let $\pi^n \in \Sigma_{\mathrm{D} }(\delta) \equiv \Pi(\mathcal{B}_1,\mathcal{B}_2)$ be a sequence converging weakly to $\pi^\infty \in \mathcal{P}(\mathcal{S}_1 \times \mathcal{S}_2)$. We have 
\[
\boldsymbol{W}_{p_1}(\pi^n_1, \mu_1) \leq \delta_1^{1 / p_1} \ \text{ and } \  \boldsymbol{W}_{p_2}(\pi^n_2, \mu_2) \leq \delta_1^{1 / p_2}.
\]
Let $\pi^n_j$ denote the marginal distribution of $\pi^n$ on $\mathcal{S}_j$. For any open $U_1$ in $\mathcal{S}_1$, the Portmanteau theorem implies
\[
\liminf_{n \rightarrow \infty}\pi^{n}_1(U_1) = \liminf_{n \rightarrow \infty}\pi^{n}(U_1 \times \mathcal{S}_2) \geq \pi^\infty(U_1 \times \mathcal{S}_2) = \pi^\infty_1(U_1).
\]
This shows $\pi^{n}_1$ weakly converges to $\pi^\infty_1$. Moreover, $\boldsymbol{W}_{p_1}(\pi_1^\infty, \mu_1 ) \leq \delta_1^{1 / p_1}$ can be seen from weakly closedness of $\mathcal{B}_1$. Using the identical argument, we can show $\pi^{n}_2$ weakly converges to $\pi^\infty_2$ and $\boldsymbol{W}_{p_2} (\pi_2^\infty, \mu_2 ) \leq \delta_1^{1 / p_2}$. This shows $\pi^\infty \in \boldsymbol{W}_{p_2}\left(\pi_2^n, \mu_2\right) \leq \delta_1^{1 / p_2}$ and hence $\Sigma_{\mathrm{D} }(\delta)$ is weakly closed.
\end{proof}

The weak compactness of $\Sigma_{\mathrm{D} }(\delta)$ does not depend on the functional forms of metrics $\boldsymbol{d}_{\mathcal{S}_1}$ and $\boldsymbol{d}_{\mathcal{S}_2}$. Essentially,  the topological properties of $\mathcal{S}_1$ and $\mathcal{S}_2$, mainly properness, determines the weak compactness of $\Sigma_{\mathrm{D} }(\delta)$.

\begin{proof}[Proof of \texorpdfstring{\Cref{thm:ID-existence}}{} ]
Since \Cref{prop:non-overlapping-compactness} implies that $\Sigma_{\mathrm{D} }(\delta)$ is weakly compact, by Weierstrass’ theorem, it suffices to show $\pi \mapsto \int_{\mathcal{V} } g \, d\pi$ is weakly upper semi-continuous. Let $\{ \pi^k \}_{k=1}^{\infty}$ be any sequence in $\Sigma_{\mathrm{D} }(\delta)$ that weakly converges to $\pi^{\infty} \in \Sigma_{\mathrm{D} }(\delta)$, we show $\limsup_{n \rightarrow \infty} \int_{\mathcal{V} } g \, d\pi^k  \leq \int_{\mathcal{V} } g \, d\pi^\infty$. For any $\rho >0$, define an auxiliary function $f_\rho: \mathcal{V} \rightarrow \mathbb{R}$ as $g_\rho (v) =   f(v) \wedge \left[ M (1+\rho^{p_0^{\prime}}+\rho^{p_1^{\prime}} ) \right]$. Let $A_1 = \{ (s_1, s_2) \in \mathcal{V} :\boldsymbol{d}_{\mathcal{S}_1}(s_1^\star, s_1) \geq \rho \}$ and $A_2 = \{ (s_1, s_2) \in \mathcal{V} : \boldsymbol{d}_{\mathcal{S}_2}(s_2^\star,s_2) \geq \rho \}$.  It is easy to verify that for all $v \in \mathcal{V}$, 
	\[
	\left| g(v)-g_\rho(v)\right| 
	\leq 
	\begin{cases} 
		M \left[\boldsymbol{d}_{\mathcal{S}_1} ( s_1^\star, s_1)^{p_1^{\prime}}+\boldsymbol{d}_{\mathcal{S}_2}  (s_2^\star, s_2)^{p_2^{\prime }} \right] & \text { if } v \in A_1 \cap A_2 , \\ 
		M\; \boldsymbol{d}_{\mathcal{S}_1} (s_1^\star, s_1)^{p_1^{\prime}}     & \text { if }  v \in A_1 \cap A_2^c, \\ 
		M \; \boldsymbol{d}_{\mathcal{S}_2} ( s^\star_2,s_2)^{p_2^{\prime}}   & \text { if }  v \in A_1^c \cap A_2 , \\ 
		0 & \text { otherwise. }\end{cases}
	\]
For any $\pi \in \Sigma_{\mathrm{D} }(\delta)$, we have
	\begin{align*}
		\left|  \int_{\mathcal{V} } g \, d\pi -  \int_{\mathcal{V} } g_\rho \, d\pi \right| 
		& \leq 
		\int_{\mathcal{V} } |g - g_{\rho}| \, d \pi \\
		& \leq 
		\int_{A_1  \cap A_2 }  |g - g_{\rho}| \, d \pi + \int_{A_1  \cap A_2^c }  |g- g_{\rho}| \, d \pi +  \int_{A_1^c \cap A_2}  |g - g_{\rho}| \, d \pi .
	\end{align*}
	By Lemma 1 in \citet{yue2022linear}, there exists $B > 0$ such that $W_{p_j} (\pi_j, \delta_{s_j^{\star}})^{p_j} \leq B$ for $j = 1, 2$ and all $\pi \in \Sigma_{\mathrm{D} }(\delta)$, where $\pi_j$ is the marginal of $\pi$ on $\mathcal{S}_j$ and $\delta_{s_j^{\star}}$ is a Dirac measure at $\{s_j^{\star} \}$. Therefore, we have 
	\[
	\begin{aligned}
		\int_{A_1  \cap A_2^c }  |g - g_{\rho}| \, d \pi &  \leq   M \int_{A_1  \cap A_2^c } \boldsymbol{d}_{\mathcal{S}_1}\left(s_1^{\star}, s_1\right)^{p_1^{\prime}}  d \pi  \leq  M {\rho^{p_1 - p_1^\prime }}  \int_{A_1  \cap A_2^c }  \boldsymbol{d}_{\mathcal{S}_1} (s_1, s^\star_1)^{p_1} \, d\pi   \\
		& \leq M {\rho^{p_1 - p_1^\prime }}   W_{p_1}( \pi_1, \delta_{s_1^\star} )^{p_1}   \leq  B \rho^{p_1^\prime - p_1 }  . 
	\end{aligned}
	\]
Similarly, we can show  $\int_{A_1^c  \cap A_2 }  |g - g_{\rho}| \, d \pi  \leq B \rho^{p_2^\prime - p_2}$ and
\[
\begin{aligned}
\int_{A_1  \cap A_2}  |g - g_{\rho}| \, d \pi &  \leq \int_{A_1  \cap A_2 }   M \left[\boldsymbol{d}_{\mathcal{S}_1} (s^\star_1, s_1)^{p_1^{\prime}}+\boldsymbol{d}_{\mathcal{S}_2}  (s_2, s^\star_1)^{p_2^{\prime}} \right]  d\pi(s_1, s_2) \\
	& \leq    B (\rho^{p_1^\prime - p_1 }   + \rho^{ p_2^\prime - p_2} ).
\end{aligned}
\]
Therefore, we have for all $\pi \in \Sigma_{\mathrm{D} }(\delta)$,
\begin{align*}
	\left|   \int_{\mathcal{V} } g \, d\pi -  \int_{\mathcal{V} } g_\rho \, d\pi \right|  
	\leq 
	\int_{\mathcal{V}}\left|g-g_\rho\right| \, d \pi 
	\leq  
	2 B (\rho^{p_1^\prime - p_1 }   + \rho^{ p_2^\prime - p_2} ).
\end{align*}
For any $\epsilon > 0$, there is a $\rho>0$ large enough such that $4B (\rho^{p_1^\prime - p_1}   + \rho^{ p_2^\prime - p_2} ) < \epsilon/2$. By Lemma 3 in \citet{yue2022linear},  we have $	\limsup_{k \rightarrow \infty } \int_{\mathcal{V} }  g_\rho \, d\pi^k  \leq  \int_{\mathcal{V} }  g_\rho \, d\pi^\infty$ and hence there is a $k(\epsilon)$ large enough such 
	\[
	\int_{\mathcal{V} }  g_\rho \, d\pi^k  -  \int_{\mathcal{V} }  g_\rho \, d\pi^\infty < \frac{\epsilon}{2}, \quad \text{for all } k > k(\epsilon).
	\]
	Consequently, for all $ k > k(\epsilon)$, the following holds:
	\[
	\begin{aligned}
		\int_{\mathcal{V} }  g \, d\pi^k   -  \int_{\mathcal{V} }  g \, d\pi^\infty & \leq   \int_{\mathcal{V} } \left| g  - g_\rho \right| \, d\pi^k +   \int_{\mathcal{V} }  g_\rho \, d\pi^k  -  \int_{\mathcal{V} }  g_\rho \, d\pi^\infty +   \int_{\mathcal{V} }  \left| g_\rho -  g \right| \, d\pi^\infty \\
		& \leq  4B (\rho^{p_1^\prime - p_1 }   + \rho^{ p_2^\prime - p_2} ) + \int_{\mathcal{V} }  g_\rho \, d\pi^k  -  \int_{\mathcal{V} }  g_\rho \, d\pi^\infty  < \epsilon.
	\end{aligned}
	\]
	Since $\epsilon$ is arbitrary, we must have $\limsup _{k \rightarrow \infty} \int_{\mathcal{V}} g \, d \pi^k \leq \int_{\mathcal{V}} g \, d \pi^{\infty}$. This completes the proof.
	
\end{proof}

\subsubsection{Proof of \texorpdfstring{\Cref{thm:I-existence}}{}}

Here, we will only show that $\Sigma(\delta)$ is weakly compact. This is because the upper semi-continuity of $\gamma \rightarrow \int f \, d \gamma$ over $\gamma \in \Sigma(\delta)$ can be shown using the same argument for the proof of \Cref{thm:ID-existence}. We  write
\[
\Sigma(\delta) = \left\{ \gamma \in \mathcal{P}(\mathcal{S}):  \boldsymbol{W}_{p_1}(\gamma_{1}, \mu_1 ) \leq \delta_1^{1/p_1},  \  \boldsymbol{W}_{p_2}(\gamma_{2}, \mu_2 )\leq \delta_2^{1/p_2}   \right\}.
\]

\begin{lemma}  \label{lemma:overlapping-tightness}
For $j =1,2$, let $\mathcal{G}_j $ be an uniformly tight subset of $\mathcal{P}(\mathcal{S}_j)$. Then the following set
\[
\Gamma (\mathcal{G}_1, \mathcal{G}_2) := \left\{\gamma \in \mathcal{P}(\mathcal{S}): \gamma_{13} \in \mathcal{G}_1, \gamma_{23} \in \mathcal{G}_2  \right\},
\]
is tight in $\mathcal{P}(\mathcal{S})$.

\end{lemma}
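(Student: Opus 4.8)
The plan is to deduce tightness of $\Gamma(\mathcal{G}_1,\mathcal{G}_2)$ in $\mathcal{P}(\mathcal{S})$ from the uniform tightness of $\mathcal{G}_1$ and $\mathcal{G}_2$ by a union bound over the three coordinate spaces $\mathcal{Y}_1,\mathcal{Y}_2,\mathcal{X}$ making up $\mathcal{S}=\mathcal{Y}_1\times\mathcal{Y}_2\times\mathcal{X}$. First I would fix $\epsilon>0$. By uniform tightness of $\mathcal{G}_1$ there is a compact $K_1\subseteq\mathcal{S}_1=\mathcal{Y}_1\times\mathcal{X}$ with $\mu(K_1)\ge 1-\epsilon/3$ for all $\mu\in\mathcal{G}_1$, and by uniform tightness of $\mathcal{G}_2$ there is a compact $K_2\subseteq\mathcal{S}_2=\mathcal{Y}_2\times\mathcal{X}$ with $\nu(K_2)\ge 1-\epsilon/3$ for all $\nu\in\mathcal{G}_2$. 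Since the coordinate projections are continuous, the sets $A_1:=\operatorname{proj}_{\mathcal{Y}_1}(K_1)$, $A_2:=\operatorname{proj}_{\mathcal{Y}_2}(K_2)$, and $B:=\operatorname{proj}_{\mathcal{X}}(K_1)$ are compact, so $K:=A_1\times A_2\times B$ is compact in $\mathcal{S}$.

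Next I would check that $\gamma(K)\ge 1-\epsilon$ for every $\gamma\in\Gamma(\mathcal{G}_1,\mathcal{G}_2)$; this is the step where the shared coordinate $\mathcal{X}$ is handled. Let $(Y_1,Y_2,X)\sim\gamma$. Because $\gamma_{13}\in\mathcal{G}_1$ and $K_1\subseteq A_1\times B$, we have $\mathbb{P}(Y_1\in A_1)\ge\gamma_{13}(K_1)\ge 1-\epsilon/3$ and $\mathbb{P}(X\in B)\ge\gamma_{13}(K_1)\ge 1-\epsilon/3$; because $\gamma_{23}\in\mathcal{G}_2$ and $K_2\subseteq A_2\times\mathcal{X}$, we have $\mathbb{P}(Y_2\in A_2)\ge\gamma_{23}(K_2)\ge 1-\epsilon/3$. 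Hence
\[
\gamma(K^{c})\le\mathbb{P}(Y_1\notin A_1)+\mathbb{P}(Y_2\notin A_2)+\mathbb{P}(X\notin B)\le\epsilon .
\]
Since $\epsilon>0$ is arbitrary, $\Gamma(\mathcal{G}_1,\mathcal{G}_2)$ is tight in $\mathcal{P}(\mathcal{S})$, and by Prokhorov's theorem it then has weakly compact closure.

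There is no serious obstacle here; the only point deserving attention is precisely that $\mathcal{X}$ is shared by $\mathcal{S}_1$ and $\mathcal{S}_2$, but controlling the $\mathcal{X}$-marginal through $\gamma_{13}$ alone already suffices and the union bound absorbs the overlap. The ingredients invoked---continuous images and finite products of compact sets are compact, and the definition of uniform tightness in a Polish space---are standard. Applied with $\mathcal{G}_1=B_{p_1}(\mu_{13},\delta_1^{1/p_1})$ and $\mathcal{G}_2=B_{p_2}(\mu_{23},\delta_2^{1/p_2})$, each weakly compact (hence uniformly tight by Prokhorov) as in the proof of \Cref{prop:non-overlapping-compactness}, this lemma shows $\Sigma(\delta)$ is tight; combined with an argument for weak closedness along the same lines as in \Cref{prop:non-overlapping-compactness}, it yields that $\Sigma(\delta)$ is weakly compact, which is what is needed for \Cref{thm:I-existence}.
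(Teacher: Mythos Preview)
Your proof is correct and follows essentially the same approach as the paper's: obtain compact sets from the uniform tightness of $\mathcal{G}_1$ and $\mathcal{G}_2$, build a compact subset of $\mathcal{S}$ from their coordinate projections, and conclude via a union bound. The only cosmetic difference is that the paper works with the intersection $E_\epsilon\cap G_\epsilon=\{(y_1,y_2,x):(y_1,x)\in K_\epsilon,\ (y_2,x)\in L_\epsilon\}$ and a two-term bound $\gamma_{13}(K_\epsilon^{c})+\gamma_{23}(L_\epsilon^{c})\le 2\epsilon$, verifying compactness afterward by embedding in $\operatorname{proj}_{\mathcal{Y}_1}(K_\epsilon)\times L_\epsilon$; you instead pass immediately to the product $A_1\times A_2\times B$ and use a three-term bound, which is arguably more direct.
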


\begin{proof}[Proof of \protect\Cref{lemma:overlapping-tightness}]

First, we assume there exist $\mu \in \mathcal{G}_1$ and  $\nu \in \mathcal{G}_2$ such that $\mu(\mathcal{Y}_1 \times A) = \nu(\mathcal{Y}_2 \times A) $ for all $A \in \mathcal{B}_\mathcal{X}$, i.e. $\mu$ and $\nu$ have same marginal distribution on $\mathcal{X}$. Otherwise, $\Gamma (\mathcal{G}_1, \mathcal{G}_2)$ will be empty and hence the statement holds trivially.

Since $\mathcal{G}_1$ is uniformly tight, then for any $\epsilon > 0$, there is a compact set $K_\epsilon \subset \mathcal{S}_1 \equiv \mathcal{Y}_1 \times \mathcal{X}$ such that $\mu( K^c_\epsilon) \leq \epsilon$ for all $\mu \in \mathcal{G}_1$. Similarly, there is a compact set $L_\epsilon \subset \mathcal{S}_2 \equiv \mathcal{Y}_2\times \mathcal{X}$ such that $\nu( L^c_\epsilon) \leq \epsilon$ for all $\nu \in \mathcal{G}_2$. 
Moreover, define a mapping $\sigma: \mathcal{S} \rightarrow \mathcal{S}$ as $\sigma: (y_1, y_2, x) \mapsto (y_1, x, y_2)$. Trivially, $\sigma$ is a homeomorphism (a continuous mapping whose inverse is also continuous) from $\mathcal{S}$ to $\mathcal{S}$. Let $E_\epsilon =  \sigma^{-1} ( K_{\epsilon} \times \mathcal{Y}_2)$ and $G_\epsilon  = \mathcal{Y}_1 \times  L_\epsilon$. Explicitly, $(y_1,y_2, x) \in E_\epsilon \Leftrightarrow (y_1,x) \in K_\epsilon$. Fix any $\gamma \in \Gamma (\mathcal{G}_1, \mathcal{G}_2)$, let $S = (Y_1,Y_2, X)$ be a random variable with $\gamma$ as its law, i.e. $\operatorname{Law}(S) = \gamma$. We must have $\gamma_{j3} \in \mathcal{G}_j$ for $j=1,2$. Then,
\[
\begin{aligned}
\mathbb{P} \left[ S \notin  E_\epsilon \cap G_\epsilon   \right] & \leq \mathbb{P} \left[  S \notin  E_\epsilon     \right]  +  \mathbb{P} \left[  S \notin  G_\epsilon     \right] \\
& = \mathbb{P} \left[  (Y_1, Y_2, X) \notin  E_\epsilon     \right]  +  \mathbb{P} \left[  (Y_1, Y_2, X) \notin  G_\epsilon     \right]  \\
& =  \mathbb{P} \left[  (Y_1, X) \notin  K_\epsilon     \right]  +  \mathbb{P} \left[  (Y_2, X) \notin  L_\epsilon     \right] \\
& \leq \gamma_{1 3} (K_\epsilon^c) + \gamma_{23} (L_\epsilon^c)\\
& \leq 2\epsilon.
\end{aligned}
\]
The desired result follows from the compactness of $E_\epsilon \cap G_\epsilon $  in $\mathcal{S}$. To see this, we note $\operatorname{proj}_{\mathcal{Y}_1}: (y_1, x) \mapsto y_1$ is continuous from $\mathcal{S}_1$ to $\mathcal{Y}_1$ and  hence $\operatorname{proj}_{\mathcal{Y}_1}(K_\epsilon)$ is compact. As a result, $\operatorname{proj}_{\mathcal{Y}_1}(K_\epsilon) \times L_\epsilon$ is compact. Since $E_\epsilon \cap G_\epsilon$ is a subset of a compact set and its compactness follows from the closedness of $E_\epsilon$ and $G_\epsilon$.

\end{proof}

\begin{proposition} \label{prop:overlapping-compactness}
Suppose \Cref{assumption:finite-moments-I,assumption:metric-cost,assumption:proper} hold. Then, $\Sigma(\delta)$ is weakly compact.
\end{proposition}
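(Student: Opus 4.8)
The plan is to mirror the proof of \Cref{prop:non-overlapping-compactness}, upgrading it to handle the overlapping structure of the constraints. Since \Cref{assumption:metric-cost} gives $c_\ell = \boldsymbol{d}_{\mathcal{S}_\ell}^{p_\ell}$, we have $\boldsymbol{K}_\ell = \boldsymbol{W}_{p_\ell}^{p_\ell}$, so that
\[
\Sigma(\delta) = \left\{ \gamma \in \mathcal{P}(\mathcal{S}): \gamma_{13} \in \mathcal{B}_1, \ \gamma_{23} \in \mathcal{B}_2 \right\},
\]
where $\mathcal{B}_\ell := B_{\mathcal{P}_{p_\ell}(\mathcal{S}_\ell)}(\mu_{\ell 3}, \delta_\ell^{1/p_\ell})$ for $\ell = 1, 2$. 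First I would invoke Theorem 1 of \citet{yue2022linear}, which gives that each $\mathcal{B}_\ell$ is weakly compact (using \Cref{assumption:finite-moments-I} that $\mu_{\ell 3}$ has finite $p_\ell$-th moment and \Cref{assumption:proper} that $\mathcal{S}_\ell$ is proper), hence uniformly tight by Prokhorov's theorem. Then \Cref{lemma:overlapping-tightness}, applied with $\mathcal{G}_\ell = \mathcal{B}_\ell$, yields that $\Sigma(\delta) = \Gamma(\mathcal{B}_1, \mathcal{B}_2)$ is tight in $\mathcal{P}(\mathcal{S})$, so by Prokhorov again it has weakly compact closure. It therefore remains only to show $\Sigma(\delta)$ is weakly closed.

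For closedness, I would take a sequence $\gamma^n \in \Sigma(\delta)$ converging weakly to some $\gamma^\infty \in \mathcal{P}(\mathcal{S})$ and show $\gamma^\infty \in \Sigma(\delta)$. The key point is that the projection maps $\mathcal{S} \to \mathcal{S}_1$, $(y_1, y_2, x) \mapsto (y_1, x)$ and $\mathcal{S} \to \mathcal{S}_2$, $(y_1, y_2, x) \mapsto (y_2, x)$ are continuous, so weak convergence $\gamma^n \Rightarrow \gamma^\infty$ implies $\gamma^n_{13} \Rightarrow \gamma^\infty_{13}$ and $\gamma^n_{23} \Rightarrow \gamma^\infty_{23}$ (which one verifies via the Portmanteau theorem on open sets $U \subset \mathcal{S}_\ell$, exactly as in the proof of \Cref{prop:non-overlapping-compactness}, noting $\gamma^n_{13}(U) = \gamma^n(U \times \mathcal{Y}_2)$ after reindexing coordinates). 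Since $\mathcal{B}_1$ and $\mathcal{B}_2$ are weakly closed (being weakly compact), we get $\gamma^\infty_{13} \in \mathcal{B}_1$ and $\gamma^\infty_{23} \in \mathcal{B}_2$, i.e., $\gamma^\infty \in \Sigma(\delta)$. This establishes weak closedness, and combined with the relative compactness above, $\Sigma(\delta)$ is weakly compact.

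I do not anticipate a genuine obstacle here — the real work is already packaged in \Cref{lemma:overlapping-tightness} (whose proof is in the excerpt) and in Theorem 1 of \citet{yue2022linear}. The only point requiring a little care is the weak convergence of projections: one must be slightly careful that the map sending $\gamma$ to its marginal $\gamma_{13}$ on $\mathcal{Y}_1 \times \mathcal{X}$ is weakly continuous, which follows because it is the pushforward under a continuous map and pushforward is weakly continuous. One should also remark, as the paper does in the parenthetical, that the weak compactness does not depend on the specific functional form of $\boldsymbol{d}_{\mathcal{S}_1}, \boldsymbol{d}_{\mathcal{S}_2}$ — only on properness of $\mathcal{S}_1, \mathcal{S}_2$ and the moment condition on $\mu_{13}, \mu_{23}$. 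For completeness the proof should close by noting that, together with \Cref{prop:overlapping-compactness}, the upper semi-continuity of $\gamma \mapsto \int_{\mathcal{S}} f \, d\gamma$ over $\Sigma(\delta)$ — established by the identical truncation argument used in the proof of \Cref{thm:ID-existence}, with $g$ replaced by $f$, $\mathcal{V}$ by $\mathcal{S}$, and condition \eqref{eq:GC-existence-i} by \eqref{eq:GC-existence-ii} — combined with Weierstrass's theorem gives the existence of an optimal solution for $\mathcal{I}(\delta)$ claimed in \Cref{thm:I-existence}.
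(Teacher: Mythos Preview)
Your proposal is correct and follows essentially the same approach as the paper's proof: rewrite $\Sigma(\delta)=\Gamma(\mathcal{B}_1,\mathcal{B}_2)$ with Wasserstein balls $\mathcal{B}_\ell$, use Theorem~1 of \citet{yue2022linear} (via \Cref{assumption:finite-moments-I,assumption:proper}) to get weak compactness and hence tightness of each $\mathcal{B}_\ell$, apply \Cref{lemma:overlapping-tightness} for tightness of $\Sigma(\delta)$, invoke Prokhorov for relative compactness, and finish with weak closedness via continuity of the projection maps. The paper's version is terser --- it simply defers the closedness step to ``a similar argument as in the proof of \Cref{prop:non-overlapping-compactness}'' --- whereas you spell out the Portmanteau/pushforward argument explicitly, which is fine.
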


\begin{proof}
By abuse of notations, let $\mathcal{B}_1= B_{p_1}(\mu_{13}, \delta_1^{1 / p_1})$ and $\mathcal{B}_2= B_{p_2}(\mu_{23}, \delta_2^{1 / p_2})$. We can rewrite $\Sigma(\delta)=\Gamma (\mathcal{B}_1, \mathcal{B}_2)$. By  \Cref{lemma:overlapping-tightness}, $\Sigma(\delta)$ is tight and hence has a compact closure under weak topology. Using a similar argument in the proof of  \Cref{prop:non-overlapping-compactness}, we can show $\Sigma(\delta)$ is weakly closed.  Therefore, $\Sigma(\delta)$ is weakly compact in $\mathcal{P}(\mathcal{S})$.
\end{proof}

\subsubsection{Proof of \texorpdfstring{\Cref{prop:Identified-Sets-Interval}}{}}

We focus on $\Theta(\delta)$ since the proof of $\Theta_{\mathrm{D}}(\delta)$ is identical to that of $\Theta(\delta)$. The proof of \Cref{prop:Identified-Sets-Interval} for $\Theta(\delta)$ follows form the following two lemmas.

\begin{lemma}\label{lemma: T-continuity}
Suppose that the Assumptions in \Cref{prop:Identified-Sets-Interval} hold.  
Then, the linear functional $T: \Sigma(\delta) \rightarrow \mathbb{R}$ given by $ \pi \mapsto \int_{\mathcal{S} } f d \pi$ is continuous.
\end{lemma}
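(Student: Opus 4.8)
The plan is to establish sequential continuity of $T$ on $\Sigma(\delta)$ with the topology of weak convergence; since $\mathcal{S}$ is Polish, $\mathcal{P}(\mathcal{S})$ (and hence the subspace $\Sigma(\delta)$) is metrizable in this topology, so sequential continuity is equivalent to continuity. Recall also that $\Sigma(\delta)$ is weakly compact by \Cref{prop:overlapping-compactness}. So fix a sequence $\{\pi^k\}_{k\ge 1}\subset \Sigma(\delta)$ with $\pi^k \to \pi^\infty$ weakly, $\pi^\infty \in \Sigma(\delta)$; I must show $\int_{\mathcal{S}} f\, d\pi^k \to \int_{\mathcal{S}} f\, d\pi^\infty$.

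The key observation is that the argument proving upper semicontinuity of $\gamma \mapsto \int_{\mathcal{S}} f\, d\gamma$ in the proof of \Cref{thm:I-existence} applies verbatim to $-f$ as well. Indeed, under the hypotheses of \Cref{prop:Identified-Sets-Interval}(ii), $f$ is continuous and $|f|$ satisfies the growth condition \eqref{eq:GC-existence-ii}, so both $f$ and $-f$ are continuous and dominated above by $M[1 + \boldsymbol{d}_{\mathcal{S}_1}(s_1^\star,s_1)^{p_1'} + \boldsymbol{d}_{\mathcal{S}_2}(s_2^\star,s_2)^{p_2'}]$ with $p_\ell' \in (0,p_\ell)$, which is precisely the structural input used there. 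By Lemma 1 of \citet{yue2022linear} there is a constant $B>0$ with $\boldsymbol{W}_{p_\ell}(\pi_{\ell 3}, \delta_{s_\ell^\star})^{p_\ell} \le B$ for every $\pi \in \Sigma(\delta)$; this powers the truncation estimate of \Cref{thm:I-existence}: writing $f_\rho := f \wedge [M(1+\rho^{p_1'}+\rho^{p_2'})]$ one gets $\lvert \int f\, d\pi - \int f_\rho\, d\pi\rvert \le 2B(\rho^{p_1'-p_1} + \rho^{p_2'-p_2})$ uniformly in $\pi\in\Sigma(\delta)$, while $f_\rho$ is continuous and bounded, so Portmanteau gives $\limsup_k \int f_\rho\, d\pi^k \le \int f_\rho\, d\pi^\infty$. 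Letting $\rho \to \infty$ yields $\limsup_k \int f\, d\pi^k \le \int f\, d\pi^\infty$, and the same reasoning with $-f$ in place of $f$ gives $\limsup_k \int(-f)\, d\pi^k \le \int(-f)\, d\pi^\infty$, i.e.\ $\liminf_k \int f\, d\pi^k \ge \int f\, d\pi^\infty$.

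Combining the two one-sided inequalities gives $\lim_k \int_{\mathcal{S}} f\, d\pi^k = \int_{\mathcal{S}} f\, d\pi^\infty$, so $T$ is sequentially continuous, hence continuous on $\Sigma(\delta)$. The only step requiring genuine care — and the main (minor) obstacle — is verifying that the uniform-integrability/truncation estimate borrowed from \Cref{thm:I-existence} is truly two-sided: the proof there truncates $f$ only from above, so for the lower bound one must either symmetrize the truncation or, as above, invoke the identical estimate for $-f$, which is legitimate exactly because \Cref{prop:Identified-Sets-Interval}(ii) imposes the growth bound on $|f|$ rather than on $f$. No ingredient beyond \Cref{thm:I-existence}, \Cref{prop:overlapping-compactness}, and the cited lemmas of \citet{yue2022linear} is needed.
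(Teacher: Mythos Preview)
Your proposal is correct and follows essentially the same approach as the paper: both establish upper semicontinuity of $\pi \mapsto \int f\, d\pi$ on $\Sigma(\delta)$ by invoking the truncation argument from the proof of \Cref{thm:ID-existence}/\Cref{thm:I-existence}, then apply the identical argument to $-f$ (legitimate precisely because the growth bound in \Cref{prop:Identified-Sets-Interval} is imposed on $|f|$) to get lower semicontinuity and hence continuity. The paper's proof is terser---it simply cites that argument and the $f \leftrightarrow -f$ symmetry---but the logic is the same; one small slip in your write-up is that $f_\rho = f \wedge C$ is only bounded \emph{above}, not bounded, so you should invoke the upper-semicontinuous version of Portmanteau (or Lemma~3 of \citet{yue2022linear}) rather than the bounded-continuous version, exactly as the paper does.
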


\begin{proof}
Since $\mu_{\ell 3}$ has finite $p_\ell$-th moment, then  for all $\pi \in \Sigma(\delta)$, $\pi_{\ell 3}$, i.e.,  the projection onto $\mathcal{Y}_\ell \times \mathcal{X}$  also has finite $p_\ell$-th moment. Define a function $h: \mathcal{S} \rightarrow \mathbb{R}$ as 
\[
h(s) = M \left[   1  + \boldsymbol{d}_{\mathcal{S}_1} (s_1^\star,s_1 )^{p_1'}+ \boldsymbol{d}_{ \mathcal{S}_2 }(s_2^\star,s_2)^{p_2'} \right],
\]
where $s = (y_1, y_2, x)$, $s_1 = (y_1, x)$ and $s_2 = (y_2, x)$.
We note $h \in L^1 (\pi)$ for all $\pi \in  \Sigma(\delta)$. Using the identical argument in the proof of \Cref{thm:ID-existence}, we can show that $\pi \mapsto \int f d\pi$ is upper semicontinuous on $\Sigma(\delta)$. By replacing $f$ by $-f$, we can see that $\pi \mapsto \int (-f) d\pi$ is upper semicontinuous and hence $\pi \mapsto \int f d\pi$ is lower semicontinuous on $\Sigma(\delta)$. As a result, $\pi \mapsto \int f \, d\pi$ is continuous on $\Sigma(\delta)$.
\end{proof}

\begin{lemma}\label{lemma: Path-Connected}
Suppose that \Cref{assumption:finite-moments-I,assumption:metric-cost} hold. Then $\Sigma(\delta)$ is connected under weak topology.
\end{lemma}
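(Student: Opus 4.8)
The plan is to show that $\Sigma(\delta)$ is connected by showing it is in fact \emph{path-connected}, using the convexity argument already implicit in the proof of \Cref{lemma:ID-I-concavity}. Recall from that proof that $\nu \mapsto \boldsymbol{K}_\ell(\mu_{\ell 3}, \nu)$ is convex because $\boldsymbol{K}_\ell$ is an optimal transport cost. Hence for any $\gamma^0, \gamma^1 \in \Sigma(\delta)$ and any $t \in [0,1]$, the convex combination $\gamma^t := (1-t)\gamma^0 + t\gamma^1$ satisfies, for $\ell = 1,2$,
\[
\boldsymbol{K}_\ell(\mu_{\ell 3}, \gamma^t_{\ell 3}) \leq (1-t)\boldsymbol{K}_\ell(\mu_{\ell 3}, \gamma^0_{\ell 3}) + t\,\boldsymbol{K}_\ell(\mu_{\ell 3}, \gamma^1_{\ell 3}) \leq (1-t)\delta_\ell + t\delta_\ell = \delta_\ell,
\]
where we used that projection is linear in $\gamma$, so $\gamma^t_{\ell 3} = (1-t)\gamma^0_{\ell 3} + t\gamma^1_{\ell 3}$. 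Thus $\gamma^t \in \Sigma(\delta)$ for all $t$, i.e. $\Sigma(\delta)$ is convex as a subset of the vector space of finite signed measures on $\mathcal{S}$.

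The remaining step is to check that the map $t \mapsto \gamma^t$ is continuous from $[0,1]$ into $\mathcal{P}(\mathcal{S})$ equipped with the weak topology. This is routine: for any bounded continuous $\varphi: \mathcal{S} \to \mathbb{R}$, the map $t \mapsto \int_{\mathcal{S}} \varphi \, d\gamma^t = (1-t)\int \varphi \, d\gamma^0 + t\int \varphi \, d\gamma^1$ is affine in $t$, hence continuous; since the weak topology on $\mathcal{P}(\mathcal{S})$ is generated by the maps $\mu \mapsto \int \varphi \, d\mu$ over bounded continuous $\varphi$ (and is metrizable, as $\mathcal{S}$ is Polish), continuity of $t \mapsto \gamma^t$ follows. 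Therefore any two points of $\Sigma(\delta)$ are joined by a continuous path lying entirely in $\Sigma(\delta)$, so $\Sigma(\delta)$ is path-connected and a fortiori connected.

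There is essentially no hard obstacle here — the entire content is the convexity of the Wasserstein/optimal-transport balls together with linearity of the projection, both of which are already used elsewhere in the paper. The one point worth stating carefully is that $\Sigma(\delta)$ need not be empty in the degenerate sense; but \Cref{assumption:cost-function} guarantees $\Sigma(\delta)$ is non-empty (it contains any element of $\mathcal{F}(\mu_{13},\mu_{23})$ when $\delta = 0$, and this only grows with $\delta$), and a non-empty convex set is connected, so the statement holds. I would write the proof in three short sentences: (i) $\Sigma(\delta)$ is convex by the convexity of $\boldsymbol{K}_\ell$ and linearity of projections; (ii) convex combinations give weakly continuous paths; (iii) conclude path-connectedness, hence connectedness.

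\begin{proof}[Proof of \Cref{lemma: Path-Connected}]
We show that $\Sigma(\delta)$ is convex as a subset of the space of finite signed Borel measures on $\mathcal{S}$, which implies that it is path-connected under the weak topology and hence connected. Let $\gamma^0, \gamma^1 \in \Sigma(\delta)$ and $t \in [0,1]$, and set $\gamma^t := (1-t)\gamma^0 + t \gamma^1 \in \mathcal{P}(\mathcal{S})$. Since projection onto $\mathcal{Y}_\ell \times \mathcal{X}$ is a linear operation on measures, $\gamma^t_{\ell 3} = (1-t)\gamma^0_{\ell 3} + t \gamma^1_{\ell 3}$ for $\ell = 1, 2$. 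Because $\boldsymbol{K}_\ell$ is an optimal transport cost, the map $\nu \mapsto \boldsymbol{K}_\ell(\mu_{\ell 3}, \nu)$ is convex, so
\[
\boldsymbol{K}_\ell(\mu_{\ell 3}, \gamma^t_{\ell 3}) \leq (1-t)\,\boldsymbol{K}_\ell(\mu_{\ell 3}, \gamma^0_{\ell 3}) + t\,\boldsymbol{K}_\ell(\mu_{\ell 3}, \gamma^1_{\ell 3}) \leq (1-t)\delta_\ell + t\delta_\ell = \delta_\ell,
\]
for $\ell = 1, 2$. Hence $\gamma^t \in \Sigma(\delta)$, and $\Sigma(\delta)$ is convex.

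It remains to observe that $t \mapsto \gamma^t$ is continuous from $[0,1]$ into $\mathcal{P}(\mathcal{S})$ with the weak topology. Indeed, for any bounded continuous function $\varphi: \mathcal{S} \to \mathbb{R}$,
\[
\int_{\mathcal{S}} \varphi \, d\gamma^t = (1-t)\int_{\mathcal{S}} \varphi \, d\gamma^0 + t\int_{\mathcal{S}} \varphi \, d\gamma^1,
\]
which is affine, hence continuous, in $t$. Since $\mathcal{S}$ is Polish, the weak topology on $\mathcal{P}(\mathcal{S})$ is generated by the functionals $\mu \mapsto \int_{\mathcal{S}} \varphi \, d\mu$ over bounded continuous $\varphi$, so $t \mapsto \gamma^t$ is weakly continuous. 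Therefore $t \mapsto \gamma^t$ is a continuous path in $\Sigma(\delta)$ joining $\gamma^0$ and $\gamma^1$. Since $\Sigma(\delta)$ is non-empty by \Cref{assumption:cost-function}, it is path-connected and hence connected under the weak topology.
\end{proof}
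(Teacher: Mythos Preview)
Your proof is correct and follows the same overall strategy as the paper---show that the straight-line interpolation $t\mapsto (1-t)\gamma^0+t\gamma^1$ stays in $\Sigma(\delta)$ and is continuous, hence $\Sigma(\delta)$ is path-connected. The difference lies in how continuity is verified. The paper metrizes $\Sigma(\delta)$ by the Wasserstein distance $\boldsymbol{W}_p$ with $p=p_1\wedge p_2$ (this is where \Cref{assumption:finite-moments-I,assumption:metric-cost} are invoked, to ensure $\Sigma(\delta)\subset\mathcal{P}_p(\mathcal{S})$) and then carries out a somewhat involved estimate using the triangle inequality and convexity of $\boldsymbol{W}_p$ to bound $\boldsymbol{W}_p(\nu(t_0),\nu(t_1))$. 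You instead argue directly at the level of the weak topology: $t\mapsto\int\varphi\,d\gamma^t$ is affine for every bounded continuous $\varphi$, which is precisely weak continuity. Your route is shorter and, notably, does not actually require the moment or metric-cost assumptions at all---convexity of $\nu\mapsto\boldsymbol{K}_\ell(\mu_{\ell3},\nu)$ holds for arbitrary optimal-transport costs, and weak continuity of affine paths needs no moment bounds. The paper's approach, by working in $\boldsymbol{W}_p$, proves a formally stronger statement (continuity in a finer topology), but for the lemma as stated your argument suffices and is the cleaner one.
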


\begin{proof}
Fix any $\pi$ and $\pi^\prime$ in $\Sigma(\delta)$. It suffices to show $\nu : t \mapsto t \pi + (1- t)\pi^\prime$ is continuous from $[0,1]$ into $\Sigma(\delta)$. We note $\Sigma(\delta) \subset \mathcal{P}_p(\mathcal{S})$ is metrizable under $\boldsymbol{W}_{p}$ for $p = p_1 \wedge p_2$. Fix any $t_0 \in [0,1]$. Let $t_1 \neq t_0$ be any point in $[0,1]$ such that  $\Delta =| t_1 - t_0| > 0$ is sufficiently small. Without loss of generality, we assume $t_0 < t_1$. For simplicity, we write $\gamma = t_0 \pi + (1- t_1) \pi^\prime \geq 0$. By the triangle inequality,
\[
\begin{aligned}
\boldsymbol{W}_{p}( \nu (t_0), \nu (t_1) ) & =   \boldsymbol{W}_{p}\left(  \nu (t_0), \gamma + \Delta \pi^\prime   \right) \\
& \leq    (1-\Delta)  \boldsymbol{W}_{p}\left(   \nu (t_0), (1-\Delta)^{-1} \gamma   \right)    + \underbrace{ \Delta \boldsymbol{W}_{p}\left(   \nu (t_0) ,  \pi^\prime  \right)}_{= O(\Delta)}.  \\
\end{aligned}
\]
Consider the following derivation:
\[
\begin{aligned}
\boldsymbol{W}_{p}\left(  \nu (t_0), (1-\Delta)^{-1} \gamma  \right)  &= \boldsymbol{W}_{p} \Bigg( \nu(t_0)  ,  \underbrace{ \frac{  \nu(t_0) - \Delta \pi^\prime  }{1- \Delta}  }_{= \rho_\Delta } \Bigg ) = \boldsymbol{W}_{p}\left(  (1-\Delta) \rho _\Delta+ \Delta \pi^\prime,  \rho_\Delta  \right)\\
& \leq \Delta \boldsymbol{W}_{p}\left( \pi^\prime ,  \rho_\Delta  \right)  =  \Delta \boldsymbol{W}_{p}\left( \pi^\prime ,  \frac{  \nu(t_0) - \Delta \pi^\prime  }{1- \Delta} \right).
\end{aligned}
\]
Since $\lim_{\Delta \rightarrow 0}  \frac{  \nu(t_0) - \Delta \pi^\prime  }{1- \Delta}  =  \nu(t_0)$ in weak topology induced by $\boldsymbol{W}_{p}$, then 
\[
\lim_{\Delta \rightarrow 0}\boldsymbol{W}_{p}\left( \pi^\prime ,  \frac{  \nu(t_0) - \Delta \pi^\prime  }{1- \Delta} \right) =  \boldsymbol{W}_{p}\left( \pi^\prime ,  \nu(t_0)  \right) < \infty.
\]
As a result, 
\[
\boldsymbol{W}_p\left(\nu\left(t_0\right),(1-\Delta)^{-1} \gamma\right) \leq \Delta \boldsymbol{W}_{p}\left( \pi^\prime ,  \frac{  \nu(t_0) - \Delta \pi^\prime  }{1- \Delta} \right) \rightarrow 0, \quad \text{as } \Delta \rightarrow 0,
\]
and hence
\[
\boldsymbol{W}_p\left(\nu\left(t_0\right), \nu\left(t_1\right)\right)  \rightarrow 0,  \quad \text{as } \Delta \rightarrow 0.
\]
Interchange the role of $t_0$ and $t_1$, we can show the case when $\boldsymbol{W}_p\left(\nu\left(t_0\right), \nu\left(t_1\right)\right) \rightarrow 0$ as $\Delta = |t_1 -t_0| \rightarrow 0$. This shows $\nu: t \mapsto t \pi+(1-t) \pi^{\prime}$ is continuous on $[0,1]$. So $\Sigma(\delta)$ is path-connected and hence connected under weak topology.
\end{proof}

\subsection{Proofs in  \texorpdfstring{\Cref{sec:continuity}}{}}

\subsubsection{Proof of \texorpdfstring{\Cref{thm:ID-continuity}}{}}

Note that the proof of \Cref{lemma: Legendre_transform} implies that 
If $\mathcal{I}_{\mathrm{D}}(\delta)$ is finite for some $\delta > 0$, then $\mathcal{I}_{\mathrm{D}}(\delta)$ is finite for all $\delta > 0$ because $\mathcal{I}(\delta)$ is concave.

\begin{lemma}\label{lemma:ID-continuity-zero}
    Suppose that \Cref{assumption:g-bounded-below,assumption:g-Psi-growth} hold. Then for any $\delta =(\delta_1, \delta_2) \in \mathbb{R}_+^2$, we have
    \[
    0 \leq \mathcal{I}_{\mathrm{D} }(\delta_1, \delta_2)-\mathcal{I}_{\mathrm{D} }(0,0) \leq \Psi(\delta_1, \delta_2).
    \] 
    Moreover, $\mathcal{I}_{\mathrm{D} }$ is continuous on $(0,0)$.
    \end{lemma}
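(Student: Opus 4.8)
\textbf{Proof proposal for \Cref{lemma:ID-continuity-zero}.}

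The plan is to prove the two-sided bound
\[
0 \leq \mathcal{I}_{\mathrm{D}}(\delta_1,\delta_2) - \mathcal{I}_{\mathrm{D}}(0,0) \leq \Psi(\delta_1,\delta_2)
\]
directly from the primal formulations, and then derive continuity at the origin as an immediate corollary. The lower bound is the trivial one: since $\Sigma_{\mathrm{D}}(0,0) = \Pi(\mu_1,\mu_2) \subseteq \Sigma_{\mathrm{D}}(\delta_1,\delta_2)$, taking the supremum over the larger set can only increase the value, so $\mathcal{I}_{\mathrm{D}}(\delta) \geq \mathcal{I}_{\mathrm{D}}(0,0)$; this is already recorded in \Cref{lemma:ID-concavity} (monotonicity), so I would just cite it.

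For the upper bound, the key idea is a transport/coupling argument. Fix $\gamma \in \Sigma_{\mathrm{D}}(\delta)$, so $\boldsymbol{K}_1(\mu_1,\gamma_1) \leq \delta_1$ and $\boldsymbol{K}_2(\mu_2,\gamma_2) \leq \delta_2$. For each $\ell$, choose a coupling $\nu_\ell \in \Pi(\mu_\ell,\gamma_\ell)$ that is nearly optimal, i.e. $\int c_\ell \, d\nu_\ell \leq \boldsymbol{K}_\ell(\mu_\ell,\gamma_\ell) + \epsilon \leq \delta_\ell + \epsilon$. Now I want to build a measure on $\mathcal{S}_1 \times \mathcal{S}_2 \times \mathcal{S}_1 \times \mathcal{S}_2$ whose $(1,3)$-marginal is $\nu_1$, whose $(2,4)$-marginal is $\nu_2$, and whose $(3,4)$-marginal is $\gamma$. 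This is exactly the gluing step used elsewhere in the paper: the index collection $\{\{1,3\},\{2,4\},\{3,4\}\}$ is decomposable (shown in \Cref{lemma:C1}'s proof via \Cref{lemma:decomposability-V}), and the product marginal system is consistent because both $\nu_1$ and $\gamma$ project to $\gamma_1$ on coordinate $3$, and both $\nu_2$ and $\gamma$ project to $\gamma_2$ on coordinate $4$; hence by \Cref{prop:MultiVBP} such a measure $\widetilde\pi$ exists. Let $\varpi$ denote its $(1,2)$-marginal, which lies in $\Pi(\mu_1,\mu_2)$. Then, writing $v = (s_1,s_2)$ for the first two coordinates and $v' = (s_1',s_2')$ for the last two, \Cref{assumption:g-Psi-growth} gives $g(v) - g(v') \leq \Psi(c_1(s_1,s_1'), c_2(s_2,s_2'))$, so
\[
\int_{\mathcal{V}} g \, d\gamma - \int_{\mathcal{V}} g \, d\varpi = \int g(v') \, d\widetilde\pi(v,v') - \int g(v)\, d\widetilde\pi(v,v') \leq \int \Psi\big(c_1(s_1,s_1'), c_2(s_2,s_2')\big)\, d\widetilde\pi.
\]
By Jensen's inequality applied to the concave function $\Psi$ (which is finite-valued, non-decreasing, and concave on $\mathbb{R}_+^2$), the right-hand side is at most $\Psi\big(\int c_1\, d\nu_1, \int c_2\, d\nu_2\big) \leq \Psi(\delta_1 + \epsilon, \delta_2 + \epsilon)$, using monotonicity of $\Psi$. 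Therefore $\int g\, d\gamma \leq \mathcal{I}_{\mathrm{D}}(0,0) + \Psi(\delta_1+\epsilon,\delta_2+\epsilon)$. Taking the supremum over $\gamma \in \Sigma_{\mathrm{D}}(\delta)$, then letting $\epsilon \downarrow 0$ and invoking continuity of $\Psi$, yields $\mathcal{I}_{\mathrm{D}}(\delta) - \mathcal{I}_{\mathrm{D}}(0,0) \leq \Psi(\delta_1,\delta_2)$. Finally, continuity at $(0,0)$ follows because $\Psi$ is continuous with $\Psi(0,0)=0$: as $\delta \to (0,0)$, the sandwich forces $\mathcal{I}_{\mathrm{D}}(\delta) \to \mathcal{I}_{\mathrm{D}}(0,0)$.

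The main obstacle I anticipate is the measurability/well-definedness bookkeeping in the gluing step — ensuring the integral $\int g(v')\, d\widetilde\pi$ is well-defined (it is bounded below by \Cref{assumption:g-bounded-below} composed with the marginal $\gamma$) and that the consistency conditions for the product marginal system are checked against the correct coordinate projections, since the paper uses several variants of this decomposable-system argument. A secondary point is justifying Jensen's inequality when $\int c_\ell \, d\nu_\ell$ might a priori be large; but since $\nu_\ell$ is chosen with $\int c_\ell\, d\nu_\ell \leq \delta_\ell + \epsilon < \infty$, the barycenter lies in $\mathbb{R}_+^2$ where $\Psi$ is finite and concave, so Jensen applies cleanly. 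I would also need to handle the degenerate cases $\delta_1 = 0$ or $\delta_2 = 0$, but the argument goes through verbatim since the cost function $\widehat{c}_\ell(s,s') = \infty\mathds{1}\{s\neq s'\}$ trick (or simply noting $\nu_\ell$ concentrates on the diagonal when $\delta_\ell = 0$) makes the corresponding transport term vanish.
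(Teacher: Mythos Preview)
Your proposal is correct and follows essentially the same approach as the paper: both use the decomposable index system $\{\{1,3\},\{2,4\},\{3,4\}\}$ together with \Cref{prop:MultiVBP} to glue the near-optimal couplings $\nu_1,\nu_2$ to $\gamma$, then apply \Cref{assumption:g-Psi-growth} followed by Jensen's inequality for the concave $\Psi$ and let $\epsilon\downarrow 0$. The only cosmetic difference is that the paper phrases the construction in random-variable language rather than in terms of marginal measures, but the argument is identical.
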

   
    \begin{proof}
  Fix any $\widetilde{\gamma} \in \Sigma_{\mathrm{D} }(\delta)$ and any $\epsilon >0$. We can construct random variables $\widetilde{V} = ( \widetilde{S}_1, \widetilde{S}_2) \in \mathcal{V}$ with $\widetilde{\gamma}  = \operatorname{Law} (\widetilde{V} )$ and write  $\widetilde{\gamma}_j =   \operatorname{Law}(\widetilde{S}_j)$ for $j \in [2]$. Let $K = \{K_1, K_2,K_3\}$ with $K_1 = \{ 1,3 \}$, $K_2 = \{ 2,4 \}$ and $K_3 = \{3,4\}$. It is easy to see $K$ is decomposable, and \Cref{prop:MultiVBP} implies that there are random variables  $(V,  \widetilde{V}) = (S_1, S_2,  \widetilde{S}_1, \widetilde{S}_2 ) \in  \mathcal{V} \times \mathcal{V}$ such that $\mu_1 = \operatorname{Law} (S_1)$, $\mu_2 = \operatorname{Law} (S_2)$ and  $\mathbb{E}  \left[ c_j(S_j, \widetilde{S}_j) \right]   \leq   \boldsymbol{K}_j (\mu_j, \widetilde{\gamma}_j) + \epsilon \leq \delta_j +\epsilon$ for $j \in [2]$.  Let $\pi$ denote the law of $(V,  \widetilde{V} )$. 
  Therefore, with $\gamma = \operatorname{Law}\left(S_1, S_2\right) \in \Sigma_{\mathrm{D} }(0)$, we have
\[
\begin{aligned}
\int_{\mathcal{V} } g \, d \widetilde{\gamma} - \mathcal{I}_{\mathrm{D} }(0,0) & \leq  \int_{\mathcal{V} } g \, d \widetilde{\gamma} -   \int_{\mathcal{V} } g \, d \gamma  = \int_{\mathcal{V} \times \mathcal{V}  } \left[ g(v) -  g(\tilde{v} ) \right] \, d \pi(v, \tilde{v}) \\ 
& =  \mathbb{E} \left[  g(V) -  g(\widetilde{V} ) \right]     \leq   \mathbb{E} \left[ \Psi\left(c_1 (S_1, \widetilde{S}_1), c_2 (  S_2, \widetilde{S}_2)\right)  \right]  \\
& \leq  \Psi\left(   \mathbb{E} \left[c_1 (S_1, \widetilde{S}_1)\right], \mathbb{E}  \left[c_2 (  S_2, \widetilde{S}_2) \right] \right) \\
& \leq  \Psi\left(  \delta_1 + \epsilon , \delta_2 + \epsilon  \right). 
\end{aligned}
\]
Since the measure $\widetilde{\gamma} \in \Sigma_{\mathrm{D} }(\delta)$ is arbitrary, we must have 
\[
\mathcal{I}_{\mathrm{D} }(\delta_1, \delta_2)- \mathcal{I}_{\mathrm{D} }(0,0) = \sup_{\widetilde{\gamma} \in \Sigma_{\mathrm{D} }(\delta)} \int_{\mathcal{V} } g \, d \widetilde{\gamma} - \mathcal{I}_{\mathrm{D} }(0,0) \leq \Psi(  \delta_1 + \epsilon , \delta_2 + \epsilon). 
\]
Since $\Psi$ is continuous and $\epsilon >0$ is arbitrary, then $\mathcal{I}_{\mathrm{D} }(\delta_1, \delta_2) - \mathcal{I}_{\mathrm{D} }(0,0) \leq \Psi(  \delta_1  , \delta_2)$. The monotonicity of $\mathcal{I}_{\mathrm{D} }$ implies $\mathcal{I}_{\mathrm{D} }(\delta_1, \delta_2) \geq \mathcal{I}_{\mathrm{D} }(0,0)$.  In addition, the continuity of $\mathcal{I}_{\mathrm{D} }$ at $(0,0)$ follows from the continuity of $\Psi$ at $(0,0)$ and letting $(\delta_1, \delta_2) \rightarrow (0,0)$.
\end{proof}

In fact, \Cref{lemma:ID-concavity} and Proof of \Cref{lemma:ID-continuity-zero} implies the effective domain of $\mathcal{I}_{\mathrm{D}}$ is either $\mathbb{R}_+^2$ or $\emptyset$ because $\mathcal{I}_{\mathrm{D}}$ is non-decreasing and concave.

\begin{lemma} \label{lemma:ID-inequality}
    Suppose that \Cref{assumption:g-bounded-below,assumption:g-Psi-growth} hold, and $\mathcal{I}_{\mathrm{D}}(\delta)$ is finite for some $\delta \in \mathbb{R}_{++}^2$. If $\eta_0 > \eta \geq 0 $ and $\delta \geq 0$, one has
    \[
    0 \leq \mathcal{I}_{\mathrm{D} }(\eta_0,\delta) - \mathcal{I}_{\mathrm{D} }(\eta,\delta)  \leq \Psi( \eta_0 - \eta, 0 ).
    \]
    and 
    \[
    0 \leq \mathcal{I}_{\mathrm{D} }(\delta, \eta_0) - \mathcal{I}_{\mathrm{D} }(\delta, \eta)  \leq \Psi( 0,  \eta_0 - \eta ).
    \]
    
    \end{lemma}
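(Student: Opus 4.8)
The lower bounds $\mathcal{I}_{\mathrm{D}}(\eta_0,\delta)-\mathcal{I}_{\mathrm{D}}(\eta,\delta)\ge 0$ and $\mathcal{I}_{\mathrm{D}}(\delta,\eta_0)-\mathcal{I}_{\mathrm{D}}(\delta,\eta)\ge 0$ are immediate from the monotonicity of $\mathcal{I}_{\mathrm{D}}$ established in \Cref{lemma:ID-concavity}. The two displayed chains are mirror images of one another (interchange the roles of $\mathcal{S}_1$ and $\mathcal{S}_2$), so the plan is to prove only the first upper bound, $\mathcal{I}_{\mathrm{D}}(\eta_0,\delta)-\mathcal{I}_{\mathrm{D}}(\eta,\delta)\le\Psi(\eta_0-\eta,0)$. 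The hypothesis that $\mathcal{I}_{\mathrm{D}}$ is finite at some interior point, together with concavity and monotonicity (as in the proof of \Cref{lemma: Legendre_transform}), guarantees that $\mathcal{I}_{\mathrm{D}}(\eta_0,\delta)$ and $\mathcal{I}_{\mathrm{D}}(\eta,\delta)$ are both finite, so all differences below are well defined.

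The idea is to adapt the coupling argument of \Cref{lemma:ID-continuity-zero}, but to move only the first marginal, and only partway. Fix $\epsilon>0$ and a near-optimal $\widetilde{\gamma}\in\Sigma_{\mathrm{D}}(\eta_0,\delta)$ with $\int_{\mathcal{V}}g\,d\widetilde{\gamma}\ge\mathcal{I}_{\mathrm{D}}(\eta_0,\delta)-\epsilon$; write $\widetilde{\gamma}_1,\widetilde{\gamma}_2$ for its projections, so $\boldsymbol{K}_1(\mu_1,\widetilde{\gamma}_1)\le\eta_0$ and $\boldsymbol{K}_2(\mu_2,\widetilde{\gamma}_2)\le\delta$. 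Choose $\nu_1\in\Pi(\mu_1,\widetilde{\gamma}_1)$ with $\int c_1\,d\nu_1\le\eta_0+\epsilon$. Set $\theta:=(\eta_0-\eta)/\eta_0\in(0,1]$ (note $\eta_0>\eta\ge0$ forces $\eta_0>0$) and let $\gamma_1:=\theta\mu_1+(1-\theta)\widetilde{\gamma}_1$; by joint convexity of the optimal transport cost, $\boldsymbol{K}_1(\mu_1,\gamma_1)\le\theta\boldsymbol{K}_1(\mu_1,\mu_1)+(1-\theta)\boldsymbol{K}_1(\mu_1,\widetilde{\gamma}_1)\le(1-\theta)\eta_0=\eta$. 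Using that the index collection $\{\{1,2\},\{2,3\}\}$ is decomposable and \Cref{prop:MultiVBP}, glue $\nu_1$ and $\widetilde{\gamma}$ along their common $\widetilde{\gamma}_1$-marginal into random elements $(S_1,\widetilde{S}_1,\widetilde{S}_2)\in\mathcal{S}_1\times\mathcal{S}_1\times\mathcal{S}_2$ with $(S_1,\widetilde{S}_1)\sim\nu_1$ and $(\widetilde{S}_1,\widetilde{S}_2)\sim\widetilde{\gamma}$; enlarge the probability space with an independent $B\sim\mathrm{Bernoulli}(\theta)$ and put $R_1:=S_1$ on $\{B=1\}$ and $R_1:=\widetilde{S}_1$ on $\{B=0\}$. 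Then $\mathrm{Law}(R_1)=\gamma_1$, and $\gamma:=\mathrm{Law}(R_1,\widetilde{S}_2)$ has marginals $\gamma_1$ and $\widetilde{\gamma}_2$, so $\gamma\in\Sigma_{\mathrm{D}}(\eta,\delta)$.

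It then remains to estimate $\int g\,d\widetilde{\gamma}-\int g\,d\gamma=\mathbb{E}[g(\widetilde{S}_1,\widetilde{S}_2)-g(R_1,\widetilde{S}_2)]$. On $\{B=0\}$ the integrand is identically $0$, while on $\{B=1\}$, \Cref{assumption:g-Psi-growth} together with $c_2(\widetilde{S}_2,\widetilde{S}_2)=0$ from \Cref{assumption:cost-function} gives $g(\widetilde{S}_1,\widetilde{S}_2)-g(S_1,\widetilde{S}_2)\le\Psi\bigl(c_1(\widetilde{S}_1,S_1),0\bigr)$ (the orders of the arguments of $c_1$ being handled exactly as in the proof of \Cref{lemma:ID-continuity-zero}); since $B$ is independent of $(S_1,\widetilde{S}_1,\widetilde{S}_2)$ and $t\mapsto\Psi(t,0)$ is concave with $\Psi(0,0)=0$, Jensen's inequality yields
\[
\int g\,d\widetilde{\gamma}-\int g\,d\gamma
\le\theta\,\mathbb{E}\bigl[\Psi(c_1(\widetilde{S}_1,S_1),0)\bigr]
\le\theta\,\Psi\bigl(\mathbb{E}[c_1(\widetilde{S}_1,S_1)],0\bigr)
\le\Psi\bigl(\theta(\eta_0+\epsilon),0\bigr).
\]
The integrability needed to perform these manipulations follows, as in \Cref{lemma:ID-continuity-zero}, from $\int g\,d\widetilde{\gamma}>-\infty$ and the finiteness of $\Psi$. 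Since $\gamma\in\Sigma_{\mathrm{D}}(\eta,\delta)$, this gives $\mathcal{I}_{\mathrm{D}}(\eta_0,\delta)-\epsilon-\mathcal{I}_{\mathrm{D}}(\eta,\delta)\le\Psi(\theta(\eta_0+\epsilon),0)$; letting $\epsilon\downarrow0$ and using the continuity of $\Psi$ and $\theta\eta_0=\eta_0-\eta$ gives the first inequality, and the second follows by symmetry.

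The main obstacle is the ``partial transport'' step: because $c_1$ need not be a power of a metric, there is no displacement interpolation to move $\widetilde{\gamma}_1$ toward $\mu_1$ along a geodesic, so one instead moves it by the convex combination $\theta\mu_1+(1-\theta)\widetilde{\gamma}_1$ (whose transport cost to $\mu_1$ is controlled by convexity) and couples it with the untouched second marginal via the Bernoulli mixture, verifying that the glued measure exists through \Cref{prop:MultiVBP} and that the mass left at $\widetilde{\gamma}_1$ contributes nothing to the difference of integrals while the transported fraction is controlled by $\Psi$. A secondary point is the bookkeeping of the argument-orders of $c_1$ and the integrability, both of which are inherited verbatim from the proof of \Cref{lemma:ID-continuity-zero}.
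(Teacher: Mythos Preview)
Your argument is correct and follows essentially the same route as the paper: both proofs interpolate the first marginal toward $\mu_1$ via the convex combination $\theta\mu_1+(1-\theta)\widetilde{\gamma}_1$ with $\theta=(\eta_0-\eta)/\eta_0$, glue with the untouched second marginal, and apply \Cref{assumption:g-Psi-growth} together with Jensen. The only cosmetic difference is that the paper couples $\widetilde{\gamma}_1$ directly with the mixture $\gamma_1^\star$ (using $\boldsymbol{K}_1(\gamma_1^\star,\widetilde{\gamma}_1)\le\eta_0-\eta$ from convexity) and then glues, whereas you first couple $\mu_1$ with $\widetilde{\gamma}_1$, glue, and afterwards realize the mixture by an independent Bernoulli; this costs you one extra concavity step ($\theta\,\Psi(x,0)\le\Psi(\theta x,0)$) that the paper avoids, but the two constructions produce the same comparison measure and the same final bound.
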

    
\begin{proof}
 We assume that for all $\eta, \delta \geq 0$, there exists $\gamma^{\eta, \delta} \in \Sigma_{\mathrm{D} }(\eta, \delta)$ such that $\mathcal{I}_{\mathrm{D} }(\eta, \delta) = \int g \, d \gamma^{\eta, \delta}$.  Otherwise, due to the continuity of $\Psi$ on $\mathbb{R}^2_+$, we can repeat the proof with $\epsilon$-approximation optimizer and let $\epsilon \downarrow 0$. In addition, since $\mathcal{I}_{\mathrm{D}}(\delta) < \infty$ for some $\delta \in \mathbb{R}^2_+$, the $\mathcal{I}_{\mathrm{D}} (\delta)  < \infty$ for all $\delta\in \mathbb{R}^2_+$.

Let $\gamma^{\eta, \delta}_\ell$ denote the marginal of $\gamma^{\eta_0, \delta}$ on $\mathcal{S}_\ell$.  Fix $\gamma^{\eta_0, \delta} \in \mathcal{P}( \mathcal{S}_1 \times \mathcal{S}_2)$. Define a probability measure $\gamma_1^\star$ on $\mathcal{S}_1$ as
\[
\gamma_1^\star = \left( \frac{\eta}{\eta_0} \right) \gamma_1^{\eta_0, \delta}  + \left( \frac{\eta_0 - \eta }{\eta_0} \right ) \mu_1.
\]
By definition,   $\boldsymbol{K}_1 ( \gamma_1^{\eta_0, \delta},  \mu_1  ) \leq \eta_0$ and $\boldsymbol{K}_2 ( \gamma_2^{\eta_0, \delta},  \mu_2  ) \leq \delta$. By convexity of $\nu \mapsto \boldsymbol{K}_1( \nu, \mu_1)$, we have  $\boldsymbol{K}_1(\gamma_1^\star , \mu_1) \leq    \eta$ and  $\boldsymbol{K}_1(\gamma_1^\star , \gamma_1^{\eta_0, \delta} )  \leq \eta_0 - \eta$.  Without loss of generality, suppose there is an optimal coupling $\nu \in \Pi( \gamma^{\eta, \delta}_1,  \gamma_1^\star )$ such that 
\[
\boldsymbol{K}_1( \gamma_1^{\eta_0, \delta}, \gamma_1^\star ) = \int_{\mathcal{S}_1 \times \mathcal{S}_1} c_1 \, d \nu.
\]
By gluing lemma, we can construct random variables $(S_1, S_2, \widetilde{S}_1) \in \mathcal{V}\times \mathcal{S}_1$ with a probability measure $\widehat{\pi} \equiv \operatorname{Law} (S_1, S_2, \widetilde{S}_1)$ such that  
\[
\widehat{\pi}_{1,2} = \text{Law}(S_1, S_2) =  \gamma^{\eta_0, \delta},  \quad  \widehat{\pi}_{1,3} =  \text{Law}( S_1,  \widetilde{S}_1 )  =\nu \in  \Pi (\gamma_1^{\eta, \delta}, \gamma_1^{\star}) ,
\]
and 
\[
\boldsymbol{K}_1 (\gamma_1, \gamma_1^{\eta_0, \delta}) =  \mathbb{E} \left[c_1(S_1, \widetilde{S}_1) \right] \leq  \eta_0 - \eta.
\]
Let $\gamma = \operatorname{Law} ( \widetilde{S}_1, S_2) \in \mathcal{P}(\mathcal{V} )$ and it is obvious that $\widetilde{\gamma}_1 \in \Sigma_{\mathrm{D} }(\eta, \delta)$. Next, consider the following derivation:
\begin{align*}
\mathcal{I}_{\mathrm{D} }(\eta_0, \delta) - \mathcal{I}_{\mathrm{D} }(\eta, \delta) & \leq  \int g (v) \, d \gamma^{\eta_0, \delta}(v) -   \int g(v) \, d \gamma(v)  \\
   & = \int_{\mathcal{V} \times \mathcal{V} }   \left[g(s_1, s_2) - g( \tilde{s}_1, s_2 ) \right] d  \widehat{\pi}  ( s_1, s_2 ,  \tilde{s}_1) \\
 & =   \mathbb{E} \left[  g(S_1, S_2) -   g( \widetilde{S}_1, S_2) \right]   \leq  \mathbb{E}\left[   \Psi\left(c_1( S_1, \widetilde{S}_1  ), 0 \right)  \right]  \\
& \leq   \Psi\left(    \mathbb{E} \left[ c_1( S_1, \widetilde{S}_1  ) \right] , 0 \right)    \leq    \Psi\left(   \eta_0 - \eta , 0 \right).
\end{align*}
Using the same argument, we can show $ \mathcal{I}_{\mathrm{D} } (\delta, \eta_0)-\mathcal{I}_{\mathrm{D} }(\delta, \eta) \leq \Psi(0, \eta_0-\eta)$.  
 \end{proof}
    
Now we present the proof of \Cref{thm:ID-continuity}.

 \begin{proof}[{Proof of \Cref{thm:ID-continuity}}]
Since $\mathcal{I}_{\mathrm{D} }$ is concave on $\mathbb{R}^2_{+}$, then $\mathcal{I}_{\mathrm{D} }$ is continuous on $\mathbb{R}^2_{++}$. By \Cref{lemma:ID-continuity-zero} , $\mathcal{I}_{\mathrm{D} }$ is continuous at $(0,0)$.  Let $E_0 = \{ (x,0)\in \mathbb{R}^{2}_+: x>0 \} $  and $E_1 = \{ (0,y)\in \mathbb{R}^{2}_+: y>0 \}$.  To complete the proof, it suffices to show $\mathcal{I}_{\mathrm{D} }$ is continuous at all $\delta \in E_0 \cup E_1$. 

Fix any $(\eta,0) \in E_0$. For any $\eta_0 \geq \eta$ and any $\delta >0$, we have
\[
\begin{aligned}
\mathcal{I}_{\mathrm{D} }(\eta_0, \delta ) -  \mathcal{I}_{\mathrm{D} }(\eta, 0 ) & = \mathcal{I}_{\mathrm{D} }(\eta_0, \delta ) -  \mathcal{I}_{\mathrm{D} }(\eta, \delta ) +  \mathcal{I}_{\mathrm{D} }(\eta, \delta ) -  \mathcal{I}_{\mathrm{D} }(\eta, 0 )  \\
& \leq \Psi (\eta_0-\eta, 0 ) +  \Psi (0, \delta ) = \Psi ( |\eta_0-\eta|, 0 ) +  \Psi (0, \delta ).
\end{aligned}
\]
Similarly, for any $\eta_0 < \eta$ and $\delta > 0$, 
\[
\mathcal{I}_{\mathrm{D} }(\eta, \delta ) -  \mathcal{I}_{\mathrm{D} }(\eta_0 ,0 )  \leq  \Psi (|\eta_0-\eta|, 0 ) +  \Psi (0, \delta ).
\]
This shows for all $\eta, \eta_0$ and $\delta$ in $(0, \infty)$, one has
\[
\left|\mathcal{I}_{\mathrm{D} }(\eta_0, \delta ) -  \mathcal{I}_{\mathrm{D} }(\eta, 0 ) \right |  \leq  \Psi (|\eta_0-\eta|, 0 ) +  \Psi (0, \delta ).
\]
The continuity of $\mathcal{I}_{\mathrm{D} }$ at $(\eta, 0)$ follows from the continuity of $\Psi$ at $(0,0)$ and letting $(\eta_0, \delta) \rightarrow (\eta, 0)$. 
Since $(\eta, 0) \in E_0$ is arbitrary, $\mathcal{I}_{\mathrm{D} }$ is continuous at all $x \in E_0$. Using the same argument, we can show $\mathcal{I}_{\mathrm{D} }$ is continuous at all $x\in E_1$. The desired result follows.
\end{proof}

\subsubsection{Proof of \texorpdfstring{\Cref{thm:I-continuity}}{}}

Note that the proof of \Cref{lemma: Legendre_transform} implies that 
If $\mathcal{I}(\delta)$ is finite for some $\delta \in \mathbb{R}_{++}^2$, then $\mathcal{I}(\delta)$ is finite for all $\delta \in\mathbb{R}_{++}^2$ because $\mathcal{I}(\delta)$ is concave. Based on this, we give the following lemma that is used to show   the continuity of $\mathcal{I}$.

\begin{lemma}\label{lemma:I-inequality}
    Let $\delta \geq 0$, $\eta_0 > \eta \geq 0$. 
    Suppose that $\mathcal{I}(\delta) < \infty$ for some $\delta \in \mathbb{R}_{++}^2$.
    Under \Cref{assumption:f-bounded-below,assumption:finite-moments-I,assumption:metric-cost,assumption:quasi-metric,assumption:f-Psi-growth}, there is a constant $M >0$ such that
    \[
    \mathcal{I}(\eta_0, \delta) - \mathcal{I}(\eta, \delta) \leq \Psi_1\left( \eta_0 - \eta  ,  M   ( 1- \eta/ \eta_0)   \right) ,
    \]
    and 
    \[
    \mathcal{I}(\delta, \eta_0) - \mathcal{I}(\delta, \eta) \leq \Psi_2 \left( M   ( 1- \eta/ \eta_0)  , \eta_0-\eta \right).
    \]
    \end{lemma}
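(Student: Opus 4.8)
\textbf{Proof plan for \Cref{lemma:I-inequality}.}

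The plan is to adapt the coupling argument used in the proof of \Cref{lemma:ID-inequality} to the overlapping case, with the key new complication being that we must control the shift in the $\mathcal{X}$-marginal induced by convexly mixing an optimizer with the reference measure. I will prove only the first inequality; the second follows by symmetry (interchanging the roles of the two marginals and replacing $\Psi_1$ by $\Psi_2$). Since $\mathcal{I}(\delta) < \infty$ for some $\delta \in \mathbb{R}_{++}^2$, concavity of $\mathcal{I}$ (\Cref{lemma:I-concavity}) gives $\mathcal{I}(\delta) < \infty$ for all $\delta \in \mathbb{R}_{+}^2$ by the argument in the proof of \Cref{lemma: Legendre_transform}. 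As in \Cref{lemma:ID-inequality}, I may assume the supremum defining $\mathcal{I}(\eta_0, \delta)$ is attained at some $\gamma^{\eta_0, \delta} \in \Sigma(\eta_0, \delta)$ (otherwise run the argument with an $\epsilon$-approximate maximizer and let $\epsilon \downarrow 0$, using continuity of $\Psi_1$).

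First I would write $\gamma_{13}^{\eta_0,\delta}$ for the projection of $\gamma^{\eta_0,\delta}$ onto $\mathcal{S}_1 = \mathcal{Y}_1 \times \mathcal{X}$ and form the convex combination $\sigma_1 := (\eta/\eta_0)\gamma_{13}^{\eta_0,\delta} + (1 - \eta/\eta_0)\mu_{13}$ on $\mathcal{S}_1$. By convexity of $\nu \mapsto \boldsymbol{K}_1(\mu_{13}, \nu)$ we get $\boldsymbol{K}_1(\mu_{13}, \sigma_1) \le \eta$, and $\boldsymbol{K}_1(\gamma_{13}^{\eta_0,\delta}, \sigma_1) \le (1 - \eta/\eta_0)\,\boldsymbol{K}_1(\gamma_{13}^{\eta_0,\delta}, \mu_{13}) \le (1 - \eta/\eta_0)\eta_0 = \eta_0 - \eta$. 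The new point compared to the non-overlapping case is that I must also control $\rho_2$-distances, i.e., I must glue a transport plan realizing $\boldsymbol{K}_1(\gamma_{13}^{\eta_0,\delta}, \sigma_1)$ with $\gamma^{\eta_0,\delta}$ itself so as to produce a candidate measure $\widetilde\gamma \in \Sigma(\eta, \delta)$. Here I invoke \Cref{lemma:decomposability-S} together with \Cref{prop:MultiVBP}: with the index collection $K_1 = \{3,4,5\}$, $K_2 = \{1,3,5\}$, $K_3 = \{2,4,5\}$ — the same one used in the proof of \Cref{thm:I-duality} via \Cref{lemma:AO1} — I can build a measure on $\mathcal{S}_1 \times (\mathcal{Y}_2) \times \mathcal{Y}_1$ (equivalently on random variables $(\widetilde{Y}_1, \widetilde{X}, Y_2, Y_1, X)$, with the $\mathcal{X}$-coordinate shared) whose relevant marginals are the optimal $\boldsymbol{K}_1$-coupling between $\gamma_{13}^{\eta_0,\delta}$ and $\sigma_1$ on one hand and $\gamma^{\eta_0,\delta}$ on the other. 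Reading off $\widetilde\gamma := \mathrm{Law}(\widetilde Y_1, Y_2, \widetilde X)$ or — more carefully — the law that keeps the $\sigma_1$-side covariate, one obtains a measure in $\Sigma(\eta, \delta)$, so that $\mathcal{I}(\eta_0,\delta) - \mathcal{I}(\eta,\delta) \le \int f\,d\gamma^{\eta_0,\delta} - \int f\,d\widetilde\gamma = \mathbb{E}[f(Y_1, Y_2, X) - f(\widetilde Y_1, Y_2, \widetilde X)]$.

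To bound this expectation, I apply the first half of \Cref{assumption:f-Psi-growth}: $f(y_1, y_2, x) - f(y_1', y_2', x') \le \Psi_1(c_1(s_1, s_1'), \rho_2(y_2, y_2'))$. Taking $y_2' = y_2$ so that $\rho_2(y_2, y_2) = 0$ would give the cleaner bound, but the covariate coordinates $x$ and $x'=\widetilde x$ differ, so the first argument is $c_1\big((y_1, x), (\widetilde y_1, \widetilde x)\big)$, which by construction has expectation $\le \boldsymbol{K}_1(\gamma_{13}^{\eta_0,\delta}, \sigma_1) \le \eta_0 - \eta$; the potential residual $\rho_2$-term is where the second argument $M(1 - \eta/\eta_0)$ enters, controlled via \Cref{assumption:quasi-metric-ii} and \Cref{assumption:finite-moments-I} by a finite-moment bound $M$ on $\rho_2$ over $\Sigma(\eta_0,\delta)$ (analogous to Lemma 1 of \citet{yue2022linear} as invoked in \Cref{thm:ID-existence}), scaled by the mass fraction $(1 - \eta/\eta_0)$ of the reference-measure part of the mixture. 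Then Jensen's inequality (monotonicity and concavity of $\Psi_1$) moves the expectation inside: $\mathbb{E}[f(Y_1,Y_2,X) - f(\widetilde Y_1, Y_2, \widetilde X)] \le \Psi_1\big(\mathbb{E}[c_1(\cdot,\cdot)], \mathbb{E}[\rho_2(\cdot,\cdot)]\big) \le \Psi_1\big(\eta_0 - \eta, M(1 - \eta/\eta_0)\big)$, which is the claim. The main obstacle I anticipate is the bookkeeping around which $\mathcal{X}$-coordinate survives into $\widetilde\gamma$ and verifying rigorously that the glued measure indeed lands in $\Sigma(\eta,\delta)$ — in particular that its $\mathcal{S}_2$-projection is unchanged so that the $\boldsymbol{K}_2 \le \delta$ constraint is automatically preserved — and correctly quantifying the $\rho_2$-discrepancy contributed by the covariate shift rather than by any shift in $y_2$; this is exactly the extra layer of complexity over the non-overlapping \Cref{lemma:ID-inequality} and is the reason the bound features $M(1 - \eta/\eta_0)$ in the second slot rather than a clean $0$.
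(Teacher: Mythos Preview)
Your construction has a genuine gap at exactly the point you flag as the main obstacle. If you set $\widetilde\gamma = \mathrm{Law}(\widetilde Y_1, Y_2, \widetilde X)$, then its $\mathcal{S}_2$-projection is $\mathrm{Law}(Y_2, \widetilde X)$, \emph{not} $\mathrm{Law}(Y_2, X) = \gamma^{\eta_0,\delta}_{23}$: the covariate has shifted from $X$ to $\widetilde X$, so the $\mathcal{S}_2$-marginal is certainly changed, and there is no reason $\boldsymbol{K}_2(\mu_{23}, \mathrm{Law}(Y_2,\widetilde X)) \le \delta$ should hold. Your attempted rescue via a ``$\rho_2$-discrepancy contributed by the covariate shift'' does not work either, because $\rho_2$ is a function on $\mathcal{Y}_2\times\mathcal{Y}_2$ only (\Cref{assumption:quasi-metric}); with $Y_2$ unchanged you get $\rho_2(Y_2,Y_2)=0$, so the second argument of $\Psi_1$ would be $0$, not $M(1-\eta/\eta_0)$, and the stated bound would not even emerge formally from your construction.

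The paper resolves this with a different coupling. It glues $\gamma^{\eta_0,\delta}$ with a random element $\widetilde S=(\widetilde Y_1,\widetilde Y_2,\widetilde X)$ whose \emph{both} overlapping marginals equal the references, $\mathrm{Law}(\widetilde Y_1,\widetilde X)=\mu_{13}$ and $\mathrm{Law}(\widetilde Y_2,\widetilde X)=\mu_{23}$, together with an optimal $c_1$-coupling of $(Y_1,X)$ and $(\widetilde Y_1,\widetilde X)$; this requires a six-coordinate decomposable index system $K_1=\{1,2,3\}$, $K_2=\{1,3,4,6\}$, $K_3=\{5,6\}$, not the five-coordinate one from \Cref{lemma:decomposability-S} you cite. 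It then introduces an independent Bernoulli $\varepsilon$ with $\mathbb P(\varepsilon=1)=\eta/\eta_0$ and sets $\widehat S=\varepsilon S+(1-\varepsilon)\widetilde S$, so that $\widehat\gamma=(\eta/\eta_0)\gamma^{\eta_0,\delta}+(1-\eta/\eta_0)\widetilde\gamma$. Now \emph{both} marginals of $\widehat\gamma$ are convex combinations with the references, yielding $\boldsymbol{K}_1(\mu_{13},\widehat\gamma_{13})\le\eta$ and $\boldsymbol{K}_2(\mu_{23},\widehat\gamma_{23})\le\delta$ simultaneously. The price is that $\widehat Y_2$ differs from $Y_2$ on $\{\varepsilon=0\}$, which has probability $1-\eta/\eta_0$; the resulting $\mathbb E[\rho_2(Y_2,\widehat Y_2)]=(1-\eta/\eta_0)\,\mathbb E[\rho_2(Y_2,\widetilde Y_2)]$ is bounded by $M(1-\eta/\eta_0)$ via \Cref{assumption:quasi-metric-ii,assumption:quasi-metric-iii} and \Cref{assumption:finite-moments-I}, and this is precisely where the second slot of $\Psi_1$ in the statement comes from.
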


    \begin{proof}
For simplicity, assume that for any $\eta, \delta \geq 0$, one has $\gamma^{\eta, \delta}  = \arg \max_{\gamma \in \Sigma(\eta, \delta)} \int_{\mathcal{S} } f d\gamma$, equivalently, $\mathcal{I}(\eta, \delta) = \int_{\mathcal{S} }  f  d \gamma^{\eta, \delta}$.   Otherwise, due to the global continuity of $\Psi_j$, we can repeat the proof with an $\epsilon$-approximation argument and let $\epsilon \downarrow 0$.

For fixed $\eta_0 >0$ and $\delta >0$,  we have $\boldsymbol{K}_1(\gamma^{\eta_0, \delta}_{1,3}, \mu_1) \leq \eta_0$ and $ \boldsymbol{K}_2(\gamma^{\eta_0, \delta}_{2,3}, \mu_2) \leq \delta$  by the definition of $\gamma^{\eta_0, \delta}$.  Let $K_1 = \{ 1,2,3\}$, $K_2 = \{ 1,3,4,6 \}$ and $K_3 = \{ 5,6\}$ and it is easy to verify the collection $\{K_1, K_2, K_3 \}$ is decomposable. As a result, by \Cref{prop:MultiVBP}, we can construct random variables  
\[
(S, \widetilde{S} ) \equiv \left(Y_1, Y_2, X, \widetilde{Y}_1, \widetilde{Y}_2, \widetilde{X} \right) \in \mathcal{S} \times \mathcal{S},
\]
such that 
\[
 \text{Law}(Y_1, Y_2, X) =  \gamma^{\eta_0, \delta} , \quad \text{Law}( \widetilde{Y}_1 , \widetilde{X}) = \mu_1, \quad \text{Law}( \tilde{Y}_2 , \widetilde{X}) = \mu_2,
\]
and 
\[
\boldsymbol{K}_1\left(\gamma_{1,3}^{\eta_1, \delta}, \mu_1\right) = \mathbb{E} \left[ c_1(  S_1, \widetilde{S}_1 )  \right] \leq \eta_0, \quad \text{where } S_1 = (Y_1, X) \text{ and } \widetilde{S}_1 = (\widetilde{Y}_1, \widetilde{X}). 
\]
Let $\varepsilon$ be a Bernoulli random variable that is independent of $(S, \widetilde{S} )$ with $\mathbb{P}(\varepsilon = 1) =  \eta/ \eta_0$. Define new random variables:
\[
\widehat{S} \equiv ( \widehat{Y}_1,\widehat{Y}_2, \widehat{X} )  =   \varepsilon  (Y_1, Y_2,X) + (1-\varepsilon)   (\widetilde{Y}_1,\widetilde{Y}_2,  \widetilde{X} ),
\]
and let $\widehat{\gamma} =  \text{Law} ( \widehat{Y}_1, \widehat{Y}_2,  \widehat{X})$. For any measurable set $A \in \mathcal{B}_{ \mathcal{S} }$, we have
\[
\begin{aligned}
\widehat{\gamma}(A)  &  =  \mathbb{P} ( \widehat{S} \in A ) =  \mathbb{E} \left[   \mathbb{P} ( \widehat{S} \in A | \varepsilon  )  \right]    \\ 
& =   \left( \eta/ \eta_0 \right) \mathbb{P} ( S \in A)   +   \left(  1- \eta/ \eta_0 \right) \mathbb{P} ( \widetilde{S} \in A).
\end{aligned}
\]
This shows 
\[
\widehat{\gamma} =   \left( \eta/ \eta_0 \right)   \gamma^{\eta_0, \delta}  +     \left(  1- \eta/ \eta_0 \right)  \widetilde{\gamma}, \quad \text{where}   \   \widetilde{\gamma} = \text{Law}(\widetilde{Y}_1, \widetilde{Y}_2, \widetilde{X}).
\]
Next, we verify $\widehat{\gamma}  \in \Sigma(\eta,  \delta)$. Since $\nu \mapsto \boldsymbol{K}_1\left( \nu  , \mu_1\right)$ is convex and $\widetilde{\gamma}_{1,3} = \text{Law}(\widetilde{Y}_1, \widetilde{X}) = \mu_1$,  we have
\[
\begin{aligned}
\boldsymbol{K}_1( \widehat{\gamma}_{1,3}, \mu_1)  & \leq     \left( \frac{\eta}{\eta_0} \right)  \boldsymbol{K}_1(\gamma^{\eta_1, \delta}_{1,3}, \mu_1)  +    \left( 1- \frac{\eta}{\eta_0} \right)  \boldsymbol{K}_1(\widetilde{\gamma}_{1,3}, \mu_1)  \leq \eta. 
\end{aligned}
\]
Similarly, we have $\boldsymbol{K}_2( \widehat{\gamma}_{2,3}, \mu_2)  \leq   \delta$. As a result, we verify $\widehat{\gamma} \in \Sigma( \eta, \delta)$. Next, it is easy to see
\[
\begin{aligned}
& \mathbb{E}   \left[   c_1 \left(  ( \widehat{Y}_1, \widehat{X} ),  (Y_1,X )  \right) \right] 
  \leq   \left(1-\frac{\eta}{\eta_0}\right)  \mathbb{E}\left[c_1\left ( (\widetilde{Y}_1, \widetilde{X} ),(Y_1, X) \right)\right]    \\
\end{aligned} \leq (\eta - \eta_0)
\]

Since $\text{Law}(Y_2, X) = \gamma^{\eta_0, \delta}_2$, $\text{Law}(\widetilde{Y}_2, \widetilde{X} ) = \mu_2$ and  $\boldsymbol{K}_2\left(\gamma_{2,3}^{\eta_0, \delta}, \mu_2\right) \leq \delta$, i.e. $\boldsymbol{W}_{p_2} \left(\gamma_{2,3}^{\eta_0, \delta}, \mu_2\right) \leq \delta^{1/p_2}$, by triangle inequality, we have
\[
\boldsymbol{W}_{p_2} \left(\gamma_{2,3}^{\eta_1, \delta}, \delta_{s_2 } \right) \leq   \boldsymbol{W}_{p_2} \left(\gamma_{2,3}^{\eta_1, \delta}, \mu_2\right) +   \boldsymbol{W}_{p_2} \left(\mu_2 , \delta_{s_2 } \right)  \leq  \delta^{1/p_2} + \boldsymbol{W}_{p_2} \left(\mu_2 , \delta_{s_2 } \right),
\]
where $\delta_{s_2}$ denotes the dirac measure at $\{s_2\}$ and $s_2  \in \mathcal{S}_2$ is arbitrary. Further,  \Cref{assumption:quasi-metric-ii} implies $\rho_2(y^\prime_2, y_2) \leq  1 +  \boldsymbol{d}_{\mathcal{S}_2 } (s^\prime_2, s_2)^{p_2} $ for all $s_2 = (y_2,x)$  and $s^\prime_2 = (y^\prime_2,x^\prime)$,
\[
\begin{aligned}
\mathbb{E} \left[   \rho_2 (Y_2, y_2) \right]  - 1 &  \leq \mathbb{E} \left[  \boldsymbol{d}_{\mathcal{S}_2 } (S_2, s_2)^{p_2}  \right] = \boldsymbol{W}_{p_2} \left(\gamma_{2,3}^{\eta_1, \delta}, \delta_{s_2 } \right)^{p_2}  \leq  \left[ \delta^{1/p_2} + \boldsymbol{W}_{p_2} \left(\mu_2 , \delta_{s_2} \right) \right]^{p_2} ,\\
\end{aligned}
\]
and 
\[
\mathbb{E} \left[   \rho_2 ( \widetilde{Y}_2, y_2) \right] - 1 \leq   \mathbb{E} \left[  \boldsymbol{d}_{\mathcal{S}_2 } (\widetilde{S}_2, s_2)^{p_2}  \right] =   \boldsymbol{W}_{p_2} \left(\mu_2 , \delta_{s_2} \right)^{p_2}.
\]
As a result, by \Cref{assumption:quasi-metric-iii}, 
\[
\begin{aligned}
 \mathbb{E} \left[ \rho_2 (Y_2 , \widehat{Y}_2)   \right] & =  (\eta/ \eta_0)  \underbrace{\mathbb{E} \left[ \rho_2 (Y_2 , Y_2) \big  | \varepsilon = 0 \right]  }_{= 0} + ( 1- \eta/ \eta_0)  \mathbb{E} \left[ \rho_2 (Y_2 , \widetilde{Y}_2) \big  | \varepsilon = 1 \right] \\
 & \leq  (1- \eta/ \eta_0)   \mathbb{E}   \left[ \rho_2 (Y_2 , \widetilde{Y}_2)   \right]  \leq    (1- \eta/ \eta_0) N \left(  \mathbb{E}   \left[ \rho_2 (Y_2 ,y_2)   \right] + \mathbb{E}  \left[ \rho_2 (y_2 , \widetilde{Y}_2)   \right] \right) \\ 
 & \leq M  (1- \eta/ \eta_0),
 \end{aligned}
\]
where
\[
M =   N  \boldsymbol{W}_{p_2} \left(\mu_2 , \delta_{s_2} \right)^{p_2} +  N \left[ \delta^{1/p_2} + \boldsymbol{W}_{p_2} \left(\mu_2 , \delta_{s_2} \right) \right]^{p_2}  < \infty.
\]
Therefore, by \Cref{assumption:f-Psi-growth}, we have 
\[
\begin{aligned}
\mathcal{I}\left(\eta_0, \delta\right) - \mathcal{I}(\eta, \delta)
& \leq  \mathbb{E} \left[ f(Y_1, Y_2, X) \right] -   \mathbb{E} \left[ f(\widehat{Y}_1, \widehat{Y}_2, \widehat{X} ) \right]  \\
& \leq  \mathbb{E} \left[   \Psi\left(   c_1 \left( (Y_1, X),  (\widehat{Y}_1,  \widehat{X} )  \right),  \rho_2 (Y_2 , \widehat{Y}_2) \right)   \right] \\
& \leq \Psi\left(   \mathbb{E}  \left[ c_1 ( S_1 , \widehat{S}_1   ) \right],   \mathbb{E} \left[ \rho_2 (Y_2 , \widehat{Y}_2)   \right]    \right)   \\
& \leq \Psi\left( \eta_0- \eta   ,  M   ( 1- \eta/ \eta_0)   \right).
\end{aligned}
\]
The rest of the proof can be completed using the same reasoning.

    \end{proof}
    
Now, we give the proof of \Cref{thm:I-continuity}.
    \begin{proof}[{Proof of \Cref{thm:I-continuity}}]
If $\eta_0 >  \eta \geq 0$, \Cref{lemma:I-inequality} implies
\[
\begin{aligned}
0 \leq \mathcal{I}(\eta_0, \delta) - \mathcal{I}(\eta, 0) & =  \mathcal{I}(\eta_0, \delta) - \mathcal{I}(\eta, \delta) + \mathcal{I}(\eta, \delta)  - \mathcal{I}(\eta, 0)  \\
&  \leq  \Psi_1\left( \eta_0- \eta   ,  M   ( 1- \eta/ \eta_0)   \right) +    \Psi_2 \left(M\delta,   \delta \right).
\end{aligned}
\]
If $\eta \geq \eta_0$, by monotonicity of $\eta \mapsto  \mathcal{I}(\eta, 0)$ and \Cref{lemma:I-inequality},  we have
\[
 \mathcal{I}(\eta_0, \delta) - \mathcal{I}(\eta, 0) \leq  \mathcal{I}(\eta_0, \delta) - \mathcal{I}(\eta_0 , 0) \leq   \Psi_2\left( M \delta  ,  \delta  \right),
\]
and 
\[
 \mathcal{I}(\eta_0, \delta) - \mathcal{I}(\eta, 0) \geq   \mathcal{I}(\eta, \delta) - \mathcal{I}(\eta, 0) \geq 0 .
\]
As a result, we must have for all $\eta_0, \eta$ and $\delta$ in $[0, \infty)$, .
\[
 0 \leq \mathcal{I}(\eta_0, \delta) - \mathcal{I}(\eta, 0) \leq  \Psi_1\left( |\eta_0- \eta|   ,  M  | 1- \eta/ \eta_0|  \right)  + \Psi_2\left( M \delta  ,  \delta  \right).
\]
The continuity of $\mathcal{I}$ at $(\eta, 0)$ follows from the continuity of $\Psi_1$ and $\Psi_2$, and letting $(\eta_0, \delta) \rightarrow (\eta, 0)$. Using a similar argument, we can show  $\mathcal{I}$ is continuous at $(0,\eta)$.        \end{proof}

\subsection{Proofs in \texorpdfstring{\Cref{sec:examples-revisited}}{}}

\subsubsection{Proof of \texorpdfstring{\Cref{prop:I-ID-equivalence-separable}}{}}

By some simple algebra and \Cref{thm:ID-duality}, we have
\[
\begin{aligned}
\mathcal{I}_{\mathrm{D} }(\delta) & =   \inf_{\lambda \in \mathbb{R}^2_+}   \left\{ \langle \lambda, \delta \rangle + \sup_{\gamma \in \Pi(\mu_{1}, \mu_{2} )} \int_{\mathcal{S} } \left[   (f_1)_{\lambda_1}(y_1)  +  (f_2)_{\lambda_2}(y_2)  \right]  \, d\gamma(y_1, y_2)\right\} \\
& = \inf_{\lambda_1 \geq 0 } \left[ \lambda_1 \delta_1 +\int_{\mathcal{Y}_1}(f_1)_{\lambda_1}  \, d \mu_1  \right] + \inf_{\lambda_2 \geq 0 } \left[ \lambda_2 \delta_2 +\int_{\mathcal{Y}_2}  (f_2 )_{\lambda_2} \,  d \mu_2  \right],
\end{aligned}
\]
where the last step holds because $(f_{\ell})_{\lambda} \geq f_{\ell}$ and the right-hand side is well-defined since $f_{\ell} \in L^{1}(\mu_{\ell})$.  Next, we show $\mathcal{I}(\delta) = \mathcal{I}_{\mathrm{D} }(\delta)$.  \Cref{thm:I-duality} implies 
\[
\mathcal{I}(\delta) = \inf_{\lambda \in \mathbb{R}^2_+}   \left\{ \langle \lambda, \delta \rangle + \sup_{\pi \in \Pi(\mu_{13}, \mu_{23} )} \int_{\mathcal{S}_1 \times \mathcal{S}_2 }  (f_\mathcal{S} )_\lambda  \, d\pi \right\},
\]
where $\left(f_{\mathcal{S}}\right)_\lambda: \mathcal{S}_1 \times \mathcal{S}_2 \rightarrow \mathbb{R}$ is given by 
\[
\left(f_{\mathcal{S}}\right)_\lambda(s_1, s_2)  =   \sup_{ (y_1^\prime, y_2^\prime, x^\prime ) \in \mathcal{S} } \left\{ f_1(y_1^\prime) +  f_2(y_2^\prime) - \sum_{1 \leq \ell \leq 2} \lambda_\ell c_\ell\left(   (y_\ell, x_\ell), (y^\prime_\ell, x^\prime) \right)    \right\}.
\]
In fact,  \Cref{assumption: separable_cost} implies for all $s_\ell = (y_\ell, x_\ell) \in \mathcal{S}_\ell$ and $s_\ell^\prime = (y^\prime_\ell, x^\prime_\ell) \in \mathcal{S}_\ell$, one has
\[
c_{Y_{\ell}}\left(y_{\ell}, y_{\ell}^{\prime}\right)  = \inf_{x_\ell, x_\ell^\prime \in \mathcal{X} } c_\ell\left(   (y_\ell, x_\ell), (y^\prime_\ell, x^\prime_\ell) \right)    \leq c_\ell\left(   (y_\ell, x_\ell), (y^\prime_\ell, x^\prime_\ell) \right)    .
\]
Recall $(f_{\mathcal{S}})_\lambda: (s_1, s_2) \mapsto (f_{\mathcal{S}})_\lambda (s_1, s_2)$ is a function from $\mathcal{S}_1 \times \mathcal{S}_2 \rightarrow \mathbb{R}$ with $s_\ell = (y_\ell, x_\ell) \in \mathcal{S}_\ell$. As a result, for all $s_1 \in \mathcal{S}_1$ and $s_2 \in \mathcal{S}_2$
\[
\begin{aligned}
(f_{\mathcal{S}})_\lambda(s_1, s_2) &  \leq       \sup_{ (y_1^\prime, y_2^\prime, x^\prime ) \in \mathcal{S} } \left\{ f_1(y_1^\prime) +  f_2(y_2^\prime) - \sum_{1 \leq \ell \leq 2} \lambda_\ell c_{Y_\ell}  \left(  y_\ell, y_\ell^\prime \right)    \right\} \\
&= (f_1 )_{\lambda_1}(y_1) +    (f_2 )_{\lambda_2}(y_2).
\end{aligned}
\]
This shows for all $\lambda = (\lambda_1, \lambda_2) \in \mathbb{R}^2_+$, one has
\[
\sup _{\pi \in \Pi\left(\mu_{13}, \mu_{23}\right)} \int_{\mathcal{S}_1 \times \mathcal{S}_2} \left(f_{\mathcal{S}}\right)_\lambda \, d \pi \leq  \sup_{\gamma \in \Pi(\mu_{1}, \mu_{2} )} \int_{\mathcal{Y}_1 \times \mathcal{Y}_2 } \left[   (f_1)_{\lambda_1}(y_1)  +  (f_2)_{\lambda_2}(y_2)  \right]  \, d\gamma(y_1, y_2),
\]
and hence $\mathcal{I}(\delta) \leq   \mathcal{I}_{\mathrm{D} }(\delta)$. We end the proof by showing
\begin{align*}
\sup _{\pi \in \Pi\left(\mu_{13}, \mu_{23}\right)} \int_{\mathcal{S}_1 \times \mathcal{S}_2}  (f_{\mathcal{S}} )_\lambda d \pi \geq \int_{\mathcal{Y}_1}(f_1)_{\lambda_1} d \mu_1 +\int_{\mathcal{Y}_2} (f_2 )_{\lambda_2} d \mu_2.
\end{align*}
It suffices to show that there is $\pi \in \Pi(\mu_{13}, \mu_{23})$ such that $(f_{\mathcal{S}} )_\lambda ( s_1, s_2) \geq  (f_1)_{\lambda_1}(y_1) +  (f_2 )_{\lambda_2}(y_2)$, $\pi$-a.e. In fact, we note that if $x_1 = x_2$, then $(f_{\mathcal{S}} )_\lambda (  (y_1, x_1), (y_2, x_2) ) =  (f_1)_{\lambda_1}(y_1) +  (f_2 )_{\lambda_2}(y_2)$ under \Cref{assumption: separable_cost}. Consider a probability measure $\pi^\star = \mathrm{Law}(Y_1, X, Y_2,  X)$ where $\mu_{\ell, 3} = \mathrm{Law}(Y_\ell, X)$ for $\ell = 1,2$. As a result, 
\begin{align*}
\sup _{\pi \in \Pi\left(\mu_{13}, \mu_{23}\right)} \int_{\mathcal{S}_1 \times \mathcal{S}_1}\left(f_{\mathcal{S}}\right)_\lambda \, d \pi & \geq  \int_{\mathcal{S}_1 \times \mathcal{S}_2}\left(f_{\mathcal{S}}\right)_\lambda \, d \pi^\star = \int_{\mathcal{S}_1 \times \mathcal{S}_2} \left[ (f_1)_{\lambda_1} +  (f_2 )_{\lambda_2}  \right] \, d\pi^\star \\
& =   \int_{\mathcal{Y}_1}(f_1)_{\lambda_1} \, d \mu_1 +\int_{\mathcal{Y}_2} (f_2 )_{\lambda_2} \, d \mu_2.
\end{align*}

\subsubsection{Proof of \texorpdfstring{\Cref{prop:ATE-Mahalanobis}}{}}

Since $c_{Y_\ell}(y_\ell, y_\ell') =  \inf_{x_\ell, x_\ell' \in \mathcal{X}_\ell} c_{\ell} (s_\ell, s_\ell')$, the proof of \Cref{prop:I-ID-equivalence-separable} implies $\mathcal{I}(\delta) \leq \mathcal{I}_{\mathrm{D}}(\delta)$.

\subsubsection{Proof of \texorpdfstring{\Cref{prop:ATE-Mahalanobis-Dualform_i}}{}}

The proof consists of two steps. In Step 1, we derive the dual form of $\mathcal{I}_{\mathrm{D}}(\delta)$ and $\mathcal{I}(\delta)$ for $\delta \in \mathbb{R}^2_{++}$. In Step 2, we derive the dual reformulations of $\mathcal{I}_{\mathrm{D}}(\delta)$ and $\mathcal{I}(\delta)$ for $\delta \in \mathbb{R}^2_{+} \setminus \mathbb{R}^2_{++}$. 

\textbf{Step 1.} We derive the expressions of $\mathcal{I}_{\mathrm{D}}(\delta)$ and $\mathcal{I}(\delta)$ for $\delta \in \mathbb{R}^2_{++}$. First, recall $c_{Y_{\ell}} (y_{\ell}, y_{\ell}^{\prime}) =  V_{\ell, Y Y}^{-1} (y_{\ell}-y_{\ell}^{\prime})^2$.  \Cref{thm:ID-duality} implies 
\[
\mathcal{I}_{\mathrm{D} }(\delta)=\inf _{\lambda \in \mathbb{R}_{+}^2}\left[\langle\lambda, \delta\rangle+\sup _{\varpi \in \Pi\left(\mu_{Y_1}, \mu_{Y_2}\right)} \int_{\mathbb{R}^2} (f_{\mathcal{Y}})_{\lambda}(y_1, y_2) \, d \varpi(y_1, y_2)\right],
\]
where $(f_{\mathcal{Y}})_{\lambda}: (y_1, y_2)  \mapsto (f_{\mathcal{Y}})_{\lambda}(y_1, y_2)$ from $\mathbb{R}^2$ to $\mathbb{R}$ is given by
\[
(f_{\mathcal{Y}})_{\lambda}(y_1, y_2)  = y_2 - y_1 + \frac{V_{1, YY}}{4 \lambda_1} + \frac{V_{2, YY}}{4 \lambda_2}.
\]
Since $V_{\ell, YY} > 0$ for $\ell \in [2]$, by some simple algebra,  we have for all $\delta \in \mathbb{R}_{++}^2$
\begin{align*}
    \mathcal{I}_{\mathrm{D}}(\delta) = \mathbb{E}[Y_2] - \mathbb{E}[Y_1] + V_{1, YY}^{1/2} \; \delta_1^{1/2} + V_{2, YY}^{1/2}\;  \delta_2^{1/2} 
\end{align*}
Next, we derive the expression of $\mathcal{I}(\delta)$ for $\delta \in \mathbb{R}^2_{++}$. Let $Q_\ell \in \mathbb{R}^{(d+1) \times (d+1)}$ be the inverse of $V_\ell$, i.e.,
    \begin{align*}
    Q_{\ell} =    \begin{bmatrix}
            Q_{\ell, YY} & Q_{\ell, YX} \\
            Q_{\ell, XY} & Q_{\ell, XX}
       \end{bmatrix} 
       = 
       \begin{bmatrix}
        (V_{\ell} / V_{\ell, XX})^{-1} & - (V_{\ell} / V_{\ell, XX})^{-1} V_{\ell, YX} V_{\ell, XX}^{-1} \\
        - V_{\ell, XX}^{-1} V_{\ell, XY} (V_{\ell} / V_{\ell, XX})^{-1} &  (V_{\ell} / V_{\ell, YY})^{-1}
        \end{bmatrix},
    \end{align*}
    where $V_{\ell} / V_{\ell, XX} = V_{\ell, YY} - V_{\ell, YX} V_{\ell, XX}^{-1} V_{\ell, XY}$ and $V_{\ell} / V_{\ell, YY} = V_{\ell, XX} - V_{\ell, XY} V_{\ell, YY}^{-1} V_{\ell, YX}$.  Conversely,
	\begin{align*}
		\begin{bmatrix}
            V_{\ell, YY} & V_{\ell, YX} \\
            V_{\ell, XY} & V_{\ell, XX}
       \end{bmatrix} 
   & =
   \begin{bmatrix}
		(Q_{\ell} / Q_{\ell, XX})^{-1} & - Q_{\ell, YY}^{-1} Q_{\ell, YX} (Q_{\ell} / Q_{\ell, YY})^{-1} \\
		- (Q_{\ell} / Q_{\ell, YY})^{-1} Q_{\ell, XY} Q_{\ell, YY}^{-1} &  (Q_{\ell} / Q_{\ell, YY})^{-1}
   \end{bmatrix} ,
	\end{align*}
    where $Q_{\ell} / Q_{\ell, XX} = Q_{\ell, YY} - Q_{\ell, YX} Q_{\ell, XX}^{-1} Q_{\ell, XY}$ and $Q_{\ell} / Q_{\ell, YY} = Q_{\ell, XX} - Q_{\ell, XY} Q_{\ell, YY}^{-1} Q_{\ell, YX}$. Next, we evaluate the function $(f_{\mathcal{S}})_{\lambda}(s_1, s_2)$ that appears in the dual reformulation. For simplicity, we write $a_1 = -1$ and $a_2 = 1$. Consider the following derivation:
    \begin{align*}
       \left(f_{\mathcal{S}}\right)_\lambda (s_1, s_2) 
        & := 
        \sup_{y_1', y_2', x'}
        \left\{y_2' - y_1' - \sum_{\ell = 1, 2} \lambda_{\ell} c_{\ell}((y_{\ell}', x'), (y_\ell, x_{\ell}))\right\} \\
        & = 
        \sup_{y_1', y_2', x'}
        \left\{
        \sum_{1 \leq \ell \leq 2}
        \left( a_\ell y_\ell -   \lambda_\ell
        \begin{bmatrix}
            y_\ell' - y_\ell \\
            x' - x_\ell
        \end{bmatrix}^{\top}
  Q_{\ell} 
            \begin{bmatrix}
                y_\ell' - y_\ell \\
                x' - x_\ell
            \end{bmatrix} \right) \right\}
        \\
        & =_{(1)}
        y_2 - y_1
        +
        \sup_{z_1', z_2', x'}
        \left\{
        \sum_{1 \leq \ell \leq 2}
        \left( a_\ell z_\ell' -   \lambda_\ell 
        \begin{bmatrix}
            z_\ell' \\
            x' - x_\ell
        \end{bmatrix}^{\top}
           Q_{\ell} 
            \begin{bmatrix}
                z_\ell' \\
                x' - x_\ell
            \end{bmatrix} \right) \right\} \\
        & =
        y_2 - y_1
        +
        \sup_{x'\in \mathbb{R}^d }
        \left\{
        \sum_{1 \leq \ell \leq 2} 
        \sup_{z_\ell' \in \mathbb{R}}
        \left( a_\ell z_\ell' -   \lambda_\ell 
        \begin{bmatrix}
            z_\ell' \\
            x' - x_\ell
        \end{bmatrix}^{\top}
	Q_{\ell}
            \begin{bmatrix}
                z_\ell' \\
                x' - x_\ell
            \end{bmatrix} \right) \right\},
    \end{align*}
where equation (1) follows from the change of variables $z^\prime_\ell = y_\ell^\prime - y_\ell$. So, to evaluate $\left(f_{\mathcal{S}}\right)_\lambda (s_1, s_2)$, it suffices to maximize $(z_1^\prime, z_2^\prime, x^\prime) \mapsto \phi_{1}(z_1', x'; x_1) + \phi_{2}(z_2', x'; x_2)$ where 
    \begin{align*}
  \phi_{\ell}(z_\ell', x'; x_\ell)
        & =
        a_\ell z_\ell' -   \lambda_\ell \
        \begin{bmatrix}
            z_\ell' \\
            x' - x_\ell
        \end{bmatrix}^{\top}
	Q_{\ell} 
            \begin{bmatrix}
                z_\ell' \\
                x' - x_\ell
            \end{bmatrix}.
    \end{align*}
We first consider $\sup_{z_\ell' \in \mathbb{R} } \phi_{\ell}(z_\ell', x'; x_\ell)$. The first-order conditions imply that the optimal solution is 
	\begin{align*}
		z_\ell' = (\lambda_\ell Q_{\ell, YY})^{-1} \left[ \frac{a_\ell}{2}  - \lambda_\ell Q_{\ell, YX} (x' - x_\ell) \right].
	\end{align*}
By some simple algebra, $\sup_{z_\ell' \in \mathbb{R} } \phi_{\ell}(z_\ell', x', x_\ell) = 		  \varphi_\ell (x^\prime- x_\ell, \lambda)$ where $\varphi_\ell: \mathbb{R}^d \times \mathbb{R} \rightarrow \mathbb{R}$ is given by
\[
\varphi_\ell (x, \lambda_\ell) = \frac{Q_{\ell, YY}^{-1} }{4\lambda_{\ell}}   +    a_\ell   x^{\top} V_{\ell, XX}^{-1} V_{\ell, XY} - \lambda_{\ell} x^{\top} V_{\ell, XX}^{-1} x.
\]
As result, 
\[
(f_{\mathcal{S}})_\lambda (s_1, s_2 ) = \sup_{x^\prime \in \mathbb{R}^d}  \left[ \varphi_1 (x^\prime- x_1, \lambda_1)  + \varphi_2 (x^\prime- x_2, \lambda_2)  \right].
\]
Now, we consider the optimization above. The first-order conditions imply the optimal solution $x^\prime$ takes the form of $x^\prime - x_\ell = B_\ell (x_2 - x_1) + b_\ell$ for some $B_\ell \in \mathbb{R}^{d \times d}$ and $b_\ell \in \mathbb{R}^d$ that depend on $\lambda_\ell$. So,  we have
\[
\sup_{x^\prime \in \mathbb{R}^d}  \left[ \varphi_1 (x^\prime, x_1)  + \varphi_2 (x^\prime, x_2)  \right] = b + B(x_1- x_2) - (x_1-x_2)^\top W (x_1-x_2).
\]
for some positive definite matrix  $W \in \mathbb{R}^{d \times d}$ and $b \in \mathbb{R}$  that depend on $\lambda_1,\lambda_2, x_1$ and $x_2$.
Here, the constant $b$ will be determined below. For any $\pi \in \Pi(\mu_{13}, \mu_{23})$, we have 
    \begin{align*}
\int_{\mathbb{R}^{d+1} \times \mathbb{R}^{d+1} } (f_{\mathcal{S}})_\lambda \, d\pi &= \frac{1}{4 \lambda_1} Q_{1, Y Y}^{-1}+\frac{1}{4 \lambda_2} Q_{2, Y Y}^{-1}  + \underbrace{ \int_{\mathbb{R}^{d+1} \times \mathbb{R}^{d+1} }  B (x_1-x_2) \, d\pi }_{= 0} \\
& +  \int_{\mathbb{R}^{d+1} \times \mathbb{R}^{d+1} } (x_1-x_2)^{\top} W(x_1-x_2) \, d \pi(s_1, s_2) + b \\
& =  \frac{1}{4 \lambda_1} Q_{1, Y Y}^{-1}+\frac{1}{4 \lambda_2} Q_{2, Y Y}^{-1} -  \int (x_1-x_2)^{\top} W(x_1-x_2) \, d \pi + b.
\end{align*}
Now, let us consider $\sup_{\pi \in \Pi(\mu_{13}, \mu_{23})} \int (f_{\mathcal{S}})_\lambda \, d \pi$. To maximize $\int (f_{\mathcal{S}})_\lambda \,  d \pi$, it suffices to consider 
\[
\inf_{\pi \in  \Pi(\mu_{13}, \mu_{23})  } \int_{\mathbb{R}^{d+1} \times \mathbb{R}^{d+1} }  (x_1-x_2)^{\top} W (x_1-x_2) \, d \pi(s_1, s_2).
\]
Since $(x_1-x_2)^{\top} W (x_1-x_2)$ for all $x_1, x_2 \in \mathbb{R}^d$, the probability measure $\pi = \mathrm{Law}(Y_1,X, Y_2, X)$ with $\mathrm{Law}(Y_\ell,X) = \mu_{\ell,3}$ for $\ell=1,2$ is a solution and the optimal value is 0. We denote by $\Pi$ the set of all probability measures on $\mathcal{S}_1 \times \mathcal{S}_2$ that takes forms of 
$\pi = \mathrm{Law}(Y_1,X, Y_2, X)$. As a consequence,
\[
\sup_{\pi \in \Pi\left(\mu_{13}, \mu_{23}\right)} \int_{\mathbb{R}^{2d +2} }  (f_{\mathcal{S}} )_\lambda \, d \pi = \frac{1}{4 \lambda_1} Q_{1, Y Y}^{-1}+\frac{1}{4 \lambda_2} Q_{2, Y Y}^{-1} + b 
\]
where $b = \frac{1}{4} V_o^{\top}\left(\lambda_1 V_{1, X X}^{-1}+\lambda_2 V_{2, X X}^{-1}\right)^{-1} V_o$ with $V_o=V_{2, X X}^{-1} V_{2, X Y}-V_{1, X X}^{-1} V_{1, X Y}$. As a result, the dual reformulation of $\mathcal{I}_{\mathrm{D} }(\delta)$ is given by 
 \begin{align*}
\mathcal{I}(\delta) = \mathbb{E}\left[Y_2\right]-\mathbb{E}\left[Y_1\right]    +
        \inf_{\lambda \in \mathbb{R}_{+}^2}
        \Bigg\{
         \lambda_1 \delta_1 + \lambda_2 \delta_2 &  +
        \frac{1}{4\lambda_{1}} \left(V_{1} / V_{1, XX} \right) + \frac{1}{4\lambda_{2}}  \left(V_{2} / V_{2, XX} \right) \\
        &  + 
        \frac{1}{4}
        V_o^{\top} 
        \left(\lambda_1 V_{1, XX}^{-1} + \lambda_2 V_{2, XX}^{-1}\right)^{-1} V_o
        \Bigg\}.
\end{align*}

\textbf{Step 2.} We derive the dual reformulation of $\mathcal{I}_{\mathrm{D}}(\delta)$ and $\mathcal{I}(\delta)$ for $\delta\in \mathbb{R}^2_+ \setminus \mathbb{R}^2_{++}$. First, we note that $\mathcal{I}_{\mathrm{D}}(0) = \mathcal{I}(0) = \mathbb{E}[Y_2] - \mathbb{E}[Y_1]$. \Cref{thm:ID-duality} implies that 
\begin{align*}
    \mathcal{I}_{\mathrm{D}}(\delta_1, 0) & = 
   \inf _{\lambda \in \mathbb{R}_{+}^2}\left[\lambda_1 \delta_1 + \sup _{\varpi \in \Pi(\mu_{Y_1}, \mu_{Y_2})} \int_{\mathbb{R}^2} (f_{\mathcal{Y}})_{\lambda, 1}(y_1, y_2) \, d \varpi(y_1, y_2)\right], \\
    \mathcal{I}_{\mathrm{D}}(0, \delta_2) & = 
    \inf_{\lambda_2 \in \mathbb{R}_{+}^2}
   \left[\lambda_2 \delta_2  + \sup _{\varpi \in \Pi(\mu_{Y_1}, \mu_{Y_2})} \int_{\mathbb{R}^2} (f_{\mathcal{Y}})_{\lambda, 2}(y_1, y_2) \, d \varpi(y_1, y_2)\right],
\end{align*}
where $(f_{\mathcal{Y}})_{\lambda, \ell}$, for $\ell = 1, 2$, is given by $(f_{\mathcal{Y}})_{\lambda, \ell}  = y_2 - y_1 +  (4 \lambda_{\ell})^{-1} V_{\ell, YY} $. Since $V_{\ell, YY} > 0$, by simple algebra,  we have for all $\delta \in \mathbb{R}_{++}^2$,
\begin{align*}
    \mathcal{I}_{\mathrm{D}}(\delta_1, 0)  = \mathbb{E}[Y_2] - \mathbb{E}[Y_1] + V_{1, YY}^{1/2} \delta_1^2  \quad \text{and} \quad 
    \mathcal{I}_{\mathrm{D}}(0, \delta_2)  = \mathbb{E}[Y_2] - \mathbb{E}[Y_1] + V_{2, YY}^{1/2} \delta_2^2.
\end{align*}
\Cref{thm:I-duality} implies that 
\begin{align*}
    \mathcal{I}(\delta_1, 0) & = 
   \inf _{\lambda \in \mathbb{R}_{+}^2}\left[\langle\lambda, \delta\rangle+\sup _{\varpi \in \Pi\left(\mu_{13}, \mu_{23}\right)} \int_{\mathbb{R}^2} (f_{\mathcal{S}})_{\lambda, 1}(y_1, y_2) \, d \varpi(y_1, y_2)\right], \\
    \mathcal{I}(0, \delta_2) & = 
    \inf_{\lambda_1 \in \mathbb{R}_{+}^2}
   \left[\langle\lambda, \delta\rangle+\sup _{\varpi \in \Pi\left(\mu_{13}, \mu_{23}\right)} \int_{\mathbb{R}^2} (f_{\mathcal{S}})_{\lambda, 2}(y_1, y_2) \, d \varpi(y_1, y_2)\right],
\end{align*}
where $(f_{\mathcal{Y}})_{\lambda, \ell}$, for $\ell = 1, 2$, is given by
\begin{align*}
    (f_{\mathcal{Y}})_{\lambda, 1} 
    & =
    \sup_{y_1'} \left\{y_2 - y_1' - \lambda_1 \begin{bmatrix}
        y_1' - y_1 \\ x_2 - x_1 
    \end{bmatrix}^{\top} 
    Q_{1} \begin{bmatrix}
        y_1' - y_1 \\ x_2 - x_1 
    \end{bmatrix}\right\}, \\
     (f_{\mathcal{Y}})_{\lambda, 2} 
    & =
    \sup_{y_2'} \left\{y_2' - y_1 - \lambda_2 \begin{bmatrix}
        y_2' - y_2 \\ x_1 - x_2 
    \end{bmatrix}^{\top} 
    Q_{2} \begin{bmatrix}
        y_2' - y_2 \\ x_1 - x_2 
    \end{bmatrix}\right\}.
\end{align*}
With similar calculation as in Step 1, the functions $(f_{\mathcal{Y}})_{\lambda, 1}$ and $(f_{\mathcal{Y}})_{\lambda, 2}$ can be written as
\begin{align*}
    (f_{\mathcal{Y}})_{\lambda, 1} & = y_2 - y_1 + \frac{V_{1}/V_{1, XX}}{4 \lambda_1} - (x_2 - x_1)^{\top} V_{1, XX}^{-1} V_{1, XY} - \lambda_1 (x_2 - x_1)^{\top} V_{1, XX}^{-1} (x_2 - x_1), \\
    (f_{\mathcal{Y}})_{\lambda, 2} & = y_2 - y_1 + \frac{V_{2}/V_{2, XX}}{4 \lambda_2} + (x_1 - x_2)^{\top} V_{2 XX}^{-1} V_{2, XY} - \lambda_2 (x_1 - x_2)^{\top} V_{2, XX}^{-1} (x_1 - x_2).
\end{align*}
With the same reasoning as in Step 1, we have 
\begin{align*}
    \sup_{\varpi \in \Pi(\mu_{13}, \mu_{23})} \int (f_{\mathcal{S}})_{\lambda, \ell} \, d \varpi 
     =  \mathbb{E}[Y_2] - \mathbb{E}[Y_1] + \frac{V_{\ell}/V_{\ell, XX}}{4 \lambda_\ell},  \quad   \text{for }  \ell \in [2].
\end{align*}
Therefore,
\begin{align*}
     \mathcal{I}(\delta_1, 0) & = 
   \mathbb{E}[Y_2] - \mathbb{E}[Y_1] + (V_{1}/V_{1, XX})^{1/2} \delta_1^{1/2} = \mathcal{I}_{\mathrm{D}} (\delta_1, 0 ), \\
    \mathcal{I}(0, \delta_2) & =  
   \mathbb{E}[Y_2] - \mathbb{E}[Y_1] + (V_{2}/V_{2, XX})^{1/2} \delta_2^{1/2} = \mathcal{I}_{\mathrm{D}} (0, \delta_2 ).
\end{align*}

\subsubsection{Proof of \texorpdfstring{\Cref{prop:ATE-Mahalanobis-Dualform_ii}}{}}

Recall the proof of \texorpdfstring{\Cref{prop:ATE-Mahalanobis-Dualform_i}}{}, we have
\[
\mathcal{I}(\delta) =  \inf_{\lambda \in \mathbb{R}^2_+} \left\{  \langle \lambda, \delta \rangle +   \sup_{\pi \in \widetilde{\mathit{\Pi} }} \int_{ \mathbb{R}^{2d+2} } \left(f_{\mathcal{S}}\right)_\lambda \, d \pi \right\}
\]
where $\widetilde{\mathit{\Pi} }$ is the set of all probability measures such that their supports $\mathrm{Supp}(\pi)$ are in  $\left\{  (y_1, x_1, y_2, x_2)\in \mathbb{R}^{2d+2} : x_1 = x_2 \right\}$. By the definition of $\widetilde{\mathit{\Pi} }$, to evaluate $\mathcal{I}(\delta)$, it suffices to restrict the domain of  $\left(f_{\mathcal{S}}\right)_\lambda$ on $\mathrm{Supp}(\pi)$. 
For any $(s_1, s_2) \in \mathrm{Supp}(\pi)$, we have $x_1 = x_2$
\[
\begin{aligned}
(f_{\mathcal{S}})_\lambda(s_1, s_2) &=  (y_2-y_1) + \sup _{x^{\prime} \in \mathbb{R}^d}\left[\varphi_1 (x^{\prime}-x_1, \lambda_1 )+\varphi_2 (x^{\prime}-x_2, \lambda_2)\right] \\
& = (y_2-y_1) + \underbrace{ \sup_{x^\prime \in \mathbb{R}^d }  \left\{  \sum_{1\leq \ell \leq 2} \frac{ Q_{\ell, Y Y}^{-1} }{4 \lambda_{\ell}}  +{x^{\prime}}^{\top} V_{\ell, X X}^{-1} V_{\ell, X Y} a_{\ell}-\lambda_{\ell} {x^{\prime}}^{\top} V_{\ell, X X}^{-1} x^{\prime}  \right\} }_{= \mathcal{H}(\lambda, \delta) }
\end{aligned}
\]
As a consequence, $(f_{\mathcal{S}})_\lambda(s_1, s_2)$ is independent of $x_1$ and $x_2$ for all $(s_1, s_2)\in \mathrm{Supp}(\pi)$, and hence for all $\pi \in \widetilde{\mathit{\Pi} }$, we have
\[
\int_{\mathbb{R}^{2 d+2}}\left(f_{\mathcal{S}}\right)_\lambda d \pi = \mathbb{E}[Y_2] - \mathbb{E}[Y_1]  + \mathrm{R}(\lambda, \delta),
\]
where $\mathrm{R}(\lambda, \delta) = \mathcal{H}(\lambda, \delta) +\langle\lambda, \delta\rangle$ and $\mathrm{Law}(Y_\ell, X) = \mu_{\ell 3}$ for $\ell =1, 2$. So, $\mathcal{I}(\delta) = \mathbb{E}[Y_2] - \mathbb{E}[Y_1]  + \inf _{\lambda \in \mathbb{R}_{+}^2} \mathrm{R}(\lambda, \delta)$. Moreover, 
\[
\mathcal{I}_{\mathrm{D} }(\delta)=\mathbb{E}\left[Y_2\right]-\mathbb{E}\left[Y_1\right]+\inf _{\lambda \in \mathbb{R}_{+}^2} \mathrm{R}_{\mathrm{D}}(\lambda, \delta),
\]
where
\[
\mathrm{R}_{\mathrm{D}}(\lambda, \delta) =  \langle\lambda, \delta\rangle +\frac{V_{1, Y Y}}{4 \lambda_1}+\frac{V_{2, Y Y}}{4 \lambda_2}.
\]
The rest of the proof is divided into the following two steps.

{\bf Step 1.} We show that $\mathcal{I}_\mathrm{D}(\delta)=\mathcal{I}(\delta)$ implies
\begin{align}
    \delta_1^{1 / 2} V_{1, Y Y}^{-1 / 2} V_{1, X Y} + \delta_2^{1 / 2} V_{2, Y Y}^{-1 / 2} V_{2, X Y} = 0. \label{eq:ATE-I_D-I-equality-condition}
\end{align}
Since $Q_{\ell, Y Y} \geq  V_{\ell, Y Y}^{-1}$ by definition, then $Q^{-1}_{\ell, Y Y}\leq V_{\ell, Y Y}$ and $\mathrm{R}(\lambda, \delta) \leq \mathrm{R}_{\mathrm{D}}(\lambda, \delta)$. Let $\lambda_{\mathrm{D}}^\star = (\delta_1^{-1 / 2} V_{1, Y Y}^{1 / 2}, \delta_2^{-1 / 2} V_{2, Y Y}^{1 / 2} )$. It is easy to see $\inf _{\lambda \in \mathbb{R}_{+}^2} \mathrm{R}_{\mathrm{D}}(\lambda, \delta) =  \mathrm{R}_{\mathrm{D}}(\lambda_{\mathrm{D}}^\star, \delta) \geq  \mathrm{R} (\lambda_{\mathrm{D}}^\star, \delta)$ and hence
\[
\mathcal{I}(\delta) \leq  \mathbb{E}[Y_2] - \mathbb{E}[Y_1]  + \mathrm{R}(\lambda_{\mathrm{D} }^\star, \delta) \leq \mathbb{E}[Y_2] - \mathbb{E}[Y_1]  +   \mathrm{R}_{\mathrm{D}} (\lambda_{\mathrm{D}}^{\star}, \delta) = \mathcal{I}_{ \mathrm{D} }(\delta).
\]
Thus, $\mathcal{I}(\delta) = \mathcal{I}_{\mathrm{D} }(\delta)$ implies $\mathrm{R}_{\mathrm{D}}(\lambda_{\mathrm{D}}^\star, \delta) = \mathrm{R}(\lambda_{\mathrm{D}}^\star, \delta)$. In fact, we note that
\[
\mathrm{R}_{\mathrm{D}}(\lambda, \delta) =  \langle\lambda, \delta\rangle+   \sup _{x^{\prime} \in \mathbb{R}^d}  \left[ \sum_{1\leq \ell \leq 2} \varphi_\ell(x^{\prime}, \lambda_\ell)\right] \quad \text{and} \quad \mathrm{R}_{\mathrm{D}}(\lambda, \delta) =  \langle\lambda, \delta\rangle+ \sum_{1\leq \ell \leq 2}  \sup _{x^{\prime} \in \mathbb{R}^d}\varphi_\ell(x^{\prime}, \lambda_\ell).
\]
Since $x^\prime \mapsto \varphi_{\ell}(x^{\prime}, \lambda_{\ell})$ is strictly concave, it admits a unique maximizer and hence  $\mathrm{R}_{\mathrm{D}}(\lambda_{\mathrm{D}}^\star, \delta) = \mathrm{R}(\lambda_{\mathrm{D}}^\star, \delta)$ implies for $\ell=1,2$,
\[
\underset{x^\prime \in \mathbb{R}^d}{\arg \max } \left[ \sum_{1\leq \ell \leq 2} \varphi_\ell(x^{\prime}, \lambda_{\mathrm{D}, \ell }^{\star}  )\right] =  \underset{x^\prime \in \mathbb{R}^d}{\arg \max} \ \varphi_\ell(x^{\prime},\lambda_{\mathrm{D}, \ell }^{\star}).
\]
The first-order conditions imply
\[
\underset{x^\prime \in \mathbb{R}^d}{\arg \max } \left[ \sum_{1\leq \ell \leq 2} \varphi_\ell(x^{\prime}, \lambda_\ell)\right] = \left(\sum_{1 \leq \ell \leq 2} \lambda_{\ell}  V_{\ell, X X}^{-1}\right)^{-1}  \left(\sum_{1 \leq \ell \leq 2} a_{\ell} V_{\ell, X X}^{-1} V_{\ell, X Y}\right),
\]
and
\[
\underset{x^{\prime} \in \mathbb{R}^d}{\arg \max } \ \varphi_{\ell}(x^{\prime}, \lambda_{\ell}) = \frac{1}{2} a_\ell  {\lambda_{\mathrm{D}, \ell}^{\star}   }^{-1}    V_{2, X Y},  \quad \text{ for } \ell = 1,2.
\]
So, recall $\lambda_{\mathrm{D}, \ell}^{\star} = \delta_{\ell}^{-1 / 2} V_{\ell, Y Y}^{1 / 2}$, $a_1 = -1$ and $a_2 = 1$, we have 
\[
\delta_1^{1 / 2} V_{1, Y Y}^{-1 / 2} V_{1, X Y}+\delta_2^{1 / 2} V_{2, Y Y}^{-1 / 2} V_{2, X Y}=0.
\]

{\bf Step 2.} We show $\delta_1^{1 / 2} V_{1, Y Y}^{-1 / 2} V_{1, X Y}+\delta_2^{1 / 2} V_{2, Y Y}^{-1 / 2} V_{2, X Y}=0$ implies  $\mathcal{I}_\mathrm{D}(\delta)=\mathcal{I}(\delta)$.  We note $\lambda \mapsto \mathrm{R}_{\mathrm{D}} (\lambda, \delta )$ is convex since it is supremum of a set of affine functions. It can be written as
\[
\mathrm{R}_{\mathrm{D}} (\lambda, \delta ) = \langle\lambda, \delta\rangle + \sum_{1\leq \ell \leq 2 }  \frac{V_\ell / V_{\ell, X X} }{4 \lambda_\ell} + \frac{1}{4} V_o^{\top} { \underbrace{\left(\lambda_1 V_{1, X X}^{-1}+\lambda_2 V_{2, X X}^{-1}\right) }_{= \Lambda_\lambda} }^{-1} V_o
\]
Taking derivatives with respect to $\lambda_\ell$ yields
\[
 \frac{\partial  \mathrm{R}_{\mathrm{D}} (\lambda, \delta )  }{\partial \lambda_\ell}= \delta_{\ell}-   \frac{V_\ell / V_{\ell, X X} }{4 \lambda_\ell^2}  -  \frac{1}{4} V_o^{\top}   \Lambda_\lambda^{-1}  V_{\ell, X X}^{-1}  \Lambda_\lambda^{-1}  V_o.
 \]
By some algebra and under $\delta_1^{1 / 2} V_{1, Y Y}^{-1 / 2} V_{1, X Y}+\delta_2^{1 / 2} V_{2, Y Y}^{-1 / 2} V_{2, X Y}=0$, we can show
\[
\frac{\partial  \mathrm{R}_{\mathrm{D}} (\lambda_{\mathrm{D}}^\star, \delta )  }{\partial \lambda_\ell} = 0.
 \]
 As a result, $\mathrm{R}_{\mathrm{D}} (\lambda_{\mathrm{D}}^{\star}, \delta) = \inf_{\lambda \in \mathbb{R}^2_+} \mathrm{R}_{\mathrm{D}}(\lambda, \delta) = \mathrm{R}(\lambda_{\mathrm{D}}^{\star}, \delta) =  \inf_{\lambda \in \mathbb{R}^2_+} \mathrm{R}(\lambda, \delta)$ and 
 \[
 \mathcal{I}(\delta) = \mathbb{E}[Y_2 ]-\mathbb{E} [Y_1]+  \inf _{\lambda \in \mathbb{R}_{+}^2} \mathrm{R}_{\mathrm{D}}(\lambda, \delta) = \mathcal{I}_{\mathrm{D}}(\delta) .
 \]

	\textbf{Step 3. } We show that \Cref{eq:ATE-I_D-I-equality-condition} incorporates the case when $\delta_1 = 0$ or $\delta_2 = 0$.   From \Cref{prop:ATE-Mahalanobis-Dualform} (ii), we know the following statements hold.
	\begin{itemize}
		\item When  $\delta_1 > 0$ and $\delta_2 = 0$,  $\mathcal{I}_{\mathrm{D}}(\delta) = \mathcal{I}(\delta)$ if and only if $V_{1, XY} = 0$.
		
		\item When $\delta_1 = 0$ and $\delta_2 > 0$,  $\mathcal{I}_{\mathrm{D}}(\delta) = \mathcal{I}(\delta)$ if and only if $V_{2, XY} = 0$. 
		
		\item When $\delta_1 = \delta_2 = 0$,  $\mathcal{I}_{\mathrm{D}}(\delta) = \mathcal{I}(\delta) = \mathcal{I}_{ \mathrm{D}, 0}$.
	\end{itemize}
	We will see that \Cref{eq:ATE-I_D-I-equality-condition} incorporates all these cases.
	\begin{itemize}
		\item When $\delta_1 > 0$ and $\delta_2 = 0$, \Cref{eq:ATE-I_D-I-equality-condition} is equivalent to $V_{1, XY} = 0$. 
		
		\item When $\delta_1 = 0$ and $\delta_2 > 0$, \Cref{eq:ATE-I_D-I-equality-condition} is equivalent to $V_{2, XY} = 0$. 
		
		\item When $\delta_1 = \delta_2 = 0$, \Cref{eq:ATE-I_D-I-equality-condition} is satisfied always.
	\end{itemize}
This completes the proof.

\subsubsection{Proof of \texorpdfstring{\Cref{prop:ATE-Mahalanobis-Dualform_iii}}{}}

The continuity of $\mathcal{I}_{\mathrm{D}}$ can be seen from the \Cref{prop:ATE-Mahalanobis-Dualform_i} or \Cref{thm:ID-continuity}. Next, we show $\mathcal{I}$ is continuous on $\mathbb{R}_+^2$ by verifying the conditions of \Cref{thm:I-continuity}.
Obviously, $\boldsymbol{d}_{\mathcal{S}_\ell}\left(s_\ell, s_\ell^{\prime}\right) = \sqrt{ c_\ell (s_\ell, s_\ell^\prime ) } $ defines a norm on $\mathcal{S}_\ell = \mathbb{R}^{q+1} $. Define a function $\rho_\ell:\mathcal{Y}_{\ell} \times  \mathcal{Y}_{\ell} \rightarrow \mathbb{R}_+$ as
	\[
	\rho_\ell (y_\ell, y_\ell^\prime) = \left(y_{\ell}-y_{\ell}^{\prime}\right)^{\top} V_{\ell, Y Y}^{-1}\left(y_{\ell}-y_{\ell}^{\prime}\right).
	\]
	In fact, it is not difficult to see 
	\[
	\rho_\ell (y_\ell, y_\ell^\prime)  = \min_{(x_\ell, x^\prime_\ell) \in \mathcal{X}_\ell \times  \mathcal{X}_\ell } \left(s_{\ell}-s_{\ell}^{\prime}\right)^{\top} V_{\ell}^{-1}\left(s_{\ell}-s_{\ell}^{\prime}\right)  \leq c_\ell(s_\ell, s_\ell^\prime), \quad \forall s_\ell, s_\ell^\prime \in \mathcal{S}_\ell.
	\]
	Moreover, $\rho_\ell^{1/2}$ is a norm on $\mathcal{Y}_\ell$ and the triangle inequality implies
	\[
	\rho_\ell^{1/2}\left(y_\ell, y_\ell^{\prime}\right) \leq   \rho_\ell^{1/2} \left(y_\ell, y_\ell^{\star}\right)+ \rho_\ell^{1/2} \left(y_\ell^{\star}, y_\ell^{\prime}\right), \quad \forall   y_\ell, y_\ell^{\prime}, y_\ell^{\star} \in \mathcal{Y}_\ell.
	\]
	As a result, we must have 
	\[
	\rho \left(y_\ell, y_\ell^{\prime}\right) \leq 2 \left[  \rho_\ell \left(y_\ell, y_\ell^{\star}\right)+ \rho_\ell \left(y_\ell^{\star}, y_\ell^{\prime}\right) \right], \quad \forall   y_\ell, y_\ell^{\prime}, y_\ell^{\star} \in \mathcal{Y}_\ell.
	\]
	We verified the functions $\rho_1$ and $\rho_2$ satisfy \Cref{assumption:quasi-metric} with respect to Mahalanobis distances. Recall $f(y_1,y_2, x) = y_1 - y_2$ and define a concave function $\Psi: \mathbb{R}^2 \rightarrow \mathbb{R}_{+}$ as
	\[
	\Psi: (a_1, a_2) \mapsto    V_{1, YY}^{1/2} a^{1/2} + V_{2, YY}^{1/2} a_2^{1/2}.
	\]
	Since $\left|y_\ell- y_\ell^\prime \right|^2  = V_{\ell, YY} \rho_{\ell}\left(y_{\ell}, y_{\ell}^{\prime}\right)$ and $\rho_\ell \leq c_\ell$, then 
	\[
	\begin{aligned}
	f\left(y_1, y_2, x\right)-f\left(y_1^{\prime}, y_2^{\prime}, x^{\prime}\right)   & \leq  |y_1- y_1^\prime| +  |y_2- y_2^\prime|  \\
	 &  \leq \sum_{\ell=1}^2 V_{\ell, YY}^{1/2} \rho_{\ell}^{1/2 }\left(y_{\ell}, y_{\ell}^{\prime}\right) = \Psi(\rho_1(y_{1}, y_{1}^{\prime}) , \rho_2(y_{2}, y_{2}^\prime ) ) \\
	 & \leq   \Psi \left( c_1(s_{1}, s_{1}^{\prime}) , \rho_2(y_{2}, y_{2}^\prime ) \right).
	\end{aligned}
	\]
	Similarly, we can show 
	\[
	f(y_1, y_2, x) -  f(y_1^\prime, y_2^\prime, x^\prime)  \leq   \Psi \left ( \rho_1(y_1, y_1^\prime), c_2(s_2, s_2^\prime)  \right).
	\]
\Cref{thm:I-continuity} implies the continuity of $\mathcal{I}$ on $\mathbb{R}^2_+$.

\subsection{Proofs in \texorpdfstring{\Cref{sec:RobustWelfare-revisited}}{}}

\subsubsection{Proof of \texorpdfstring{\Cref{prop:RW-L1-with-X-L2}}{}} \label{appendix:RW-L1-with-X-L2}

We prove \Cref{prop:RW-L1-with-X-L2_i} using a technique similar to \citet{adjaho2022}.  For any $s_\ell = (y_\ell, x_\ell) \in \mathcal{S}_\ell$, we have
	\begin{align*}
		(f_{\mathcal{S}})_{\lambda}(s_1, s_2) 
		& =
		\sup_{ x^\prime \in 	\mathcal{X}} \sup_{ (y_1^\prime , y_2^\prime) \in 	\mathcal{Y}_1 \times \mathcal{Y}_2}  
		\left\{ - y_2' d(x') - y_1' \left[1 - d(x')\right] -  \sum_{1\leq \ell \leq2} \lambda_\ell \left[ | y_\ell - y_\ell' | + \| x_\ell - x' \|_2 \right]  \right\}   \\
		& =
		\sup_{ x^\prime \in \mathcal{X}}
		\Bigg\{  
		\left[  \sup_{y_2' \in \mathcal{Y}_2} \{ - y_2' d(x') -  \lambda_2 |y_2 - y_2'| \} + \sup_{y_1' \in \mathcal{Y}_1} \{ - y_1' (1 - d(x')) -  \lambda_1 |y_1 - y_1'| \}  \right] \\
		& \quad \quad  -  \sum_{1\leq \ell \leq2} \lambda_\ell  \| x_\ell - x' \| 
		\Bigg\}.
	\end{align*}
We note that
	\begin{align*}
		\sup_{y_2' \in \mathcal{Y}_2} \{ - y_2' d(x') -  \lambda_2 |y_2 - y_2'| \}
		=
		\begin{cases}
			\infty	            & \text{ if } 0 \le \lambda_2 < 1 \\
			- y_2 d(x')   &  \text{ if } \lambda_2 \ge 1
		\end{cases},
	\end{align*}
	and
	\begin{align*}
		\sup_{y_1' \in \mathcal{Y}_1} \{ - y_1' (1 - d(x')) -  \lambda_1 |y_1 - y_1'| \}
		=
		\begin{cases}
			\infty	            & \text{ if } 0 \le \lambda_1 < 1 \\
			- y_1 (1 - d(x'))   &  \text{ if } \lambda_1 \ge 1
		\end{cases}.
	\end{align*}
Therefore,  we have for $\lambda_1 \ge 1$ and $\lambda_2 \ge 1$
	\begin{align*}
		(f_{\mathcal{S}})_{\lambda}(s_1, s_2) 
		& = \sup_{ x^\prime \in \mathcal{X}}
		\left\{  - y_2 d(x') - y_1 (1 - d(x')) 
		-  \sum_{1 \leq \ell \leq 2} \lambda_\ell  \| x_\ell - x' \| 
		\right\} \\
		& = 
		- \min\{ y_2 + \varphi_{\lambda, 1} (x_1, x_2), y_1 +  \varphi_{\lambda, 0} (x_1, x_2)   \},
	\end{align*}
where 
\[
\varphi_{\lambda, d} (x_1, x_2)  = \min_{u \in \mathcal{X} : d(u) = d}   \sum_{1\leq \ell \leq 2} \lambda_\ell \| x_\ell - u \|_2,
\]
for $d \in \{0,1\}$. 	If $\lambda_1 < 1$ or $\lambda_2 < 1$, then $(f_{S})_{\lambda}(s_1, s_2) = \infty$. As a result, we have
	\begin{align*}
		\mathrm{RW}(d) & =
		\inf_{\gamma \in \Sigma(\delta)} \mathbb{E}[Y_2 d(X) + Y_1 (1 - d(X))]  = - \inf_{\lambda \in \mathbb{R}_{+}^2} \left[ \langle \lambda, \delta \rangle  + \sup_{\pi \in \Pi(\mu_{13}, \mu_{23})}  \int_{\mathcal{V} } (f_{S})_{\lambda} \,	d \pi \right] \\
		& =
		- \inf_{\lambda \in [1, \infty)^2} \left[ \langle \lambda, \delta \rangle  + \sup_{\pi \in \Pi(\mu_{13}, \mu_{23})}  \int_{\mathcal{V} } - \min\{ y_2 + \varphi_{\lambda, 1} (x_1, x_2), y_1 +  \varphi_{\lambda, 0} (x_1, x_2)   \} \, d \pi(v) \right] \\
		& =
		\sup_{\lambda \in [1, \infty)^2 }
		\left[
		\inf_{\pi \in \Pi(\mu_{13}, \mu_{23})}  \int_{\mathcal{V} } \min \left\{ y_2 + \varphi_{\lambda, 1} (x_1, x_2), y_1 +  \varphi_{\lambda, 0} (x_1, x_2) \right \} \, d \pi(v) - \langle \lambda, \delta \rangle
		\right].
	\end{align*}
Next, we show \Cref{prop:RW-L1-with-X-L2_ii}. Recall the set $\widetilde{\mathit{\Pi} }$ defined in the proof of \Cref{prop:ATE-Mahalanobis-Dualform_ii}. Here, $\widetilde{\mathit{\Pi} }$ is the set of all the probability measures concentrate on $\left\{ (y_1, x_1, y_2, x_2) \in \mathbb{R}^{2d +2} : x_1 = x_2  \right\}$. Consider the following derivation:
	\begin{align*}
 \mathrm{RW}(d)     & = 
		\sup_{\lambda_0 \ge 1, \lambda_2 \ge 1}
		\left[
		\inf_{\pi \in \Pi(\mu_{13}, \mu_{23})}  \int_{\mathcal{V}} \min\{ y_2 +  \varphi_{\lambda, 1} (x_1, x_2), y_1 +  \varphi_{\lambda, 0} (x_1, x_2) \} \, d \pi(v) -  (\lambda_1 + \lambda_2) \delta_0
		\right] \\
        &   \le \sup_{\lambda_1 \ge 1, \lambda_2 \ge 1}
		\left[
		\inf_{\pi \in \widetilde{\mathit{\Pi} } }   \int_{\mathcal{V}} \min\{ y_2 +  \varphi_{\lambda, 1} (x_1, x_2), y_1 +  \varphi_{\lambda, 0} (x_1, x_2) \} \, d \pi(v) -  (\lambda_1 + \lambda_2) \delta_0
		\right].
        \end{align*}
Recall the functions $h_0$ and $h_1$ defined in \Cref{prop:RW_AC}, we notice that for all $(y_1,x_1,y_2,x_2) \in \widetilde{\mathit{\Pi} }$,
\[
\varphi_{\lambda, \ell} (x_1, x_2) = (\lambda_1 + \lambda_2) h_\ell(x_1), \quad \forall \ell = 1,2.
\]
As a result, we have
  \begin{align*}
		\mathrm{RW}(d) &\leq 
		\sup_{\lambda_1 \ge 1, \lambda_2 \ge 1}
		\left[
		\inf_{\pi \in \mathcal{F}(\mu_{13}, \mu_{23})}
		\int_{\mathcal{S}} \min\{ y_2 +  \varphi_{\lambda, 1} (x), y_1 +  \varphi_{\lambda, 0} (x) \} \, d \pi(s) -  (\lambda_1 + \lambda_2) \delta_0
		\right] \\
		& =
		\sup_{\eta \ge 2}
		\left[ \inf_{\pi \in \mathcal{F}(\mu_{13}, \mu_{23})}
		\int_{\mathcal{S}} \min\{ y_2 +  \eta h_{1} (x), y_1 +  \eta h_{0} (x) \} \, d \pi(s) -  \eta \delta_0
		\right] \\
		& \le
		\sup_{\eta \ge 1}
		\left[
		\inf_{\pi \in \mathcal{F}(\mu_{13}, \mu_{23})}
		\int_{\mathcal{S}} \min\{ y_2 +  \eta h_{1} (x), y_1 +  \eta h_{0} (x) \} \, d \pi(s) -  \eta \delta_0
		\right] \\
         & =_{(1)}  \sup_{\eta \ge 1} \left[
		\inf_{\pi \in \mathcal{F}(\mu_{13}, \mu_{23})}
		\mathbb{E}_X\left[\mathbb{E}\left(\min\{ Y_2 - Y_1 +  \eta h_{1} (X),  \eta h_{0} (X) \} |X\right)\right] + \mathbb{E}(Y_1)-  \eta \delta_0
		\right] \\
           & = \sup_{\eta \ge 1}
		\left[
		\int_{\mathcal{S}} \min\{ y_2 +  \eta h_{1} (x), y_1 +  \eta h_{0} (x) \} \, d \pi^*(s) -  \eta \delta_0
		\right]\\
  & = \mathrm{RW}_0(d)
	\end{align*}
 where equation (1) follows from Proposition 2.17 in \citet{Santambrogio_2015} and the concavity of $y \mapsto \min\{y +  \eta h_{1} (x),  \eta h_{0} (x)\}$  (see also Section 4.3.1 in \citet{adjaho2022}).

\subsection{Proofs in \texorpdfstring{\Cref{sec:multi-marginals}}{}}   

We provide a brief sketch of proofs in  \Cref{sec:multi-marginals}.
\subsubsection{Proof of \texorpdfstring{\Cref{thm:ID-duality-multi-marginals}}{}}

Similarly to the proof of \Cref{thm:ID-duality}, it is sufficient to derive the dual reformulation of $\mathcal{I}_{\mathrm{D}}(\delta)$ for $\delta \in \mathbb{R}^L_{++}$.  Let $\mathcal{P}_{\mathrm{D}}$ denote the set of $\gamma \in \mathcal{P}(\mathcal{V})$ that satisfies $\boldsymbol{K}_\ell(\mu_\ell, \gamma_\ell) < \infty$ for all $\ell \in [L]$ and $\int_{\mathcal{V} } g d \gamma > -\infty$.  Taking the Legendre transform on $\mathcal{I}_{\mathrm{D}}$ yields that any $\lambda \in \mathbb{R}^2_+$,
\begin{align*}
\mathcal{I}_{\mathrm{D} }^\star (\lambda) :=&\sup_{\delta \in \mathbb{R}_{+}^L }  \left\{ \mathcal{I}_{\mathrm{D} }(\delta)  - \langle \lambda, \delta \rangle \right\}  =\sup_{\delta \in \mathbb{R}_{+}^L } \sup_{\gamma \in \Sigma_{\mathrm{D}}(\delta)  } \left\{ \int_\mathcal{V} g \, d\gamma - \langle \lambda, \delta \rangle \right\}\\
= &   \sup_{\gamma \in \mathcal{P}_{\mathrm{D}} }  \underbrace{\left\{ \int_\mathcal{V} g \, d\gamma -  \sum_{\ell \in [L]}\lambda_\ell \boldsymbol{K}_\ell(\mu_\ell,\gamma_\ell) \right\}}_{:= I_{\mathrm{D}, \lambda}[\gamma]} = \sup_{\gamma \in \mathcal{P}_{\mathrm{D}} } I_{\mathrm{D}, \lambda}[\gamma].
\end{align*}
Using \Cref{lemma:decomposability-V-multi-marginals} and the similar seasoning as the proof of \Cref{thm:ID-duality}, we can show
\begin{align*}
\mathcal{I}_{\mathrm{D} }^\star (\lambda) = \sup_{\gamma \in \mathcal{P}_{\mathrm{D}} } I_{\mathrm{D}, \lambda}[\gamma] = \sup_{\pi \in  \Gamma(\Pi, \varphi_\lambda ) } \int_{\mathcal{V} \times \mathcal{V} } \varphi_\lambda \, d\pi =   \sup_{\pi \in \Pi(\mu_1, \ldots, \mu_L)} \int_{\mathcal{V} } \, g_\lambda d\pi. 
\end{align*}
The desired result follows from \Cref{lemma: Legendre_transform}.

\subsubsection{Proof of \texorpdfstring{\Cref{thm:I-duality-multi-marginals}}{}}

Similarly to the proof of \Cref{thm:I-duality}, it is sufficient to derive the dual reformulation of $\mathcal{I}(\delta)$ for $\delta \in \mathbb{R}^L_{++}$.  Let $\bar{\mathcal{P}}$ denote the set of $\gamma \in \mathcal{P}(\mathcal{S})$ that satisfies $\boldsymbol{K}_\ell\left(\mu_{\ell,L}, \gamma_{\ell,L} \right) < \infty$ for all $\ell \in [L]$ and $\int_{\mathcal{S} } f d \gamma > -\infty$.  Taking the Legendre transform on $\mathcal{I}$ yields that any $\lambda \in \mathbb{R}^2_+$,
\begin{align*}
\mathcal{I}^\star (\lambda) :=&\sup_{\delta \in \mathbb{R}_{+}^L }  \left\{ \mathcal{I}(\delta)  - \langle \lambda, \delta \rangle \right\}  =\sup_{\delta \in \mathbb{R}_{+}^L } \sup_{\gamma \in \Sigma(\delta)  } \left\{ \int_\mathcal{V} f d\gamma - \langle \lambda, \delta \rangle \right\}\\
= &   \sup_{\gamma \in \bar{\mathcal{P} }  }  \underbrace{\left\{ \int_\mathcal{V} g d\gamma -  \sum_{\ell \in [L]}\lambda_\ell \boldsymbol{K}_\ell(\mu_\ell,\gamma_\ell) \right\}}_{:= I_{ \lambda}[\gamma]} = \sup_{\gamma \in \bar{\mathcal{P} }  } I_{ \lambda}[\gamma].
\end{align*}
For notational simplicity, we write $\Pi := \Pi\left(\mu_{1,L+1}, \ldots, \mu_{L, L+1} \right)$. Using \Cref{lemma:decomposability-S-multi-marginals} and the similar seasoning as in the proof of \Cref{thm:I-duality}, we can show
\begin{align*}
\mathcal{I}^\star (\lambda) = \sup_{\gamma \in \bar{\mathcal{P}} } I_{\lambda}[\gamma] = \sup_{\pi \in  \Gamma(\Pi, \phi_\lambda ) } \int_{\mathcal{V} \times \mathcal{V} } \varphi_\lambda d\pi =   \sup_{\pi \in \Pi} \int_{\mathcal{V} } f_\lambda d\pi. 
\end{align*}
The desired result follows from \Cref{lemma: Legendre_transform}.

\subsubsection{Proof of \texorpdfstring{\Cref{prop:RW-L1-with-X-L2-multi-marginals}}{}}

The proof is identical to that of \Cref{prop:RW-L1-with-X-L2}.

\end{document}